\newtheorem{theorem}{Theorem}[section]
\newtheorem{remark}{Remark}[section]
\newtheorem{lemma}[theorem]{Lemma}
\newtheorem{proposition}[theorem]{Proposition}
\newtheorem{corollary}[theorem]{Corollary}
\newtheorem{define}{Definition}[section]
\begin{document}
\title[Stochastic transport-diffusion equation]{Non-uniqueness in law of transport-diffusion equation forced\\ by random noise}
 
\date{\today}

\subjclass[2010]{35A02; 35R60}

\keywords{Convex integration; Continuity equation; Transport-diffusion equation; Non-uniqueness; Random noise.}

\author[Koley]{Ujjwal Koley}
\address{Centre For Applicable Mathematics (CAM), Tata Institute of Fundamental Research, PO Box 6503, GKVK Post Office, Bangalore 560065, India}
\email{ujjwal@math.tifrbng.res.in}
\author[Yamazaki]{Kazuo Yamazaki}
\address{Department of Mathematics and Statistics, Texas Tech University, Lubbock, Texas, 79409-1042, U.S.A.}
\email{kyamazak@ttu.edu}

\begin{abstract}
We consider a transport-diffusion equation forced by random noise of three types: additive, linear multiplicative in It$\hat{\mathrm{o}}$'s interpretation, and transport in Stratonovich's interpretation. Via convex integration modified to probabilistic setting, we prove existence of a divergence-free vector field with spatial regularity in Sobolev space and corresponding solution to a transport-diffusion equation with spatial regularity in Lebesgue space, and consequently non-uniqueness in law at the level of probabilistically strong solutions globally in time.
%\textbf{Keywords: convex integration; continuity equation; transport-diffusion equation; non-uniqueness; random noise.}
\end{abstract}
%\footnote{2010MSC : 35A02; 35R60}

\maketitle

\tableofcontents

\section{Introduction}\label{Introduction}

\subsection{Motivation from physics and mathematics}\label{Motivation from physics and mathematics}

A transport equation, also known as a continuity equation, appears in various problems of mathematical physics such as fluid mechanics and kinetic theory. There are physical examples such as the vorticity formulation of two-dimensional Euler equations with solution that is only in $L_{t}^{\infty} L_{x}^{p}$ for $p \in (1,2)$ and thus not even bounded (see \cite{CS15}); hence, following the breakthrough work of DiPerna and Lions \cite{DL89} to be described in detail subsequently, in the deterministic case there has been extensive effort to reduce the necessary regularity condition on a vector field and still retain the uniqueness of the equation transported by such a vector field. In contrast, Flandoli, Gubinelli, and Priola \cite{FGP10}, and Beck, Flandoli, Gubinelli, and Maurelli \cite{BFGM19} demonstrated that a transport noise can actually regularize its solution by proving uniqueness with a relatively rough vector field. Finally, the recent developments of convex integration technique led to various non-uniqueness results for a deterministic transport equation, even with an arbitrary strong diffusion, with a surprisingly smooth vector field (e.g., \cite{MS18, MS19, MS20}). The purpose of this manuscript is to employ convex integration and prove non-uniqueness in law of the transport equation forced by random noise of various types: additive, linear multiplicative, and transport, with our most interest in the last case considering other works on the stochastic transport equation  such as \cite{FGP10}. 

\subsection{Previous works}\label{Previous works}
Let us write ``$dD$'' for ``$d$-dimensional'' for $d \geq 2$. With a spatial variable $x \in \mathbb{T}^{d} = \mathbb{R}^{d} \setminus \mathbb{Z}^{d}$, given $\rho^{\text{in}}: \mathbb{T}^{d} \mapsto \mathbb{R}$ and a vector field $u: \mathbb{R}_{+} \times \mathbb{T}^{d} \mapsto \mathbb{R}^{d}$, density $\rho: \mathbb{R}_{\geq 0}\times \mathbb{T}^{d} \mapsto \mathbb{R}$ is a solution to a Cauchy problem of a transport equation if 
\begin{subequations}\label{transport}
\begin{align}
&\partial_{t} \rho(t,x) + \text{div} (u(t,x)\rho(t,x)) = 0, \hspace{3mm} \nabla\cdot u = 0, \hspace{3mm} t > 0,\\
&\rho(0,x) = \rho^{\text{in}}(x), 
\end{align}
\end{subequations} 
where $\partial_{t} \triangleq \frac{\partial}{\partial t}$. We refer to \eqref{transport} forced by certain random noise as a stochastic transport equation  (see \eqref{stochastic transport}). Hereafter, for any $p \in [1,\infty]$ we denote by $p'$ its H$\ddot{\mathrm{o}}$lder dual; i.e.,
\begin{equation}\label{est 235}
\frac{1}{p} + \frac{1}{p'} = 1. 
\end{equation} 
For any space of functions $X$, we indicate by $\mathring{X}$ an additional mean-zero condition imposed; only for $C^{\infty} (\mathbb{T}^{d})$, we follow the convention and denote by $C_{0}^{\infty} (\mathbb{T}^{d})$ an additional mean-zero condition imposed on $C^{\infty} (\mathbb{T}^{d})$. 

Informally, the pioneering work of DiPerna and Lions (e.g., \cite[Cor. II.1]{DL89}) states that given initial data $\rho^{\text{in}} \in L_{x}^{p}$ for any $p \in [1,\infty]$, not only existence but also uniqueness for the solution $\rho \in L_{t}^{\infty}L_{x}^{p}$ to \eqref{transport} holds provided  
\begin{equation}\label{est 2} 
u \in L^{1}_{t}W_{x}^{1,p'} \hspace{1mm} \text{ and } \hspace{1mm} \nabla\cdot u \in L_{t}^{1} L_{x}^{\infty} 
\end{equation} 
(see \cite{ACF15} for a certain local version). Subsequently, Ambrosio \cite{A04} proved uniqueness of solution $\rho \in L_{t,x}^{\infty}$ to \eqref{transport} under the condition that $u \in L_{t}^{1} BV_{x}$ and $\nabla\cdot u \in L_{t}^{1}L_{x}^{\infty}$ (see also \cite{CL02}). More recently, Bianchini and Bonicatto \cite{BB20} proved the uniqueness in case $u \in L_{t}^{1}BV_{x}$ and $u$ is nearly incompressible (see \cite[Definition 1.1]{BB20} for the definition of $u$ being nearly incompressible). Moreover, Caravenna and Crippa \cite{CC16} proved uniqueness of solution $\rho \in L_{t}^{1}L_{x}^{1}$ rather than $L_{t}^{\infty} L_{x}^{p}$, starting from $\rho^{\text{in}} \in L_{x}^{1}$ under additional assumptions that $u \in L_{t}^{1}W_{x}^{1,q}$ for $q > 1$, $\nabla\cdot u \in L_{t}^{1}L_{x}^{\infty}$, and $u(t,\cdot)$ is continuous for almost every $t \in [0,T]$ with a modulus of continuity on compact sets that is uniform in time.  We refer to \cite{A17} for an excellent survey of related results. In the stochastic case, Flandoli, Gubinelli, and Priola \cite{FGP10} demonstrated that a transport noise in Stratonovich's interpretation can regularize the solution enough to prove uniqueness. Specifically, in $\mathbb{R}^{d}$ rather than $\mathbb{T}^{d}$, the authors in \cite{FGP10} considered \eqref{transport} forced on the right hand side (r.h.s.) by 
\begin{equation}
-\sum_{i=1}^{d} \frac{\partial}{\partial x_{i}} \rho(t,x) \circ dB_{i}(t)
\end{equation} 
where $B = (B_{1},\hdots, B_{d})$ is a standard Brownian motion in $\mathbb{R}^{d}$ such that $B\rvert_{t=0} = 0$ $\mathbb{P}$-almost surely (a.s.), and they proved in \cite[Theorem 20]{FGP10} that under a condition that $u \in L_{t}^{\infty} C_{b}^{\alpha}$ for some $\alpha \in (0,1)$ and $ \nabla\cdot u \in L_{t,x}^{p}$ for some $p \in (2,\infty)$, given any $\rho^{\text{in}} \in L_{x}^{\infty}$, there exists a unique process $\rho \in L^{\infty} (\Omega \times [0,T] \times \mathbb{R}^{d})$ such that for all $\psi \in C_{0}^{\infty} (\mathbb{R}^{d})$, the process $\int_{\mathbb{R}^{d}} \psi(x) \rho(t,x) dx$ has a continuous modification which is a semi-martingale and satisfies \eqref{transport} distributionally (see also \cite[Theorem 21]{FGP10} for some variation). We refer to \cite{CO13} for similar results. Moreover, the authors in \cite{BFGM19} proved, under Ladyzhenskaya-Prodi-Serrin (LPS) condition on $u$, path-by-path uniqueness of weak solutions to \eqref{transport} (see \cite[Theorem 1.2]{BFGM19}). Concerning non-uniqueness, DiPerna and Lions in \cite[Section IV]{DL89} provided a few examples via Lagrangian approach, specifically $u \in W_{\text{loc}}^{1,p} (\mathbb{R}^{2})$ for any $p < \infty$ that is bounded, uniformly continuous and has unbounded divergence, as well as $u \in W_{\text{loc}}^{s,1} (\mathbb{R}^{2})$ for all $s \in [0,1)$ such that $u \in L^{p} (\mathbb{R}^{2}) + L^{\infty} (\mathbb{R}^{2})$ for all $p \in [1,2)$ and $\nabla\cdot u = 0$. Moreover, \cite[Theorem 2]{CLR03} showed that in case $x \in \mathbb{R}^{3}$, there exist $u = u(x)$ that is uniformly bounded and divergence-free, as well as a corresponding non-trivial solution $\rho \in L_{t,x}^{\infty}$ starting from $\rho^{\text{in}} \equiv 0$ (see also \cite[Section 6.1]{FGP10}). Such non-uniqueness results were relatively limited until the recent developments of convex integration which we describe next. 

The breakthrough work of Nash \cite{N54} concerning isometric embeddings led to Gromov establishing convex integration in \cite[Part 2.4]{G86}. Further important works by M$\ddot{\mathrm{u}}$ller and $\check{\mathrm{S}}$ver$\acute{\mathrm{a}}$k on convex integration for Lipschitz mappings and more in \cite{MS98, MS03} led to another breakthrough work \cite{DS09}  by De Lellis and Sz$\acute{\mathrm{e}}$kelyhidi Jr. in which non-zero weak solutions $u \in L^{\infty} (\mathbb{R}_{+} \times \mathbb{R}^{d})$ to $dD$ Euler equations with compact support for $d\geq 2$ were constructed via convex integration technique. After various extensions and improvements (e.g., \cite{DS10, DS13, BDIS15}), particularly making use of Mikado flows in convex integration, Isett \cite{I18} settled the negative direction of Onsager's conjecture \cite{O49}. By an additional ingredient of intermittency, Buckmaster and Vicol \cite{BV19a} proved the non-uniqueness of weak solutions to the 3D Navier-Stokes equations. Other recent applications of convex integration can be found in the following references: \cite{BCV18, CDD18, D19, LT20} concerning fractional Laplacian; \cite{BMS21} on power-law flows; \cite{BBV21, FLS21} on magnetohydrodynamics (MHD) system; \cite{LTZ20} on Boussinesq system (see \cite{BV19b} for an excellent review). 

The far-reaching consequence of convex integration technique has recently made impact in the stochastic community as well. For convenience, let us first recall some definitions.
\begin{define}
(e.g., \cite[Definitions 1.2-1.4]{C03}) For \eqref{transport} forced on the r.h.s. by noise involving Brownian motion $B$, we say that uniqueness in law holds if for any solutions $(\rho, B)$ and $(\tilde{\rho}, \tilde{B})$  which may be defined on different filtered probability spaces, $\mathcal{L}(\rho(t))_{t\geq 0} = \mathcal{L}(\tilde{\rho}(t))_{t\geq 0}$; i.e., they have same probability laws. Moreover, we say that path-wise uniqueness holds if for any solutions $(\rho, B)$ and $(\tilde{\rho}, B)$ defined on same filtered probability space $(\Omega, \mathcal{F}, (\mathcal{F}_{t})_{t\geq 0}, \mathbb{P})$, $\mathbb{P} ( \{ \rho(t) = \tilde{\rho}(t) \hspace{1mm} \forall \hspace{1mm} t \geq 0\}) = 1$. Finally, we say that a solution $(\rho, B)$ is probabilistically strong if $\rho$ is adapted to the completed natural filtration of $B$. 
\end{define} 
First, path-wise non-uniqueness of certain stochastic Euler system forced by linear multiplicative noise in It$\hat{\mathrm{o}}$'s interpretation was proven by Breit, Feireisl, and Hofmanov$\acute{\mathrm{a}}$ \cite{BFH20} and Chiodaroli, Feireisl, and Flandoli \cite{CFF19} with the noise in Stratonovich's interpretation. Subsequently, Hofmanov$\acute{\mathrm{a}}$, Zhu, and Zhu \cite{HZZ19} proved non-uniqueness in law of 3D stochastic Navier-Stokes equations forced by additive and linear multiplicative noise in It$\hat{\mathrm{o}}$'s interpretation (see also \cite{HZZ21a, HZZ21b}). For a subsequent comparison purpose, let us formally state this equation as follows: with $u: \mathbb{R}_{\geq 0} \times \mathbb{T}^{3} \mapsto \mathbb{R}^{3}$ and $\pi: \mathbb{R}_{+} \times \mathbb{T}^{3} \mapsto \mathbb{R}$ respectively representing the velocity and pressure fields, given $u^{\text{in}} (x) \triangleq u(0,x)$, 
\begin{equation}\label{stochastic NS}
du(t,x) + (\text{div} (u(t,x) \otimes u(t,x)) + \nabla \pi(t,x) - \Delta u(t,x)) dt = G(u) dB, \hspace{2mm} \nabla\cdot u = 0, \hspace{3mm} t > 0,
\end{equation} 
where $G(u)dB$ represents the stochastic force. More works followed concerning non-uniqueness in law: \cite{HZZ20} on 3D stochastic Euler equations; \cite{RS21, Y20a, Y20c, Y21c} in case of 2D and 3D stochastic Navier-Stokes equations with fractional Laplacian; \cite{Y21a} on 2D and 3D stochastic Boussinesq system; \cite{Y21d} on 3D stochastic MHD system. Let us emphasize that non-uniqueness in law implies non-uniqueness path-wise due to Yamada-Watanabe theorem while uniqueness in law, together with existence of a probabilistically strong solution, implies path-wise uniqueness due to Cherny's theorem (see \cite[Theorem 3.2]{C03}). A remarkable property of solutions to the stochastic partial differential equations (PDEs) obtained via convex integration is that they are probabilistically strong, and the existence of a probabilistically strong solution to the 3D stochastic Navier-Stokes equations was a long-standing open problem (e.g., \cite[p. 84]{F08}). 

Convex integration technique was applied to transport equation first, to the best of our knowledge, by Crippa, Gusev, Spirito, and Wiedemann who demonstrated, with a proof inspired by \cite{DS09}, that in $\mathbb{R}^{d}$ with $d \geq 2$ there exist infinitely many $\rho$ and divergence-free vector field $u$ which are both bounded and compactly supported in space and time that solves \eqref{transport} distributionally and has prescribed energy (see \cite[Theorem 3.1]{CGSW15}). More recently, Modena and Sz$\acute{\mathrm{e}}$kelyhidi Jr. extended such result significantly; in particular, \cite[Corollary 1.3]{MS18} states that on $\mathbb{T}^{d}$ for $d \geq 3$, if $p\in (1,\infty), \tilde{p} \in [1,\infty)$ satisfy 
\begin{equation}\label{est 1}
\frac{1}{p} + \frac{1}{\tilde{p}} > 1 + \frac{1}{d-1},
\end{equation} 
and $\bar{\rho} \in C_{0}^{\infty} (\mathbb{T}^{d})$, then there exist $u \in C_{t} L_{x}^{p'} \cap C_{t} W_{x}^{1,\tilde{p}} $ that is divergence-free and a corresponding weak solution $\rho \in C_{t} L_{x}^{p}$ such that $\rho \rvert_{t=0} \equiv 0$ but $\rho \rvert_{t=T} \equiv \bar{\rho}$, implying non-uniqueness. Subsequently, the same authors extended this result to the case $p = 1$ so that $\rho \in C_{t}L_{x}^{1}$ and surprisingly, not only $u\in C_{t}W_{x}^{1,\tilde{p}} \cap C_{t}L_{x}^{\infty}$ but in fact  $u\in C_{t}W_{x}^{1,\tilde{p}} \cap C_{t,x}$. These results of \cite{MS18, MS19} required $d \geq 3$; this is related to the inherent nature of Mikado flows and it is not the first time that the case $d = 2$ had to be excluded in its application (e.g., see \cite[p. 877]{I18}). Nevertheless, Modena and Sattig in \cite{MS20} significantly improved \cite{MS18, MS19}; specifically, \cite[Theorem 1.1]{MS20} states that for any $d \geq 2$, if $p \in [1,\infty), \tilde{p} \in [1,\infty)$ satisfy 
\begin{equation}\label{est 7}
\frac{1}{p} + \frac{1}{\tilde{p}} > 1 + \frac{1}{d}
\end{equation}
(cf. \eqref{est 1}), then there are infinitely many divergence-free vector fields $u \in C_{t}L_{x}^{p'} \cap C_{t}W_{x}^{1,\tilde{p}}$ if $p \in (1,\infty)$ while $u \in C_{t,x} \cap C_{t}W_{x}^{1,\tilde{p}}$ if $p  =1$, such that the corresponding Cauchy problem with a weak solution $\rho \in C_{t}L_{x}^{p}$ fails uniqueness. These results in \cite{MS18, MS19, MS20} can be extended to the case of arbitrarily strong diffusion in the expense of a few additional constraints; e.g., \cite[Theorem 1.3]{MS20} considers a diffusive case with $-\Delta \rho$ but only for $d \geq 3$. Finally, Cheskidov and Luo \cite[Theorem 1.3]{CL21} improved \eqref{est 7} to $\frac{1}{p} + \frac{1}{\tilde{p}} > 1$ although it additionally requires $d \geq 3$, $p > 1$, $u \in L_{t}^{\infty} L_{x}^{p'} \cap L_{t}^{1}W_{x}^{1, \tilde{p}}$ rather than $C_{t}L_{x}^{p'} \cap C_{t}W_{x}^{1,\tilde{p}}$, and $\rho \in L_{t}^{1}L_{x}^{p}$ rather than $C_{t}L_{x}^{p}$. 
The purpose of this manuscript is to employ, for the first time to the best of our knowledge, probabilistic convex integration to stochastic transport equation to prove non-uniqueness in law. 

More precisely, the main contributions of this paper are listed below:
\begin{itemize}
\item [(a)] We prove that non-uniqueness in law holds for \eqref{transport} with diffusion perturbed by an additive noise on an arbitrary time interval. This has been achieved by gluing a convex integration solution with a weak solution of  stochastic transport equation.
\item [(b)] We also consider the transport equation \eqref{transport} with diffusion perturbed by a linear multiplicative noise in It$\hat{\mathrm{o}}$'s interpretation, and present its non-uniqueness (in law) results.
\item[(c)] Finally, for the transport equation \eqref{transport} with diffusion perturbed by a transport noise in Stratonovich's interpretation, we exhibit two different proofs of non-uniqueness in law. In one situation, we are able to prescribe initial data and construct convex integration solutions.
\end{itemize}

\section{Statement of main results}\label{Section 2}
Hereafter, we will consider the following stochastic transport-diffusion equation 
\begin{subequations}\label{stochastic transport}
\begin{align}
&d \rho (t,x) + ( \text{div} (u(t,x) \rho(t,x)) - \Delta \rho(t,x)) dt = G(\rho) dB, \hspace{3mm} \nabla\cdot u = 0, \\
&\rho(0,x) = \rho^{\text{in}} (x), 
\end{align} 
\end{subequations} 
where $G(\rho) dB$ represents a stochastic force on the probability space $(\Omega, \mathcal{F}, (\mathcal{F}_{t})_{t \geq 0},\mathbb{P})$. We chose to consider the case of diffusion via Laplacian in comparison to the Navier-Stokes equations; most of our discussions and results go through for the cases of zero diffusion and arbitrarily strong diffusion similarly to \cite{MS18, MS19, MS20}. Let us consider three different types of noise:
\begin{enumerate}
\item additive noise; i.e., $G(\rho)dB =dB$ where $B$ is a certain $GG^{\ast}$-Wiener process for a Hilbert-Schmidt operator $G \in L_{2} (U, \mathring{L}^{2}(\mathbb{T}^{d}))$ on some Hilbert space $U$ (e.g., \cite[Section 4.1]{DZ14}) (considered in \cite{HZZ19, HZZ20, HZZ21a, HZZ21b, RS21, Y20a, Y20c, Y21a, Y21c, Y21d}); 
\item linear multiplicative noise in It$\hat{\mathrm{o}}$'s interpretation; i.e., $G(\rho) dB = \rho dB$ where $B$ is a $\mathbb{R}$-valued Wiener process (considered in \cite{BFH20, HZZ19, Y20a, Y20c, Y21a, Y21c, Y21d}); 
\item transport noise in Stratonovich's interpretation; i.e., $G(\rho) dB = - \sum_{i=1}^{d} \frac{\partial}{\partial x_{i}} \rho \circ dB_{i}$ where $B = (B_{1},\hdots, B_{d})$ is a Brownian motion.
\end{enumerate} 
While the noise type (3) has never been considered in probabilistic convex integration, it was the type investigated in \cite{FGP10}. A common property that is shared by \eqref{stochastic transport} with these types of noise is that they may all be informally transformed to random PDEs when these operations are allowed. In the first case of an additive noise, one may consider a heat equation forced by the same noise 
\begin{equation}\label{stochastic heat} 
dz(t,x) = \Delta z(t,x) dt + dB(t,x), \hspace{3mm} z(0,x) \equiv 0
\end{equation} 
so that we may focus on the following random PDE solved by $\theta(t,x) \triangleq \rho(t,x) - z(t,x)$: 
\begin{equation}\label{est 3}
\partial_{t} \theta (t,x) + \text{div} ( u(t,x) \theta (t,x)) = \Delta \theta(t,x) - \text{div} (u(t,x) z(t,x)), \hspace{3mm} \nabla\cdot u = 0, \hspace{3mm} \theta(0,x) = \rho^{\text{in}}(x). 
\end{equation} 
In the second case of a linear multiplicative noise in It$\hat{\mathrm{o}}$'s interpretation, we may focus on the following random PDE solved by $\theta(t,x) \triangleq \rho(t,x) e^{-B(t)}$: 
\begin{equation}\label{est 4}
\partial_{t} \theta(t,x) + \text{div} (u(t,x) \theta(t,x)) + \frac{1}{2} \theta(t,x) = \Delta \theta (t,x), \hspace{3mm} \nabla\cdot u = 0, \hspace{3mm} \theta(0,x) = \rho^{\text{in}}(x).
\end{equation} 
In the third case of a transport noise in Stratonovich's interpretation, we may focus on the following random PDE solved by $\theta (t,x) \triangleq \rho(t, x+ B(t))$ (e.g., \cite[Theorem 3.3.2 on p. 93]{K90}, also \cite{K82, K84}): 
\begin{equation}\label{est 6}
\partial_{t} \theta (t,x) + \text{div} (u(t, x+ B(t)) \theta (t,x)) = \Delta \theta(t,x), \hspace{3mm} \nabla\cdot u = 0, \hspace{3mm} \theta(0,x) = \rho^{\text{in}}(x). 
\end{equation} 
\begin{remark}
We may also consider a linear multiplicative noise in Stratonovich's interpretation; i.e., $G(\rho) dB = \rho \circ dB$ where $B$ is a $\mathbb{R}$-valued Wiener process (considered in \cite{CFF19}). In this case we may focus on the following random PDE solved by $\theta(t,x) \triangleq \rho(t,x) e^{-B(t)}$: 
\begin{equation}\label{est 5}
\partial_{t} \theta(t,x) + \text{div} (u(t,x) \theta(t,x)) = \Delta \theta(t,x), \hspace{3mm} \nabla\cdot u = 0, \hspace{3mm} \theta(0,x) = \rho^{\text{in}}(x).
\end{equation} 
This equation \eqref{est 5} is equivalent to the deterministic transport-diffusion equation (cf. \eqref{transport}) and hence non-uniqueness results from \cite{MS18, MS19, MS20} directly apply to \eqref{est 5} and therefore to \eqref{stochastic transport} in the case of linear multiplicative noise in Stratonovich's interpretation.
\end{remark}
Let us now describe main results in the cases of additive noise, linear multiplicative noise in It$\hat{\mathrm{o}}$'s interpretation, and transport noise in Stratonovich's interpretation; we describe the case of a linear multiplicative noise in It$\hat{\mathrm{o}}$'s interpretation first for convenience of explaining the difficulty of their proofs. Let us mention first that existence  of solution is standard; e.g., Lemma \ref{Lemma 8.1}, which is a straight-forward generalization of \cite[Proposition II.1]{DL89}, proves the existence of a deterministic solution $\theta \in L_{t}^{\infty} L_{x}^{p}$ to \eqref{est 4} with $u \in L^{1}_{t}L_{x}^{p'}$ for any $p \in (1,\infty)$ starting from $\theta^{\text{in}} \in L_{x}^{p}$ and therefore a solution process $\rho = \theta e^{B} \in L_{t}^{\infty} L_{x}^{p}$ to \eqref{stochastic transport} forced by linear multiplicative noise $G(\rho)dB = \rho dB$ where $B$ is a $\mathbb{R}$-valued Wiener process. The following result proves that uniqueness fails by construction via convex integration.  

\begin{theorem}\label{Theorem 2.1}
(Linear multiplicative noise in It$\hat{\mathrm{o}}$'s interpretation) Suppose that $d \geq 3$ and $B$ is a $\mathbb{R}$-valued Wiener process on $(\Omega, \mathcal{F}, \mathbb{P})$ with $(\mathcal{F}_{t})_{t\geq 0}$ being the normal filtration generated by $B$. Let $T > 0$, $p \in (1, \infty), \tilde{p} \in [1, \infty)$ such that 
\begin{equation}\label{est 8}
\frac{1}{p} + \frac{1}{\tilde{p}} > 1 + \frac{1}{d} \hspace{1mm} \text{ and } \hspace{1mm} p' < d. 
\end{equation} 
Then there exist infinitely many pairs $(\rho, u)$ where 
\begin{equation}
u \in C([0,T]; L^{p'} (\mathbb{T}^{d})) \cap C([0,T]; W^{1,\tilde{p}}(\mathbb{T}^{d}))
\end{equation} 
is a deterministic divergence-free vector field and 
\begin{equation}
\rho \in C([0,T]; L^{r} (\Omega; L^{p}(\mathbb{T}^{d}))) \hspace{1mm} \forall \hspace{1mm} r \in [1,\infty) \text{ that is } (\mathcal{F}_{t})_{t\geq 0}\text{-adapted, }  \rho \rvert_{t=0} \equiv 0, \rho \rvert_{t=T} \not\equiv 0 \hspace{1mm} \mathbb{P}\text{-a.s.,}
\end{equation} 
that satisfy \eqref{stochastic transport} forced by linear multiplicative noise in It$\hat{o}$'s interpretation as follows: for every test function $\varphi \in C_{0}^{\infty} (\mathbb{T}^{d})$, the process $\int_{\mathbb{T}^{d}} \rho(t,x) \varphi(x) dx$ has a continuous modification which is a $(\mathcal{F}_{t})_{t\geq 0}$-semimartingale and satisfies 
\begin{align}
\int_{\mathbb{T}^d} \rho(t,x)\,\varphi(x)\,dx &= \int_{\mathbb{T}^d} \rho^{\text{in}}  (x)\,\varphi(x)\,dx + \int_0^t \int_{\mathbb{T}^d} \rho(s,x)\, u(s,x)\, \cdot \nabla \varphi(x)\,dx\, ds \nonumber  \\
&\quad + \int_0^t \int_{\mathbb{T}^d} \rho(s,x)\,\Delta \varphi(x)\,dx\,ds + \int_0^t \int_{\mathbb{T}^d} \rho(s,x)\,\varphi(x)\,dx\,dB(s,x) \label{solution multiplicative}
\end{align}
$\mathbb{P}$-a.s. and for all $t \in [0, T]$. Consequently, non-uniqueness in law holds for \eqref{stochastic transport} forced by the linear multiplicative noise in It$\hat{\mathrm{o}}$'s interpretation on $[0,\infty)$; moreover, for all $T> 0$, non-uniqueness in law holds on $[0,T]$. 
\end{theorem}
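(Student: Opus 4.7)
The plan is to exploit the transformation $\theta(t,x) \triangleq \rho(t,x) e^{-B(t)}$, which by It\^o's formula converts the stochastic transport-diffusion equation with linear multiplicative noise into the random PDE \eqref{est 4}. Because the resulting equation contains no random coefficients (the noise has been fully absorbed into the factor $e^{B(t)}$), it suffices to carry out a deterministic convex integration scheme to produce $(\theta, u)$ with $u$ deterministic, and then define $\rho \triangleq \theta e^{B}$ which will automatically be adapted to $(\mathcal{F}_{t})_{t\geq 0}$.

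The convex integration scheme will be a modification of the Modena--Sattig construction \cite{MS20} adapted to accommodate the linear zeroth-order term $\frac{1}{2}\theta$ and the Laplacian $-\Delta\theta$. I would set up an inductive sequence $(\theta_{q}, u_{q}, R_{q})$ of smooth approximations satisfying the relaxed equation
\begin{equation*}
\partial_{t}\theta_{q} + \textnormal{div}(u_{q}\theta_{q}) + \tfrac{1}{2}\theta_{q} - \Delta\theta_{q} = -\textnormal{div}(R_{q}), \qquad \nabla\cdot u_{q} = 0,
\end{equation*}
with defect $R_{q}$ that vanishes in $L^{1}_{t,x}$ as $q\to\infty$. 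Perturbations $\vartheta_{q+1}, w_{q+1}$ are built from concentrated Mikado densities and Mikado flows at frequency $\lambda_{q+1}$, chosen so that the principal part of $\textnormal{div}(w_{q+1}\vartheta_{q+1})$ cancels $\textnormal{div}(R_{q})$ while preserving the prescribed $L^{p}\times (L^{p'}\cap W^{1,\tilde p})$ integrability (the condition $\tfrac{1}{p}+\tfrac{1}{\tilde p}>1+\tfrac{1}{d}$ is exactly what balances amplitude and concentration). The linear contributions $\tfrac{1}{2}\vartheta_{q+1}$ and $-\Delta\vartheta_{q+1}$ are folded into the new defect $R_{q+1}$ via an inverse-divergence operator $\mathcal{R}$ applied frequency-by-frequency; the hypothesis $p'<d$ enters at this stage to ensure that $\mathcal{R}\Delta\vartheta_{q+1}$, which morally behaves like $\lambda_{q+1}\vartheta_{q+1}$, remains controllable in the required norms through Calder\'on--Zygmund and Sobolev-embedding bounds on $\mathbb{T}^{d}$.

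To pin down the boundary values $\rho|_{t=0}\equiv 0$ and $\rho|_{t=T}\not\equiv 0$, I would multiply each perturbation by a smooth temporal cutoff supported in $[\tau_{q}, T]$ with $\tau_{q}\downarrow 0$, and steer the limiting $\theta|_{t=T}$ towards a prescribed non-trivial target $\bar\theta$ as in the deterministic setting. After passing to the limit, set $\rho(t,x)\triangleq\theta(t,x)e^{B(t)}$; then $\rho$ is $(\mathcal{F}_{t})_{t\geq 0}$-adapted since $u$ and $\theta$ are deterministic, and $\|\rho(t)\|_{L^{r}(\Omega; L^{p})}\leq \|\theta\|_{C_{t}L^{p}}\|e^{B(t)}\|_{L^{r}(\Omega)}<\infty$ for every $r\in [1,\infty)$ because $B(t)$ is Gaussian. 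Applying It\^o's formula to $\theta(t)e^{B(t)}$ tested against $\varphi$ produces the weak form \eqref{solution multiplicative}; the It\^o correction $-\tfrac{1}{2}\theta\,dt$ in the transform precisely cancels the artificial $+\tfrac{1}{2}\theta$ term built into the PDE for $\theta$. Non-uniqueness in law on $[0,T]$ then follows by comparison with the trivial solution $\rho\equiv 0$, and the flexibility of the target $\bar\theta$ produces infinitely many pairs; non-uniqueness in law on $[0,\infty)$ is obtained by extending any one such solution to the half-line using a standard existence result (e.g., via Lemma \ref{Lemma 8.1}).

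The main obstacle I anticipate is designing the convex integration iteration so that the dissipative term $-\Delta\theta$ is compatible with the chosen frequency--amplitude balance. Unlike the Navier--Stokes setting where the vector field carrying the dissipation is also being perturbed, here the diffusion acts on $\theta$ directly; the perturbations $\vartheta_{q+1}$ are constrained to lie in $L^{p}$ (not in a Sobolev space), so $\mathcal{R}\Delta\vartheta_{q+1}$ must be estimated entirely through the concentration gain, and this is where the sharp restriction $p'<d$ crystallizes. Verifying the inductive estimates under this double budget (enough concentration for the defect cancellation, enough control for $\mathcal{R}\Delta$) is the technical heart of the argument; the remainder (transform back, moment bounds, adaptedness, non-uniqueness in law) is comparatively routine once $(\theta,u)$ has been produced.
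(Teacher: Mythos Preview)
Your proposal is correct and follows essentially the same route as the paper: reduce via $\theta = \rho e^{-B}$ to the deterministic damped transport-diffusion equation \eqref{est 4}, run the Modena--Sattig scheme from \cite{MS20} with the extra terms $\tfrac{1}{2}\theta$ and $-\Delta\theta$ absorbed into the defect (the condition $p'<d$ enters exactly where you say, via the choice of $\epsilon$ in \eqref{est 60}), and then set $\rho = \theta e^{B}$ and compare with the trivial solution. The one point worth sharpening is the endpoint control: the paper does not use temporal cutoffs, but instead starts the iteration from $(\bar\theta,\bar u) = (\Upsilon(t)c(\tfrac{1}{2}-x_{d}),\,0)$ with $\Upsilon\equiv 0$ near $t=0$ and $\Upsilon(t)=e^{-t/2}$ near $t=T$, so that $R_{0}\equiv 0$ on neighborhoods of \emph{both} endpoints, and then relies on the structural property from \cite{MS20} that $R_{q}(t)\equiv 0 \Rightarrow (\theta_{q+1},u_{q+1})(t)=(\theta_{q},u_{q})(t)$ at every step; this fixes $\theta|_{t=0}=0$ and $\theta|_{t=T}\not\equiv 0$ simultaneously, whereas your cutoffs supported in $[\tau_{q},T]$ handle only the former and you still need this mechanism (or a cutoff vanishing at $t=T$ as well) for the latter.
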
 

\begin{remark}
\cite[Theorem 1.3]{HZZ19} proves non-uniqueness in law for the 3D stochastic Navier-Stokes equations \eqref{stochastic NS} forced by a linear multiplicative noise in It$\hat{\mathrm{o}}$'s interpretation defined on a random time interval using stopping time to control the noise, and then \cite[Theorem 1.4]{HZZ19} (which is same as the last sentence in Theorem \ref{Theorem 2.1}) more generally proves non-uniqueness in law on an arbitrary deterministic time interval using techniques from martingale problem of \cite{SV97} (see \cite[Section 5]{HZZ19}) and generalization of Cherny's theorem \cite{C03} (see \cite[Theorem C.1]{HZZ19}). Theorem \ref{Theorem 2.1} is interesting in a way that it implies non-uniqueness in law more immediately. The main reason for this difference is that analogous transformation to \eqref{est 4} for the 3D Navier-Stokes equations \eqref{stochastic NS} forced by linear multiplicative noise in It$\hat{\mathrm{o}}$'s interpretation solved by $u$ returns the following random PDE solved by $v(t,x) = u(t,x) e^{-B(t)}$:
\begin{equation}\label{est 15}
\partial_{t} v(t,x) + e^{B(t)} \text{div} (v(t,x) \otimes v(t,x)) + e^{-B(t)} \nabla \pi(t,x) + \frac{1}{2} v(t,x) = \Delta v(t,x), \hspace{3mm} \nabla\cdot v = 0.  
\end{equation} 
The striking difference between \eqref{est 4} and \eqref{est 15} is that the noise appears in the latter but not the former. We shall elaborate on how the appearance of the noise in the transformed random PDE creates major difficulty in Remark \ref{Remark 2.3}. In short, we are able to essentially directly apply the deterministic convex integration technique from \cite{MS20} to \eqref{est 4} with only a few necessary modifications and immediately deduce Theorem \ref{Theorem 2.1}; for this reason, we leave the proof of Theorem \ref{Theorem 2.1} in Appendix A for completeness. 
\end{remark}

Our next theorem concerns the case of additive noise, $G(\rho) dB = dB$ where $B$ is a certain $GG^{\ast}$-Wiener process. 
\begin{remark}\label{Remark 2.3} 
The first major difficulty is the appearance of $z$ in \eqref{est 3} which is H$\ddot{o}$lder continuous in time of an exponent that is strictly less than $\frac{1}{2}$, inherently from the regularity of Brownian motion. Following the Nash-type convex integration schemes in \cite{MS18, MS19, MS20}, let us consider the following transport-diffusion-defect equation based on \eqref{est 3}:
\begin{subequations}\label{est 29}
\begin{align}
\partial_{t} \theta(t,x) + \text{div} (u(t,x) \theta(t,x)) + \text{div} (u(t,x) z(t,x))  - \Delta \theta(t,x) = - \text{div} R(t,x), \label{est 30} \\
\nabla\cdot u = 0. 
\end{align}
\end{subequations} 
E.g., in the key proposition, specifically \cite[Proposition 2.1]{MS20}, the authors assume that there already exists ``smooth solution $(\rho_{0}, u_{0}, R_{0})$'' and construct another ``smooth solution $(\rho_{1}, u_{1}, R_{1})$'' with the desired properties. Unfortunately, such smoothness is not only in space but also in time. Specifically, the authors in \cite{MS20} define $R_{0}^{j}$ to be the $j$-th component of the vector $R_{0}$, define $a_{j}, b_{j}$ in terms of $R_{0}^{j}$ on \cite[p. 1092]{MS20} and assume an estimate of $a_{j}$ and $b_{j}$ in $C_{t,x}^{k}$-norm for an arbitrary $k \in \mathbb{N}$ (see \cite[Equation (4.16b)]{MS20}); similarly, the proof in \cite{MS18} consists of an estimate of ``$\lVert \partial_{t} \vartheta(t) \rVert_{L^{1}}$'' on \cite[p. 30]{MS18} which involves a temporal derivative of $R_{0}$, and the proof of \cite{MS19} also consists of ``$\partial_{t}R_{0,j}$'' in the definitions of ``$A_{1}^{j}$'' and ``$A_{2}^{j}$'' on \cite[p. 25]{MS19}. These authors can do so because their assumption is that $R_{0}$ is smooth in both space and time; however, if we follow the same approach, our $R$, defined by simply taking an anti-divergence operator (see Definition \ref{Definition 3.1}) in \eqref{est 30}, is only H$\ddot{o}$lder continuous in time with an exponent strictly smaller than $\frac{1}{2}$. This type of difficulty was already observed in previous attempts of probabilistic convex integration (e.g., \cite[Remark 1.2]{Y20c}). We will overcome this difficulty by mollifying $R_{0}$ and replacing $R_{0}$ in the proof of \cite{MS20} by the mollified $R_{0}$ appropriately and carefully estimating our new $a_{j}$ and $b_{j}$ (see Lemma \ref{Lemma 4.4}). While this has clear advantage in terms of differentiability, it has a major cost, that we will describe next. 

Let us describe the second major difficulty, which involves the aforementioned ``cost'' of mollifying $R_{0}$. In short, the authors in \cite{MS20} assume the existence of a smooth solution $(\rho_{0}, u_{0}, R_{0})$ and construct $(\rho_{1}, u_{1}, R_{1}) = (\rho_{0}, u_{0}, R_{0}) + \text{perturbation}$. The key idea is that $R_{0}$ is strategically embedded in this perturbation so that for any $t$ such that $R_{0}(t) \equiv 0$, it follows that $R_{1}(t)\equiv 0$ and $(\rho_{1}, u_{1})(t) \equiv (\rho_{0}, u_{0})(t)$ (see the last sentence of \cite[Proposition 2.1]{MS20}). This leads to the key fact in \cite[Theorem 1.2 (iii)]{MS20}; the authors can choose an arbitrary $\bar{\rho} \in C^{\infty}_{t,x}$ with zero mean, divergence-free vector field $\bar{u} \in C^{\infty}_{t,x}$, and construct a new solution $(\rho, u)$ such that $(\rho, u)(t) \equiv (\bar{\rho}, \bar{u})(t)$ for all $t$ such that $(\bar{\rho}, \bar{u})(t)$ satisfies the transport equation. This immediately allows them to prove the existence of a non-zero solution starting from zero initial data ``Therefore, $\rho \rvert_{t=0} \equiv 0$ and $\rho \rvert_{t=T} = \bar{\rho} \not\equiv 0$'' on \cite[p. 1079]{MS20}, and this is a common punchline in all the proofs of \cite{MS18, MS19, MS20}. The cost of mollifying $R_{0}$ and replacing $R_{0}$ appropriately in the proof of \cite{MS20}, which seems unavoidable, is the following: clearly $R_{0}(t) \equiv 0$ does not imply that the  mollified $R_{0}$ at time $t$ is equivalently zero and therefore our perturbation would not vanish at such $t$. In fact, another unique difficulty in the case of an  additive noise, in comparison to all other cases, is that a zero function is no longer a solution starting from zero initial data for either \eqref{stochastic transport} or \eqref{est 3}. Therefore, we cannot follow the approach of \cite{MS18, MS19, MS20} anyway. Hence, we must find an alternative approach to prove non-uniqueness. The complexity is that the convex integration scheme on the transport equation constructs both $\rho$ and $u$ and hence we cannot ``prescribe $u$'' or even ``know $u$ precisely even after the construction,'' and because formally $u$ is ``given'' and $\rho$ is the ``unknown,'' to prove non-uniqueness, we need to \emph{contradict} a \emph{classical} fact that is valid for ``any'' $u$. With the approach of doing so by constructing a non-zero solution starting from zero initial data out of the picture, we turn to another \emph{classical} fact that is valid for ``any'' $u$, namely the $L^{p}$-inequality which we will describe next. 
\end{remark}
 
In case $p \in [2,\infty)$, we first recall that $dB(s,x) = \sum_{j=1}^{\infty} \sqrt{\eta_{j}} e_{j}(x) d \beta_{j}(s)$, where $\{\sqrt{\eta_{j}}\}_{j\in \mathbb{N}}$ and $\{e_{j}\}_{j\in\mathbb{N}}$ respectively are eigenvalues and eigenvectors of $G$ by the property of a $GG^{\ast}$-Wiener process (e.g., \cite[Definition 4.2, Proposition 4.3]{DZ14}). Thus, because \eqref{stochastic transport} has diffusion, we can employ a standard Galerkin approximation in which we may rely on It$\hat{\mathrm{o}}$'s formula for $L^{p}$-norm (e.g., \cite[Lemma 5.1]{K10}), that is valid for only $p \in [2,\infty)$, 
to deduce 
\begin{align}
\lVert \rho(t) & \rVert_{L_{x}^{p}}^{p} = \lVert \rho(0) \rVert_{L^{p}}^{p} + \int_{0}^{t} [-p \int_{\mathbb{T}^{d}} \lvert \rho(l) \rvert^{p-2} \rho(l) (-\Delta \rho(l) + (u\cdot\nabla) \rho(l)) dx \label{est 337}\\ 
&+ \frac{1}{2} p(p-1) \int_{\mathbb{T}^{d}} \lvert \rho(l) \rvert^{p-2} \lVert \sqrt{\eta_{j}} e_{j} \rVert_{l^{2} (\mathbb{N})}^{2} dx ] dl + p \sum_{j=1}^{\infty} \int_{0}^{t} \int_{\mathbb{T}^{d}} \lvert \rho(l) \rvert^{p-2} \rho(l) \sqrt{\eta_{j}} e_{j} dx d \beta_{j}(l). \nonumber 
\end{align} 
In case $p = 2$, this implies
\begin{equation}\label{est 335}
\mathbb{E}^{\mathbb{P}}[\lVert \rho(t) \rVert_{L_{x}^{2}}^{2}] \leq \lVert \rho(0) \rVert_{L^{2}}^{2} + t Tr (GG^{\ast}).
\end{equation} 
In case $p \in (2,\infty)$, we can use the well-known fact that $-p\int_{\mathbb{T}^{d}} \lvert \rho(l) \rvert^{p-2} \rho(l) (-\Delta \rho(l) + (u\cdot\nabla) \rho(l) d \leq 0$ for all $l \in [0,t]$, apply Young's inequality and Gronwall's inequality on \eqref{est 337} to deduce
\begin{equation}\label{est 228}
\mathbb{E}^{\mathbb{P}} [ \lVert \rho(t) \rVert_{L_{x}^{p}}^{p}] \leq e^{t} [ \lVert \rho(0) \rVert_{L^{p}}^{p} + C(p, Tr((-\Delta)^{\frac{d}{2} + 2 \varsigma}GG^{\ast} ))]
\end{equation} 
where 
\begin{equation}\label{est 33}
C(p, Tr((-\Delta)^{\frac{d}{2} + 2 \varsigma}GG^{\ast})) \triangleq \frac{2}{p} (\frac{p-2}{p})^{\frac{p-2}{2}} [ \frac{1}{2} p(p-2) C_{S}^{1} Tr ((-\Delta)^{\frac{d}{2} + 2 \varsigma} GG^{\ast} )]^{\frac{p}{2}}
\end{equation} 
with $C_{S}^{1}$ being the Sobolev constant for $\dot{H}^{\frac{d}{2}+ 2 \varsigma} (\mathbb{T}^{d}) \hookrightarrow L^{\infty}(\mathbb{T}^{d})$ for mean-zero functions. Application of martingale representation theorem (e.g., \cite[Theorem 8.2]{DZ14}) deduces the existence of an analytically weak solution $\rho \in L_{\omega}^{q} L_{t}^{\infty} L_{x}^{p}$ for all $q \in [1,\infty)$ that preserves the bounds \eqref{est 335} and \eqref{est 228} in case $p = 2, p \in (2,\infty)$, respectively. In case $p \in (1,2)$, standard It$\hat{\mathrm{o}}$'s formula is not available (e.g., \cite{K10} which is only for $p \geq 2$). Nonetheless, we can assume \eqref{est 31} so that $z$ that solves \eqref{stochastic heat} has regularity of $L_{\omega}^{\infty} L_{t}^{\infty} W_{x}^{1,\infty}$ due to \eqref{est 32} up to a stopping time $T_{L}$ in \eqref{est 54}, we can apply Lemma \ref{Lemma 8.1} to obtain the existence of an analytically weak solution $\theta \in L_{\omega}^{\infty} L_{t}^{\infty} L_{x}^{p}$ to \eqref{est 3} in case $p \in (1,2]$. Here, we can apply Lemma \ref{Lemma 8.1} precisely only for $p \in (1,2]$ because we must consider $-(u\cdot\nabla) z$ in \eqref{est 3} as the external force $f$ in \eqref{est 227}; even though $\nabla z \in L_{\omega, t, x}^{\infty}$ by our assumption, \eqref{est 13} still requires $u \in L^{1}_{t}L^{p}_{x}$ so that $-(u\cdot\nabla) z \in L_{t}^{1}L_{x}^{p}$ and this is satisfied only if $p \in (1,2]$ so that $p' \in [2,\infty)$. Therefore, we can rely on the bounds \eqref{est 11} and \eqref{est 32} to deduce for all $t \in [0, T_{L}]$ (see \eqref{est 54}) 
\begin{equation}\label{est 336}
\lVert \rho(t) \rVert_{L_{x}^{p}} \leq \lVert \theta(t) \rVert_{L_{x}^{p}} + \lVert z(t) \rVert_{L_{x}^{\infty}} \leq \lVert \rho^{\text{in}} \rVert_{L^{p}} + L^{\frac{1}{4}} (\int_{0}^{t} \lVert u(s) \rVert_{L_{x}^{p}} ds+ 1). 
\end{equation} 
It also follows that $\rho = \theta + z$ is an analytically weak solution to \eqref{stochastic transport} forced by the additive noise. Therefore, we aim to construct a solution $\rho$ such that for a fixed $T > 0$, on a set $\{T_{L} \geq T\}$  
\begin{equation}
\lVert \rho(T) \rVert_{L_{x}^{p}} > 
\begin{cases}
\lVert \rho^{\text{in}} \rVert_{L^{p}} + L^{\frac{1}{4}} ( \int_{0}^{T} \lVert u(s) \rVert_{L_{x}^{p}} ds + 1) & \text{ if } p \in (1,2), \\
\lVert \rho^{\text{in}} \rVert_{L^{2}} + \sqrt{T Tr (GG^{\ast})} & \text{ if } p = 2, \\
e^{\frac{T}{p}} [ \lVert \rho^{\text{in}} \rVert_{L^{p}} + C(p, Tr((-\Delta)^{\frac{d}{2} + 2 \varsigma} GG^{\ast} ))^{\frac{1}{p}}] & \text{ if } p \in (2,\infty) 
\end{cases}
\end{equation} 
(see \eqref{est 53}). Let us now present our main result in the case of an additive noise.
\begin{theorem}\label{Theorem 2.2} 
(Additive noise) Suppose that $d \geq 3$, $B$ is a $GG^{\ast}$-Wiener process, and  
\begin{equation}\label{est 31}
Tr ((-\Delta)^{\frac{d}{2} + 2 \varsigma} GG^{\ast}) < \infty \hspace{1mm} \text{ for some } \hspace{1mm} \varsigma > 0. 
\end{equation} 
Given $T > 0$, $K > 1$, and $\kappa \in (0,1)$, there exists a $\mathbb{P}$-a.s. strictly positive stopping time $T_{L}$ such that 
\begin{equation}\label{est 140}
\mathbb{P} ( \{ T_{L} \geq T \}) > \kappa 
\end{equation}
and the following is additionally satisfied. Let $p \in (1,\infty), \tilde{p} \in [1,\infty)$ such that \eqref{est 8} holds. Then there exist an $(\mathcal{F}_{t})_{t \geq 0}$-adapted process $u$ that is divergence-free such that 
\begin{equation}\label{est 393}
u \in L^{\infty} (\Omega; C([0,T_{L}]; L^{p'} (\mathbb{T}^{d}))) \cap L^{\infty} (\Omega; C([0,T_{L}]; W^{1, \tilde{p}}(\mathbb{T}^{d}))), 
\end{equation} 
an $(\mathcal{F}_{t})_{t\geq 0}$-adapted process 
\begin{equation}\label{est 363}
\rho \in L^{\infty} (\Omega; C([0,T_{L}]; L^{p} (\mathbb{T}^{d}))),  
\end{equation}
and $\rho^{\text{in}} \in L^{p}(\mathbb{T}^{d})$ that is deterministic such that $\rho$ solves the corresponding \eqref{stochastic transport} forced by additive noise $B$ as follows: for every test function $\varphi \in C_{0}^{\infty} (\mathbb{T}^{d})$, the process $\int_{\mathbb{T}^{d}} \rho(t,x) \varphi(x) dx$ has a continuous modification which is a $(\mathcal{F}_{t})_{t\geq 0}$-semimartingale and satisfies 
\begin{align}
\int_{\mathbb{T}^d} \rho(t,x)\,\varphi(x)\,dx &= \int_{\mathbb{T}^d} \rho^{\text{in}}  (x)\,\varphi(x)\,dx + \int_0^t \int_{\mathbb{T}^d} \rho(s,x)\, u(s,x)\, \cdot \nabla \varphi(x)\,dx\, ds \nonumber \\
&\quad + \int_0^t \int_{\mathbb{T}^d} \rho(s,x)\,\Delta \varphi(x)\,dx\,ds + \int_{\mathbb{T}^d} \int_0^t \varphi(x)\,dB(s,x)\,dx \label{solution add}
\end{align}
$\mathbb{P}$-a.s. and for all $t \in [0, T_{L}]$. Moreover, on the set $\{T_{L} \geq T\}$, 
\begin{equation}\label{est 53} 
\lVert \rho(T) \rVert_{L_{x}^{p}} > 
\begin{cases}
K[\lVert \rho^{\text{in}} \rVert_{L^{p}} + L^{\frac{1}{4}} ( \int_{0}^{T} \lVert u \rVert_{L_{x}^{p}} ds + 1)] & \text{ if } p \in (1,2), \\
K[\lVert \rho^{\text{in}} \rVert_{L^{2}} + \sqrt{T Tr (GG^{\ast})}] & \text{ if } p = 2, \\
Ke^{\frac{T}{p}} [ \lVert \rho^{\text{in}} \rVert_{L^{p}} + C(p, Tr((-\Delta)^{\frac{d}{2} + 2 \varsigma} GG^{\ast} ))^{\frac{1}{p}}] & \text{ if } p \in (2,\infty). 
\end{cases}
\end{equation} 
\end{theorem}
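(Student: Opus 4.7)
The plan is to pass to the random PDE \eqref{est 3} satisfied by $\theta \triangleq \rho - z$, where $z$ solves the stochastic heat equation \eqref{stochastic heat}. Assumption \eqref{est 31}, combined with the Sobolev embedding $\dot{H}^{d/2 + 2\varsigma} \hookrightarrow W^{1,\infty}$, gives $z \in L^{r}(\Omega; C([0,T]; W^{1,\infty}(\mathbb{T}^d)))$ for every $r \in [1,\infty)$, together with H\"older regularity in time. First I would define, for a small $\delta > 0$,
\begin{equation*}
T_{L} \triangleq T \wedge \inf\bigl\{t \geq 0 : \|z(t)\|_{W^{1,\infty}_x} + \|z\|_{C^{1/2-\delta}([0,t]; L^{\infty}_x)} \geq L^{1/4}\bigr\},
\end{equation*}
which is $\mathbb{P}$-a.s. strictly positive since $z(0) \equiv 0$. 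Standard tail estimates and Chebyshev's inequality then give $\mathbb{P}(\{T_L \geq T\}) > \kappa$ for $L = L(T, \kappa, GG^{\ast})$ chosen sufficiently large, and on $[0,T_L]$ the pointwise bound used in \eqref{est 336} holds; all constants below are allowed to depend on $L$.

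Next I would carry out a Nash-type convex integration iteration on the transport-diffusion-defect system \eqref{est 29}, producing a sequence $(\theta_q, u_q, R_q)_{q \geq 0}$ with $R_q \to 0$ in $C_t L^{1}_{x}$ and $(\theta_q, u_q) \to (\theta, u)$ in $C_t L^p_x \times (C_t L^{p'}_x \cap C_t W^{1,\tilde p}_x)$, built from the intermittent Mikado flows of \cite{MS20}. Two modifications of the deterministic theory are needed. First, because $z$ is only H\"older-$(1/2-\delta)$ in time, so is $R_q$, and the amplitudes $a_j, b_j$ defined on \cite[p.~1092]{MS20} require temporal derivatives that do not exist. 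Following the strategy flagged in Remark \ref{Remark 2.3}, I would mollify $R_q$ in time at a scale $\ell_q \to 0$ and prove a replacement for \cite[Lemma 4.4]{MS20} controlling $\|a_j\|_{C^k_{t,x}} + \|b_j\|_{C^k_{t,x}}$ in terms of the mollified defect and inverse powers of $\ell_q$; the mollification scale $\ell_q$, the Mikado frequency $\lambda_q$, the concentration exponent, and the amplitude scale must be tuned simultaneously so that the mollification error is absorbed at each step and the desired convergence and regularity in \eqref{est 393}--\eqref{est 363} are obtained. Second, because $z$ is $(\mathcal{F}_t)$-adapted and each step of the iteration is a pathwise deterministic operation on $(u_{q-1}, \theta_{q-1}, R_{q-1}, z)$, starting from a deterministic seed $(\theta_0, u_0)$, adaptedness propagates to the limit, and $\rho \triangleq \theta + z$ is adapted as required.

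Finally, because zero initial data does not produce a zero solution in the additive case, the strategy from \cite{MS18,MS19,MS20} of constructing a nontrivial solution from zero initial data is unavailable. Instead I would exploit the freedom of convex integration to engineer a large $L^p_x$ norm at time $T$: I fix $\rho^{\text{in}} \equiv 0$, hence $\theta(0) \equiv 0$, and choose the seed $(\theta_0, u_0)$ so that on $\{T_L \geq T\}$ the value $\|\theta_0(T)\|_{L^p_x}$ already exceeds $(K+1)$ times the relevant right-hand side of \eqref{est 53}, while keeping $\|u_0\|_{C_t L^{p'}_x \cap C_t W^{1,\tilde p}_x}$ bounded independently; note that for $p \in (1,2]$ one has $L^{p'} \hookrightarrow L^{p}$, so $\int_0^T \|u\|_{L^p_x}\,ds$ remains controlled by $\|u\|_{C_t L^{p'}_x}$. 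The standard smallness of convex integration perturbations in $C_t L^p_x$ preserves this lower bound at $t = T$, yielding \eqref{est 53}; comparison with the classical estimates \eqref{est 335}, \eqref{est 228}, \eqref{est 336} then shows that the law of $\rho$ differs from the law of the weak solution produced via Galerkin/martingale representation or via Lemma \ref{Lemma 8.1}, giving non-uniqueness in law. The main obstacle I anticipate is the first modification above: deriving the new amplitude estimates from a mollified, only H\"older-continuous-in-time defect while simultaneously engineering the prescribed large lower bound on $\|\rho(T)\|_{L^p_x}$ demands a delicate joint choice of mollification scale and convex integration parameters, and is the heart of the argument.
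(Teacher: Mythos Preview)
Your overall architecture is correct and matches the paper: pass to the random PDE \eqref{est 3}, define a stopping time $T_L$ controlling $z$, mollify $R_q$ in time to compensate for the lack of temporal differentiability, and iterate the Modena--Sattig scheme with adaptedness propagated pathwise. The difficulty you flag (new amplitude estimates for $a_j,b_j$ built from a mollified defect) is exactly Lemma~\ref{Lemma 4.4}, and your parameter-tuning remark is the content of \eqref{est 148}--\eqref{est 152}.

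There is, however, a genuine gap in your endgame. You propose to \emph{prescribe} $\rho^{\text{in}}\equiv 0$ and engineer a large $\lVert\theta_0(T)\rVert_{L^p_x}$. But once $R_q$ is mollified, the perturbations no longer vanish at times where $R_q$ does (this is precisely the cost described in Remarks~\ref{Remark 2.3} and~\ref{Remark 4.1}); in particular you cannot force $\theta(0)=0$ even if the seed satisfies $\theta_0(0)=0$. The paper does \emph{not} prescribe $\rho^{\text{in}}$: it takes the seed $\theta_0(t,x)=M_0(t)^{1/p}(x_d-\tfrac12)$ with $M_0(t)=L^4e^{4Lt}$, lets $\rho^{\text{in}}=\rho\rvert_{t=0}$ be whatever the limit produces, and proves via \eqref{est 48}--\eqref{est 52} that all perturbations are small \emph{relative to} $M_0(t)^{1/p}$. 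The point is that $\lVert\rho^{\text{in}}\rVert_{L^p}$ and $\lVert u\rVert_{C_tL^{p'}_x}$ grow at most like a fixed power of $M_0(0)=L^4$, whereas $\lVert\theta(T)\rVert_{L^p_x}$ grows like $M_0(T)^{1/p}=L^{4/p}e^{4LT/p}$; the exponential-in-$L$ factor then beats every term on the right of \eqref{est 53} once $L$ is large (this is the content of \eqref{est 50}--\eqref{est 51}). Your sketch lacks this growing amplitude $M_0(t)$, without which there is no mechanism to make $\lVert\rho(T)\rVert_{L^p_x}$ dominate an initial datum that you cannot control.
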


\begin{remark}\label{Remark in additive case}
Theorem \ref{Theorem 2.2} is an analogue of \cite[Theorem 1.1]{HZZ19} and already represents non-uniqueness in law for solutions defined on the random time interval $[0,T_{L}]$ (cf. \eqref{est 335}, \eqref{est 228}, \eqref{est 33}, \eqref{est 336}, and \eqref{est 53}). On the other hand, \cite[Theorems 1.2]{HZZ19} is an extension that shows non-uniqueness in law over a deterministic time interval similarly to the last sentence of our Theorems \ref{Theorem 2.1}, \ref{Theorem 2.4}-\ref{Theorem 2.5}. Such an extension from Theorem \ref{Theorem 2.2} following the proof of \cite[Theorem 1.2]{HZZ19} would require proving the existence of a solution to a martingale problem in the spirit of \cite{SV97} (see \cite[Definitions 3.1-3.2, Theorem 3.1]{HZZ19}) and thereafter its various properties following \cite[Section 3]{HZZ19}. A systematic approach for the former task is given in \cite[Theorem 4.6]{GRZ09}; however, it seems to be only for initial data in a Hilbert space limiting this possibility only to the special case $p = 2$. Other differences from \cite[Theorem 1.1]{HZZ19} and Theorem \ref{Theorem 2.2} include the fact that the solution to the 3D Navier-Stokes equations constructed via convex integration actually has Sobolev regularity $H^{\gamma}(\mathbb{T}^{3})$, although for $\gamma > 0$ arbitrarily small, and it is part of the definition of a solution (see ``(M3)'' in \cite[Definitions 3.1-3.2]{HZZ19}) while $\rho$ in Theorem \ref{Theorem 2.2} has no such higher regularity, unless we work with arbitrarily strong diffusion (see \cite[Theorem 1.4]{MS20}). This creates major obstacle in applying \cite[Theorem 4.6]{GRZ09}.

One last possible approach to obtain non-uniqueness over a deterministic time interval in the case of an additive noise may be to follow the approach of \cite{MS18} in which \cite[Proposition 3.1]{MS18} is similar in spirit to \cite[Proposition 16]{BMS21}; the advantage here is that the proof of non-uniqueness does not rely on the strategy of $R_{0} (t) \equiv 0$ implying $R_{1}(t)\equiv 0$ that we described in Remark \ref{Remark 2.3} and thus will not be affected by the fact that we will have to mollify $R_{0}$. Hence, one idea will be to assign $z\rvert_{t=0} = \rho^{\text{in}}$ so that the solution to the convex integration scheme can have zero initial data similarly to \cite{HZZ21a}, utilize Mikado density and Mikado field from \cite{MS20} to obtain the desired H$\ddot{o}$lder relation \eqref{est 7} rather than \eqref{est 1}, obtain key iteration estimates similarly to \cite[Proposition 3.1]{MS18} and \cite[Proposition 16]{BMS21} up to a stopping time $T_{L}$ in hope to take the final value at this stopping time and repeat following the approach of \cite[Theorem 1.1, Corollary 1.2]{HZZ21a}. Alas, even if this works, as stated in \cite[Theorem 1.2 (c)]{MS18}, for any prescribed function $\bar{\rho}$, we can deduce a pair $(\rho, u)$ that satisfies the transport-diffusion equation forced by an additive noise such that $\rho\rvert_{t=0} = \bar{\rho}\rvert_{t=0}$ and $\rho\rvert_{t=T_{L}} = \bar{\rho}\rvert_{t=T_{L}}$. In the deterministic case, Modena and Sz$\acute{\mathrm{e}}$kelyhidi Jr. \cite{MS18} can conclude now by taking $\bar{\rho}\rvert_{t=0} \equiv 0$ and $\bar{\rho}\rvert_{t=T} \not\equiv 0$ because for any $u$, starting from $\rho\rvert_{t=0} \equiv 0$, $\rho \equiv 0$ is a solution for them; however, this is not the case when forced by an additive noise. At the time of writing this manuscript, it is not clear to us how to prove non-uniqueness in law of the stochastic transport-diffusion equation forced by additive noise over an arbitrary deterministic time interval unconditionally; nonetheless, our next result Theorem \ref{Theorem 2.3} actually shows that we have been able to overcome the aforementioned difficulty, although the temporal continuity and the $(\mathcal{F}_{t})_{t\geq 0}$-adaptedness of the vector field $u$ have been lost.  
\end{remark}

\begin{theorem}\label{Theorem 2.3}
(Additive noise) Suppose that $d \geq 3$, $B$ is a $GG^{\ast}$-Wiener process, and \eqref{est 31} holds. For any $T > 0, K > 1$, and $\kappa \in (0,1)$, there exists a $\mathbb{P}$-a.s. strictly positive stopping time $T_{L}$ such that \eqref{est 140} holds and the following is additionally satisfied. Let $p \in (1,\infty), \tilde{p} \in [1,\infty)$ such that \eqref{est 8} holds. Then there exist a process $u$ that is divergence-free such that 
\begin{equation}
u \in L^{\infty} (\Omega; L^{\infty} ([0, T]; L^{p'} (\mathbb{T}^{d}))) \cap L^{\infty} (\Omega; L^{\infty} ([0,T]; W^{1,\tilde{p}}(\mathbb{T}^{d}))), 
\end{equation} 
an $(\mathcal{F}_{t})_{t\geq 0}$-adapted process 
\begin{equation}
\rho \in C([0, T]; L^{r} (\Omega; L^{p} (\mathbb{T}^{d})))
\end{equation} 
for all $r \in [1,\infty)$ such that $\rho$ solves the corresponding \eqref{stochastic transport} forced by the additive noise $B$; i.e., for every test function $\varphi \in C_{0}^{\infty} (\mathbb{T}^{d})$, the process $\int_{\mathbb{T}^{d}} \rho(t,x) \varphi(x) dx$ has a continuous modification which is a $(\mathcal{F}_{t})_{t\geq 0}$-semimartingale and satisfies \eqref{solution add} $\mathbb{P}$-a.s. and for all $t \in [0,T]$. Moreover, on the set $\{T_{L} \geq T\}$ \eqref{est 53} holds. Consequently, non-uniqueness in law holds for \eqref{stochastic transport} forced by the additive noise $B$ on $[0,\infty)$; moreover, for all $T> 0$, non-uniqueness in law holds on $[0,T]$. 
\end{theorem}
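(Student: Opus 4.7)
My approach is to take the convex-integration pair $(\rho_{1},u_{1})$ produced by Theorem \ref{Theorem 2.2} on the random interval $[0,T_{L}]$ and extend it to the deterministic interval $[0,T]$ by gluing on a classical weak solution of the linear stochastic heat equation over $(T_{L}\wedge T,T]$. Non-uniqueness in law will then follow by contrasting this glued solution, which violates \eqref{est 53} on the set $\{T_{L}\geq T\}$ of probability at least $\kappa$, with a standard weak solution of \eqref{stochastic transport} sharing the same deterministic initial datum $\rho^{\text{in}}$ and constructed by Galerkin approximation combined with the martingale representation theorem.

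First, I would apply Theorem \ref{Theorem 2.2} with the prescribed $T,K,\kappa$ to produce the stopping time $T_{L}$ obeying \eqref{est 140}, a deterministic $\rho^{\text{in}}\in L^{p}(\mathbb{T}^{d})$, a divergence-free field $u_{1}$ with regularity \eqref{est 393}, and a density $\rho_{1}$ satisfying \eqref{est 363}, the weak formulation \eqref{solution add} on $[0,T_{L}]$, and the inequality \eqref{est 53} on $\{T_{L}\geq T\}$. I then set $u(t)=u_{1}(t)\mathbf{1}_{\{t\leq T_{L}\}}$ on $[0,T]$ and let $\tilde{\rho}$ be the unique analytically weak solution on $[T_{L}\wedge T,T]$ of the linear stochastic heat equation
\[
d\tilde{\rho}=\Delta\tilde{\rho}\,dt+dB,\qquad \tilde{\rho}(T_{L}\wedge T)=\rho_{1}(T_{L}\wedge T),
\]
which is classically well-posed in $L^{p}(\mathbb{T}^{d})$ under \eqref{est 31} via the heat semigroup and the stochastic convolution, and which belongs to $C([T_{L}\wedge T,T];L^{r}(\Omega;L^{p}))$ for every $r\in[1,\infty)$. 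Defining the glued process
\[
\rho(t)=\rho_{1}(t)\mathbf{1}_{\{t\leq T_{L}\}}+\tilde{\rho}(t)\mathbf{1}_{\{t> T_{L}\}},
\]
the contribution of the transport term $\int \rho\,u\cdot\nabla\varphi\,dx$ vanishes on $(T_{L}\wedge T,T]$ because $u\equiv 0$ there, and summing the weak identities on $[0,T_{L}]$ and $[T_{L},t]$ together with the additivity of the stochastic integral across the stopping time $T_{L}$ yields \eqref{solution add} for $(\rho,u)$ on $[0,T]$. The required regularity of $u$ follows from \eqref{est 393} and the extension by zero (which costs temporal continuity, and in general $(\mathcal{F}_{t})$-adaptedness), while $\rho\in C([0,T];L^{r}(\Omega;L^{p}))$ for every $r\in[1,\infty)$ is obtained by combining the uniform-in-$\omega$ bound \eqref{est 363} on $\rho_{1}$ with standard $L^{p}$-moment estimates for the heat piece $\tilde{\rho}$.

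Finally, for non-uniqueness in law: as outlined in the discussion preceding Theorem \ref{Theorem 2.2}, starting from the same $\rho^{\text{in}}$ one constructs by Galerkin approximation and the martingale representation theorem (for $p\in[2,\infty)$) or via the random PDE \eqref{est 3} together with Lemma \ref{Lemma 8.1} (for $p\in(1,2)$) a standard weak solution $\rho_{\mathrm{std}}$, paired with an appropriate divergence-free $u_{\mathrm{std}}$, which satisfies the $L^{p}$-inequalities \eqref{est 335}, \eqref{est 228}, or \eqref{est 336} \emph{without} the factor $K$. Since on $\{T_{L}\geq T\}$ of positive probability the glued $\rho$ exceeds that bound by the factor $K>1$, taking expectation of $\|\cdot\|_{L^{p}}^{p}$ distinguishes the laws of $\rho(T)$ and $\rho_{\mathrm{std}}(T)$, giving non-uniqueness in law on $[0,T]$ and, by restriction, on $[0,\infty)$. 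The principal technical obstacle is the compatible gluing at the random time $T_{L}$: one must ensure both that $(\rho,u)$ is a bona fide analytically weak solution in the sense of \eqref{solution add} (handled by the optional sampling theorem and the piecewise definition of $u$), and that the $r$-uniform moment continuity $\rho\in C([0,T];L^{r}(\Omega;L^{p}))$ survives concatenation, which is delicate because the $L^{\infty}(\Omega)$-bound on $\rho_{1}$ supplied by Theorem \ref{Theorem 2.2} must be traded against the $L^{r}(\Omega)$-moment estimates natural for the stochastic heat tail $\tilde{\rho}$.
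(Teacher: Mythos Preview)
Your approach is essentially the same as the paper's: take the convex-integration pair from Theorem~\ref{Theorem 2.2} on $[0,T_{L}]$, extend $u$ by zero beyond $T_{L}$, and glue on the solution of the linear stochastic heat equation $d\tilde{\rho}=\Delta\tilde{\rho}\,dt+dB$ started from $\rho_{1}(T_{L})$. The paper carries this out by writing the post-$T_{L}$ piece as $\tilde{\rho}(t)=\hat{\rho}(t)+e^{t\Delta}\rho(T_{L})$ with $\hat{\rho}$ driven by the shifted Brownian motion $\hat{B}(t)=B(t+T_{L})-B(T_{L})$, and invokes the argument of \cite{HZZ21a} for $(\mathcal{F}_{t})$-adaptedness of the glued density; your sketch is an equivalent formulation.

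One point to tighten: in your non-uniqueness paragraph you speak of a standard solution $\rho_{\mathrm{std}}$ ``paired with an appropriate divergence-free $u_{\mathrm{std}}$.'' For the conclusion to mean anything, the comparison solution must be built with the \emph{same} vector field $u$ you just constructed (extended by zero after $T_{L}$), not some other $u_{\mathrm{std}}$: non-uniqueness here is non-uniqueness of $\rho$ for a fixed $u$. This matters concretely in the case $p\in(1,2)$, where the right-hand side of \eqref{est 53} contains $\int_{0}^{T}\lVert u\rVert_{L_{x}^{p}}\,ds$ for that specific $u$, so the Galerkin/Lemma~\ref{Lemma 8.1} solution you compare against must be run with this $u$ in order for its $L^{p}$-bound to match the quantity that the convex-integration solution overshoots by the factor $K$.
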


At last, let us discuss the case of a transport noise in Stratonovich's interpretation: $G(\rho) dB = - \sum_{i=1}^{d} \frac{\partial}{\partial x_{i}} \rho \circ dB_{i}$. \begin{remark}
For this case of a transport noise in Stratonovich's interpretation, there is an advantage that a zero function is a solution, in contrast to the case of additive noise. Nonetheless, there is a familiar difficulty and a new difficulty. A familiar difficulty, which is similar to the presence of $z$ in \eqref{est 29} as described in Remark \ref{Remark 2.3}, is that due to the presence of $B(t)$ in \eqref{est 6}, upon employing a Nash-type convex integration scheme similarly to \eqref{est 29}, $R(t,x)$ will not be differentiable in time. Therefore, the same difficulty explained in Remark \ref{Remark 2.3} applies and we must mollify $R$. A new difficulty is that while the case of transport noise in Stratonovich's interpretation does not have an external force ``$-\text{div} (u(t,x) z(t,x))$'' on the r.h.s. of \eqref{est 6}, we now have a ``mismatch'' of variables within the nonlinear term, specifically $u(t,x+B(t)) \cdot \nabla \theta(t,x)$. As we will describe subsequently, this creates a new difficulty in the convex integration scheme (see Remark \ref{Remark on difficulty of transport}).
\end{remark}
Existence of an analytically weak solution to \eqref{stochastic transport} forced by a transport noise in Stratonovich's interpretation with $u \in C_{t}L_{x}^{p'}$ such that $\nabla\cdot u = 0$, starting from $\rho^{\text{in}} \in L_{x}^{p}$ for $p \in (1,\infty)$, has been proven in variations; e.g., \cite[Lemma 2.2]{CO13} in case of zero diffusion (also \cite[Theorem 15]{FGP10} in case $p = \infty$). In short, we can consider the random PDE \eqref{est 6} in which $u(t, x+ B(t)) \in L_{t}^{1}L_{x}^{p'}$ due to $u \in L_{t}^{1}L_{x}^{p'}$, apply Lemma \ref{Lemma 8.1} (or its slight variation by considering test functions $\phi \in C_{x}^{\infty}$) and apply It$\hat{\mathrm{o}}$-Wentzell-Kunita formula (e.g., \cite[Theorem 3.3.2 on p. 93]{K90}, also \cite{K82, K84}) to conclude (see the proofs of \cite[Lemma 2.2]{CO13} and \cite[Theorem 15]{FGP10}). Concerning the $L^{p}$-inequality, we see from \eqref{est 11} that for all $p \in (1,\infty)$, 
\begin{align}
\left( \int_{\mathbb{T}^{d}} \lvert \rho(t,x) \rvert^{p} dx \right)^{\frac{1}{p}} = \lVert \theta(t) \rVert_{L_{x}^{p}} \overset{\eqref{est 11}}{\leq} \lVert \theta^{\text{in}} \rVert_{L_{x}^{p}} = \left( \int_{\mathbb{T}^{d}} \lvert \rho(0, x + B(0)) \rvert^{p} dx \right)^{\frac{1}{p}} = \lVert \rho^{\text{in}} \rVert_{L_{x}^{p}}.
\end{align} 
Therefore we aim to construct solutions such that $\lVert \rho(T) \rVert_{L^{p}} > K \lVert \rho^{\text{in}} \rVert_{L^{p}}$ for $K > 1$. Because $B(t)$ only appears ``within'' the vector field $u(t,x+B(t))$, we are able to obtain the following result over any prescribed deterministic interval $[0,T]$. 

\begin{theorem}\label{Theorem 2.4} 
(Transport noise in Stratonovich's interpretation) Suppose that $d \geq 3$ and $B(t) = (B_{1}, \hdots, B_{d})(t)$ is a standard Brownian motion on $(\Omega, \mathcal{F}, \mathbb{P})$. Let $T > 0$, $K > 1$, $p \in (1,\infty), \tilde{p} \in [1,\infty)$ such that \eqref{est 8} holds. Then there exist an $(\mathcal{F}_{t})_{t \geq 0}$-adapted process $u$ that is divergence-free such that 
\begin{equation}
u \in L^{\infty} (\Omega; C([0, T]; L^{p'} (\mathbb{T}^{d}))) \cap  L^{\infty} (\Omega; C([0, T]; W^{1, \tilde{p}} (\mathbb{T}^{d}))),
\end{equation} 
an $(\mathcal{F}_{t})_{t\geq 0}$-adapted process 
\begin{equation}
\rho \in L^{\infty} (\Omega; C([0, T]; L^{p} (\mathbb{T}^{d}))),
\end{equation}
and $\rho^{\text{in}} \in L^{p}(\mathbb{T}^{d})$ that is deterministic such that $\rho$ solves the corresponding \eqref{stochastic transport} forced by the transport noise in Stratonovich's interpretation as follows: for every test function $\varphi \in C_{0}^{\infty} (\mathbb{T}^{d})$, the process $\int_{\mathbb{T}^{d}} \rho(t,x) \varphi(x) dx$ has a continuous modification which is a $(\mathcal{F}_{t})_{t\geq 0}$-semimartingale and satisfies
\begin{align}
\int_{\mathbb{T}^d} \rho(t,x)\,\varphi(x)\,dx &= \int_{\mathbb{T}^d} \rho^{\text{in}}  (x)\,\varphi(x)\,dx + \int_0^t \int_{\mathbb{T}^d} \rho(s,x)\, u(s,x)\, \cdot \nabla \varphi(x)\,dx\, ds \nonumber \\
&\quad + \int_0^t \int_{\mathbb{T}^d} \rho(s,x)\,\Delta \varphi(x)\,dx\,ds - \sum_{i=1}^{d}\int_0^t \int_{\mathbb{T}^d}  \varphi(x) \frac{\partial}{\partial x_{i}} \rho(s,x)\,dx\,\circ dB_{i}(s,x) \label{solution transport}
\end{align}
$\mathbb{P}$-a.s. and for all $t \in [0,T]$. Moreover, $\rho$ satisfies 
\begin{equation}\label{est 355}
\lVert \rho(T) \rVert_{L_{x}^{p}} > K \lVert \rho^{\text{in}} \rVert_{L^{p}}.
\end{equation}
Consequently, non-uniqueness in law holds for \eqref{stochastic transport} forced by the transport noise in Stratonovich's interpretation on $[0,\infty)$; moreover, for all $T> 0$, non-uniqueness in law holds on $[0,T]$. 
\end{theorem}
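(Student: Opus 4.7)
The plan is to transform the SPDE into the random PDE \eqref{est 6} via $\theta(t,x) \triangleq \rho(t, x+B(t))$, run a probabilistic convex integration scheme on that equation, and transform back by the It\^o--Wentzell--Kunita formula. Introducing the auxiliary divergence-free vector field $v(t,x) \triangleq u(t, x+B(t))$, which is $(\mathcal{F}_t)_{t\geq 0}$-adapted exactly when $u$ is, the random PDE reads $\partial_t \theta + \text{div}(v\theta) = \Delta\theta$ with $\nabla\cdot v = 0$. Since spatial translation on $\mathbb{T}^d$ is an isometry on $L^{p'}$ and $W^{1,\tilde p}$, the regularity and adaptedness of $u$ will follow automatically from those of $v$ via $u(t,x) = v(t, x-B(t))$. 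Working in $(\theta, v)$-variables rather than directly with $(\rho, u)$ is the key device for sidestepping the variable-mismatch difficulty flagged just before the theorem: the convex-integration building blocks now only need to cancel $v\cdot\nabla\theta$, a product whose two factors share the same spatial argument, so the standard Mikado self-interaction from \cite{MS20} applies without modification.

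I would then iterate on $(\theta_q, v_q, R_q)$ solving the transport--diffusion--defect system
\begin{equation*}
\partial_t \theta_q + \text{div}(v_q \theta_q) - \Delta\theta_q = -\text{div}\, R_q, \qquad \nabla\cdot v_q = 0,
\end{equation*}
closely following Modena--Sattig \cite{MS20} with Mikado density/field pairs calibrated to \eqref{est 8}, localizing perturbations away from $t=0$ so that the prescribed deterministic initial datum $\theta^{\text{in}} = \rho^{\text{in}} \in L^p(\mathbb{T}^d)$ is preserved throughout the iteration, and driving $\lVert \theta_q(T) \rVert_{L_{x}^{p}}$ to exceed $K\lVert \rho^{\text{in}} \rVert_{L^p}$ in the limit. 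I expect the main obstacle to be identical to the one identified in Remark \ref{Remark 2.3}: because $v_q$ inherits the temporal irregularity of $B$, the defect $R_q$ will be only H\"older in time with exponent strictly below $\tfrac12$, whereas the estimates of \cite[(4.16b)]{MS20} assume $C^k_{t,x}$-smoothness. I would address this by mollifying $R_q$ in time, rebuilding the perturbation amplitudes $a_j, b_j$ out of the mollified defect, and absorbing the commutator error into $R_{q+1}$ through the antidivergence of Definition \ref{Definition 3.1}, with the mollification scale tuned against the Mikado frequency so that the $L_{t}^{1} L_{x}^{1}$-decay of the defect is preserved.

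Passing to the limit yields $\theta \in C([0,T]; L^p(\mathbb{T}^d))$ and $v \in C([0,T]; L^{p'}(\mathbb{T}^d)) \cap C([0,T]; W^{1,\tilde p}(\mathbb{T}^d))$, both $(\mathcal{F}_t)$-adapted and solving the random PDE distributionally. Setting $\rho(t,x) \triangleq \theta(t, x-B(t))$ and $u(t,x) \triangleq v(t, x-B(t))$, the It\^o--Wentzell--Kunita formula (e.g.\ \cite[Theorem 3.3.2]{K90}) converts the random PDE back into the analytically weak form \eqref{solution transport}, and translation invariance of the $L^p$-norm furnishes
\begin{equation*}
\lVert \rho(T) \rVert_{L_{x}^{p}} = \lVert \theta(T) \rVert_{L_{x}^{p}} > K \lVert \rho^{\text{in}} \rVert_{L^p},
\end{equation*}
which is \eqref{est 355}. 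Non-uniqueness in law is deduced by comparison with a second analytically weak solution $\tilde\rho$ of \eqref{stochastic transport} starting from the same $\rho^{\text{in}}$ and constructed as in the paragraph preceding the theorem through Lemma \ref{Lemma 8.1} combined with It\^o--Wentzell--Kunita; such a $\tilde\rho$ satisfies the classical bound $\lVert \tilde\rho(T) \rVert_{L_{x}^{p}} \leq \lVert \rho^{\text{in}} \rVert_{L^p}$, so the two solution pairs have distinct one-time marginals and hence distinct laws on $[0,T]$, with the extension to $[0,\infty)$ being immediate.
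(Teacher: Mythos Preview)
Your strategy---pass to $v(t,x) \triangleq u(t,x+B(t))$, iterate Modena--Sattig on the defect system $\partial_t\theta_q + \text{div}(v_q\theta_q) - \Delta\theta_q = -\text{div}R_q$, then shift back via $u(t,x)=v(t,x-B(t))$ and It\^o--Wentzell--Kunita---is exactly the paper's approach: Proposition~\ref{Proposition 7.3} constructs $u_1(t,x) = u_0(t,x) + w(t,x-B(t)) + w_c(t,x-B(t))$, which is nothing but your $v$-perturbation composed with the back-shift (cf.\ \eqref{est 188} and Remark~\ref{Remark on difficulty of transport}).

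Two aspects of your write-up are over-engineered. First, once you pass to $v$-coordinates the Brownian motion \emph{vanishes from the defect equation}; starting from $v_0\equiv 0$ and deterministic Mikado building blocks, every $(\theta_q,v_q,R_q)$ is deterministic and $C^\infty_{t,x}$, so $v_q$ does \emph{not} ``inherit the temporal irregularity of $B$'' and your mollification step is unnecessary. (The paper mollifies only because Proposition~\ref{Proposition 7.3} is stated for general $u_0\in C_t^{1/2-2\varpi}C_x^\infty$; in the actual application $u_0\equiv 0$ and the mollification could be dropped there too.) Second, the theorem asserts \emph{existence} of $\rho^{\text{in}}$, not that an arbitrary one can be prescribed; accordingly the paper does not localize away from $t=0$ but simply takes $\theta_0(t,x)=Kt(x_d-\tfrac12)$, lets $\rho^{\text{in}}=\theta(0,\cdot)$ emerge as the Cauchy limit, and obtains \eqref{est 355} via the quantitative bound $\lVert \theta-\theta_0\rVert_{C_tL^p_x} < T/[4(p+1)^{1/p}]$ coming from the free parameter $\eta$ in \eqref{est 173} (see \eqref{est 169}--\eqref{est 182}). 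Your time-cutoff would add an $R^{\text{cutoff}}$-type error (cf.\ \eqref{est 291}) for no benefit in this theorem.
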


We can also prove non-uniqueness in law for the case of a transport noise as follows:
\begin{theorem}\label{Theorem 2.5}
(Transport noise in Stratonovich's interpretation) Suppose that $d \geq 3$ and $B(t) = (B_{1}, \hdots, B_{d})(t)$ is a standard Brownian motion on $(\Omega, \mathcal{F}, \mathbb{P})$. Let $T > 0$, $\xi \in (0,T)$, $p \in (1,\infty), \tilde{p} \in [1,\infty)$ such that \eqref{est 8} holds, and $\upsilon \in (1,p)$. Then there exist infinitely many pairs $(\rho, u)$ such that $u \rvert_{t=0} \equiv 0, \rho \rvert_{t=0} \equiv 0$ $\mathbb{P}$-a.s., 
\begin{subequations}
\begin{align}
& u\in L^{\infty} (\Omega; C((0, T]; L^{p'} (\mathbb{T}^{d}))) \cap L^{\infty} (\Omega; C([0, T]; W^{1,\tilde{p}}(\mathbb{T}^{d}))), \\
& \rho \in L^{\infty} (\Omega; C((0,T]; L^{p} (\mathbb{T}^{d}))) \cap L^{\infty} (\Omega; C([0,T]; L^{\upsilon} (\mathbb{T}^{d}))), 
\end{align}
\end{subequations} 
both $\rho$ and $u$ are $(\mathcal{F}_{t})_{t\geq 0}$-adapted, $\rho$ satisfies the corresponding \eqref{stochastic transport} forced by transport noise in Stratonovich's interpretation as follows: for every test function $\varphi \in C_{0}^{\infty} (\mathbb{T}^{d})$, the process $\int_{\mathbb{T}^{d}} \rho(t,x) \varphi(x) dx$ has a continuous modification which is a $(\mathcal{F}_{t})_{t\geq 0}$-semimartingale and satisfies \eqref{solution transport} $\mathbb{P}$-a.s. and for all $t \in [0,T]$. Moreover, for all the pairs $(\rho, u)$ and all $t \in (\xi,T]$, 
\begin{equation}\label{est 236} 
\int_{\mathbb{T}^{d}} \rho(t, x) u(t,x) dx  
\end{equation}  
are distinct $\mathbb{P}$-a.s. Consequently, non-uniqueness in law holds for \eqref{stochastic transport} forced by the transport noise in Stratonovich's interpretation on $[0,\infty)$; moreover, for all $T > 0$, non-uniqueness in law holds on $[0,T]$. 
\end{theorem}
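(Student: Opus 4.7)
The plan is to first reduce the stochastic problem to a deterministic-looking random PDE, then carry out a convex integration scheme on that PDE parametrized to yield infinitely many distinct solutions, and finally transform back. By the Kunita-It\^o-Wentzell-type change of variables already used in \eqref{est 6}, any pair $(\theta, v)$ of $(\mathcal{F}_{t})_{t \geq 0}$-adapted processes with $\nabla \cdot v = 0$ that solves
\begin{equation*}
\partial_{t} \theta + \text{div}(v \theta) - \Delta \theta = 0, \qquad \theta(0, \cdot) \equiv 0,
\end{equation*}
in the analytically weak sense yields a pair $\rho(t,x) \triangleq \theta(t, x - B(t))$, $u(t,x) \triangleq v(t, x - B(t))$ satisfying \eqref{solution transport} with $\rho(0,\cdot) \equiv 0$, $u(0,\cdot) \equiv 0$. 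Moreover, translation invariance of Lebesgue measure gives $\int_{\mathbb{T}^d} \rho(t,x) u(t,x)\,dx = \int_{\mathbb{T}^d} \theta(t,y) v(t,y)\,dy$, so distinctness at the level of $(\theta, v)$ descends to distinctness at the level of $(\rho, u)$. Hence the problem reduces to constructing, for each $n \in \mathbb{N}$, a divergence-free pair $(\theta^{(n)}, v^{(n)})$ with the prescribed spatial regularity such that the energy integrals $t \mapsto \int \theta^{(n)}(t) v^{(n)}(t)\,dx$ take distinct values on $(\xi, T]$.

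To carry out the construction I would adapt the convex integration scheme of Modena-Sattig \cite{MS20}, iterating over $(\theta_{q}, v_{q}, R_{q})$ solving the transport-diffusion-defect equation
\begin{equation*}
\partial_{t} \theta_{q} + \text{div}(v_{q} \theta_{q}) - \Delta \theta_{q} = -\text{div} R_{q},
\end{equation*}
with $\nabla \cdot v_{q} = 0$ and $\lVert R_{q} \rVert \to 0$ at a geometric rate along a frequency-amplitude sequence. Following the strategy indicated in Remark \ref{Remark in additive case} and the discussion after \eqref{est 29}, the defect $R_{q}$ is defined via an antidivergence operator (Definition \ref{Definition 3.1}), and the perturbation at stage $q+1$ is built from Mikado densities and Mikado fields encoding $R_{q}$ through amplitudes $a_{j}, b_{j}$ analogous to those on \cite[p.~1092]{MS20}. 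Since there is no external forcing in \eqref{est 6} (in contrast to the additive noise case), a zero function is a solution at every iteration, so I can initialize with $(\theta_{0}, v_{0}, R_{0}) \equiv 0$. To enforce $\theta \rvert_{t=0} \equiv 0$ and $v \rvert_{t=0} \equiv 0$, I multiply every perturbation by a smooth temporal cutoff $\chi_{q}(t)$ supported in $[2^{-q} \eta, T]$ with $\eta > 0$ small and $\chi_{q} \to \chi$ monotonically; this forces the limits $\theta, v$ to vanish in the weaker $L^{\upsilon}$ and $W^{1,\tilde{p}}$ norms at $t = 0$, while the stronger $L^{p}$ and $L^{p'}$ norms need only be continuous on $(0, T]$.

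To produce infinitely many solutions with distinct values of the integral \eqref{est 236}, I parametrize the scheme by a sequence of target profiles $e^{(n)} \in C^{\infty}([0,T])$ with $e^{(n)}(t) = 0$ for $t \leq \xi/2$ and all $e^{(n)}$ pairwise distinct on $(\xi, T]$. At each stage $q+1$, the low-frequency interaction of the Mikado density and Mikado field contributes a controllable term to $\int \theta_{q+1} v_{q+1}\,dx$, and by adjusting the amplitude prefactor of the perturbation (as in the energy-pumping arguments underlying \cite[Theorem 1.1]{MS20}) I can steer this integral towards $e^{(n)}(t)$ up to an error that decays with $q$; passing to the limit yields $\int \theta^{(n)} v^{(n)}\,dx = e^{(n)}$, giving the required distinctness. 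Adaptedness is automatic since the scheme uses $B(s)$ only for $s \le t$, and the regularity statements follow from the usual summability of geometric series in the Modena-Sattig scheme under the H\"older relation \eqref{est 8}.

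The main obstacle, as anticipated in Remark \ref{Remark 2.3}, is the temporal regularity of the defect: because the velocity appearing in \eqref{est 6} contains $B(t)$, the defect $R_{q}$ constructed by antidivergence is only H\"older $\alpha < 1/2$ in time, whereas the Modena-Sattig perturbations require $C^{k}_{t,x}$ control of $a_{j}, b_{j}$. I would overcome this, in parallel with Remark \ref{Remark 2.3}, by replacing $R_{q}$ with a temporally mollified $\tilde{R}_{q}$ at scale matched to the frequency of the perturbation, and then bookkeeping the extra commutator error in the defect estimate (analogous to Lemma \ref{Lemma 4.4}); the delicate point is that because a zero solution is admissible here, the ``$R_{0}(t) \equiv 0 \Rightarrow R_{1}(t) \equiv 0$'' mechanism is not needed to get $\theta \rvert_{t=0} \equiv 0$, so the cost of mollification can be absorbed entirely in the geometric convergence of the iteration without disturbing the initial-data condition. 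Verifying that the resulting $\rho, u$ satisfy \eqref{solution transport} is then a direct application of the It\^o-Wentzell-Kunita formula (e.g., \cite[Theorem 3.3.2]{K90}) to $\rho = \theta(\cdot, \cdot - B(\cdot))$, completing the proof of both the distinctness \eqref{est 236} and non-uniqueness in law.
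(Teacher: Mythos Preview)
Your reduction to a deterministic equation for $(\theta, v)$ with $v(t,x) = u(t, x+B(t))$ is correct and is exactly what the paper implements: Proposition~\ref{Proposition 6.1} sets $u_1(t,x) = u_0(t,x) + \tilde{w}(t, x-B(t)) + \tilde{w}_c(t, x-B(t))$ (see~\eqref{est 245b} and Remark~\ref{Remark on difficulty of transport}), so that $u_1(t, x+B(t)) = v_0(t,x) + \tilde{w}(t,x) + \tilde{w}_c(t,x)$; starting from $(\theta_0, v_0, R_0)\equiv 0$ the iteration on $(\theta_n, v_n, R_n)$ is then purely deterministic. This makes your final paragraph internally inconsistent with your own setup: once you pass to the $v$ variable, there is no $B(t)$ left in the defect equation and every $R_q$ is smooth in time, so the obstruction from Remark~\ref{Remark 2.3} never arises here. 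The paper mollifies $R_0$ only because Proposition~\ref{Proposition 6.1} is stated for a general $u_0 \in C^{1/2-2\varpi}_t$; with the zero initialization used for Theorem~\ref{Theorem 2.5} it is harmless but unnecessary.

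The genuine gap is your mechanism for distinctness. ``Adjusting the amplitude prefactor'' cannot steer $\int \theta v$: in the Modena--Sattig building blocks one has $\vartheta = \eta\sum_j a_j \Theta^j$ and $w = \eta^{-1}\sum_j b_j W^j$, so $\eta$ cancels in $\vartheta w$, and by the orthogonality~\eqref{est 64} together with~\eqref{est 113},~\eqref{est 305} one finds $\int \vartheta w \approx \sum_j \int a_j b_j\,e_j = \sum_j \int \chi_j^2 R_l^j\,e_j = O(\delta)$, which goes to zero along the iteration and leaves no tunable parameter. The paper's device (see~\eqref{est 244} and Remarks~\ref{Difficulty 1},~\ref{Difficulty 3}) is to embed an \emph{additive shift} $\Gamma$ inside the amplitudes: take $a_j = \chi_j\,\mathrm{sgn}(R_l^j+\Gamma)\lvert R_l^j+\Gamma\rvert^{1/p}$ and $b_j = \chi_j\lvert R_l^j+\Gamma\rvert^{1/p'}$, so that $a_j b_j = \chi_j^2(R_l^j+\Gamma)$ and the dominant contribution to $\int \theta_{1} v_{1} - \int \theta_{0} v_{0}$ becomes $\Gamma\sum_j\int \chi_j^2\,e_j$ (estimate~\eqref{est 293}). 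Choosing $\Gamma_n = 2\delta_n$ at every step except one, where $\Gamma_3 = K \gg 1$ (see~\eqref{est 343}), then separates $\int \rho u$ for different $K$. This $\Gamma$-shift is the missing idea in your proposal; the appeal to ``energy-pumping arguments underlying \cite[Theorem~1.1]{MS20}'' is misplaced, since that result prescribes endpoint values of $\rho$, not the cross-integral $\int \rho u$, and there is no analogue of the Navier--Stokes geometric-lemma term $\rho\,\mathrm{Id}$ to cancel against here (cf.\ Remark~\ref{Difficulty 1}).
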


\begin{remark}\label{Remark in transport case}
Theorems \ref{Theorem 2.4} and \ref{Theorem 2.5} are both concerned with the case of a transport noise in Stratonovich's interpretation; nonetheless, they  present interesting differences in terms of both results and proofs. Theorem \ref{Theorem 2.4} states that there exist some deterministic initial condition $\rho^{\text{in}} \in L^{p}(\mathbb{T}^{d})$ and a relatively smooth $u$ such that solution $\rho$ emanating from it violates the classical $L^{p}(\mathbb{T}^{d})$-inequality. On the other hand, Theorem \ref{Theorem 2.5} shows that we may prescribe zero initial data $\rho^{\text{in}}$ and construct $\rho$ that is non-zero regardless of any information about $u^{\text{in}} \in L^{p}(\mathbb{T}^{d})$ except that initially it is zero. The proof of Theorem \ref{Theorem 2.5} follows the idea from \cite{BMS21} and \cite{HZZ21a} and this approach also presented multiple difficulties, which we will describe within its proof (see Remarks \ref{Difficulty 1}-\ref{Difficulty 3}). We point out that the non-uniqueness is derived from the distinct values of \eqref{est 236}; this is in sharp contrast from analogous choice of energy $L^{2}(\mathbb{T}^{d})$-norm of the solution in the case of the Navier-Stokes equations, that corresponds to $L^{p}(\mathbb{T}^{d})$-norm of $\rho$ in the case of the transport-diffusion equation. We discovered this difference due to the structure of the transport-diffusion equation and believe that it is inevitable. 

Lastly, we point out that the proof of Theorem \ref{Theorem 2.5} also fails in the case of an additive noise because \eqref{est 236} implies non-uniqueness only by taking zero initial data and relying on the fact that $\rho \equiv 0$ is a solution regardless of $u$ in the case of transport noise, which is not valid in the case of additive noise. 
\end{remark}

Finally, as a corollary of our proofs of Theorems \ref{Theorem 2.2} and \ref{Theorem 2.4}, we are able to also prove non-uniqueness for deterministic transport-diffusion equation forced by non-zero external force $f \in L^{1}(0, T; L^{p}(\mathbb{T}^{d}))$ that is mean-zero. To the best of our knowledge this is new because all previous works \cite{MS18, MS19, MS20} ultimately proved non-uniqueness by taking advantage of the fact that starting from zero initial data $\rho^{\text{in}}$, $\rho \equiv 0$ is a solution regardless of $u$ to conclude the proof of non-uniqueness, and that breaks down once we add a non-zero external force.

First, for all $f \in L^{1}(0, T; L^{p}(\mathbb{T}^{d}))$ that is mean-zero for all $t \in [0,T]$, $u \in C([0,T]; L^{p'}(\mathbb{T}^{d}))$ such that $\nabla\cdot u = 0$ where $p \in (1,\infty)$, and $\rho^{\text{in}} \in L^{p}(\mathbb{T}^{d})$, by Lemma \ref{Lemma 8.1} there exists an analytically weak solution $\rho \in L^{\infty} (0,T; L^{p}(\mathbb{T}^{d}))$ to \eqref{transport} with diffusion and the external force 
\begin{subequations}\label{est 327}
\begin{align}
&\partial_{t} \rho(t,x) + \text{div} (u(t,x) \rho(t,x)) - \Delta \rho(t,x) = f(t,x), \hspace{3mm} t > 0, \label{est 329}\\
& \rho(0,x) = \rho^{\text{in}}(x), 
\end{align}
\end{subequations} 
and from \eqref{est 11} we see that it satisfies 
\begin{equation}
\lVert \rho(t) \rVert_{L_{x}^{p}} \leq \lVert \rho^{\text{in}} \rVert_{L^{p}} + \int_{0}^{t} \lVert f(s) \rVert_{L_{x}^{p}} ds \hspace{3mm} \forall \hspace{1mm} t \in [0,T]. 
\end{equation}   
\begin{corollary}\label{Corollary 2.6}
(Deterministic force) Suppose that $d \geq 3$. Let $T> 0, K > 1$, $p \in (1,\infty)$, $\tilde{p} \in [1,\infty)$ such that \eqref{est 8} holds, and $f \in L^{1}(0, T; L^{p}(\mathbb{T}^{d}))$ that is mean-zero for all $t \in [0,T]$. Then there exist $u \in C([0,T]; L^{p'} (\mathbb{T}^{d})) \cap C([0,T]; W^{1,\tilde{p}}(\mathbb{T}^{d}))$ that is divergence-free and $\rho \in C([0,T]; L^{p}(\mathbb{T}^{d}))$ that satisfy \eqref{est 327} analytically weakly and 
\begin{equation}\label{est 331}
\lVert \rho(T) \rVert_{L_{x}^{p}} > K [\lVert \rho^{\text{in}} \rVert_{L^{p}} + \int_{0}^{T} \lVert f (s) \rVert_{L_{x}^{p}} ds]. 
\end{equation} 
Consequently, for all $T > 0$, non-uniqueness holds for \eqref{est 327} on $[0,T]$.
\end{corollary}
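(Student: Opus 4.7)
Corollary \ref{Corollary 2.6} is the deterministic counterpart of Theorem \ref{Theorem 2.2}. The plan is therefore to reuse the Nash-type convex integration iteration developed for the additive-noise case, with the external force $f$ playing the role that the stochastic correction $-\text{div}(uz)$ plays in \eqref{est 3}. Since \eqref{est 331} violates, by a factor $K > 1$, the classical $L^{p}$ estimate that \emph{any} weak solution of \eqref{est 327} with $u \in C_{t}L_{x}^{p'}$ and $\nabla \cdot u = 0$ must satisfy, non-uniqueness is obtained by constructing a single weak solution that breaks this inequality; this is exactly the mechanism already used to contradict \eqref{est 335}--\eqref{est 336} in Theorem \ref{Theorem 2.2}.

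First I would set up the transport-diffusion-defect system
\begin{equation*}
\partial_{t} \rho_{q} + \text{div}(u_{q} \rho_{q}) - \Delta \rho_{q} = f - \text{div} R_{q}, \qquad \nabla \cdot u_{q} = 0,
\end{equation*}
and initialize it with a smooth triple $(\rho_{0}, u_{0}, R_{0})$ chosen so that $\rho_{0}(0,\cdot) = \rho^{\text{in}}$ and $\lVert \rho_{0}(T) \rVert_{L_{x}^{p}}$ already exceeds $K[\lVert \rho^{\text{in}} \rVert_{L^{p}} + \int_{0}^{T} \lVert f(s) \rVert_{L_{x}^{p}} ds]$ with a comfortable safety margin; $R_{0}$ absorbs the remaining discrepancy. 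The inductive step $(\rho_{q}, u_{q}, R_{q}) \mapsto (\rho_{q+1}, u_{q+1}, R_{q+1})$ is performed with Mikado densities and Mikado fields exactly as in the Theorem \ref{Theorem 2.2} construction, which is in turn modelled on \cite{MS20}; the hypothesis \eqref{est 8} on $(p, \tilde p)$ is built into those building blocks and guarantees that the $L_{x}^{p'}$, $W_{x}^{1,\tilde p}$ and $L_{x}^{p}$ norms of the perturbations are controlled simultaneously.

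The familiar technical nuisance, already discussed in Remark \ref{Remark 2.3}, is that $R_{q}$, defined by taking an anti-divergence of the error in the defect equation, is only as regular in time as $f$, i.e.\ merely $L_{t}^{1}$. I would therefore mollify $R_{q}$ (and, if needed, $f$) in time, plug the mollified version into the amplitude-frequency split of Lemma \ref{Lemma 4.4}, and absorb the mollification error into the new defect $R_{q+1}$. A bonus not present in \cite{MS18, MS19, MS20} is that we do \emph{not} need the implication ``$R_{q}(t) \equiv 0 \Rightarrow R_{q+1}(t) \equiv 0$'': forced by $f$, the equation \eqref{est 327} no longer admits $\rho \equiv 0$ as a solution from zero initial data, so the mechanism of non-uniqueness is violation of the $L^{p}$ bound rather than constructing a non-trivial solution from zero data, and mollification costs us nothing. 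Once the perturbations are summable in $C_{t}L_{x}^{p} \cap C_{t}L_{x}^{p'} \cap C_{t}W_{x}^{1,\tilde p}$ and $R_{q} \to 0$, the limit $(\rho, u)$ has the claimed regularity, solves \eqref{est 327} weakly and, by the safety margin built into $\rho_{0}(T)$, satisfies \eqref{est 331}. Non-uniqueness on $[0,T]$ then follows because the weak solution produced by Lemma \ref{Lemma 8.1} from the same $\rho^{\text{in}}$, $u$ and $f$ must obey the classical bound.

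The main obstacle will be the bookkeeping in the inductive step: choosing the temporal mollification scale compatibly with the frequency parameters of the Mikado flow so that the new defect $R_{q+1}$ stays below the required threshold, while keeping the deviation of the mollified $f$ from $f$ itself small in $L^{1}_{t}L^{p}_{x}$. This is precisely the task already accomplished in the proof of Theorem \ref{Theorem 2.2}, and since $f$ is deterministic and no stopping time is involved, I expect only cosmetic modifications to be needed here.
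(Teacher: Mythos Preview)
Your overall strategy is correct and matches the paper's: run convex integration on the defect system $\partial_t\rho_q+\text{div}(u_q\rho_q)-\Delta\rho_q=f-\text{div}R_q$ and produce a solution violating the classical $L^p$ bound. But you have overcomplicated the inductive step by missing a key cancellation.

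The force $f$ \emph{drops out} of the iteration. Since both $(\rho_q,u_q,R_q)$ and $(\rho_{q+1},u_{q+1},R_{q+1})$ satisfy the same forced defect equation, subtracting shows that $-\text{div}R_{q+1}$ involves only the perturbations and $\text{div}R_q$; $f$ does not appear. Consequently, once $R_0$ is smooth, every subsequent $R_q$ is smooth, and the inductive step is literally that of \cite{MS20} with diffusion, which the paper records as Proposition \ref{Proposition 7.4} with the clean estimates \eqref{est 340} in terms of $\lVert R_0(t)\rVert_{L_x^1}$. There is no need to mollify $R_q$ at each stage, and the ``main obstacle'' you anticipate --- tuning a temporal mollification scale against the Mikado parameters at every step and tracking the deviation of a mollified $f$ --- simply does not arise. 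The low time-regularity of $f\in L^1_tL^p_x$ enters only through the initial defect $R_0=\mathcal{D}^{-1}(f-\partial_t\rho_0)$; this is resolved by a single approximation of $f$ at the outset, after which the deterministic iteration of \cite{MS20} applies verbatim. Your Theorem \ref{Theorem 2.2}-style mollification-at-every-step would still produce a proof, but it is heavier machinery than the problem requires; moreover, the mollification-error estimate you would import (Lemma \ref{Lemma 4.11}) uses $C_t^{1/2-2\varpi}$ regularity of $R_0$, which is unavailable here without first smoothing $f$, so you end up needing the same preliminary approximation anyway.

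A smaller point: $\rho^{\text{in}}$ is not input in Corollary \ref{Corollary 2.6} but output --- it is $\rho(0,\cdot)$ for the constructed $\rho$. The paper initializes with $\rho_0(t,x)=K^{N}t(x_d-\tfrac12)$ (so $\rho_0(0)\equiv 0$), choosing $N$ via \eqref{est 333} to build in the safety margin, and $\rho^{\text{in}}$ emerges from the limit; your plan to set $\rho_0(0,\cdot)=\rho^{\text{in}}$ presumes a prescribed datum that is not part of the hypothesis.
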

We leave this proof in the Appendix A for completeness. 

\begin{remark}
Various extensions of our results may be possible. First, Theorems \ref{Theorem 2.1}-\ref{Theorem 2.5} may be extended to the cases of zero or arbitrarily strong diffusion by appropriate modifications described in \cite[Theorems 1.2 and 1.4]{MS20}, respectively. In particular, the case of zero diffusion may include the case $d = 2$ (see \cite[Remark on p. 1080]{MS20}). Improving from $u \in L_{x}^{\infty}$ to $u \in C_{x}$ in case $p = 1$ may also be possible (see \cite[Section 7.1]{MS20} and \cite{MS19}). The method in our work may be applied to \cite{CL21} to achieve its probabilistic analogue as well. 

To the best of our knowledge, convex integration technique has never been employed on stochastic transport equation. Via convex integration, we have proved the negative direction when forced by random noise of additive and linear multiplicative types similarly to previous works (e.g., \cite{HZZ19}); despite \cite{FGP10}, transport noise is no exception here. We note that the condition that the authors in \cite{FGP10} had on the vector field $u$ was stronger than ours. Our work also makes a contribution to the research direction of probabilistic convex integration by providing another equation, as well as another type of noise, specifically transport, on which we can apply such a technique. 
\end{remark}

In the following, we shall give some preliminaries and thereafter prove Theorems \ref{Theorem 2.2}-\ref{Theorem 2.3} and \ref{Theorem 2.5}.  The proof of Theorem \ref{Theorem 2.1} is similar to the deterministic case while that of Theorem \ref{Theorem 2.4} follows from similar computations in the proof of Theorem \ref{Theorem 2.2}; hence, along with the proof of Corollary \ref{Corollary 2.6}, they are left in the Appendix A. 

\section{Preliminaries}\label{Preliminaries}
\subsection{Notations and assumptions}\label{Subsection 3.1}
We write $A \lesssim_{a,b} B$ and $A \approx_{a,b} B$ to imply that $A \leq C_{a,b}^{1} B$ and $C_{a,b}^{1} B \leq A \leq C_{a,b}^{2}B$ for some constants $C_{a,b}^{j} = C^{j} (a,b)\geq 0$ for $j \in \{1,2\}$, respectively. We also write $A \overset{(\cdot)}{\lesssim}B$ to indicate that this inequality is due to an equation $(\cdot)$. We define $\mathbb{N} \triangleq \{ 1, 2, \hdots, \}$ while $\mathbb{N}_{0} \triangleq \{0\} \cup \mathbb{N}$. We write $\lVert f \rVert_{C_{t,x}^{N}} \triangleq \sum_{0\leq n + \lvert \alpha \rvert \leq N} \lVert \partial_{t}^{n} D^{\alpha} f \rVert_{L_{t,x}^{\infty}}$ and $\partial_{j} \triangleq \frac{\partial}{\partial x_{j}}$ for $j \in \{1,\hdots, d\}$. We denote a mathematical expectation with respect to (w.r.t.) any probability measure $P$ by $\mathbb{E}^{P}$. 
 
\subsection{Convex integration}\label{Subsection 3.2}
For any $f: \mathbb{T}^{d} \mapsto \mathbb{R}$ and $\lambda \in \mathbb{N}$, we denote its dilation  
\begin{equation}\label{est 66}
f_{\lambda}: \mathbb{T}^{d} \mapsto \mathbb{R}, \hspace{3mm} f_{\lambda}(x) \triangleq f(\lambda x) \hspace{1mm} \text{ so that }\hspace{1mm}  \lVert D^{k} f_{\lambda} \rVert_{L^{p}} = \lambda^{k} \lVert D^{k} f \rVert_{L^{p}} 
\end{equation} 
where $D^{k}$ represents some multi-index $\alpha$ such that $\lvert \alpha \rvert = k \in \mathbb{N}_{0}, p \in [1,\infty]$ (e.g., \cite[Equation (1.20)]{MS20}). 
\begin{lemma}
\rm{(\cite[Lemma 2.1]{MS18})} Let $\lambda \in \mathbb{N}_{0}$ and $f, g: \mathbb{T}^{d} \mapsto \mathbb{R}$ be smooth functions. Then, for every $p \in [1,\infty]$, there exists a constant $C_{p} \geq 0$ such that 
\begin{equation}\label{est 80}
\lvert \lVert f g_{\lambda} \rVert_{L^{p}} - \lVert f \rVert_{L^{p}} \lVert g \rVert_{L^{p}} \rvert \leq C_{p} \lambda^{-\frac{1}{p}} \lVert f \rVert_{C^{1}} \lVert g \rVert_{L^{p}}.
\end{equation} 
\end{lemma}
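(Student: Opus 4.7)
The core intuition is that $g_\lambda(x)=g(\lambda x)$ oscillates on scale $\lambda^{-1}$, so once $\mathbb{T}^d$ is partitioned into the $\lambda^d$ subcubes on which the dilation is exactly periodic, the $L^p$-mass of $g_\lambda$ is uniformly distributed while $f$ is nearly constant on each such cube. This reduces the estimate to a $C^1$-type Riemann-sum error, after which the conclusion is obtained by taking $p$-th roots via the subadditivity of $t \mapsto t^{1/p}$. The case $\lambda = 0$ is vacuous and the case $p = \infty$ is trivial (since $\lambda^{-1/p}=1$ and $|\|fg_\lambda\|_\infty-\|f\|_\infty\|g\|_\infty|\le\|f\|_\infty\|g\|_\infty\le\|f\|_{C^1}\|g\|_\infty$), so I would assume $\lambda\in\mathbb{N}$ and $p\in[1,\infty)$.

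Concretely, I would partition $\mathbb{T}^d = \bigsqcup_{k} Q_k$ where $k \in \{0,1,\dots,\lambda-1\}^d$ and $Q_k = k/\lambda + [0,\lambda^{-1})^d$. The change of variables $y = \lambda x - k$ together with $\mathbb{Z}^d$-periodicity of $g$ gives the exact identity
\begin{equation*}
\int_{Q_k} |g_\lambda(x)|^p\,dx = \lambda^{-d} \int_{\mathbb{T}^d} |g(y)|^p\,dy = \lambda^{-d}\|g\|_{L^p}^p \qquad \text{for every } k.
\end{equation*}
Next I would fix any sample point $x_k \in Q_k$ (the corner will do); since $|x-x_k|\le \sqrt{d}\,\lambda^{-1}$ on $Q_k$, the mean value theorem gives $|f(x)-f(x_k)|\le \sqrt{d}\,\lambda^{-1}\|f\|_{C^1}$, and the elementary inequality $\bigl||a|^p-|b|^p\bigr|\le p \max(|a|,|b|)^{p-1}|a-b|$ applied with $a=f(x)$, $b=f(x_k)$ upgrades this to $\bigl||f(x)|^p-|f(x_k)|^p\bigr|\le C_{p,d}\|f\|_{C^1}^p \lambda^{-1}$ on each $Q_k$.

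Combining these two ingredients via a telescoping triangle inequality, I would write
\begin{equation*}
\Bigl|\|fg_\lambda\|_{L^p}^p - \|f\|_{L^p}^p \|g\|_{L^p}^p\Bigr|
\le \Bigl|\|fg_\lambda\|_{L^p}^p - \textstyle\sum_k |f(x_k)|^p \lambda^{-d} \|g\|_{L^p}^p\Bigr|
+ \Bigl|\textstyle\sum_k |f(x_k)|^p \lambda^{-d} - \|f\|_{L^p}^p\Bigr|\,\|g\|_{L^p}^p.
\end{equation*}
The first term is controlled by applying the cubewise $|f|^p$-approximation inside the integral and invoking the periodicity identity, yielding a bound $C_{p,d}\|f\|_{C^1}^p\lambda^{-1}\|g\|_{L^p}^p$; the second is a pure Riemann-sum error for $|f|^p$, also of order $C_{p,d}\|f\|_{C^1}^p\lambda^{-1}$ by the same $C^1$-approximation argument applied to $\mathbb{T}^d = \bigsqcup_k Q_k$ with Lebesgue measure in place of $|g_\lambda|^p\,dx$. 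The closing step is the subadditivity inequality $|A^{1/p}-B^{1/p}|\le |A-B|^{1/p}$, valid for $A,B\ge 0$ and $p\in[1,\infty)$; taking $p$-th roots of a bound of the form $C\|f\|_{C^1}^p\lambda^{-1}\|g\|_{L^p}^p$ yields precisely the stated $\lambda^{-1/p}$ rate.

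\textbf{Main obstacle.} There is no serious obstacle: the argument is an oscillation-averaging lemma standard in convex integration. The only points requiring care are the correct bookkeeping of the \emph{two} Riemann-sum errors (one inside the $|g_\lambda|^p$ integral, one on the $\lambda^d$ point sum against unit weights), and the use of $p$-th-root subadditivity at the end, which is what converts an $O(\lambda^{-1})$ bound on $p$-th powers into the sharp $O(\lambda^{-1/p})$ bound on norms that the lemma asserts and that is crucial for the convex-integration scheme downstream.
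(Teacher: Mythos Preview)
The paper does not give its own proof of this lemma; it is simply quoted from \cite[Lemma~2.1]{MS18} and used as a black box throughout the convex-integration constructions. Your argument is correct and is in fact the standard proof (and essentially the one in \cite{MS18}): partition $\mathbb{T}^d$ into the $\lambda^d$ subcubes on which $g_\lambda$ is exactly periodic, freeze $|f|^p$ at a sample point in each cube, bound the resulting Riemann-sum error by $C_{p,d}\|f\|_{C^1}^p\lambda^{-1}\|g\|_{L^p}^p$ at the level of $p$-th powers, and then pass to norms via $|A^{1/p}-B^{1/p}|\le|A-B|^{1/p}$. There is nothing to add.
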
 

\begin{lemma}
\rm{(\cite[Proposition 2]{BMS21})} Let $a \in C^{\infty} (\mathbb{T}^{d}), v \in C_{0}^{\infty} (\mathbb{T}^{d})$. Then, for any $p \in [1,\infty]$, there exists a constant $C_{p} \geq 0$ such that 
\begin{equation}\label{est 304}
\lvert \int_{\mathbb{T}^{d}} a v_{\lambda} dx \rvert \leq \lambda^{-1} C_{p} \lVert \nabla a \rVert_{L^{p}} \lVert v \rVert_{L^{p'}}. 
\end{equation} 
\end{lemma}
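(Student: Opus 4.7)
The plan is to exploit the mean-zero property of $v \in C_0^\infty(\mathbb{T}^d)$ by writing $v$ as the divergence of a smooth vector field, thereby producing the factor of $\lambda^{-1}$ upon rescaling and integrating by parts against $a$. This is the canonical way of extracting an oscillation gain in convex integration: a derivative falling on the high-frequency factor $v_\lambda$ is worth a factor of $\lambda$, and if $v$ has mean zero this process can be reversed to gain a factor of $\lambda^{-1}$.

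Concretely, I would first solve the Poisson problem $\Delta \phi = v$ on $\mathbb{T}^d$: since $v \in C_0^\infty$ has vanishing mean, there is a unique mean-zero smooth solution $\phi$, and $V := \nabla \phi$ is then a smooth vector field with $\nabla \cdot V = v$. The chain rule gives $v(\lambda x) = (\nabla\cdot V)(\lambda x) = \lambda^{-1} \nabla_x \cdot V_\lambda(x)$, and integration by parts on the torus (with no boundary contribution) yields
\begin{equation*}
\int_{\mathbb{T}^d} a(x)\, v_\lambda(x)\, dx = - \frac{1}{\lambda} \int_{\mathbb{T}^d} \nabla a(x) \cdot V_\lambda(x)\, dx.
\end{equation*}
Applying H\"older's inequality with exponents $p$ and $p'$, and using that $\lambda \in \mathbb{N}$ (by the convention in \eqref{est 66}) so that periodicity gives $\lVert V_\lambda \rVert_{L^{p'}(\mathbb{T}^d)} = \lVert V \rVert_{L^{p'}(\mathbb{T}^d)}$ (substitute $y = \lambda x$ and decompose $[0,\lambda]^d$ into $\lambda^d$ translates of $[0,1]^d$), I arrive at
\begin{equation*}
\left\lvert \int_{\mathbb{T}^d} a\, v_\lambda\, dx \right\rvert \leq \frac{1}{\lambda} \lVert \nabla a \rVert_{L^p} \lVert V \rVert_{L^{p'}}.
\end{equation*}
It then remains to bound $\lVert V \rVert_{L^{p'}} = \lVert \nabla (-\Delta)^{-1} v \rVert_{L^{p'}} \leq C_p \lVert v \rVert_{L^{p'}}$, which is the standard $L^{p'}$-boundedness of the Riesz transform on $\mathbb{T}^d$ via Calder\'on--Zygmund theory, valid for $p' \in (1,\infty)$.

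The main obstacle I anticipate is the endpoint behaviour $p \in \{1,\infty\}$ (equivalently $p' \in \{\infty, 1\}$), where the Riesz transform fails to be $L^{p'}$-bounded and the Calder\'on--Zygmund constant $C_p$ blows up. In these endpoint cases one would need either to invoke a weak-type $(1,1)$ or BMO substitute, or, more practically, to exploit the fact that in the actual convex-integration applications $v$ is a fixed, highly structured Mikado-type profile for which the bound $\lVert V \rVert_{L^{p'}} \lesssim \lVert v \rVert_{L^{p'}}$ can be verified directly. Beyond this endpoint subtlety, every step above is a standard computation; the essential input is simply the existence of an antiderivative with good $L^{p'}$ control for mean-zero functions on $\mathbb{T}^d$.
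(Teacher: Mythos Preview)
Your approach is correct and is essentially the standard proof; note that the paper itself does not prove this lemma but simply cites it from \cite{BMS21}. One clarification resolves your endpoint worry entirely: the operator $V = \nabla(-\Delta)^{-1}$ is \emph{not} the Riesz transform (that would be $\nabla(-\Delta)^{-1/2}$), but rather a pseudodifferential operator of order $-1$. Its convolution kernel is $\nabla G$, where $G$ is the periodic Green's function, and since $|\nabla G(x)| \sim |x|^{1-d}$ near the origin (for $d\geq 2$) one has $\nabla G \in L^{1}(\mathbb{T}^{d})$. Young's inequality then gives $\lVert V\rVert_{L^{p'}} \leq \lVert \nabla G\rVert_{L^{1}} \lVert v\rVert_{L^{p'}}$ for \emph{every} $p' \in [1,\infty]$, with no appeal to Calder\'on--Zygmund theory and no endpoint obstruction. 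This is precisely the content of the paper's own Lemma~\eqref{est 121} (the case $k=1$ of \cite[Lemma~3.4]{MS20}), which you could simply invoke. So your ``main obstacle'' is in fact not an obstacle at all, and the proof goes through uniformly in $p\in[1,\infty]$ --- which is good, because the paper applies \eqref{est 304} exactly at the endpoints $p=1$ and $p=\infty$ (see e.g.\ \eqref{est 312}--\eqref{est 313}).
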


\begin{define}\label{Definition 3.1} 
\rm{(\cite[Definition on p. 1083]{MS20})} For any $f \in C^{\infty}( \mathbb{T}^{d})$, $k \in \mathbb{N}_{0}$, we define 
\begin{equation}\label{est 88}
\mathcal{D}^{k} f \triangleq 
\begin{cases}
\Delta^{\frac{k}{2}} f & \text{ if } k \text{ is even}, \\
\nabla \Delta^{\frac{k-1}{2}} f & \text{ if } k \text{ is odd}, 
\end{cases} 
\end{equation} 
with the convention that $\mathcal{D}^{0} = \Delta^{0} = Id$. For $k \in \mathbb{Z}_{< 0}$ the definition is identical under an additional hypothesis that $f \in C_{0}^{\infty} (\mathbb{T}^{d})$. We refer to $\mathcal{D}^{-1}$ as the anti-divergence operator. 
\end{define}

\begin{lemma}
\rm{ (\cite[p. 1084]{MS20})} $\mathcal{D}^{k}$ defined in Definition \ref{Definition 3.1} has the following properties. 
\begin{enumerate}
\item $\partial^{\alpha} \mathcal{D}^{k} f = \mathcal{D}^{k} \partial^{\alpha} f$ for all $k \in \mathbb{Z}$ and any multi-index $\alpha$. 
\item For any $k, n, m \in \mathbb{Z}$ and $f, g \in C_{0}^{\infty} (\mathbb{T}^{d})$, 
\begin{equation}
\int_{\mathbb{T}^{d}} \mathcal{D}^{k} f \cdot \mathcal{D}^{m+n} g dx = (-1)^{n} \int_{\mathbb{T}^{d}} \mathcal{D}^{k+n} f \cdot \mathcal{D}^{m} g dx. 
\end{equation} 
\item $\mathcal{D}^{k} u_{\lambda} = \lambda^{k} (\mathcal{D}^{k} u)_{\lambda}$ for any $k \in \mathbb{Z}$ and $\lambda \in \mathbb{N}_{0}$. 
\end{enumerate} 
\end{lemma}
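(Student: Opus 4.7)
The plan is to handle the three assertions separately, relying throughout on the Fourier-multiplier interpretation of $\mathcal{D}^k$ on $\mathbb{T}^d$: the symbol of $\Delta$ is $-|\xi|^2$, the symbol of $\nabla$ is $i\xi$, and for negative $k$ the operator $\Delta^{-1}$ is well-defined on mean-zero functions by setting its symbol to $-|\xi|^{-2}$ on $\xi\neq 0$ (annihilating the zero mode). With this setup, all three items reduce to elementary manipulations.

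For part (1), any constant-coefficient differential operator $\partial^\alpha$ acts on the Fourier side as multiplication by $(i\xi)^\alpha$, which commutes with the multipliers $(-|\xi|^2)^{k/2}$ (for $k$ even) or $i\xi\cdot(-|\xi|^2)^{(k-1)/2}$ (for $k$ odd), and similarly with $\Delta^{-1}$ applied to mean-zero functions. Hence $\partial^\alpha$ and $\mathcal{D}^k$ commute on $C_0^\infty(\mathbb{T}^d)$.

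For part (2), I would proceed by induction on $|n|$, with the base case $n=1$ establishing $\int \mathcal{D}^k f\cdot \mathcal{D}^{m+1}g\,dx = -\int \mathcal{D}^{k+1}f \cdot \mathcal{D}^m g\,dx$. Two cases arise according to the parity of $m$, matched by the parity of $k$ (which must make both $\mathcal{D}^k f$ and $\mathcal{D}^{m+n}g$ simultaneously scalar or vector-valued; note $k-(m+n)$ and $(k+n)-m$ differ by $2n$ so the parity requirement is automatically consistent on both sides). When $m$ is even, $\mathcal{D}^{m+1}g = \nabla\mathcal{D}^m g$ and $k$ must be odd, so $\mathcal{D}^{k+1}f = \Delta^{(k+1)/2}f$; integration by parts then gives
\begin{equation}
\int \mathcal{D}^k f \cdot \nabla \mathcal{D}^m g\,dx = -\int \nabla\cdot \mathcal{D}^k f \,\, \mathcal{D}^m g\,dx = -\int \mathcal{D}^{k+1}f \cdot \mathcal{D}^m g\, dx,
\end{equation}
using $\nabla\cdot(\nabla \Delta^{(k-1)/2}f) = \Delta^{(k+1)/2}f$. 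The case $m$ odd is symmetric, using $\mathcal{D}^{m+1}g = \Delta^{(m+1)/2}g$ and the same integration by parts in reverse. The iteration then propagates to arbitrary $n>0$ one step at a time; for $n<0$ one applies the base case in the opposite direction, and the hypothesis $f,g\in C_0^\infty(\mathbb{T}^d)$ guarantees that the required inverse Laplacians are well-defined because at each step the operand remains mean-zero (anti-divergence and Laplacian powers preserve the mean-zero condition).

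For part (3), the chain rule gives $\nabla(u_\lambda)(x) = \lambda(\nabla u)_\lambda(x)$ and $\Delta(u_\lambda)(x) = \lambda^2(\Delta u)_\lambda(x)$, and iterating these identities $k/2$ or $(k-1)/2$ times yields the claim for $k\geq 0$. For $k<0$, the cleanest argument is through Fourier series: since $\lambda\in\mathbb{N}$, $u_\lambda$ has Fourier coefficients supported in $\lambda\mathbb{Z}^d$, and a mode at frequency $\lambda\xi_0$ of $u_\lambda$ corresponds to a mode at $\xi_0$ of $u$; since $\Delta^{-1}$ multiplies the former by $-|\lambda\xi_0|^{-2} = \lambda^{-2}\cdot(-|\xi_0|^{-2})$, combined with the gradient rescaling for odd $k$, one recovers the factor $\lambda^k$ in front of $(\mathcal{D}^k u)_\lambda$. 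The only subtlety worth flagging is the bookkeeping of parities in part (2) and the verification that the mean-zero condition persists through the inductive step when $n<0$; everything else is essentially a direct computation.
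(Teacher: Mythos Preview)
Your proof is correct. The paper itself does not supply a proof of this lemma; it is stated with a bare citation to \cite[p.~1084]{MS20}, so there is no argument in the paper to compare against. Your Fourier-multiplier framing for parts (1) and (3), together with the parity-tracking induction for part (2), constitutes exactly the standard verification one would expect, and it matches in spirit what the cited reference does.

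One very minor remark: in part (3) you restrict to $\lambda\in\mathbb{N}$ for the Fourier argument, whereas the lemma states $\lambda\in\mathbb{N}_0$. The case $\lambda=0$ is degenerate (both sides vanish for $k\geq 1$, and for $k<0$ the hypothesis $u\in C_0^\infty$ combined with $u_0\equiv u(0)$ being constant forces some care), but this is a triviality and does not affect the substance of your argument.
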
 
\begin{lemma}
\rm{ (\cite[Lemma 3.2]{MS20})} Let $p \in (1,\infty)$. Then there exists a constant $C_{d,p} \geq 0$ such that for any $f \in C_{0}^{\infty} (\mathbb{T}^{d})$, 
\begin{equation}
\lVert f \rVert_{W^{2,p}} \leq C_{d,p} \lVert \Delta f \rVert_{L^{p}}.
\end{equation} 
\end{lemma}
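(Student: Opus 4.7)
The plan is to establish the inequality by bounding each piece of the $W^{2,p}$ norm, namely $\lVert f \rVert_{L^p}$, $\lVert \nabla f \rVert_{L^p}$, and $\lVert \nabla^2 f \rVert_{L^p}$, by $\lVert \Delta f \rVert_{L^p}$, using crucially that $f \in C_0^\infty(\mathbb{T}^d)$ is mean-zero. The heart of the matter is controlling the pure second derivatives via a Calder\'on--Zygmund type estimate; the lower-order terms then follow from Poincar\'e inequalities on the torus.

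For the second derivatives, I would pass to Fourier series. Writing $f(x) = \sum_{k \in \mathbb{Z}^d \setminus \{0\}} \hat{f}(k) e^{2\pi i k \cdot x}$, where the sum excludes $k = 0$ precisely because of the mean-zero condition, one has $\widehat{\partial_i \partial_j f}(k) = -4\pi^2 k_i k_j \hat{f}(k)$ and $\widehat{\Delta f}(k) = -4\pi^2 \lvert k \rvert^2 \hat{f}(k)$, so formally $\partial_i \partial_j f = T_{ij} \Delta f$ where $T_{ij}$ is the Fourier multiplier with symbol $m_{ij}(k) = k_i k_j / \lvert k \rvert^2$. Since $m_{ij}$ extends to a smooth function on $\mathbb{R}^d \setminus \{0\}$ that is homogeneous of degree zero, the Mikhlin multiplier theorem on $\mathbb{T}^d$, or equivalently a transference argument from the corresponding Riesz-transform composition on $\mathbb{R}^d$, yields $\lVert T_{ij} g \rVert_{L^p(\mathbb{T}^d)} \leq C_{d,p} \lVert g \rVert_{L^p(\mathbb{T}^d)}$ for every $p \in (1,\infty)$ and every mean-zero $g$. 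Applied with $g = \Delta f$, this gives $\lVert \partial_i \partial_j f \rVert_{L^p} \leq C_{d,p} \lVert \Delta f \rVert_{L^p}$, and summing over $i,j$ yields $\lVert \nabla^2 f \rVert_{L^p} \lesssim_{d,p} \lVert \Delta f \rVert_{L^p}$.

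For the lower-order terms I would invoke Poincar\'e twice. Since $f$ is mean-zero, $\lVert f \rVert_{L^p} \leq C_{d,p} \lVert \nabla f \rVert_{L^p}$. Moreover, by periodicity each partial derivative $\partial_j f$ is automatically mean-zero on $\mathbb{T}^d$, so a second application gives $\lVert \nabla f \rVert_{L^p} \leq C_{d,p} \lVert \nabla^2 f \rVert_{L^p}$. Chaining these estimates with the Calder\'on--Zygmund bound established above proves the claim.

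The main obstacle is the $L^p$ boundedness of the multiplier operators $T_{ij}$ on the torus; everything else is elementary. This is also precisely where the restriction $p \in (1,\infty)$ enters, since the endpoints $p = 1$ and $p = \infty$ fail for Riesz-type transforms, and hence for this inequality. I would verify it either by directly checking Mikhlin's condition $\lvert \partial^\alpha m_{ij}(\xi) \rvert \lesssim \lvert \xi \rvert^{-\lvert \alpha \rvert}$ for $\lvert \alpha \rvert \leq \lfloor d/2 \rfloor + 1$ and invoking the torus version of the multiplier theorem, or via a de Leeuw-type transference to the Euclidean Riesz transforms $R_i R_j$, whose $L^p$ boundedness is classical Calder\'on--Zygmund theory.
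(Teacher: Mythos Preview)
Your argument is correct and is the standard route to this Calder\'on--Zygmund estimate on the torus. Note, however, that the paper does not actually supply its own proof of this lemma: it is quoted directly from \cite[Lemma 3.2]{MS20} and used as a black box, so there is no in-paper argument to compare against. Your sketch via the Mikhlin multiplier theorem (or transference to Euclidean Riesz transforms) for the second-order part, together with two applications of the Poincar\'e inequality exploiting the mean-zero assumption on $f$ and the automatic mean-zero property of $\partial_j f$ by periodicity, is exactly how one proves this result and correctly isolates where the restriction $p\in(1,\infty)$ is needed.
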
 

\begin{lemma}
\rm{( \cite[Lemma 3.3]{MS20})} Let $p \in (1,\infty)$ and $k \in \mathbb{N}_{0}$. Then there exists a constant $C_{d,p,k} \geq 0$ such that for any $f \in C_{0}^{\infty} (\mathbb{T}^{d})$, 
\begin{equation}
\lVert \mathcal{D}^{-k} f \rVert_{W^{k,p}} \leq C_{d,p,k} \lVert f \rVert_{L^{p}}. 
\end{equation} 
\end{lemma}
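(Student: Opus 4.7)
The plan is to reduce this to a standard Fourier multiplier bound. First I would write $\lVert \mathcal{D}^{-k} f \rVert_{W^{k,p}} \approx \sum_{|\alpha| \le k} \lVert \partial^\alpha \mathcal{D}^{-k} f \rVert_{L^p}$, so it suffices to show that for each multi-index $\alpha$ with $|\alpha| \le k$, the operator $T_\alpha := \partial^\alpha \mathcal{D}^{-k}$ is bounded on $L^p(\mathbb{T}^d)$ when acting on mean-zero functions. Because $f \in C_0^\infty(\mathbb{T}^d)$ has zero mean, $\hat f(0) = 0$, so every inverse power of $\Delta$ appearing in $\mathcal{D}^{-k}$ is unambiguously defined on the Fourier side as $(-|\xi|^2)^{-m}$ restricted to $\xi \in \mathbb{Z}^d \setminus \{0\}$.

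Next I would compute the symbol of $T_\alpha$ in each parity case. If $k = 2m$, then $\mathcal{D}^{-k} = \Delta^{-m}$ and $T_\alpha$ has symbol $(i\xi)^\alpha (-|\xi|^2)^{-m}$, homogeneous of degree $|\alpha| - 2m \le 0$. If $k = 2m+1$, then $\mathcal{D}^{-k} = \nabla \Delta^{-(m+1)}$ and the $j$-th component of $T_\alpha$ has symbol $(i\xi)^\alpha (i\xi_j)(-|\xi|^2)^{-(m+1)}$, homogeneous of degree $|\alpha| + 1 - (2m+2) \le 0$. In both cases the resulting symbol $m_\alpha(\xi)$ extends smoothly to $\mathbb{R}^d \setminus \{0\}$ and, being homogeneous of degree $\le 0$, satisfies Mikhlin's condition $|\xi|^{|\gamma|} |\partial_\xi^\gamma m_\alpha(\xi)| \le C_\gamma$ for every multi-index $\gamma$.

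The conclusion then follows from the Mikhlin--H\"ormander multiplier theorem, transferred from $\mathbb{R}^d$ to $\mathbb{T}^d$ by the standard de Leeuw--Stein transference principle (or equivalently, by representing $T_\alpha$ as a composition of Riesz transforms $R_j = \partial_j(-\Delta)^{-1/2}$ and a smoothing operator of order $2m - |\alpha| \ge 0$, and invoking the classical $L^p$-boundedness of the Riesz transforms). This yields $\lVert T_\alpha f \rVert_{L^p} \le C_{d,p,k,\alpha} \lVert f \rVert_{L^p}$, and summing over $|\alpha| \le k$ delivers the asserted estimate.

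The main point to be careful about is the borderline case $|\alpha| = k$, where the symbol has homogeneity degree exactly zero and $T_\alpha$ is a genuine Calder\'on--Zygmund singular integral rather than a smoothing operator; this is precisely where the restriction $p \in (1,\infty)$ is used and where the bound fails at the endpoints $p \in \{1, \infty\}$. The lower-order cases $|\alpha| < k$ are strictly easier because the symbol then has negative homogeneity degree, so the convolution kernel is locally integrable and the bound follows either from Mikhlin directly or from Young's convolution inequality after a Littlewood--Paley decomposition.
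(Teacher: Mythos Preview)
Your argument is correct. The paper itself does not supply a proof of this lemma; it is quoted verbatim from \cite[Lemma 3.3]{MS20} as a preliminary ingredient, so there is no in-paper proof to compare against. Your reduction to the $L^p$-boundedness of the Fourier multipliers $\partial^\alpha \mathcal{D}^{-k}$ via Mikhlin--H\"ormander (equivalently, compositions of Riesz transforms with nonnegative-order smoothing) is the standard route and is exactly how one would prove the cited lemma; the identification of the case $|\alpha|=k$ as the genuine Calder\'on--Zygmund endpoint explaining the restriction $p\in(1,\infty)$ is also accurate.
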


\begin{lemma}
\rm{( \cite[Lemma 3.4]{MS20})} Let $p \in [1,\infty]$ and $k \in \mathbb{N}$. Then there exists a constant $C_{d,p,k} \geq 0$ such that for any $f \in C_{0}^{\infty} (\mathbb{T}^{d})$, 
\begin{equation}\label{est 121}
\lVert \mathcal{D}^{-k} f \rVert_{W^{k-1, p} } \leq C_{d,p,k} \lVert f \rVert_{L^{p}}. 
\end{equation} 
\end{lemma}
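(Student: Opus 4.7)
The plan is to view $\mathcal{D}^{-k}$ as a Fourier multiplier of order $-k$ acting on mean-zero smooth functions, and to exploit the fact that $W^{k-1,p}$ asks for one fewer derivative than what $\mathcal{D}^{-k}$ gains. After any differentiation of total order at most $k-1$, we are left with an operator of order at most $-1$, whose convolution kernel on $\mathbb{T}^{d}$ is integrable; Young's inequality then yields the desired $L^{p}\to L^{p}$ bound uniformly for $p\in[1,\infty]$. I would first expand $f(x)=\sum_{\xi\in\mathbb{Z}^{d}\setminus\{0\}}\hat{f}(\xi)\,e^{2\pi i\xi\cdot x}$, use the convention in \eqref{est 88} to write $\mathcal{D}^{-k}f$ as a Fourier series whose symbol is comparable to $|\xi|^{-k}$ (vector-valued if $k$ is odd, scalar if $k$ is even), and then observe that for every multi-index $\alpha$ with $|\alpha|\leq k-1$ the operator $\partial^{\alpha}\mathcal{D}^{-k}$ is a multiplier of order $|\alpha|-k\leq -1$.

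The next step is to represent $\partial^{\alpha}\mathcal{D}^{-k}f=K_{\alpha}*f$ for some periodic convolution kernel $K_{\alpha}$ on $\mathbb{T}^{d}$, and to show $\|K_{\alpha}\|_{L^{1}(\mathbb{T}^{d})}\leq C_{d,p,k}$. The cleanest way I would argue this is to split the Fourier multiplier as a sum of its behaviour at low and high frequencies: the low-frequency part (finitely many modes) is a trigonometric polynomial whose $L^{1}$-norm is trivially bounded, while for the high-frequency tail one notes that a multiplier of order $-1$ corresponds on $\mathbb{R}^{d}$ to a kernel with singularity $|x|^{-(d-1)}$ near the origin and rapid decay, and periodization produces a kernel that is $L^{1}$ locally because $\int_{|x|\leq 1}|x|^{-(d-1)}\,dx\sim\int_{0}^{1}dr<\infty$ in $d$ dimensions; a smooth remainder absorbs the boundary. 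Equivalently, one may represent $\mathcal{D}^{-k}$ through iterated applications of $\Delta^{-1}$ (and at most one $\nabla$) and use the Green's function $G(x,y)$ of $-\Delta$ on $\mathbb{T}^{d}$, whose derivatives up to order $k-1$ still live in $L^{1}_{\mathrm{loc}}$ in each variable. Once $\|K_{\alpha}\|_{L^{1}}\lesssim 1$ is established, Young's inequality gives
\begin{equation}
\|\partial^{\alpha}\mathcal{D}^{-k}f\|_{L^{p}}=\|K_{\alpha}*f\|_{L^{p}}\leq\|K_{\alpha}\|_{L^{1}}\|f\|_{L^{p}}\qquad\forall\,p\in[1,\infty],
\end{equation}
and summing over all $|\alpha|\leq k-1$ produces the claimed bound \eqref{est 121}.

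For $p\in(1,\infty)$ the conclusion is actually subsumed by \cite[Lemma 3.3]{MS20}, since $\|\mathcal{D}^{-k}f\|_{W^{k,p}}\lesssim\|f\|_{L^{p}}$ trivially implies $\|\mathcal{D}^{-k}f\|_{W^{k-1,p}}\lesssim\|f\|_{L^{p}}$; the essential content of the statement is the extension to the endpoints $p=1$ and $p=\infty$, where Calderón--Zygmund singular integral theory (which fails at these endpoints) must be replaced by the Young-type argument above. The main obstacle is precisely this endpoint analysis: verifying that the periodized kernel of an order $-1$ multiplier belongs to $L^{1}(\mathbb{T}^{d})$. This step is standard but requires careful bookkeeping of the singularity strength of $\nabla_{x}^{j}G(x,y)$ and of the convergence of the periodic sum defining the kernel; once this ingredient is in place, the rest of the argument is a one-line application of Young's inequality together with the gain-of-derivative estimate $|\alpha|\leq k-1$.
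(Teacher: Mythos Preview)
The paper does not supply its own proof of this lemma: it is stated as a direct citation of \cite[Lemma 3.4]{MS20} and used as a black box throughout. So there is no in-paper argument to compare against.

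Your proposed argument is correct and is essentially the standard proof of the cited result. You correctly isolate the point of the lemma: for $p\in(1,\infty)$ it is weaker than the preceding Calder\'on--Zygmund estimate (your observation that \cite[Lemma 3.3]{MS20} already gives $W^{k,p}$ control is exactly right), and the content lies in the endpoints $p=1,\infty$. There the strategy of writing $\partial^{\alpha}\mathcal{D}^{-k}$ for $|\alpha|\le k-1$ as convolution with an $L^{1}(\mathbb{T}^{d})$ kernel and invoking Young's inequality is precisely what is needed. Your heuristic for the $L^{1}$ bound on the kernel---that an order $-1$ multiplier produces a singularity no worse than $|x|^{-(d-1)}$, which is locally integrable---is the right one; making it rigorous amounts to differentiating the periodic Green's function of $-\Delta$ the appropriate number of times and checking the resulting singularity strength, which is routine. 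The only remark is that your write-up is a sketch rather than a full proof: the ``careful bookkeeping'' you flag (periodization, smooth remainder away from the origin) would need to be written out explicitly in a self-contained exposition, but there is no missing idea.
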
 

\begin{define}\label{Definition 3.2}
\rm{( \cite[Definition on p. 1086]{MS20})}  (Bilinear anti-divergence operator) Let $N \in \mathbb{N}_{0}$. Define $\mathcal{R}_{N}: C^{\infty} (\mathbb{T}^{d}) \times C_{0}^{\infty} (\mathbb{T}^{d}) \mapsto C^{\infty} (\mathbb{T}^{d}; \mathbb{R}^{d})$ by 
\begin{equation}
\mathcal{R}_{N} (f,g) \triangleq \sum_{k=0}^{N-1} (-1)^{k} \mathcal{D}^{k} f \mathcal{D}^{-k-1} g + \mathcal{D}^{-1} \left( (-1)^{N} \mathcal{D}^{N} f \cdot \mathcal{D}^{-N} g - \fint_{\mathbb{T}^{d}} fg dx \right).
\end{equation} 
\end{define} 

\begin{lemma}
\rm{ (\cite[Lemma 3.5 and Remark on p. 1087]{MS20} )} 
\begin{enumerate}
\item Let $N \in \mathbb{N}_{0}, f \in C^{\infty} (\mathbb{T}^{d})$, and $g \in C_{0}^{\infty} (\mathbb{T}^{d})$. Then 
\begin{equation}\label{est 74}
\text{div} (\mathcal{R}_{N} (f,g)) = fg - \fint_{\mathbb{T}^{d}} fg dx.
\end{equation}  
\item Let $N \in \mathbb{N}_{0}, f \in C^{\infty} (\mathbb{T}^{d})$, and $g \in C_{0}^{\infty} (\mathbb{T}^{d})$. Then for all $j \in \{1,\hdots, d\}$, 
\begin{equation}\label{est 96}
\partial_{j} (\mathcal{R}_{N} (f,g)) = \mathcal{R}_{N} (\partial_{j} f, g) + \mathcal{R}_{N} (f, \partial_{j} g). 
\end{equation} 
\item Let $N \in \mathbb{N}_{0}$. Then, for any $p, r, s \in [1,\infty]$ such that $\frac{1}{p} = \frac{1}{r} + \frac{1}{s}$, there exists a constant $C_{d,p} \geq 0$ such that for any $f \in C^{\infty} (\mathbb{T}^{d})$ and $g \in C_{0}^{\infty} (\mathbb{T}^{d})$,  
\begin{equation}
\lVert \mathcal{R}_{N} (f,g) \rVert_{L^{p}} \leq \sum_{k=0}^{N-1} \lVert \mathcal{D}^{k} f \rVert_{L^{r}} \lVert \mathcal{D}^{-k-1} g \rVert_{L^{s}} + C_{d,p} \lVert \mathcal{D}^{N} f \rVert_{L^{r}} \lVert \mathcal{D}^{-N} g \rVert_{L^{s}}. 
\end{equation} 
\item Let $p \in [1,\infty]$ and $\lambda, N \in \mathbb{N}_{0}$. Then there exists a constant $C_{d,p,N} \geq 0$ such that for all $f \in C^{\infty} (\mathbb{T}^{d})$ and $g \in C_{0}^{\infty} (\mathbb{T}^{d})$, 
\begin{subequations}\label{est 283}
\begin{align}
\lVert \mathcal{R}_{N} (f,g_{\lambda}) \rVert_{L^{p}} \leq& C_{d,p,N} \lVert g \rVert_{L^{p}} \left( \sum_{k=0}^{N-1} \lambda^{-k-1} \lVert \mathcal{D}^{k} f \rVert_{L^{\infty}} + \lambda^{-N} \lVert \mathcal{D}^{N} f \rVert_{L^{\infty}} \right), \label{est 89} \\
\lVert \mathcal{R}_{N} (f, g_{\lambda}) \rVert_{L^{p}} \leq& C_{d,p,N} \lVert g \rVert_{L^{\infty}} \left( \sum_{k=0}^{N-1} \lambda^{-k-1} \lVert \mathcal{D}^{k} f \rVert_{L^{p}} + \lambda^{-N} \lVert \mathcal{D}^{N} f \rVert_{L^{p}} \right). \label{est 90}
\end{align}
\end{subequations}
\end{enumerate}
\end{lemma}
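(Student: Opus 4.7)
My plan is to treat the four statements in order, exploiting the explicit formula for $\mathcal{R}_N$ together with the commutation and scaling properties of $\mathcal{D}^k$ already established.

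For part (1), I would apply the divergence directly to each summand $(-1)^k\mathcal{D}^k f\,\mathcal{D}^{-k-1}g$ via the product rule, using that $\text{div}(\mathcal{D}^{-k-1}g)=\mathcal{D}^{-k}g$ (which follows from $\mathcal{D}^{-1}$ being an anti-divergence together with $\mathcal{D}^{-k-1}=\mathcal{D}^{-1}\mathcal{D}^{-k}$ up to commutation). This yields
\begin{equation*}
\text{div}\bigl(\mathcal{D}^k f\,\mathcal{D}^{-k-1}g\bigr)=\mathcal{D}^{k+1}f\cdot\mathcal{D}^{-k-1}g+\mathcal{D}^k f\cdot\mathcal{D}^{-k}g.
\end{equation*}
After reindexing the first sum $j=k+1$, the alternating signs produce a telescoping collapse, leaving only the $k=0$ and $k=N$ boundary contributions: $fg-(-1)^N\mathcal{D}^N f\cdot\mathcal{D}^{-N}g$. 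Adding the divergence of the $\mathcal{D}^{-1}$ correction then cancels the $k=N$ term and subtracts $\fint fg$, provided I first verify that $(-1)^N\mathcal{D}^N f\cdot\mathcal{D}^{-N}g-\fint fg$ is mean-zero. That identity is $\fint \mathcal{D}^N f\,\mathcal{D}^{-N}g=(-1)^N\fint fg$, which is exactly an iterated application of property (2) of the preceding lemma (integration by parts for $\mathcal{D}^k$).

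For part (2), I would observe that $\partial_j$ commutes with every $\mathcal{D}^k$ (already stated in the $\mathcal{D}^k$-property lemma), so the product rule applied termwise gives
\begin{equation*}
\partial_j\bigl(\mathcal{D}^k f\,\mathcal{D}^{-k-1}g\bigr)=\mathcal{D}^k(\partial_j f)\,\mathcal{D}^{-k-1}g+\mathcal{D}^k f\,\mathcal{D}^{-k-1}(\partial_j g),
\end{equation*}
and similarly for the $\mathcal{D}^{-1}$-term, using that $\partial_j$ commutes with $\mathcal{D}^{-1}$ on mean-zero functions. The mean constant $\fint fg$ is a constant in $x$, so its $\partial_j$ vanishes, while in the decomposition $\mathcal{R}_N(\partial_j f,g)+\mathcal{R}_N(f,\partial_j g)$ the corresponding mean-zero adjustments sum to $-\fint(\partial_j f)g-\fint f(\partial_j g)=0$ by integration by parts. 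Matching terms then gives the desired Leibniz-type identity.

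For part (3), I would apply the triangle inequality to the defining sum, H\"older with exponents $\frac{1}{p}=\frac{1}{r}+\frac{1}{s}$ on each product, and invoke the $L^p\to L^p$ bound for $\mathcal{D}^{-1}$ (a consequence of the $\mathcal{D}^{-k}$ lemma with $k=1$). The mean $\fint fg$ appearing inside the $\mathcal{D}^{-1}$ term is controlled by $\lVert \mathcal{D}^N f\rVert_{L^r}\lVert\mathcal{D}^{-N}g\rVert_{L^s}$ via H\"older together with the identity $\fint fg=(-1)^N\fint\mathcal{D}^N f\,\mathcal{D}^{-N}g$ used above, so it gets absorbed into the last displayed term. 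For part (4) I would substitute $g\mapsto g_\lambda$ into the bound from (3), choosing $(r,s)=(\infty,p)$ for \eqref{est 89} and $(r,s)=(p,\infty)$ for \eqref{est 90}, and use the scaling rule $\mathcal{D}^{-k}(g_\lambda)=\lambda^{-k}(\mathcal{D}^{-k}g)_\lambda$ from the $\mathcal{D}^k$-property lemma to extract the $\lambda^{-k-1}$ and $\lambda^{-N}$ factors, together with the translation-invariance $\lVert(\mathcal{D}^{-k}g)_\lambda\rVert_{L^p}=\lVert\mathcal{D}^{-k}g\rVert_{L^p}$ and the $L^p$ (resp.\ $L^\infty$) boundedness of $\mathcal{D}^{-k}$. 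The main delicate point in the whole lemma is the telescoping in (1) together with the verification that the $\mathcal{D}^{-1}$ argument is mean-zero; everything else is routine once the identity $\fint\mathcal{D}^N f\,\mathcal{D}^{-N}g=(-1)^N\fint fg$ is in hand.
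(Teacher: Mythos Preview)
Your proof sketch is correct and follows the natural approach: the telescoping computation in (1), the termwise Leibniz rule in (2), H\"older plus the $L^p$-bound for $\mathcal{D}^{-1}$ in (3), and the scaling identity $\mathcal{D}^{-k}(g_\lambda)=\lambda^{-k}(\mathcal{D}^{-k}g)_\lambda$ together with \eqref{est 121} in (4). The paper does not prove this lemma at all---it is quoted verbatim from \cite[Lemma~3.5 and the Remark on p.~1087]{MS20}---so there is no paper proof to compare against; your argument is essentially the standard one carried out in that reference.
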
 

The following are preliminaries on space-time Mikado densities and fields from \cite[Section 4.1]{MS20}. Let $e_{j}$ denote the $j$-th element in the standard basis of $\mathbb{R}^{d}$. For given $\zeta, v \in \mathbb{T}^{d}$, we consider the line on $\mathbb{T}^{d}: s \mapsto \zeta + s v \in \mathbb{T}^{d}$ for any $s \in \mathbb{R}$. 
\begin{lemma}\label{Lemma 3.8} 
\rm{ (Space-time Mikado lines \cite[Lemma 4.1]{MS20})} Let $d_{\mathbb{T}^{d}}$ denote the Euclidean distance on the torus. There exist $r > 0$ and $\zeta_{1}, \hdots, \zeta_{d} \in \mathbb{T}^{d}$ such that the lines 
\begin{equation}\label{est 193} 
\mathfrak{x}_{j}: \mathbb{R} \mapsto \mathbb{T}^{d}, \mathfrak{x}_{j}(s) \triangleq \zeta_{j} + s e_{j} 
\end{equation} 
satisfy 
\begin{equation}\label{est 194}
d_{\mathbb{T}^{d}} (\mathfrak{x}_{i}(s), \mathfrak{x}_{j}(s)) > 2r \hspace{4mm} \forall \hspace{1mm} s \in \mathbb{R}, \forall \hspace{1mm} i \neq j. 
\end{equation}  
\end{lemma}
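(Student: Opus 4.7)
The plan is to reduce the geometric separation condition to an avoidance property for the ``difference trajectories'' on $\mathbb{T}^{d}$ and then to produce the anchor points $\zeta_{j}$ by a simple measure-theoretic argument.

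First I would use translation invariance of $d_{\mathbb{T}^{d}}$ to rewrite
$$d_{\mathbb{T}^{d}}\bigl(\mathfrak{x}_{i}(s),\mathfrak{x}_{j}(s)\bigr) = d_{\mathbb{T}^{d}}\bigl(\zeta_{i} - \zeta_{j} + s(e_{i} - e_{j}),\,0\bigr).$$
Because $e_{i} - e_{j} \in \mathbb{Z}^{d}$, the curve $s \mapsto s(e_{i} - e_{j}) \pmod{\mathbb{Z}^{d}}$ is $1$-periodic, so its image
$$\mathcal{L}_{ij} := \bigl\{ s(e_{i} - e_{j}) \pmod{\mathbb{Z}^{d}} : s \in [0,1] \bigr\}$$
is a closed one-dimensional subgroup of $\mathbb{T}^{d}$ (topologically a circle), hence of $d$-dimensional Lebesgue measure zero. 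It follows that
$$\inf_{s \in \mathbb{R}} d_{\mathbb{T}^{d}}(\mathfrak{x}_{i}(s),\mathfrak{x}_{j}(s)) = d_{\mathbb{T}^{d}}(\zeta_{i} - \zeta_{j},\, \mathcal{L}_{ij}),$$
and since $\mathcal{L}_{ij}$ is compact the infimum is a minimum; it is strictly positive precisely when $\zeta_{i} - \zeta_{j} \notin \mathcal{L}_{ij}$.

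Next I would construct the $\zeta_{j}$ inductively. Pick $\zeta_{1} \in \mathbb{T}^{d}$ arbitrarily; having chosen $\zeta_{1}, \ldots, \zeta_{k-1}$, select $\zeta_{k}$ outside the forbidden set $\bigcup_{i < k}(\zeta_{i} + \mathcal{L}_{ik})$, which is a finite union of measure-zero closed subsets of $\mathbb{T}^{d}$ and therefore has nonempty (indeed full-measure) complement. This guarantees $\zeta_{k} - \zeta_{i} \notin \mathcal{L}_{ik}$ for all $i < k$, and since $\mathcal{L}_{ik}$ is a subgroup we automatically also have $\zeta_{i} - \zeta_{k} = -(\zeta_{k} - \zeta_{i}) \notin \mathcal{L}_{ik}$. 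After $d$ steps every pair $i \neq j$ is handled, and setting
$$r := \tfrac{1}{4} \min_{1 \leq i \neq j \leq d} d_{\mathbb{T}^{d}}(\zeta_{i} - \zeta_{j},\, \mathcal{L}_{ij}) > 0$$
yields \eqref{est 194}.

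I do not anticipate any serious obstacle in this argument. The only delicate point worth emphasizing is that each $\mathcal{L}_{ij}$ must be \emph{closed} (and not merely of measure zero): closedness is what makes the complement in $\mathbb{T}^{d}$ open and nonempty for the inductive selection to succeed, and simultaneously guarantees that the distance $d_{\mathbb{T}^{d}}(\zeta_{i} - \zeta_{j}, \mathcal{L}_{ij})$ is actually attained, so that $r > 0$ follows immediately. (For a more explicit alternative one may, when $d \geq 3$, take $\zeta_{j} := (j/(3d))(1,\ldots,1)$, which forces the $k$-th coordinate of $\zeta_{i} - \zeta_{j}$ to equal $(i-j)/(3d)$ for every $k \notin \{i,j\}$; but the measure-theoretic construction above has the advantage of working uniformly for all $d \geq 2$.)
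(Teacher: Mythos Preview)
Your argument is correct. The paper itself does not prove this lemma; it is stated with a citation to \cite[Lemma 4.1]{MS20} and no proof is supplied, so there is nothing in the present paper to compare against. Your reduction via translation invariance to the condition $\zeta_{i}-\zeta_{j}\notin\mathcal{L}_{ij}$, followed by the inductive selection of the $\zeta_{j}$ from the complement of a finite union of closed null sets, is a clean and complete proof valid for all $d\geq 2$.
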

For $r > 0$ from Lemma \ref{Lemma 3.8}, we let $\varrho$ be a smooth function on $\mathbb{R}^{d}$ such that 
\begin{equation}\label{est 56} 
supp\varrho \subset B(P, r) \subset (0,1)^{d} \hspace{2mm} \text{ where } P \triangleq (\frac{1}{2}, \hdots, \frac{1}{2}) \in (0,1)^{d} \text{ and } \int_{\mathbb{R}^{d}} \varrho^{2} dx = 1. 
\end{equation} 
For any $p \in (1,\infty)$, we define 
\begin{equation}\label{est 55}
a \triangleq \frac{d}{p} \hspace{2mm} \text{ and }\hspace{2mm}  b \triangleq \frac{d}{p'} \hspace{1mm} \text{ so that } a + b = d
\end{equation}
(recall \eqref{est 235}). Then we define the scaled functions 
\begin{equation}
\varrho_{\mu}(x) \triangleq \mu^{a} \varrho (\mu x), \hspace{2mm} \tilde{\varrho}_{\mu} (x) \triangleq \mu^{b} \varrho(\mu x), \hspace{2mm} \mu \geq 1. 
\end{equation} 
Concerning these scaled functions, we have the following result: 
\begin{lemma}\label{Lemma 3.9} 
\rm{ (\cite[Lemma 4.2 and p. 1088]{MS20})} For every $\mu \geq 1, k \in \mathbb{N}$, and $r \in [1,\infty]$, 
\begin{subequations}
\begin{align}
&\lVert D^{k} \varrho_{\mu} \rVert_{L^{r} (\mathbb{R}^{d})} = \mu^{a - \frac{d}{r} + k} \lVert D^{k} \varrho \rVert_{L^{r} (\mathbb{R}^{d})}, \hspace{2mm} \lVert D^{k} \tilde{\varrho}_{\mu} \rVert_{L^{r}(\mathbb{R}^{d})} = \mu^{b - \frac{d}{r} + k} \lVert D^{k} \varrho \rVert_{L^{r}(\mathbb{R}^{d})}, \\
& \int_{\mathbb{R}^{d}} \varrho_{\mu} \tilde{\varrho}_{\mu} dx = 1. \label{est 307}
\end{align}
\end{subequations} 
Consequently, $\lVert \varrho_{\mu} \rVert_{L^{p}(\mathbb{R}^{d})} = \lVert \varrho \rVert_{L^{p}(\mathbb{R}^{d})}, \lVert \tilde{\varrho}_{\mu} \rVert_{L^{p'}(\mathbb{R}^{d})} = \lVert \tilde{\varrho} \rVert_{L^{p'} (\mathbb{R}^{d})}$. Moreover, $supp \varrho_{\mu} = supp \tilde{\varrho}_{\mu}$ and both are contained in a ball with radius at most $r$ from Lemma \ref{Lemma 3.8}. 
\end{lemma}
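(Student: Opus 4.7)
The plan is to prove Lemma 3.9 by direct computation via the linear change of variables $y = \mu x$, which has Jacobian $\mu^{-d}$. Each identity reduces to tracking how the explicit prefactors $\mu^{a}, \mu^{b}$ interact with the substitution, using only the calibration $a+b = d$ from \eqref{est 55} and the normalization $\int \varrho^{2}\, dx = 1$ imposed in \eqref{est 56}.

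First I would establish the scaling of the derivative norms. The chain rule gives $D^{k}\varrho_{\mu}(x) = \mu^{a+k}(D^{k}\varrho)(\mu x)$, so
\begin{equation*}
\lVert D^{k} \varrho_{\mu} \rVert_{L^{r}(\mathbb{R}^{d})}^{r} = \int_{\mathbb{R}^{d}} \mu^{(a+k)r} \lvert (D^{k}\varrho)(\mu x) \rvert^{r}\, dx = \mu^{(a+k)r - d} \lVert D^{k} \varrho \rVert_{L^{r}(\mathbb{R}^{d})}^{r},
\end{equation*}
from which the claimed formula follows upon taking $r$-th roots; the case $r = \infty$ is the analogous supremum argument. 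Replacing $a$ by $b$ throughout handles $\tilde{\varrho}_{\mu}$. Specializing to $k = 0$, $r = p$ and invoking $a = d/p$ collapses the prefactor to $\mu^{0} = 1$, yielding $\lVert \varrho_{\mu} \rVert_{L^{p}} = \lVert \varrho \rVert_{L^{p}}$; the symmetric identity for $\tilde{\varrho}_{\mu}$ in $L^{p'}$ follows from $b = d/p'$. For the pairing \eqref{est 307} I would simply compute
\begin{equation*}
\int_{\mathbb{R}^{d}} \varrho_{\mu} \tilde{\varrho}_{\mu}\, dx = \mu^{a+b} \int_{\mathbb{R}^{d}} \varrho(\mu x)^{2}\, dx = \mu^{a+b-d} \int_{\mathbb{R}^{d}} \varrho^{2}(y)\, dy = 1.
\end{equation*}

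The support assertion is immediate once one observes that $\varrho_{\mu}(x)$ and $\tilde{\varrho}_{\mu}(x)$ vanish precisely when $\varrho(\mu x) = 0$; hence the two supports coincide and equal $\mu^{-1}\,\mathrm{supp}\,\varrho \subset \mu^{-1} B(P, r) = B(P/\mu, r/\mu)$, a ball of radius $r/\mu \leq r$ since $\mu \geq 1$. There is no real analytic obstacle here; the entire proof is dimensional bookkeeping. The one point warranting emphasis is that the exponents $a$ and $b$ have been calibrated precisely so that $\lVert \varrho_{\mu} \rVert_{L^{p}}$ and $\lVert \tilde{\varrho}_{\mu} \rVert_{L^{p'}}$ remain uniformly bounded in $\mu$, while their product nonetheless integrates to one; this $L^{p}/L^{p'}$ duality, combined with the concentration afforded by $\mu \gg 1$, is exactly what will allow $\varrho_{\mu}$ and $\tilde{\varrho}_{\mu}$ to serve as Mikado density and field building blocks in the subsequent convex integration scheme.
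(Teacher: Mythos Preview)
Your proof is correct. The paper does not supply its own proof of this lemma, citing instead \cite[Lemma 4.2 and p.~1088]{MS20}; your direct change-of-variables computation is exactly the standard argument and is what one finds in that reference.
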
 

For any given $y \in \mathbb{T}^{d}$, we define $\tau_{y}: \mathbb{T}^{d} \mapsto \mathbb{T}^{d}$ by 
\begin{equation}\label{est 67}
\tau_{y}(x) \triangleq x - y  \hspace{1mm} \text{ so that } \hspace{1mm} \lVert D^{k} (g \circ \tau_{y}) \rVert_{L^{r}} = \lVert D^{k} g \rVert_{L^{r}}
\end{equation}
for every smooth function $g$ on $\mathbb{T}^{d}$, for all $k \in \mathbb{N}_{0},$ and all $r \in [1,\infty].$

\begin{lemma}
\rm{ (\cite[Lemma 4.3]{MS20})} There exist functions $\varrho_{\mu}^{j}: \mathbb{T}^{d} \mapsto \mathbb{R}, \tilde{\varrho}_{\mu}^{j}: \mathbb{T}^{d} \mapsto \mathbb{R}$ for $j \in \{1, \hdots, d \}$, such that for any $r \in [1,\infty]$, and any $k \in \mathbb{N}_{0}$ 
\begin{equation}\label{est 68}
\lVert D^{k} \varrho_{\mu}^{j} \rVert_{L^{r}} = \mu^{a - \frac{d}{r} + k} \lVert D^{k} \varrho \rVert_{L^{r}}, \hspace{3mm} \lVert D^{k} \tilde{\varrho}_{\mu}^{j} \rVert_{L^{r}} = \mu^{ b - \frac{d}{r} + k} \lVert D^{k} \varrho \rVert_{L^{r}}. 
\end{equation} 
Moreover, for any $i, j \in \{1, \hdots, d\}$ and $s \in \mathbb{R}$, 
\begin{equation}\label{est 113}
\fint_{\mathbb{T}^{d}} (\varrho_{\mu}^{i} \circ \tau_{se_{i}}) (\tilde{\varrho}_{\mu}^{i} \circ \tau_{se_{i}}) dx = 1 \hspace{1mm} \text{ and } \hspace{1mm} (\varrho_{\mu}^{i} \circ \tau_{se_{i}}) ( \tilde{\varrho}_{\mu}^{j} \circ \tau_{se_{j}}) = 0 \hspace{1mm} \forall \hspace{1mm} i \neq j. 
\end{equation} 
\end{lemma}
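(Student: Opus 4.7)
The plan is to construct $\varrho_\mu^j$ and $\tilde{\varrho}_\mu^j$ as periodic translates of $\varrho_\mu$ and $\tilde{\varrho}_\mu$ (originally defined on $\mathbb{R}^d$), with their unique ``bump'' on $\mathbb{T}^d$ centered at the point $\zeta_j$ from Lemma \ref{Lemma 3.8} rather than at $P$. Viewing $\mathbb{T}^d \simeq \mathbb{R}^d/\mathbb{Z}^d$ and recalling from Lemma \ref{Lemma 3.9} that $\mathrm{supp}\,\varrho_\mu = \mathrm{supp}\,\tilde{\varrho}_\mu \subseteq B(P/\mu, r/\mu) \subset (0,1)^d$ (with $r$ chosen sufficiently small in Lemma \ref{Lemma 3.8}), I would define
\[
\varrho_\mu^j(x) \triangleq \sum_{n\in\mathbb{Z}^d}\varrho_\mu(x-\zeta_j+P-n), \qquad \tilde{\varrho}_\mu^j(x) \triangleq \sum_{n\in\mathbb{Z}^d}\tilde{\varrho}_\mu(x-\zeta_j+P-n).
\]
These are smooth $1$-periodic functions on $\mathbb{R}^d$ and therefore descend to smooth functions on $\mathbb{T}^d$; on any fundamental cell their support is precisely one translated copy of $B(P/\mu, r/\mu)$, now centered at $\zeta_j$.

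With the construction fixed, \eqref{est 68} is immediate: since $D^k \varrho_\mu^j$ is supported in a single copy of $B(\zeta_j, r/\mu)$ inside a fundamental domain, translating back by $\zeta_j - P$ yields
\[
\lVert D^k \varrho_\mu^j \rVert_{L^r(\mathbb{T}^d)} = \lVert D^k \varrho_\mu \rVert_{L^r(\mathbb{R}^d)},
\]
and Lemma \ref{Lemma 3.9} supplies the explicit scaling $\mu^{a - d/r + k}\lVert D^k \varrho \rVert_{L^r}$. The estimate for $\tilde{\varrho}_\mu^j$ is identical with $a$ replaced by $b$.

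To verify \eqref{est 113}, the normalization piece follows by translation invariance of Lebesgue measure on $\mathbb{T}^d$:
\[
\fint_{\mathbb{T}^d}(\varrho_\mu^i\circ\tau_{se_i})(\tilde{\varrho}_\mu^i\circ\tau_{se_i})\,dx = \fint_{\mathbb{T}^d}\varrho_\mu^i\tilde{\varrho}_\mu^i\,dx = \int_{\mathbb{R}^d}\varrho_\mu\tilde{\varrho}_\mu\,dy \stackrel{\eqref{est 307}}{=} 1,
\]
using that $\varrho_\mu^i \tilde{\varrho}_\mu^i$ is supported in a single fundamental-cell copy. For disjointness when $i \neq j$: the composed functions $\varrho_\mu^i\circ\tau_{se_i}$ and $\tilde{\varrho}_\mu^j\circ\tau_{se_j}$ are supported in the balls $B(\mathfrak{x}_i(s), r/\mu)$ and $B(\mathfrak{x}_j(s), r/\mu)$ on $\mathbb{T}^d$, respectively, and these balls are at distance strictly greater than $2r \geq 2r/\mu$ by \eqref{est 194}, so the pointwise product is identically zero.

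The only delicate point — essentially bookkeeping rather than a serious obstacle — is to ensure that periodizing the compactly supported $\varrho_\mu$ really yields a single bump in each fundamental domain, so that torus integrals coincide with Euclidean integrals and the translated bumps remain inside one cell. This is guaranteed by taking $r$ from Lemma \ref{Lemma 3.8} sufficiently small (which we may do without affecting the Mikado line property) and by $\mu \geq 1$ further shrinking the support. Once this is in place, both \eqref{est 68} and \eqref{est 113} reduce directly to Lemma \ref{Lemma 3.9} and the geometric separation \eqref{est 194}.
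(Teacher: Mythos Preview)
Your proof is correct and follows essentially the same approach as the paper (which cites \cite[Lemma 4.3]{MS20} and records in the subsequent Remark the explicit formula $\varrho_\mu^j \triangleq \varrho_\mu \circ \tau_{\zeta_j}$, $\tilde{\varrho}_\mu^j \triangleq \tilde{\varrho}_\mu \circ \tau_{\zeta_j}$). The only cosmetic difference is your extra $+P$ shift, which is unnecessary: since $\varrho_\mu$ is centered at $P/\mu$ rather than $P$, your offset does not actually place the bump at $\zeta_j$ (your support is $B(\mathfrak{x}_i(s)-P+P/\mu, r/\mu)$, not $B(\mathfrak{x}_i(s), r/\mu)$), but this is harmless because the common offset cancels in the distance computation and in all integrals.
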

\begin{remark}
From the proof of \cite[Lemma 4.3]{MS20} we know that the precise forms of such $\varrho_{\mu}^{j}$ and $\tilde{\varrho}_{\mu}^{j}$ are 
\begin{equation}\label{est 192} 
\varrho_{\mu}^{j} \triangleq \varrho_{\mu} \circ \tau_{\zeta_{j}}, \hspace{3mm} \tilde{\varrho}_{\mu}^{j} \triangleq \tilde{\varrho}_{\mu} \circ \tau_{\zeta_{j}}
\end{equation} 
where $\zeta_{j}$ are those from Lemma \ref{Lemma 3.8}.  
\end{remark} 

Next, we fix a smooth function $\psi: \mathbb{T}^{d-1} \mapsto \mathbb{R}$ such that 
\begin{equation}\label{est 57} 
\fint_{\mathbb{T}^{d-1}} \psi dx = 0 \hspace{1mm} \text{ while } \hspace{1mm} \fint_{\mathbb{T}^{d-1}} \psi^{2} dx = 1 
\end{equation} 
and define for every $j \in \{1, \hdots, d \}$
\begin{equation}\label{est 72}
\psi^{j}: \mathbb{T}^{d} \mapsto \mathbb{R}, \hspace{3mm} \psi^{j} (x) \triangleq \psi^{j} (x_{1}, \hdots, x_{d}) \triangleq \psi(x_{1}, \hdots, x_{j-1}, x_{j+1}, \hdots, x_{d}) 
\end{equation} 
so that 
\begin{equation}\label{est 305}
\fint_{\mathbb{T}^{d}} \psi^{j} dx = 0 \hspace{1mm} \text{ while } \hspace{1mm} \fint_{\mathbb{T}^{d}} (\psi^{j})^{2} dx = 1. 
\end{equation} 
For the parameters 
\begin{subequations}\label{est 58} 
\begin{align}
&\text{ fast oscillation} = \lambda \in \mathbb{N}, \hspace{3mm} \text{ concentration}  = \mu \gg \lambda, \\
&\text{ phase speed} = \omega,  \hspace{12mm}\text{ very fast oscillation} =\nu \in \lambda \mathbb{N}, \nu \gg \lambda,
\end{align}
\end{subequations} 
we define for $j \in \{1, \hdots, d \}$, Mikado density, Mikado field, and quadratic corrector as 
\begin{subequations}\label{est 59}
\begin{align}
&\Theta_{\lambda, \mu, \omega, \nu}^{j} (t,x) \triangleq \varrho_{\mu}^{j} (\lambda (x - \omega t e_{j} )) \psi^{j} (\nu x) = (( \varrho_{\mu}^{j} )_{\lambda} \circ \tau_{\omega t e_{j}})(x) \psi_{\nu}^{j}(x), \\
&W_{\lambda, \mu, \omega, \nu}^{j} (t,x) \triangleq \tilde{\varrho}_{\mu}^{j} (\lambda (x - \omega t e_{j} )) \psi^{j} (\nu x) e_{j} = (( \tilde{\varrho}_{\mu}^{j})_{\lambda} \circ \tau_{\omega t e_{j}}) (x) \psi_{\nu}^{j} (x) e_{j}, \\
& Q_{\lambda, \mu, \omega, \nu}^{j} (t,x) \triangleq \omega^{-1} ( \varrho_{\mu}^{j} \tilde{\varrho}_{\mu}^{j}) ( \lambda (x - \omega t e_{j} )) (\psi^{j} (\nu x))^{2} = \omega^{-1} (( \varrho_{\mu}^{j} \tilde{\varrho}_{\mu}^{j})_{\lambda} \circ \tau_{\omega t e_{j}})(x) (\psi_{\nu}^{j}(x))^{2}, 
\end{align}
\end{subequations} 
to which we refer as $\Theta^{j}, W^{j}$, and $Q^{j}$ when no confusion arises, respectively. They satisfy 
\begin{equation}\label{est 126}
\partial_{t} \Theta_{\lambda, \mu, \nu, \omega}^{j} = - \lambda \omega (( \partial_{j}  \varrho_{\mu}^{j})_{\lambda} \circ \tau_{\omega t e_{j}}) \psi_{\nu}^{j} \hspace{2mm} \text{ and } \hspace{2mm} \text{ div} W_{\lambda, \mu, \omega, \nu}^{j} = \lambda (( \partial_{j} \tilde{\varrho}_{\mu}^{j})_{\lambda} \circ \tau_{\omega t e_{j}}) \psi_{\nu}^{j}.
\end{equation}  

\section{Proof of Theorem \ref{Theorem 2.2}}\label{Section 4}

\begin{proposition}\label{Proposition 4.1} 
Under the hypothesis of \eqref{est 31}, the solution $z$ to \eqref{stochastic heat} where $B$ is the $GG^{\ast}$-Wiener process satisfies for all $\delta \in (0, \frac{1}{2}), T > 0,$ and $l \in \mathbb{N}$, 
\begin{equation}
\mathbb{E}^{\mathbb{P}}[ \lVert z \rVert_{C_{T} \dot{H}_{x}^{\frac{d+ 2 + \varsigma}{2}}}^{l} + \lVert z \rVert_{C_{T}^{\frac{1}{2} - \delta} \dot{H}_{x}^{\frac{d+\varsigma}{2}}}^{l}] < \infty. 
\end{equation} 
\end{proposition}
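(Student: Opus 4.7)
The plan is to represent $z$ via the stochastic convolution
$$z(t) = \int_{0}^{t} e^{(t-s)\Delta}\,dB(s) = \sum_{j=1}^{\infty} \sqrt{\eta_{j}} \int_{0}^{t} e^{(t-s)\Delta} e_{j}\,d\beta_{j}(s),$$
which is a well-defined Gaussian process taking values in every Hilbert space in which the series converges, and then to combine It\^o's isometry with the regularizing effect of the heat semigroup and a standard factorization argument of Da Prato-Kwapie\'n-Zabczyk in order to extract both the spatial supremum bound and the temporal H\"older bound.

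I would start from the pointwise second moment bound at spatial regularity $\sigma$, namely
$$\mathbb{E}\lVert z(t) \rVert_{\dot H^{\sigma}}^{2} = \int_{0}^{t} \sum_{j} \eta_{j} \lVert (-\Delta)^{\sigma/2} e^{(t-s)\Delta} e_{j} \rVert_{L^{2}}^{2}\,ds.$$
When $\sigma \le d/2 + 2\varsigma$ I drop the semigroup (a contraction) and bound the right side by $T\,\mathrm{Tr}((-\Delta)^{d/2+2\varsigma}GG^{\ast})$; when $\sigma > d/2 + 2\varsigma$ I factor $(-\Delta)^{\sigma/2} = (-\Delta)^{(\sigma - d/2 - 2\varsigma)/2} (-\Delta)^{d/4 + \varsigma}$ and apply the spectral inequality $\lVert (-\Delta)^{\beta/2} e^{\tau\Delta} \rVert_{\mathrm{op}} \lesssim \tau^{-\beta/2}$, obtaining the trace times $\int_{0}^{t} \tau^{-(\sigma - d/2 - 2\varsigma)}\,d\tau$. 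Both target exponents $\sigma_{1} = (d+2+\varsigma)/2$ and $\sigma_{2} = (d+\varsigma)/2$ fall into one of these two regimes with the singular integral remaining absolutely convergent, precisely thanks to $\varsigma > 0$. Gaussian hypercontractivity on the second Wiener chaos then upgrades these $L^{2}(\Omega)$ bounds to arbitrary $L^{l}(\Omega)$ moments, at each fixed time.

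To pass from the pointwise bounds to the continuous-path regularity, I would then invoke the factorization identity
$$z(t) = \frac{\sin\pi\alpha}{\pi} \int_{0}^{t} (t-s)^{\alpha-1} e^{(t-s)\Delta} Y(s)\,ds, \qquad Y(s) \triangleq \int_{0}^{s} (s-r)^{-\alpha} e^{(s-r)\Delta}\,dB(r),$$
valid for every $\alpha \in (0, 1/2)$. Repeating the It\^o-isometry computation above with the extra weight $(s-r)^{-2\alpha}$ shows that $Y \in L^{p}(0, T; \dot H^{\sigma_{1}})$ in every $L^{l}(\Omega)$-moment provided $\alpha < 3\varsigma/4$ (where the additional singularity $(s-r)^{-2\alpha-(1-3\varsigma/2)}$ must remain integrable), and that $Y \in L^{p}(0, T; \dot H^{\sigma_{2}})$ in every moment for every $\alpha < 1/2$. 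The standard factorization lemma then delivers (i) the bound $\sup_{t \in [0,T]} \lVert z(t) \rVert_{\dot H^{\sigma_{1}}} \lesssim_{\alpha, p, T} \lVert Y \rVert_{L^{p}(0, T; \dot H^{\sigma_{1}})}$ as soon as $\alpha > 1/p$, giving the first half of the statement after choosing $\alpha \in (1/p, 3\varsigma/4)$ with $p$ large; and (ii) the H\"older bound $\lVert z(t_{1}) - z(t_{2}) \rVert_{\dot H^{\sigma_{2}}} \lesssim_{\alpha, p, T} |t_{1} - t_{2}|^{\alpha - 1/p} \lVert Y \rVert_{L^{p}(0, T; \dot H^{\sigma_{2}})}$, giving the second half by sending $\alpha \uparrow 1/2$ and $p \uparrow \infty$ so that $\alpha - 1/p > 1/2 - \delta$. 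Combining with Gaussian hypercontractivity (or Minkowski's integral inequality together with the all-moments property of the Gaussian random variable $Y(s)$) yields the claimed $L^{l}(\Omega)$ bound for every $l \in \mathbb{N}$.

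The main technical point is simply the bookkeeping: the exponent $\alpha$ must be chosen independently in the two applications of the factorization method, large enough in each case to beat the $1/p$ loss in the sup/H\"older estimate yet small enough to keep the integral defining $Y$ at the required spatial regularity absolutely convergent. Once this balance is arranged using the slack afforded by $\varsigma > 0$ in \eqref{est 31}, the remaining work is a direct application of standard SPDE tools (It\^o isometry, the factorization formula, and Gaussian hypercontractivity), and no new idea is needed.
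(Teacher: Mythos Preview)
Your proposal is correct and follows exactly the standard route the paper defers to: the paper itself gives no argument beyond citing \cite[Proposition 4.4]{Y21a}, \cite[Proposition 3.6]{HZZ19}, and \cite[Proposition 34]{D13}, all of which establish the regularity of the stochastic convolution via the Da Prato--Kwapie\'n--Zabczyk factorization method combined with It\^o's isometry and Gaussian moment bounds, precisely as you outline. Your bookkeeping of the exponents (in particular the constraint $\alpha < 3\varsigma/4$ for the $\dot H^{(d+2+\varsigma)/2}$ bound on $Y$ and the unrestricted $\alpha < 1/2$ for the $\dot H^{(d+\varsigma)/2}$ bound) is the right balance, and the passage to all $l\in\mathbb{N}$ via Gaussian hypercontractivity is standard.
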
 

\begin{proof}[Proof of Proposition \ref{Proposition 4.1}]
This result is discussed in detail in \cite[Proposition 4.4]{Y21a} and follows from \cite[Proposition 3.6]{HZZ19} which in turn followed \cite[Proposition 34]{D13}. 
\end{proof}

For the Sobolev constant $C_{S} > 0$ such that $\lVert f \rVert_{L_{x}^{\infty}} \leq C_{S} \lVert f \rVert_{\dot{H}_{x}^{\frac{d+\varsigma}{2}}}$ for all $f \in \dot{H}^{\frac{d+\varsigma}{2}}(\mathbb{T}^{d})$ that is mean-zero and $\varpi \in (0, \frac{1}{4})$, we define 
\begin{equation}\label{est 54}
T_{L} \triangleq \inf \{ t \geq 0: C_{S} \lVert z(s) \rVert_{\dot{H}_{x}^{\frac{d+ 2 + \varsigma}{2}}} \geq L^{\frac{1}{4}} \}  
\wedge \inf\{t \geq 0: C_{S} \lVert z \rVert_{C_{t}^{\frac{1}{2} - 2 \varpi} \dot{H}_{x}^{\frac{d+\varsigma}{2}}} \geq L^{\frac{1}{2}} \} \wedge L. 
\end{equation} 
We see that $T_{L} > 0$ and $\lim_{L\to\infty} T_{L} = \infty$ $\mathbb{P}$-a.s. due to Proposition \ref{Proposition 4.1} and for all $t \in [0, T_{L}]$, 
\begin{equation}\label{est 32}
\lVert z(t) \rVert_{L_{x}^{\infty}} \leq L^{\frac{1}{4}}, \hspace{3mm} \lVert z(t) \rVert_{W_{x}^{1,\infty}} \leq L^{\frac{1}{4}}, \hspace{3mm} \lVert z \rVert_{C_{t}^{\frac{1}{2} - 2 \varpi} L_{x}^{\infty}} \leq L^{\frac{1}{2}}.
\end{equation} 
We let 
\begin{equation}\label{est 49}
M_{0}(t) \triangleq L^{4} e^{4Lt}. 
\end{equation} 
Theorem \ref{Theorem 2.2} essentially follows from this key proposition concerning the transport-diffusion-defect equation \eqref{est 29}.
\begin{proposition}\label{Proposition 4.2} 
There exists a constant $M > 0$ such that the following holds. Let $\varpi \in (0, \frac{1}{4})$, $p \in (1,\infty)$, $\tilde{p} \in [1,\infty)$ such that \eqref{est 8} holds. Then for any $\delta, \eta > 0$ and $(\mathcal{F}_{t})_{t\geq 0}$-adapted $(\theta_{0}, u_{0}, R_{0})$ that satisfies \eqref{est 29} such that for all $t \in [0, T_{L}]$ $\fint_{\mathbb{T}^{d}} \theta_{0}(t,x) dx = 0$, 
\begin{subequations}\label{est 240}
\begin{align}
&\theta_{0} \in C^{\infty} ([0,T_{L}] \times \mathbb{T}^{d}),  \hspace{3mm} u_{0} \in C^{\infty} ([0,T_{L}] \times \mathbb{T}^{d}), \\
&R_{0} \in C([0,T_{L}]; C^{1}(\mathbb{T}^{d})) \cap C^{\frac{1}{2} - 2 \varpi} ([0,T_{L}]; C(\mathbb{T}^{d})), 
\end{align}
\end{subequations}
and 
\begin{equation}\label{est 78}
\lVert R_{0}(t) \rVert_{L_{x}^{1}} \leq 2 \delta M_{0}(t), 
\end{equation}
there exists another $(\mathcal{F}_{t})_{t\geq 0}$-adapted $(\theta_{1}, u_{1}, R_{1})$ that satisfies \eqref{est 29} in same corresponding regularity class \eqref{est 240} such that for all $t \in [0, T_{L}]$ $\fint_{\mathbb{T}^{d}} \theta_{1}(t,x) dx = 0$ and 
\begin{subequations}\label{est 39}
\begin{align}
& \lVert (\theta_{1} -\theta_{0})(t) \rVert_{L_{x}^{p}} \leq M \eta ( 2 \delta M_{0}(t))^{\frac{1}{p}}, \label{est 35} \\ 
& \lVert (u_{1} - u_{0})(t) \rVert_{L_{x}^{p'}} \leq M \eta^{-1} (2\delta M_{0}(t))^{\frac{1}{p'}}, \label{est 36}\\
& \lVert (u_{1} - u_{0})(t) \rVert_{W_{x}^{1,\tilde{p}}} \leq \delta M_{0}(t), \label{est 37}\\
& \lVert R_{1}(t) \rVert_{L_{x}^{1}} \leq \delta M_{0}(t). \label{est 38}
\end{align}
\end{subequations}
Finally, if $(\theta_{0}, u_{0}, R_{0})(0,x)$ are deterministic, then so are $(\theta_{1}, u_{1}, R_{1})(0,x)$. 
\end{proposition}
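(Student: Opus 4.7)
The plan is to follow a Nash-type iteration of Modena--Sz\'ekelyhidi Jr.\ \cite{MS20}, modified for the probabilistic setting and for the extra term $\mathrm{div}(u_0 z)$ appearing in \eqref{est 30}. Concretely, I would set $\theta_1 = \theta_0 + \vartheta_p + \vartheta_c$ and $u_1 = u_0 + w_p + w_c$, where $\vartheta_p, w_p$ are constructed from space--time Mikado densities $\Theta^j$ and fields $W^j$ from \eqref{est 59} with coefficients $a_j, b_j$ built from the components $R_0^j$, so that $\sum_j a_j b_j \Theta^j W^j$ produces $-R_0$ plus oscillation, while $\vartheta_c$ is a temporal corrector balancing $\partial_t \Theta^j$ via the identity \eqref{est 126}, and $w_c$ enforces $\nabla\cdot(w_p+w_c)=0$.

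To address the first issue flagged in Remark \ref{Remark 2.3}, I would first mollify $R_0$ in time via a \emph{causal} kernel $\varphi_\varepsilon$ supported in $[0,\varepsilon]$ (after extending $R_0$ by its value at $t=0$ for $t<0$), producing $\bar R_0 \in C^\infty_t$ that is still $(\mathcal{F}_t)$-adapted and satisfies $\bar R_0(0,\cdot)=R_0(0,\cdot)$. Using \eqref{est 240} and the H\"older exponent $\tfrac{1}{2}-2\varpi$, one controls $\|R_0-\bar R_0\|_{C_tL^1_x}\lesssim \varepsilon^{1/2-2\varpi}$, while $\bar R_0$ inherits the $L^1_x$ bound \eqref{est 78} up to a constant, and its $C^k_{t,x}$ norms blow up only polynomially in $\varepsilon^{-1}$. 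The coefficients $a_j, b_j$ are then defined from $\bar R_0^j$ exactly as on \cite[p.~1092]{MS20} (with $a_j\sim|\bar R_0^j|^{1/p}$ and $b_j\sim|\bar R_0^j|^{1/p'}\mathrm{sgn}(\bar R_0^j)$), so that all required $C^k_{t,x}$ bounds of $a_j,b_j$ follow, to be proven in Lemma \ref{Lemma 4.4}.

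With parameters satisfying the hierarchy \eqref{est 58}, \eqref{est 35}--\eqref{est 37} follow from Lemmas \ref{Lemma 3.9} and \eqref{est 68} together with \eqref{est 80}, the latter absorbing the $\psi^j_\nu$ factors; the parameter $\eta$ enters through the scaling $\mu^a$ versus $\mu^b$ in \eqref{est 55}, which redistributes size between $a_j$ and $b_j$. Substituting $(\theta_1,u_1)$ into \eqref{est 29} and applying \eqref{est 74}, \eqref{est 96} together with $\mathcal{R}_N$ from Definition \ref{Definition 3.2}, I would write $R_1$ as a sum of
\begin{equation*}
R_1 = R_{\mathrm{osc}} + R_{\mathrm{lin}} + R_{\mathrm{trans}} + R_{\mathrm{moll}} + R_{\mathrm{time}} + R_z,
\end{equation*}
i.e.\ the oscillation error $\mathcal{R}_N(a_jb_j,\Theta^jW^j-\fint\Theta^jW^j\,dx)$; the linear errors from $u_0\vartheta_p+w_p\theta_0-\Delta\vartheta_p$; a transport error from $\partial_t\vartheta_p+w_p\cdot\nabla$; the mollification defect $R_0-\bar R_0$; the time-corrector defect; and the genuinely new term $R_z = -\mathcal{R}_N(\ldots, u_0 z)$ coming from the $z$-transport. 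The $L^1_x$ estimate \eqref{est 38} for $R_{\mathrm{osc}}$ uses \eqref{est 89}--\eqref{est 90} with gains $\mu^{-N}$ or $\lambda^{-1}$, and works precisely when \eqref{est 8} holds, as in \cite[Section~4]{MS20}; $R_z$ and $R_{\mathrm{moll}}$ are bounded using \eqref{est 32} on $[0,T_L]$, and one fixes $\varepsilon=\varepsilon(\lambda,\mu,\omega,\nu)$ small so that the mollification error is controlled without inflating the $C^k_{t,x}$ norms of $\bar R_0$ past what $\lambda,\mu,\omega,\nu\gg1$ can absorb.

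Adaptedness of $(\theta_1,u_1,R_1)$ is automatic because every ingredient---$\bar R_0$, $a_j$, $b_j$, the deterministic profiles $\Theta^j,W^j$, and $\mathcal{R}_N$---is $(\mathcal{F}_t)$-adapted; deterministic initial data is preserved since the causal mollification gives $\bar R_0(0,\cdot)=R_0(0,\cdot)$ and, consequently, $(\vartheta_p,\vartheta_c,w_p,w_c)(0,\cdot)$ are deterministic functionals of deterministic objects. The principal obstacle I expect is the simultaneous control of $R_{\mathrm{time}}$ (where the temporal corrector $\vartheta_c$ must neutralize the $\partial_t\Theta^j$ contribution yet remain small in $L^p_x$, forcing a careful balance of $\omega$ against the $\mu,\lambda$ scales) and the new $R_z$ term, since $u_0$ is itself large in $C^k_{t,x}$ (because it is modified at each iteration) and $z$ is only bounded up to the stopping time; reconciling these with the oscillation bound dictated by the sharp condition \eqref{est 8} is what makes the parameter ordering the delicate heart of the argument.
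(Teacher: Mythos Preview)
Your outline has the right skeleton---causal mollification of $R_0$ only, space--time Mikado building blocks from \eqref{est 59}, coefficients $a_j,b_j$ built from the mollified defect, and adaptedness via the one-sided kernel---and these are indeed the ingredients the paper uses. But there is a genuine structural gap in your perturbation ansatz.

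You write $\theta_1=\theta_0+\vartheta_p+\vartheta_c$ and describe $\vartheta_c$ as ``a temporal corrector balancing $\partial_t\Theta^j$ via \eqref{est 126}''. This is not the mechanism. In the paper (following \cite{MS20}), $\vartheta_c$ is merely the constant $-\fint\vartheta\,dx$ ensuring mean zero; it plays no dynamical role. The crucial missing piece is the \emph{quadratic corrector} $q=\sum_j a_jb_j\,Q^j$ (with $q_c=-\fint q$), where $Q^j$ is defined in \eqref{est 59}. The key identity is \eqref{est 65}: $\partial_tQ^j+\mathrm{div}(\Theta^jW^j)=0$. When you expand $\mathrm{div}(\vartheta w-R_l)$, the term $\sum_j a_jb_j\,\mathrm{div}(\Theta^jW^j)$ appears (see \eqref{est 106}), and this is \emph{not small}: $\|\mathrm{div}(\Theta^jW^j)\|_{L^1_x}\sim\lambda\mu$, and no anti-divergence $\mathcal{R}_N$ acting on a product oscillating at mixed scales $\lambda,\mu,\nu$ recovers that loss. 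The only way to cancel it is to pair it with $\partial_t q$, converting $a_jb_j\,\mathrm{div}(\Theta^jW^j)=-a_jb_j\,\partial_tQ^j=-\partial_t(a_jb_jQ^j)+\partial_t(a_jb_j)Q^j$; the first piece is absorbed into $\partial_t\theta_1$, the second is small because $\|Q^j\|_{L^1_x}\lesssim\omega^{-1}$. Without $q$, your $R_{\mathrm{osc}}$ estimate fails. The paper's decomposition accordingly contains \emph{nine} error terms \eqref{est 150}, including $R^{\mathrm{time,1}}$, $R^q$, and pieces of $R^{\mathrm{corr,1}}$ arising specifically from $q$.

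A secondary point: your $R_z$ is misidentified. The forcing $\mathrm{div}(u_0z)$ cancels between the equations for $(\theta_0,u_0,R_0)$ and $(\theta_1,u_1,R_1)$; what survives is $\mathrm{div}((w+w_c)z)$, not anything involving $u_0z$. In the paper this becomes $R^{\mathrm{corr,2}}=zw$ (estimated in Lemma~\ref{Lemma 4.10} using $\|z\|_{L^\infty}\le L^{1/4}$ on $[0,T_L]$ and $\|W^j\|_{L^1}\lesssim\mu^{-a}$) together with the $zw_c$ contribution folded into $R^{\mathrm{corr,1}}$. These are genuinely easy; the hard balance you anticipate is really between $R^{\mathrm{time,2}}$ (scaling $\omega\mu^{-b}(\lambda\mu/\nu)^N$) and $R^{\mathrm{quadr}},R^q$ (scaling $\mu^b/\omega$), which forces the choice $\omega=\lambda^\beta$ with $b\alpha<\beta<b\alpha+\gamma-\alpha-1$ in \eqref{est 148}.
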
 

\begin{remark}\label{Remark 4.1}
Proposition \ref{Proposition 4.2} is a kind of a probabilistic analogue of \cite[Proposition 2.1]{MS20}. One key difference is that ``$\lVert R_{0}(t) \rVert_{L_{x}^{1}}^{\frac{1}{p}}$'' to bound $\lVert (\theta_{1} - \theta_{0})(t) \rVert_{L_{x}^{p}}$ and ``$\lVert R_{0}(t) \rVert_{L_{x}^{1}}^{\frac{1}{p'}}$'' to bound $\lVert (u_{1} - u_{0})(t) \rVert_{L_{x}^{p'}}$ in \cite[Equations (2.3a)-(2.3b)]{MS20} are unreasonable for us. Such bounds work well in \cite{MS20} because e.g., they construct $\theta_{1}(t)$ as a sum of $\theta_{0}(t)$ and certain perturbation (``$\vartheta(t,x)  + \vartheta_{c}(t) + q(t,x) + q_{c}(t)$'' on \cite[p. 1092]{MS20}) that completely vanishes when $R_{0}(t) \equiv 0$ and thus $\theta_{1}(t) - \theta_{0}(t) \equiv 0$ so that an inductive bound of ``$\lVert \theta_{1}(t) -\theta_{0}(t) \rVert_{L_{x}^{p}} \leq M \eta \lVert R_{0}(t) \rVert_{L_{x}^{1}}^{\frac{1}{p}}$'' is achievable. As we described in Remark \ref{Remark 2.3}, we will construct $\theta_{1}$ as a sum of $\theta_{0}$ and perturbations that does not vanish in general even if $R_{0}(t) \equiv 0$; thus we cannot bound $ \lVert (\theta_{1} -\theta_{0})(t) \rVert_{L_{x}^{p}}$ by $M\eta \lVert R_{0}(t) \rVert_{L^{1}}^{\frac{1}{p}}$ which may be zero. Analogous comment applies for our choice of $ \lVert (u_{1} - u_{0})(t) \rVert_{L_{x}^{p'}} \leq M \eta^{-1} (2\delta M_{0}(t))^{\frac{1}{p'}}$ in \eqref{est 36}.  This is precisely why we included a hypothesis \eqref{est 78} that is absent in \cite[Proposition 2.1]{MS20} to achieve bounds of $M \eta (2\delta M_{0}(t))^{\frac{1}{p}}$ and $M \eta^{-1} (2\delta M_{0}(t))^{\frac{1}{p'}}$ in \eqref{est 35}-\eqref{est 36}, respectively.  
\end{remark} 

Assuming Proposition \ref{Proposition 4.2}, we can prove Theorem \ref{Theorem 2.2} as follows.
\begin{proof}[Proof of Theorem \ref{Theorem 2.2} assuming Proposition \ref{Proposition 4.2}]
For any $L > 1$ we define 
\begin{equation}\label{est 179}
\theta_{0} (t,x) \triangleq M_{0}(t)^{\frac{1}{p}} (x_{d} - \frac{1}{2}), \hspace{3mm} u_{0} \equiv 0, \hspace{3mm} R_{0} \triangleq -\mathcal{D}^{-1} \partial_{t} \theta_{0}
\end{equation} 
so that $\theta_{0}$ has mean zero for all $t \geq 0$, $u_{0}$ is trivially divergence-free, and $\mathcal{D}^{-1}$ from Definition \ref{Definition 3.1} is well-defined because $\theta_{0}$ is mean-zero. Because $\Delta \theta_{0} = 0$, by construction $(\theta_{0}, u_{0}, R_{0})$ solves \eqref{est 29}; moreover, $\theta_{0} \in C_{t,x}^{\infty}, u_{0} \in C_{t,x}^{\infty}, R_{0} \in C_{t}C_{x}^{1} \cap C_{t}^{\frac{1}{2} - 2 \varpi}C_{x}$ and they are all $(\mathcal{F}_{t})_{t\geq 0}$-adapted. We can also readily compute 
\begin{equation}\label{est 40}
\lVert \theta_{0}(t) \rVert_{L^{p}} = \frac{M_{0} (t)^{\frac{1}{p}}}{2(p+1)^{\frac{1}{p}}}.
\end{equation} 
We note that a typical choice for such $\theta_{0}$ in the case of the 3D Navier-Stokes equations would be $\theta_{0}(t,x) = M_{0}(t)^{\frac{1}{2}} \sin(2\pi x_{3})$ for which it suffices to compute its $L^{2}(\mathbb{T}^{3})$-norm (see e.g., \cite[Proposition 4.7]{Y20a}); however, we need to compute its $L^{p}(\mathbb{T}^{d})$-norm for an arbitrary $p \in (1,\infty)$ while making sure that it is mean-zero and thus we chose $\theta_{0} = M_{0}(t)^{\frac{1}{p}} (x_{d} - \frac{1}{2})$ in \eqref{est 179} for simplicity. 

We set $\delta = \sup_{t \in [0,T]} \frac{ \lVert R_{0}(t) \rVert_{L_{x}^{1}}}{M_{0}(t)}$ so that $\lVert R_{0}(t) \rVert_{L_{x}^{1}} \leq 2 \delta M_{0}(t)$. We choose 
\begin{equation}\label{est 41}
\delta_{n} = \delta 2^{-(n-1)}, \hspace{3mm} n \in \mathbb{N}_{0}
\end{equation}
so that $\delta_{n+1} = \delta_{n} 2^{-1}$, $\delta_{0} = 2 \delta, \delta_{1} = \delta$, etc. Having fixed such $\delta_{n}$, we fix a sequence $\eta_{n} \in (1,\infty)$ for $n \in \mathbb{N}$ such that 
\begin{equation}\label{est 42} 
\delta_{n}^{\frac{1}{p}} \eta_{n} = \sigma \delta_{n}^{\frac{1}{2}}
\end{equation} 
for $\sigma > 0$ that satisfies 
\begin{equation}\label{est 47}
\sigma 4(p+1)^{\frac{1}{p}} M \sum_{n=0}^{\infty} \sqrt{\delta} 2^{- \frac{n-1}{2}} < 1. 
\end{equation} 
By repeated applications of Proposition \ref{Proposition 4.2} we obtain $(\theta_{n}, u_{n}, R_{n})_{n\in\mathbb{N}}$ that satisfies \eqref{est 29} and 
\begin{subequations}\label{est 180}
\begin{align}
& \lVert (\theta_{n+1} - \theta_{n})(t) \rVert_{L_{x}^{p}}  \overset{\eqref{est 35}}{\leq} M \eta_{n} ( 2 \delta_{n+1} M_{0}(t))^{\frac{1}{p}} \overset{\eqref{est 41}\eqref{est 42}}{=} M \sigma \delta_{n}^{\frac{1}{2}}M_{0}(t)^{\frac{1}{p}} , \label{est 43}\\
& \lVert (u_{n+1} - u_{n})(t) \rVert_{L_{x}^{p'}} \overset{\eqref{est 36}}{\leq} M \eta_{n}^{-1} ( 2 \delta_{n+1} M_{0}(t))^{\frac{1}{p'}} \overset{\eqref{est 41} \eqref{est 42}}{=} M \sigma^{-1} \delta_{n}^{\frac{1}{2}} M_{0}(t)^{\frac{1}{p'}}, \label{est 44} \\
& \lVert (u_{n+1} - u_{n})(t) \rVert_{W_{x}^{1,\tilde{p}}} \overset{\eqref{est 37}}{\leq} \delta_{n+1} M_{0}(t), \label{est 45}\\
& \lVert R_{n+1}(t) \rVert_{L_{x}^{1}} \overset{\eqref{est 38}}{\leq} \delta_{n+1} M_{0}(t). \label{est 46} 
\end{align}
\end{subequations}
Therefore,
\begin{subequations}  
\begin{align}
& \sum_{n=0}^{\infty} \lVert (\theta_{n+1} - \theta_{n})(t)\rVert_{L_{x}^{p}}  \overset{\eqref{est 43} \eqref{est 41}}{\leq} M M_{0}(t)^{\frac{1}{p}} \sigma \sum_{n=0}^{\infty} \sqrt{\delta} 2^{-\frac{(n-1)}{2}} < \infty, \\
& \sum_{n=0}^{\infty} \lVert (u_{n+1}- u_{n})(t) \rVert_{L_{x}^{p'}} \overset{\eqref{est 44} \eqref{est 41}}{\leq} M M_{0}(t)^{\frac{1}{p'}} \sigma^{-1} \sum_{n=0}^{\infty}  \sqrt{\delta} 2^{- \frac{ (n-1)}{2}} < \infty, \\
& \sum_{n=0}^{\infty} \lVert (u_{n+1} - u_{n})(t) \rVert_{W_{x}^{1, \tilde{p}}} \overset{\eqref{est 45} \eqref{est 41}}{\leq} M_{0}(t) \sum_{n=0}^{\infty} \delta 2^{-n} < \infty. 
\end{align}
\end{subequations} 
Thus, for all $t \in [0, T_{L}]$, $\{ \theta_{n}(t) \}_{n\in\mathbb{N}_{0}}$ and $\{ u_{n}(t) \}_{n\in\mathbb{N}_{0}}$ are Cauchy in $C([0, T_{L}]; L^{p} (\mathbb{T}^{d}))$ and $C([0, T_{L}]; L^{p'} (\mathbb{T}^{d})) \cap C([0,T_{L}]; W^{1, \tilde{p}}(\mathbb{T}^{d}))$, respectively. Therefore, there exist unique $\theta \in C([0, T_{L}]; L^{p}(\mathbb{T}^{d}))$ and $u \in C([0, T_{L}]; L^{p'} (\mathbb{T}^{d})) \cap C([0, T_{L}]; W^{1, \tilde{p}}(\mathbb{T}^{d}))$ such that 
\begin{equation}
\theta_{n} \to \theta \text{ in } C([0, T_{L}]; L^{p} (\mathbb{T}^{d})), \hspace{1mm} u_{n} \to u \text{ in } C([0, T_{L}]; L^{p'} (\mathbb{T}^{d})) \cap C([0, T_{L}]; W^{1, \tilde{p}}(\mathbb{T}^{d}))
\end{equation} 
and there exists a deterministic constant $C_{L} > 0$ such that 
\begin{equation}\label{est 347}
\max\{ \sup_{t \in [0, T_{L}]} \lVert \theta (t) \rVert_{L_{x}^{p}}, \sup_{t \in [0, T_{L}]} \lVert u(t) \rVert_{L_{x}^{p'}}, \sup_{t \in [0, T_{L}]} \lVert u(t) \rVert_{W_{x}^{1,\tilde{P}}} \} \leq C_{L}. 
\end{equation} 
Because $(\theta_{0}, u_{0}, R_{0})$ were all $(\mathcal{F}_{t})_{t\geq 0}$-adapted, so are $(\theta_{n}, u_{n}, R_{n})$ for all $n \in \mathbb{N}$ due to Proposition \ref{Proposition 4.2}; consequently, $(\theta, u)$ are both $(\mathcal{F}_{t})_{t\geq 0}$-adapted. Moreover, $\lVert R_{n}(t) \rVert_{L_{x}^{1}} \overset{\eqref{est 46}}{\leq} M_{0}(t) \delta_{n} \to 0$ as $n\to\infty$ and 
\begin{subequations}\label{est 181}
\begin{align}
& \lVert \theta_{n} u_{n} - \theta u \rVert_{C_{t}L_{x}^{1}} \leq \sup_{t \in [0, T_{L}]} \lVert \theta_{n}(t) \rVert_{L_{x}^{p}} \lVert (u_{n} - u)(t) \rVert_{L_{x}^{p'}} + \lVert (\theta_{n} - \theta)(t) \rVert_{L_{x}^{p}} \lVert u(t) \rVert_{L_{x}^{p'}} \to 0, \\
& \lVert u_{n} z - u z \rVert_{C_{t}L_{x}^{1}} \leq \sup_{t \in [0, T_{L}]} \lVert (u_{n} - u)(t) \rVert_{L_{x}^{p'}} L^{\frac{1}{4}} \to 0 
\end{align}
\end{subequations} 
as $n\to\infty$ due to \eqref{est 32}. Thus, $(\theta, u)$ satisfies \eqref{est 3} analytically weakly. Consequently, by defining $\rho = u + z$, we obtain $(\rho, u)$ that satisfies \eqref{stochastic transport} forced by the additive noise analytically weakly, i.e., \eqref{solution add}. The regularity claimed in \eqref{est 393}-\eqref{est 363} follow from \eqref{est 32} and \eqref{est 347}. Next, in order to prove \eqref{est 53} on $\{T_{L} \geq T \}$ for the fixed $T > 0$ from hypothesis, we fix such $(\rho,u)$, define $\rho^{\text{in}} = \rho \rvert_{t=0}$, and then take $L > 0$ larger if necessary so that 
\begin{subequations}
\begin{align}
&L^{\frac{4}{p}} e^{\frac{2L T}{p}} >( 3 L^{\frac{4}{p}} + L 4(p+1)^{\frac{1}{p}}), \label{est 50}\\
& (\lVert \rho^{\text{in}} \rVert_{L^{p}} + L) e^{\frac{2L T}{p}}  \nonumber\\
\geq& 
\begin{cases}
L^{\frac{1}{4}} + K [\lVert \rho^{\text{in}} \rVert_{L^{p}} + L^{\frac{1}{4}} ( \int_{0}^{T} \lVert u \rVert_{L_{x}^{p}} ds + 1)] & \text{ if } p \in (1,2), \\
L^{\frac{1}{4}} + K [\lVert \rho^{\text{in}} \rVert_{L^{2}} + \sqrt{T Tr(GG^{\ast} )}] & \text{ if } p = 2, \\
L^{\frac{1}{4}} + K e^{\frac{T}{p}} [ \lVert \rho^{\text{in}} \rVert_{L^{p}} + C(p, Tr((-\Delta)^{\frac{d}{2} + 2 \varsigma} GG^{\ast} ))^{\frac{1}{p}}] & \text{ if } p \in (2,\infty) \label{est 51}
\end{cases} 
\end{align}
\end{subequations}
where $C(p, Tr((-\Delta)^{\frac{d}{2} + 2 \varsigma}GG^{\ast})) $ was defined in \eqref{est 33}. Now for all $t \in [0, T_{L}]$, we can compute 
\begin{align}
\lVert (\theta - \theta_{0})(t) \rVert_{L_{x}^{p}} \overset{\eqref{est 41} \eqref{est 43}}{\leq} M_{0}(t)^{\frac{1}{p}} M \sigma  \sum_{n=0}^{\infty} \sqrt{\delta} 2^{- \frac{n-1}{2}} 
\overset{\eqref{est 47}}{\leq} \frac{M_{0}(t)^{\frac{1}{p}}}{4 (p+1)^{\frac{1}{p}}}. \label{est 48}
\end{align}
This leads us to an estimate of 
\begin{align}
( \lVert \theta(0) \rVert_{L_{x}^{p}} + L) e^{\frac{ 2L T}{p}} &\leq ( \lVert (\theta - \theta_{0}) (0) \rVert_{L_{x}^{p}} + \lVert \theta_{0} (0) \rVert_{L_{x}^{p}} + L) e^{\frac{2L T}{p}} \label{est 52} \\
\overset{\eqref{est 48} \eqref{est 40}}{\leq}& \left( \frac{ M_{0}(0)^{\frac{1}{p}}}{4 (p+1)^{\frac{1}{p}}} + \frac{ M_{0}(0)^{\frac{1}{p}}}{2(p+1)^{\frac{1}{p}}} + L\right) e^{\frac{ 2LT}{p}} \nonumber \\
\overset{\eqref{est 49} \eqref{est 50}}{<}& \frac{M_{0}(T)^{\frac{1}{p}}}{2(p+1)^{\frac{1}{p}}} - \frac{M_{0}(T)^{\frac{1}{p}}}{4(p+1)^{\frac{1}{p}}} 
\overset{\eqref{est 48} \eqref{est 40}}{\leq} \lVert \theta_{0}(T) \rVert_{L_{x}^{p}} - \lVert (\theta - \theta_{0})(T) \rVert_{L_{x}^{p}} \leq \lVert \theta(T) \rVert_{L_{x}^{p}}.  \nonumber 
\end{align}
Because $\rho = \theta + z$, this allows us to conclude via H$\ddot{\mathrm{o}}$lder's inequality and the fact that $z(0,x) \equiv 0$ from \eqref{stochastic heat} so that $\rho^{\text{in}}(x) = \theta(0,x)$, that on the set $\{T_{L} \geq T\}$ 
\begin{align}
\lVert \rho(T) \rVert_{L_{x}^{p}} \geq& \lVert \theta(T) \rVert_{L_{x}^{p}} - \lVert z(T) \rVert_{L_{x}^{p}} \overset{\eqref{est 52}}{>} ( \lVert \theta(0) \rVert_{L_{x}^{p}} + L) e^{\frac{2LT}{p}} - \lVert z(T) \rVert_{L_{x}^{\infty}} \nonumber \\
\overset{\eqref{stochastic heat}\eqref{est 32} }{\geq}& ( \lVert \rho^{\text{in}} \rVert_{L^{p}} + L) e^{\frac{2LT}{p}} - L^{\frac{1}{4}} \nonumber \\
\overset{\eqref{est 51}}{\geq}& 
\begin{cases}
K[ \lVert \rho^{\text{in}} \rVert_{L^{p}} + L^{\frac{1}{4}} (\int_{0}^{T} \lVert u \rVert_{L_{x}^{p}} ds + 1) ] & \text{ if } p \in (1,2), \\
K [\lVert \rho^{\text{in}} \rVert_{L^{2}} + \sqrt{T Tr (GG^{\ast})} & \text{ if } p = 2, \\
K e^{\frac{T}{p}} [ \lVert \rho^{\text{in}} \rVert_{L^{p}} + C(p, Tr((-\Delta)^{\frac{d}{2} + 2 \varsigma} GG^{\ast} ))^{\frac{1}{p}}] & \text{ if }  p \in (2,\infty).
\end{cases} \label{est 184}
\end{align}
This proves \eqref{est 53}. We can take $L > 0$ larger if necessary to achieve \eqref{est 140} due to $\lim_{L\to\infty} T_{L} = \infty$ $\mathbb{P}$-a.s. from \eqref{est 54}. Finally, because $(\theta_{0}, u_{0}, R_{0})(0,x)$ was deterministic, Proposition \ref{Proposition 4.2} implies that $(\theta_{n}, u_{n}, R_{n})(0,x)$ are deterministic for all $n \in \mathbb{N}$ and consequently so is $\rho^{\text{in}}$ because $z(0,x) \equiv 0$ due to \eqref{stochastic heat}. This completes the proof of Theorem \ref{Theorem 2.2}.  
\end{proof}

\begin{remark}\label{Remark 4.2}
In \cite[Theorem 1.2]{MS20}, the parameter $\eta$ in Proposition \ref{Proposition 4.2} was crucially utilized to deduce a freedom to choose $\sigma > 0$ in \eqref{est 42}-\eqref{est 47} and conclude that given any $\epsilon > 0$, any $\bar{\rho} \in C^{\infty} ([0,T] \times \mathbb{T}^{d})$ with zero mean and $\bar{u} \in C^{\infty} ([0,T] \times \mathbb{T}^{d})$ that is divergence-free, they can construct a solution via convex integration such that $\lVert \rho - \bar{\rho} \rVert_{C_{T}L_{x}^{p}} < \epsilon$ by taking $\sigma>0$ sufficiently small or $\lVert u(t) - \bar{u}(t) \rVert_{C_{T}L_{x}^{p'}} < \epsilon$ by taking $\sigma > 0$ sufficiently large. Because we are not pursuing such a result, one may wonder why we need the parameter $\eta$. First, $\eta$ leads to the freedom to choose $\sigma$ defined in \eqref{est 42}-\eqref{est 47}, namely $\sigma 4(p+1)^{\frac{1}{p}} M \sum_{n=0}^{\infty} \sqrt{\delta} 2^{- \frac{n-1}{2}} < 1$, and \eqref{est 47} is crucial to deduce the necessary estimate \eqref{est 48}. If we were to simplify the claim in Proposition \ref{Proposition 4.2} with $\eta = 1$ therein, we obtain 
\begin{align*}
\lVert (\theta_{1} -\theta_{0})(t) \rVert_{L_{x}^{p}} \leq M ( 2 \delta M_{0}(t))^{\frac{1}{p}}
\end{align*} 
instead of \eqref{est 35}; following the computations of \eqref{est 48} requires 
\begin{align*}
\lVert (\theta - \theta_{0})(t) \rVert_{L_{x}^{p}} \leq \sum_{n=0}^{\infty} \lVert (\theta_{n+1} - \theta_{n})(t) \rVert_{L_{x}^{p}} \leq \sum_{n=0}^{\infty} M (2\delta_{n+1} M_{0}(t))^{\frac{1}{p}} \leq \frac{M_{0}(t)^{\frac{1}{p}}}{4(p+1)^{\frac{1}{p}}}, 
\end{align*}
which in turn requires  $\delta  [\sum_{n=0}^{\infty} 2^{-\frac{n}{p}} 4M]^{p} 2(p+1) \leq 1$. On the other hand, we pointed out in Remark \ref{Remark 4.1} the convenience of the additional hypothesis \eqref{est 78}, and this implies the necessity of 
$\frac{\lVert R_{0}(t) \rVert_{L_{x}^{1}}}{2 M_{0}(t)} \leq \delta$ to satisfy the hypothesis \eqref{est 78} at initial step. Making sure that we can find such $\delta$ with the lower and the upper bounds is not trivial; one idea in this case is to take $L > 0$ sufficiently large so that $M_{0}(t) \gg 1$. Nonetheless, the same issue will arise in the proof of Theorem \ref{Theorem 2.4} assuming Proposition \ref{Proposition 7.3} in which the same strategy will not work due to the absence of stopping time therein. Therefore, for convenience and consistency, we chose to attain Propositions \ref{Proposition 4.2} and \ref{Proposition 7.3} with $\eta$ therein.  
\end{remark} 

The rest of the proof of Theorem \ref{Theorem 2.2} is devoted to the proof of Proposition \ref{Proposition 4.2}. 

\subsection{Proof of Proposition \ref{Proposition 4.2}}
We define a parameter 
\begin{equation}\label{est 95}
l \triangleq \lambda^{-\iota} \ll 1
\end{equation} 
for $\lambda \in\mathbb{N}$ and $\iota \in \mathbb{R}_{>0}$ to be chosen subsequently. Then we let 
\begin{equation}\label{est 242}
\{ \phi_{l}\}_{l > 0} \hspace{2mm} \text{ and } \hspace{2mm} \{\varphi_{l}\}_{l > 0} 
\end{equation} 
respectively be families of standard mollifiers on $\mathbb{R}^{d}$ and $\mathbb{R}$ with mass one where the latter is equipped with compact support on $(0,l]$. Then we extend $R_{0}$ to $t < 0$ with its value at $t = 0$ and mollify it with $\phi_{l}$ and $\varphi_{l}$ to obtain 
\begin{equation}\label{est 86}
R_{l} \triangleq R_{0} \ast_{x} \phi_{l} \ast_{t} \varphi_{l}.
\end{equation} 
\begin{remark}\label{Remark 4.3}
We note that in all of the previous works that employed Nash-type convex integration schemes to stochastic case mollified not only $R_{0}$ but analogues of $\theta_{0}, u_{0}$, and $z$ (e.g., \cite{HZZ19}). Mollifying only $R_{0}$ simplifies our proof significantly; indeed, otherwise our next step would be to write down the mollified equation of \eqref{est 29} which produces a commutator term from the nonlinear term. Not mollifying $\theta_{0}$ and $u_{0}$ will also be a crucial ingredient in proof of Proposition \ref{Proposition 6.1}, as we will describe in Remark \ref{Difficulty 2}.
\end{remark} 

For the fixed $p \in (1,\infty)$ from the hypothesis of Proposition \ref{Proposition 4.2}, we define $a \triangleq \frac{d}{p}$ and $b \triangleq \frac{d}{p'}$ so that $a +b = d$ as in \eqref{est 55}. We adhere to the setting of convex integration from Section \ref{Subsection 3.2}. In particular, we define $r$ from Lemma \ref{Lemma 3.8}, $\varrho$ from \eqref{est 56}, $\psi$ from \eqref{est 57}, the key parameters $\lambda, \mu, \omega, \nu$ from \eqref{est 58}, and $\Theta_{\lambda, \mu, \omega, \nu}^{j}$, $W_{\lambda, \mu, \omega, \nu}^{j}$, and $Q_{\lambda, \mu, \omega, \nu}^{j}$ for $j \in \{1, \hdots, d\}$ from \eqref{est 59}. Now we define 
\begin{equation}\label{est 60}
0 < \epsilon < \min \{ \frac{d}{\tilde{p}} - \frac{d}{p'} - 1, \frac{d}{p'} - 1 \}
\end{equation} 
where the positivity is guaranteed by the hypothesis \eqref{est 8}. We have the following result:
\begin{lemma}\label{Lemma 4.3}
\rm{ (\cite[Proposition 4.4]{MS20})} Define a constant 
\begin{equation}\label{est 69}
M \triangleq 2d \max_{k, k' \in \{0, 1\}} \{ \lVert D^{k} \varrho \rVert_{L^{\infty}} \lVert D^{k'} \psi \rVert_{L^{\infty}}, \lVert \varrho \rVert_{L^{\infty}}^{2} \lVert \psi \rVert_{L^{\infty}}^{2} \}. 
\end{equation} 
Then for all $j \in \{1, \hdots, d \}$ and $t \geq 0$, 
\begin{subequations}\label{est 269}
\begin{align}
& \lVert \Theta_{\lambda, \mu, \omega, \nu}^{j} (t) \rVert_{L_{x}^{p}} \leq \frac{M}{2d}, \hspace{2mm} \lVert W_{\lambda, \mu, \omega, \nu}^{j} (t) \rVert_{L_{x}^{p'}} \leq \frac{M}{2d}, \hspace{2mm} \lVert Q_{\lambda, \mu, \omega, \nu}^{j}(t)\rVert_{L_{x}^{p}} \leq \frac{M \mu^{b}}{\omega}, \label{est 61} \\
& \lVert \Theta_{\lambda, \mu, \omega, \nu}^{j}(t) \rVert_{L_{x}^{1}} \leq \frac{M}{\mu^{b}}, \hspace{2mm} \lVert W_{\lambda, \mu, \omega, \nu}^{j} (t) \rVert_{L_{x}^{1}} \leq \frac{M}{\mu^{a}}, \hspace{2mm} \lVert Q_{\lambda, \mu, \omega, \nu}^{j}(t) \rVert_{L_{x}^{1}} \leq \frac{M}{\omega}, \label{est 62}\\
& \lVert W_{\lambda, \mu, \omega, \nu}^{j}(t) \rVert_{C_{x}} \leq M \mu^{b}, \hspace{2mm} \lVert W_{\lambda, \mu,\omega, \nu}^{j}(t) \rVert_{W_{x}^{1, \tilde{p}}} \leq M \frac{ \lambda \mu + \nu}{\mu^{1+ \epsilon}}. \label{est 63} 
\end{align}
\end{subequations} 
Finally, for every $i \neq j$, 
\begin{equation}\label{est 64}
\Theta_{\lambda, \mu, \omega, \nu}^{i} W_{\lambda, \mu, \omega, \nu}^{j} = 0
\end{equation} 
and 
\begin{equation}\label{est 65} 
\partial_{t} Q_{\lambda, \mu, \omega, \nu}^{j} + \text{div} (\Theta_{\lambda, \mu, \omega, \nu}^{j} W_{\lambda,\mu,\omega,\nu}^{j}) = 0. 
\end{equation} 
\end{lemma}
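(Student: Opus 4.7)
The plan is to verify each item by direct computation from the explicit formulas \eqref{est 59}, exploiting the scaling properties of the Mikado building blocks in Lemma \ref{Lemma 3.9} and \eqref{est 68}, the Mikado disjointness \eqref{est 113}, and the fact that $\psi^{j}$ has no dependence on the $j$-th coordinate.

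For the $L^{p}$, $L^{p'}$, $L^{1}$, and $L^{\infty}$ estimates in \eqref{est 61}--\eqref{est 63}, the strategy is to split each of $\Theta^{j}$, $W^{j}$, $Q^{j}$ into a bump factor and an oscillatory factor, bounding the oscillatory factor $\psi_{\nu}^{j}$ (or $(\psi_{\nu}^{j})^{2}$) by its $L^{\infty}$-norm. Translations $\tau_{\omega t e_{j}}$ preserve Lebesgue norms, and together with $\lambda \in \mathbb{N}$ this yields $\lVert ((\varrho_{\mu}^{j})_{\lambda} \circ \tau_{\omega t e_{j}}) \rVert_{L^{r}(\mathbb{T}^{d})} = \lVert \varrho_{\mu} \rVert_{L^{r}(\mathbb{R}^{d})} = \mu^{a - d/r}\lVert \varrho \rVert_{L^{r}(\mathbb{R}^{d})}$ (and analogously for $\tilde{\varrho}_{\mu}^{j}$). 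Choosing $r \in \{p, p', 1, \infty\}$ together with $a = d/p$, $b = d/p'$ produces exactly the exponents $\mu^{0}$, $\mu^{-b}$, $\mu^{-a}$, $\mu^{a}$, $\mu^{b}$ on the right-hand sides of \eqref{est 62}--\eqref{est 63}. Bounding $\lVert \varrho \rVert_{L^{r}(\mathbb{R}^{d})} \leq \lVert \varrho \rVert_{L^{\infty}}$ (valid since $\lvert \mathrm{supp}\,\varrho\rvert \leq 1$) and absorbing numerical constants into $M$ from \eqref{est 69} delivers the stated bounds. The $L^{p}$-bound on $Q^{j}$ additionally uses $\lVert \varrho_{\mu}\tilde{\varrho}_{\mu}\rVert_{L^{p}} \leq \lVert \tilde{\varrho}_{\mu} \rVert_{L^{\infty}} \lVert \varrho_{\mu} \rVert_{L^{p}} = \mu^{b} \lVert \varrho \rVert_{L^{\infty}} \lVert \varrho \rVert_{L^{p}}$, while the $L^{1}$-bound uses $\lVert \varrho_{\mu}\tilde{\varrho}_{\mu}\rVert_{L^{1}(\mathbb{R}^{d})} = 1$ from \eqref{est 307}.

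For the $W^{1,\tilde{p}}$-bound on $W^{j}$, I would apply the product rule, producing the two contributions
\[
\lambda ((\partial_{j}\tilde{\varrho}_{\mu}^{j})_{\lambda} \circ \tau_{\omega t e_{j}}) \psi_{\nu}^{j}\, e_{j} \quad \text{and} \quad ((\tilde{\varrho}_{\mu}^{j})_{\lambda} \circ \tau_{\omega t e_{j}})\, (\nabla \psi_{\nu}^{j}) \otimes e_{j},
\]
controlled in $L^{\tilde{p}}$ by $\lambda \mu^{b - d/\tilde{p} + 1}\lVert \nabla \varrho \rVert_{L^{\tilde p}}$ and $\nu \mu^{b - d/\tilde{p}}\lVert \varrho \rVert_{L^{\tilde p}} \lVert \nabla \psi \rVert_{L^{\infty}}$ respectively. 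The choice of $\epsilon$ in \eqref{est 60} ensures $b - d/\tilde{p} \leq -(1+\epsilon)$, giving the claimed $M(\lambda \mu + \nu)/\mu^{1+\epsilon}$.

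The orthogonality \eqref{est 64} follows by changing variables $y = \lambda x$ and $s = \lambda \omega t$: for $i \neq j$ the product $\Theta^{i}(t,x) W^{j}(t,x)$ reduces (up to the harmless $\psi_{\nu}^{i}\psi_{\nu}^{j} e_{j}$ factor) to $(\varrho_{\mu}^{i} \circ \tau_{s e_{i}})(y)\,(\tilde{\varrho}_{\mu}^{j} \circ \tau_{s e_{j}})(y)$, which vanishes by \eqref{est 113}. For the transport-type identity \eqref{est 65}, the key observation is that $\psi^{j}$ is independent of the $j$-th coordinate, so $\partial_{j}((\psi_{\nu}^{j})^{2}) = 0$. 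Since $\Theta^{j} W^{j}$ has only a $j$-th component, equal to $((\varrho_{\mu}^{j}\tilde{\varrho}_{\mu}^{j})_{\lambda} \circ \tau_{\omega t e_{j}})(\psi_{\nu}^{j})^{2}$, its divergence equals $+\lambda\, ((\partial_{j}(\varrho_{\mu}^{j}\tilde{\varrho}_{\mu}^{j}))_{\lambda} \circ \tau_{\omega t e_{j}})(\psi_{\nu}^{j})^{2}$, whereas the chain rule applied to $Q^{j}$ produces $\partial_{t} Q^{j} = \omega^{-1}(-\lambda\omega)\,((\partial_{j}(\varrho_{\mu}^{j}\tilde{\varrho}_{\mu}^{j}))_{\lambda} \circ \tau_{\omega t e_{j}})(\psi_{\nu}^{j})^{2}$; the factor of $\omega^{-1}$ in the definition of $Q^{j}$ is chosen precisely so that the two terms cancel. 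No essential obstacle is expected; the only care needed is in tracking the scaling exponents from \eqref{est 68} so that the bounds match the form of $M$ in \eqref{est 69}.
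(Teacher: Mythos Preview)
Your proposal is correct and follows essentially the same approach as the paper: the paper defers all claims except the second inequality in \eqref{est 63} to \cite[Proposition 4.4]{MS20}, and for that inequality carries out exactly the product-rule computation you describe (see Appendix~B), using $b - d/\tilde{p} < -(1+\epsilon)$ from \eqref{est 60} to absorb the $L^{\tilde{p}}$-scaling. One small slip: in the first contribution to $\nabla W^{j}$ you wrote $\partial_{j}\tilde{\varrho}_{\mu}^{j}$ where the full gradient $\nabla\tilde{\varrho}_{\mu}^{j}$ is needed, but this does not affect the $L^{\tilde{p}}$-bound you state.
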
 

\begin{proof}[Proof of Lemma \ref{Lemma 4.3}]
This is essentially \cite[Proposition 4.4]{MS20}. The only difference is that because we have diffusion, we considered $\epsilon$ in \eqref{est 60} differently from \cite[Equation (4.11)]{MS20} following \cite[p. 1106]{MS20}. Therefore, the only claim that requires verification is the second inequality of \eqref{est 63} as it involves $\epsilon$. We prove it in the Appendix B for completeness. 
\end{proof}

We now start the definition of perturbations which will differ from \cite{MS20}. We denote by $R_{l}^{j}$ the $j$-th component of $R_{l}$ for $j \in \{1,\hdots, d\}$; i.e., 
\begin{equation}\label{est 104}
R_{l} (t,x) = (R_{0} \ast_{x} \phi_{l} \ast_{x} \varphi_{l})(t,x) = \sum_{j=1}^{d}R_{l}^{j}(t,x) e_{j}.   
\end{equation} 
We define 
\begin{subequations}\label{est 76}
\begin{align}
&\theta_{1}(t,x) \triangleq \theta_{0} (t,x) + \vartheta(t,x) + \vartheta_{c} (t) + q(t,x) + q_{c}(t), \\
&u_{1}(t,x) \triangleq  u_{0} (t,x) + w(t,x) + w_{c}(t,x), 
\end{align}
\end{subequations}
where 
\begin{subequations}\label{est 79}
\begin{align}
& \vartheta(t,x) \triangleq \eta \sum_{j=1}^{d} \chi_{j}(t,x) sgn (R_{l}^{j} (t,x)) \lvert R_{l}^{j} (t,x) \rvert^{\frac{1}{p}} \Theta_{\lambda, \mu, \omega, \nu}^{j} (t,x), \\
& w(t,x) \triangleq \eta^{-1} \sum_{j=1}^{d} \chi_{j} (t,x) \lvert R_{l}^{j} (t,x) \rvert^{\frac{1}{p'}} W_{\lambda, \mu, \omega, \nu}^{j} (t,x), \\
& q(t,x) \triangleq \sum_{j=1}^{d} \chi_{j}^{2}(t,x) R_{l}^{j} (t,x) Q_{\lambda, \mu, \omega, \nu}^{j}(t,x).
\end{align}
\end{subequations} 
Let us describe $\chi_{j}, \vartheta_{c}, q_{c},$ and $w_{c}$. First, $\chi_{j}$ is a cut-off function such that 
\begin{equation}\label{est 70}
\chi_{j}: \mathbb{R}_{\geq 0}\times \mathbb{T}^{d} \mapsto [0,1] \text{ and } \chi_{j} (t,x) = 
\begin{cases}
0 & \text{ if } \lvert R_{l}^{j} (t,x) \rvert \leq \frac{\delta}{4d} M_{0}(t), \\
1 & \text{ if } \lvert R_{l}^{j}(t,x) \rvert \geq \frac{\delta}{2d} M_{0}(t). 
\end{cases} 
\end{equation} 
Additionally, let us define 
\begin{equation}\label{est 77}
a_{j}(t,x) \triangleq  \eta \chi_{j}(t,x) sgn (R_{l}^{j} (t,x)) \lvert R_{l}^{j} (t,x)\rvert^{\frac{1}{p}}, \hspace{3mm}  b_{j}(t,x) \triangleq \eta^{-1} \chi_{j}(t,x)\lvert R_{l}^{j}(t,x) \rvert^{\frac{1}{p'}} 
\end{equation} 
so that 
\begin{equation}\label{est 101}
a_{j}(t,x) b_{j}(t,x)  = \chi_{j}^{2} (t,x) R_{l}^{j} (t,x)
\end{equation} 
and 
\begin{equation}\label{est 71}
\vartheta(t) = \sum_{j=1}^{d} a_{j}(t) \Theta^{j}(t), \hspace{3mm} w(t) = \sum_{j=1}^{d} b_{j}(t) W^{j}(t), \hspace{2mm} \text{ and } \hspace{2mm} q(t) = \sum_{j=1}^{d} a_{j}(t)b_{j}(t) Q^{j}(t). 
\end{equation} 
Moreover, $a_{j}, b_{j}$ satisfy due to \eqref{est 70} 
\begin{equation}\label{est 81}
\lVert a_{j}(t) \rVert_{L_{x}^{p}} \leq \eta \lVert R_{l}^{j}(t) \rVert_{L_{x}^{1}}^{\frac{1}{p}} \hspace{2mm} \text{ and } \hspace{2mm}  \lVert b_{j}(t) \rVert_{L_{x}^{p'}} \leq \eta^{-1}\lVert R_{l}^{j} (t) \rVert_{L_{x}^{1}}^{\frac{1}{p'}}.
\end{equation} 
We note that because $\theta_{0}$ is mean-zero by hypothesis, by defining 
\begin{equation}\label{est 85}
\vartheta_{c}(t) \triangleq - \fint_{\mathbb{T}^{d}} \vartheta(t,x) dx \hspace{1mm} \text{ and } \hspace{1mm} q_{c}(t) \triangleq - \fint_{\mathbb{T}^{d}} q(t,x) dx, 
\end{equation} 
we see that $\theta_{1}(t,x)$ in \eqref{est 76} is mean-zero for all $t$. Moreover, we can compute 
\begin{equation}\label{est 75}
\nabla\cdot w(t,x) \overset{\eqref{est 71}}{=} \nabla\cdot \left( \sum_{j=1}^{d} b_{j}(t,x) W^{j} (t,x) \right) \overset{\eqref{est 59} \eqref{est 72}}{=} \sum_{j=1}^{d} \nabla ( b_{j}(t,x) ( \tilde{ \varrho}_{\mu}^{j})_{\lambda} \circ \tau_{\omega t e_{j}}) \cdot \psi_{\nu}^{j}(x) e_{j} 
\end{equation} 
because $\nabla\cdot (\psi_{\nu}^{j} e_{j}) =0$ due to \eqref{est 72}. Thus, if we define 
\begin{equation}\label{est 73}
w_{c}(t,x) \triangleq - \sum_{j=1}^{d} \mathcal{R}_{N} (f_{j}(t,x), \psi_{\nu}^{j}(x)e_{j}), \hspace{3mm} f_{j}(t,x) \triangleq \nabla (b_{j}(t,x) (\tilde{\varrho}_{\mu}^{j})_{\lambda} \circ \tau_{\omega t e_{j}}(x)), 
\end{equation} 
which is well-defined by Definition \ref{Definition 3.2} because $\psi_{\nu}^{j}$ is mean-zero by \eqref{est 57}, then 
\begin{equation}\label{est 197} 
\nabla\cdot w_{c}(t,x) 
\overset{\eqref{est 74}\eqref{est 75}}{=} - \nabla\cdot w(t,x)
\end{equation}
and thus because $u_{0}$ is divergence-free by hypothesis, 
\begin{equation}\label{est 98}
\nabla\cdot u_{1} = 0 
\end{equation}
as desired; we note that $N \in \mathbb{N}$ in \eqref{est 73} will be chosen subsequently.

\begin{lemma}\label{Lemma 4.4} 
For all $j \in \{1,\hdots, d \}$ and $t \in [0, T_{L}]$, $a_{j}, b_{j}$ in \eqref{est 77} satisfy 
\begin{subequations}
\begin{align}
&\lVert a_{j}(t) \rVert_{L_{x}^{\infty}} \lesssim \eta \lVert R_{0} \rVert_{C_{t,x}}^{\frac{1}{p}}, \hspace{17mm} \lVert b_{j}(t) \rVert_{L_{x}^{\infty}} \lesssim \eta^{-1} \lVert R_{0} \rVert_{C_{t,x}}^{\frac{1}{p'}}, \label{est 82} \\
&\lVert a_{j}(t) \rVert_{C_{x}^{s}} \lesssim \eta l^{-(d+2) s} (\delta M_{0}(t))^{\frac{1}{p}},  \hspace{5mm} \lVert b_{j}(t) \rVert_{C_{x}^{s}} \lesssim \eta^{-1} l^{-(d+2) s} (\delta M_{0}(t))^{\frac{1}{p'}} \hspace{4mm} \forall \hspace{1mm} s \in \mathbb{N}, \label{est 83}\\
& \lVert \partial_{t} a_{j}(t) \rVert_{L_{x}^{\infty}} \lesssim \eta l^{-(d+2)} (\delta M_{0}(t))^{\frac{1}{p}}, \hspace{3mm}  \lVert \partial_{t} b_{j}(t) \rVert_{L_{x}^{\infty}} \lesssim \eta^{-1} l^{-(d+2)} (\delta M_{0}(t))^{\frac{1}{p'}}. \label{est 84} 
\end{align} 
\end{subequations} 
\end{lemma}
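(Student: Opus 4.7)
The plan is to combine standard mollification estimates with the pointwise lower bound $|R_l^j|\geq \delta M_0/(4d)$ enforced on $\operatorname{supp}\chi_j$. Since $\|D^\alpha \phi_l\|_{L^\infty}\lesssim l^{-d-|\alpha|}$ and $\|\partial_t^m \varphi_l\|_{L^1}\lesssim l^{-m}$, Young's inequality combined with the $L^1$ hypothesis \eqref{est 78} gives the main workhorse
\begin{equation*}
\|\partial_t^m D_x^\alpha R_l(t)\|_{L^\infty_x}\lesssim l^{-d-|\alpha|-m}\delta M_0(t),\qquad m\in \mathbb{N}_0.
\end{equation*}
For concreteness I would fix $\chi_j=\zeta\!\bigl(d|R_l^j|/(\delta M_0)\bigr)$ for a smooth $\zeta:\mathbb{R}\to [0,1]$ equal to $0$ on $[0,1/4]$ and $1$ on $[1/2,\infty)$, so that all derivatives of $\chi_j$ become accessible via the chain rule.

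The $L^\infty$ bound \eqref{est 82} is immediate: $\chi_j\in [0,1]$ and mollification is an $L^\infty$-contraction, so $|a_j|\leq \eta|R_l^j|^{1/p}\leq \eta\|R_0\|_{C_{t,x}}^{1/p}$, and analogously for $b_j$. For the $C_x^s$ bound \eqref{est 83}, write $a_j=\eta\chi_j F(R_l^j)$ with $F(r)=\operatorname{sgn}(r)|r|^{1/p}$, which is smooth on $\{|r|\geq \delta M_0/(4d)\}$ with $|F^{(k)}(r)|\lesssim (\delta M_0)^{1/p-k}$. On $\operatorname{supp}\chi_j$ the Fa\`a di Bruno formula expands $D^\alpha[F(R_l^j)]$ as a sum of terms $F^{(k)}(R_l^j)\prod_{i=1}^{k}D^{\alpha_i}R_l^j$ with $\sum|\alpha_i|=|\alpha|\leq s$, each controlled via the workhorse by $(\delta M_0)^{1/p-k}\prod_i l^{-d-|\alpha_i|}\delta M_0=(\delta M_0)^{1/p}l^{-dk-|\alpha|}\leq (\delta M_0)^{1/p}l^{-(d+1)s}$. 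An identical Fa\`a di Bruno argument handles the derivatives of $\chi_j$, and the product rule then yields \eqref{est 83} with one power of $l^{-1}$ to spare (which is why the stated exponent $l^{-(d+2)s}$ is harmless since $l\ll 1$); the case of $b_j$ is identical with $1/p$ replaced by $1/p'$. The temporal bound \eqref{est 84} follows by expanding $\partial_t a_j=\eta(\partial_t \chi_j)F(R_l^j)+\eta\chi_j F'(R_l^j)\,\partial_t R_l^j$: the second summand is controlled by the workhorse as $\eta l^{-d-1}(\delta M_0)^{1/p}$, and for the first, chain-rule differentiation of the explicit form of $\chi_j$ together with $\partial_t M_0\lesssim_L M_0$ and the fact that $\zeta'$ is supported where $|R_l^j|\leq \delta M_0/(2d)$ yields $\|\partial_t \chi_j\|_{L^\infty}\lesssim l^{-d-1}$, so multiplying by $|F(R_l^j)|\lesssim (\delta M_0)^{1/p}$ again fits within $\eta l^{-(d+2)}(\delta M_0)^{1/p}$.

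The main obstacle I anticipate is the combinatorial bookkeeping in the Fa\`a di Bruno expansion and the need to simultaneously track derivatives of $\chi_j$ and of $F(R_l^j)$ in the product rule; once $\chi_j$ is fixed in the explicit form above, everything reduces to routine but notationally heavy accounting, and the statement's exponent $l^{-(d+2)s}$ is chosen generously so that it absorbs all such terms without having to optimize.
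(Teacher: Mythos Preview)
Your approach is correct and essentially the same as the paper's: both combine the lower bound $|R_l^j|\geq \delta M_0/(4d)$ on $\operatorname{supp}\chi_j$ with mollification bounds on derivatives of $R_l$ and a chain-rule/composition estimate. The only cosmetic difference is that the paper routes through the Sobolev embedding $W^{d+1,1}(\mathbb{T}^d)\hookrightarrow L^\infty(\mathbb{T}^d)$ (estimating $\|R_l^j\|_{W^{k,1}}\lesssim l^{-k}\delta M_0$) and invokes a standard H\"older chain-rule lemma, whereas you estimate $\|D^\alpha R_l\|_{L^\infty}\lesssim l^{-d-|\alpha|}\delta M_0$ directly and write out Fa\`a di Bruno explicitly.
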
 

\begin{proof}[Proof of Lemma \ref{Lemma 4.4}]
First, Young's inequality for convolution gives for all $t \in [0, T_{L}]$, 
\begin{align}
\lVert a_{j}(t) \rVert_{L_{x}^{\infty}} \overset{\eqref{est 77}\eqref{est 70}}{\leq}& \eta \lVert R_{l}^{j}(t) \rVert_{L_{x}^{\infty}}^{\frac{1}{p}} 
\lesssim \eta \lVert R_{0} \rVert_{C_{t,x}}^{\frac{1}{p}}.
\end{align}
Similarly, we can estimate for all $t \in [0, T_{L}]$, 
\begin{align*}
\lVert b_{j}(t) \rVert_{L_{x}^{\infty}} \overset{\eqref{est 77}\eqref{est 70}}{\leq}& \eta^{-1} \lVert R_{l}^{j}(t) \rVert_{L_{x}^{\infty}}^{\frac{1}{p'}}  \lesssim \eta^{-1} \lVert R_{0} \rVert_{C_{t,x}}^{\frac{1}{p'}}. 
\end{align*}
Next, we rely on $W^{r+ d + 1, 1}(\mathbb{T}^{d}) \hookrightarrow W^{r,\infty}(\mathbb{T}^{d})$ for all $r \in \mathbb{N}_{0}$, chain rule estimate in H$\ddot{\mathrm{o}}$lder space (e.g., \cite[Equation (130)]{BDIS15}) and the lower bound of $\lvert R_{l}^{j} (t,x) \rvert > \frac{\delta}{4d} M_{0}(t)$ in the support of $\chi_{j}$ due to \eqref{est 70} to estimate for all $s \in \mathbb{N}$, all $t \in [0, T_{L}]$, and all $\lambda \in\mathbb{N}$ sufficiently large 
\begin{align}
&\lVert a_{j} (t) \rVert_{C_{x}^{s}} \nonumber \\
\overset{ \eqref{est 70}}{\lesssim}& \eta [ \lVert R_{l}^{j} (t) \rVert_{W_{x}^{d+1, 1}}^{\frac{1}{p}} + [(\delta M_{0}(t))^{\frac{1}{p} -1} \lVert R_{l}^{j}(t) \rVert_{W_{x}^{s+d+1, 1}} + (M_{0}(t) \delta)^{\frac{1}{p} - s} \lVert R_{l}^{j} (t) \rVert_{W_{x}^{d+2, 1}}^{s} ]] \nonumber \\
\overset{\eqref{est 78}}{\lesssim}& \eta [ l^{-(d+1) \frac{1}{p}} (M_{0}(t) \delta)^{\frac{1}{p}} + [(\delta M_{0}(t))^{\frac{1}{p} -1} l^{-s-d-1} \delta M_{0}(t) + (M_{0}(t) \delta)^{\frac{1}{p} -s} l^{-(d+2) s} (\delta M_{0}(t))^{s} ]] \nonumber \\
\lesssim& \eta (\delta M_{0}(t))^{\frac{1}{p}} l^{-(d+2) s}. 
\end{align}
Similarly,  for all $s \in \mathbb{N},$ all $t \in [0, T_{L}]$, and all $\lambda \in\mathbb{N}$ sufficiently large 
\begin{align}
&\lVert b_{j} (t) \rVert_{C_{x}^{s}} \nonumber \\
\overset{ \eqref{est 70}}{\lesssim}&  \eta^{-1} [ \lVert R_{l}^{j} (t) \rVert_{W_{x}^{d+1, 1}}^{\frac{1}{p'}} + [(\delta M_{0}(t))^{\frac{1}{p'} -1} \lVert R_{l}^{j}(t) \rVert_{W_{x}^{s+d+1, 1}} + (M_{0}(t) \delta)^{\frac{1}{p'} - s} \lVert R_{l}^{j} (t) \rVert_{W_{x}^{d+2, 1}}^{s} ]]\nonumber \\
\overset{\eqref{est 78}}{\lesssim}& \eta^{-1} [ l^{-(d+1) \frac{1}{p'}} (M_{0}(t) \delta)^{\frac{1}{p'}} + [(\delta M_{0}(t))^{\frac{1}{p'} -1} l^{-s-d-1} \delta M_{0}(t) + (M_{0}(t) \delta)^{\frac{1}{p'} -s} l^{-(d+2) s} (\delta M_{0}(t))^{s} ]] \nonumber \\
\lesssim& \eta^{-1} (\delta M_{0}(t))^{\frac{1}{p'}} l^{-(d+2) s}. 
\end{align}
Finally, for all $t \in [0, T_{L}]$, and all $\lambda \in\mathbb{N}$ sufficiently large, 
\begin{align*}
 \lVert \partial_{t} a_{j} (t) \rVert_{L_{x}^{\infty}} \overset{\eqref{est 77}\eqref{est 70}}{\lesssim}&  \eta [ \lVert R_{l}^{j}(t) \rVert_{W_{x}^{d+1, 1}}^{\frac{1}{p}} + \lVert \partial_{t} R_{l}^{j}(t) \rVert_{W_{x}^{d+1, 1}} (\delta M_{0}(t))^{-1 + \frac{1}{p}} ] \\
\overset{\eqref{est 78}}{\lesssim}& \eta [ l^{- \frac{d+1}{p}} (\delta M_{0}(t))^{\frac{1}{p}} + l^{-(d+2)} \delta M_{0}(t) (\delta M_{0}(t))^{-1 + \frac{1}{p}}]
\lesssim \eta l^{-(d+2)} (\delta M_{0}(t))^{\frac{1}{p}} 
\end{align*}
and similarly 
\begin{align*}
 \lVert \partial_{t} b_{j} (t) \rVert_{L_{x}^{\infty}}& \overset{\eqref{est 77} \eqref{est 70}}{\lesssim}  \eta^{-1} [ \lVert R_{l}^{j}(t) \rVert_{W_{x}^{d+1, 1}}^{\frac{1}{p'}} + \lVert \partial_{t} R_{l}^{j}(t) \rVert_{W_{x}^{d+1, 1}} (\delta M_{0}(t))^{-1 + \frac{1}{p'}} ] \\
\overset{\eqref{est 78}}{\lesssim}& \eta^{-1} [ l^{- \frac{d+1}{p'}} (\delta M_{0}(t))^{\frac{1}{p'}} + l^{-(d+2)} \delta M_{0}(t) (\delta M_{0}(t))^{-1 + \frac{1}{p'}}] \lesssim \eta^{-1} l^{-(d+2)} (\delta M_{0}(t))^{\frac{1}{p'}}.
\end{align*}
\end{proof}

With Lemma \ref{Lemma 4.4} in hand, we can now start various necessary estimates. 
\begin{lemma}\label{Lemma 4.5}
There exist constants $C = C(p) \geq 0$ with which $\vartheta, q, w$ in \eqref{est 79}, $\vartheta_{c}$ and $q_{c}$ in \eqref{est 85} satisfy for all $t \in [0, T_{L}]$
\begin{subequations}\label{est 342}
\begin{align}
& \lVert \vartheta(t) \rVert_{L_{x}^{p}} \leq \frac{M\eta}{2} (2 \delta M_{0}(t))^{\frac{1}{p}} + \frac{C}{\lambda^{\frac{1}{p}}} \eta l^{-(d+2)} (\delta M_{0}(t))^{\frac{1}{p}}, \label{est 134}\\
& \lVert q(t) \rVert_{L_{x}^{p}} \leq C l^{-(d+1)} \delta M_{0}(t) \mu^{b} \omega^{-1}, \label{est 135}\\
& \lvert \vartheta_{c}(t) \rvert \leq C \eta \lVert R_{0} \rVert_{C_{t,x}}^{\frac{1}{p}} \mu^{-b} \hspace{2mm} \text{ and } \hspace{2mm}\lvert q_{c}(t) \rvert \leq C\lVert R_{0} \rVert_{C_{t,x}} \omega^{-1}, \label{est 151}\\
& \lVert w(t) \rVert_{L_{x}^{p'}} \leq \frac{M}{2\eta}(2 \delta M_{0}(t))^{\frac{1}{p'}} + \frac{C}{\lambda^{\frac{1}{p'}}} \eta^{-1} l^{-(d+2)} (\delta M_{0}(t))^{\frac{1}{p'}}, \label{est 153}\\
& \lVert w(t) \rVert_{W_{x}^{1, \tilde{p}}} \leq  C \eta^{-1} l^{-(d+2)} (\delta M_{0}(t))^{\frac{1}{p'}} \frac{ \lambda \mu + \nu}{\mu^{1+ \epsilon}}.\label{est 154} 
\end{align}
\end{subequations}
Furthermore, for any $k, h \in \mathbb{N}_{0}$ and $r \in [1,\infty]$, there exists a constant $C \geq 0$ with which for all $j \in \{1, \hdots, d\}$, $f_{j}$ in \eqref{est 73} satisfies for all $t \in [0, T_{L}]$
\begin{equation}\label{est 91}
\lVert \mathcal{D}^{k} D^{h} f_{j}(t) \rVert_{L_{x}^{r}} \leq C \eta^{-1} (\delta M_{0}(t))^{\frac{1}{p'}} l^{-(d+2) (k+ h + 1)} (\lambda \mu)^{k+ h + 1} \mu^{ b - \frac{d}{r}}
\end{equation} 
where $\mathcal{D}^{k}$ is defined in Definition \ref{Definition 3.1}. Consequently, there exist constants $C = C(p', \tilde{p}, N) \geq 0$ with which $w_{c}$ in \eqref{est 73} satisfies for all $t \in [0, T_{L}]$
\begin{subequations}
\begin{align}
& \lVert w_{c}(t) \rVert_{L_{x}^{p'}} \leq C\eta^{-1} (\delta M_{0}(t))^{\frac{1}{p'}} [\sum_{k=1}^{N} \left( \frac{ \lambda \mu l^{-(d+2)}}{\nu} \right)^{k} + \frac{ (\lambda \mu l^{-(d+2)})^{N+1}}{\nu^{N}} ], \label{est 136}\\
& \lVert w_{c}(t) \rVert_{W_{x}^{1, \tilde{p}}} \leq C\eta^{-1} \frac{ ( \delta M_{0}(t))^{\frac{1}{p'}} [ \lambda \mu l^{-(d+2)} + \nu]}{\mu^{1+ \epsilon}} [ \sum_{k=1}^{N} \left( \frac{ l^{-(d+2)} \lambda \mu}{\nu} \right)^{k} + \frac{ ( l^{-(d+2)} \lambda \mu)^{N+1}}{\nu^{N}} ].\label{est 93}
\end{align}
\end{subequations}
\end{lemma}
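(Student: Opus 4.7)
The plan is to prove every bound in Lemma \ref{Lemma 4.5} by the same template. Using the decompositions \eqref{est 71} (and \eqref{est 73} for $w_c$), each quantity splits as a sum over $j\in\{1,\dots,d\}$ of a slowly varying coefficient multiplied by a highly oscillating Mikado building block. The disjointness of the supports of the profiles $(\varrho_\mu^j)_\lambda\circ\tau_{\omega t e_j}$ and $(\tilde{\varrho}_\mu^j)_\lambda\circ\tau_{\omega t e_j}$ built into Lemma \ref{Lemma 3.8} (the same mechanism underlying \eqref{est 64}) allows me to replace $\lVert\sum_j a_j\Theta^j\rVert_{L^p}^p$ by $\sum_j\lVert a_j\Theta^j\rVert_{L^p}^p$, and similarly for $w$, so it suffices to estimate one term at a time. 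Each term is then handled by combining the oscillation estimates of Section \ref{Subsection 3.2} with the coefficient bounds of Lemma \ref{Lemma 4.4}, carefully tracking the cost $l^{-(d+2)}$ of the mollification of $R_0$ introduced in \eqref{est 86}.

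For \eqref{est 134} I apply the almost-product estimate \eqref{est 80} termwise with $f=a_j$ and $g_\lambda$ the translated Mikado density at scale $\lambda$: the main piece $\lVert a_j\rVert_{L^p}\lVert\Theta^j\rVert_{L^p}$ is controlled by \eqref{est 81} and \eqref{est 61}, while the remainder $\lambda^{-1/p}\lVert a_j\rVert_{C^1}\lVert\Theta^j\rVert_{L^p}$ is controlled by \eqref{est 83} with $s=1$. The $L^{p'}$ bound \eqref{est 153} on $w$ is obtained identically after swapping $p\leftrightarrow p'$ and $a_j\leftrightarrow b_j$, and the Sobolev estimate \eqref{est 154} follows from Leibniz together with the second inequality of \eqref{est 63} and \eqref{est 83} with $s=1$. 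For \eqref{est 135}, I only need the $L^\infty$ bound $\lVert a_jb_j\rVert_{L^\infty}\leq\lVert R_l^j\rVert_{L^\infty}\lesssim l^{-(d+1)}\delta M_0(t)$ coming from Young's inequality applied to $R_0\ast_x\phi_l\ast_t\varphi_l$ together with \eqref{est 78}, multiplied against $\lVert Q^j\rVert_{L^p}\leq M\mu^b/\omega$ from \eqref{est 61}. The corrector bounds \eqref{est 151} reduce to the duality pairings $|\fint\vartheta\,dx|\leq\sum_j\lVert a_j\rVert_{L^\infty}\lVert\Theta^j\rVert_{L^1}$ and analogously for $q_c$, controlled by \eqref{est 82} together with \eqref{est 62}.

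For \eqref{est 91} I distribute $\mathcal{D}^k D^h$ across the product inside $f_j=\nabla\bigl(b_j(\tilde{\varrho}_\mu^j)_\lambda\circ\tau_{\omega t e_j}\bigr)$ via Leibniz: each derivative landing on $b_j$ costs $l^{-(d+2)}$ by \eqref{est 83}, each derivative landing on the dilated Mikado profile costs $\lambda\mu$ by \eqref{est 66} and \eqref{est 68}, and crude summation over the $k+h+1$ derivatives yields $(l^{-(d+2)}\lambda\mu)^{k+h+1}\mu^{b-d/r}$ once $\lambda$ is large enough that both prefactors exceed one. For \eqref{est 136} and \eqref{est 93} on $w_c$ I then feed \eqref{est 91} into the antidivergence bound \eqref{est 90}, taken at $r=p'$ and $r=\tilde{p}$ respectively, using \eqref{est 96} to commute the extra derivative inside $\mathcal{R}_N$ in the Sobolev case. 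The cancellation $b-d/p'=0$ from \eqref{est 55} eliminates the $\mu^{b-d/r}$ factor in the $L^{p'}$ estimate and leaves precisely the geometric series in $\lambda\mu l^{-(d+2)}/\nu$ appearing in \eqref{est 136}; for $r=\tilde{p}$, the surplus $\mu^{b-d/\tilde{p}}$ accounts exactly for the $\mu^{-(1+\epsilon)}$ loss in \eqref{est 93} by \eqref{est 60}. The main obstacle is choosing the correct form of each oscillation estimate at each step—\eqref{est 80} rather than a naive Leibniz expansion, and \eqref{est 90} (which depends on $\lVert g\rVert_{L^\infty}$) rather than \eqref{est 89}—so that the mollification cost $l^{-(d+2)}$ enters at the lowest possible order, which is what makes the resulting bounds compatible with the iteration of Proposition \ref{Proposition 4.2}.
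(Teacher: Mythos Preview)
Your proposal is correct and follows essentially the same route as the paper's proof: termwise application of the improved H\"older estimate \eqref{est 80} together with \eqref{est 81}, \eqref{est 83}, and the Mikado bounds \eqref{est 269} for $\vartheta$ and $w$; the crude $L^\infty$ bound on $R_l^j$ for $q$; the $L^\infty$--$L^1$ pairing for the correctors; Leibniz plus \eqref{est 83} and \eqref{est 68} for $f_j$; and then feeding \eqref{est 91} into \eqref{est 90} (with \eqref{est 96} for the derivative) for $w_c$. The one cosmetic difference is that you invoke the pairwise disjointness of the supports of the $\Theta^j$ (which is indeed a consequence of \eqref{est 113} and Lemma~\ref{Lemma 3.9}) to pass from $\lVert\sum_j\cdot\rVert_{L^p}$ to $(\sum_j\lVert\cdot\rVert_{L^p}^p)^{1/p}$, whereas the paper simply uses the triangle inequality over $j$ and absorbs the resulting factor $d$ into the constant $M/(2d)$ from \eqref{est 61}; both arguments yield the stated bound.
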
 

\begin{proof}[Proof of Lemma \ref{Lemma 4.5}]
First, because $\Theta^{j}(t,x) = \varrho_{\mu}^{j} (\lambda (x- \omega t e_{j})) \psi^{j} (\nu x)$ due to \eqref{est 59} for $\nu \in \lambda \mathbb{N}$ due to \eqref{est 58}, \eqref{est 80} is applicable, allowing us to estimate 
\begin{align*}
\lVert \vartheta (t) \rVert_{L_{x}^{p}} \overset{\eqref{est 71}\eqref{est 80} \eqref{est 66}}{\leq}& \sum_{j=1}^{d} \lVert a_{j}(t) \rVert_{L_{x}^{p}} \lVert \Theta^{j}(t) \rVert_{L_{x}^{p}} + \frac{C}{\lambda^{\frac{1}{p}}} \lVert a_{j}(t) \rVert_{C_{x}^{1}} \lVert \Theta^{j} (t) \rVert_{L_{x}^{p}} \\
\overset{\eqref{est 61} \eqref{est 81} \eqref{est 83}}{\leq}& \sum_{j=1}^{d} \eta \lVert R_{l}^{j}(t) \rVert_{L_{x}^{1}}^{\frac{1}{p}} (\frac{M}{2d}) + \frac{C}{\lambda^{\frac{1}{p}}} \eta l^{-(d+2)} (\delta M_{0}(t))^{\frac{1}{p}} (\frac{M}{2d})  \\
\leq& \frac{M\eta}{2} (2\delta M_{0}(t))^{\frac{1}{p}} + \frac{C}{\lambda^{\frac{1}{p}}} \eta l^{-(d+2)} (\delta M_{0}(t))^{\frac{1}{p}}, 
\end{align*}
where in the last inequality we used the Young's inequality as follows. Because $M_{0}(t)$ is strictly increasing, the assumptions that $\lVert R_{0}(t) \rVert_{L_{x}^{1}} \leq 2 \delta M_{0}(t)$ for all $t \in [0, T_{L}]$ due to \eqref{est 78} and $\phi_{l}$ and $\varphi_{l}$ both have mass one imply 
\begin{equation}\label{est 87}
\lVert R_{l}(t) \rVert_{L_{x}^{1}} \overset{\eqref{est 86}}{\leq}  \sup_{s \in [0,t]} \lVert R_{0} (s,x) \rVert_{L_{x}^{1}} \lVert \varphi_{l} \rVert_{L_{t}^{1}} \overset{\eqref{est 78}}{\leq} 2 \delta M_{0}(t).
\end{equation}
Second, we can directly estimate for all $t \in [0, T_{L}]$ 
\begin{equation}
\lVert q(t) \rVert_{L_{x}^{p}} \overset{\eqref{est 79} \eqref{est 70}}{\leq} \sum_{j=1}^{d} \lVert R_{l}^{j}(t) \rVert_{L_{x}^{\infty}} \lVert Q^{j} (t) \rVert_{L_{x}^{p}} \overset{\eqref{est 61} \eqref{est 78}}{\leq} Cl^{-(d+1)} \delta M_{0}(t)\mu^{b} \omega^{-1}.
\end{equation}
Third, for all $t \in [0, T_{L}]$ we can compute using H$\ddot{\mathrm{o}}$lder's inequality 
\begin{equation}
\lvert \vartheta_{c}(t) \rvert \overset{\eqref{est 85}}{\leq} \lVert \vartheta(t) \rVert_{L_{x}^{1}} 
\overset{\eqref{est 71}}{\leq} \sum_{j=1}^{d} \lVert a_{j}(t) \rVert_{L_{x}^{\infty}} \lVert \Theta^{j}(t) \rVert_{L_{x}^{1}} 
\overset{\eqref{est 82} \eqref{est 62}}{\leq} C \eta \lVert R_{0} \rVert_{C_{t,x}}^{\frac{1}{p}}\mu^{-b}.
\end{equation} 
Similarly via \eqref{est 85}, \eqref{est 71}, \eqref{est 82}, and \eqref{est 62}, for all $t \in [0, T_{L}]$, by H$\ddot{\mathrm{o}}$lder's inequality 
\begin{align}
\lvert q_{c}(t) \rvert \leq \lVert q(t) \rVert_{L_{x}^{1}} \leq \sum_{j=1}^{d} \lVert a_{j}(t) \rVert_{L_{x}^{\infty}} \lVert b_{j}(t) \rVert_{L_{x}^{\infty}} \lVert Q^{j}(t) \rVert_{L_{x}^{1}} \leq C  \lVert R_{0} \rVert_{C_{t,x}}\omega^{-1}.
\end{align}
Fourth, similarly to \eqref{est 134}, because $W^{j}(t) = \tilde{\varrho}_{\mu}^{j} (\lambda (x - \omega t e_{j})) \psi^{j} (\nu x) e_{j}$ due to \eqref{est 59} for $\nu \in \lambda \mathbb{N}$ due to \eqref{est 58}, \eqref{est 80} is applicable, allowing us to estimate for all $t \in [0, T_{L}]$, 
\begin{align*}
\lVert w(t) \rVert_{L_{x}^{p'}} \overset{\eqref{est 71}\eqref{est 80} \eqref{est 66}}{\leq}& \sum_{j=1}^{d} \lVert b_{j}(t) \rVert_{L_{x}^{p'}} \lVert W^{j}(t) \rVert_{L_{x}^{p'}} + \frac{C}{\lambda^{\frac{1}{p'}}} \lVert b_{j}(t) \rVert_{C_{x}^{1}} \lVert W^{j}(t) \rVert_{L_{x}^{p'}} \\
\overset{\eqref{est 81} \eqref{est 61} \eqref{est 83}}{\lesssim}& \sum_{j=1}^{d} \eta^{-1} \lVert R_{l}^{j} (t) \rVert_{L_{x}^{1}}^{\frac{1}{p'}} (\frac{M}{2d}) + \frac{C}{\lambda^{\frac{1}{p'}}} \eta^{-1} l^{-(d+2)} (\delta M_{0}(t))^{\frac{1}{p'}} (\frac{M}{2d}) \\
\overset{\eqref{est 78}}{\leq}& \frac{M}{2\eta} (2 \delta M_{0}(t))^{\frac{1}{p'}} + \frac{C}{\lambda^{\frac{1}{p'}}} \eta^{-1} l^{-(d+2)} ( \delta M_{0}(t))^{\frac{1}{p'}}
\end{align*}
by Young's inequality for convolution similarly to \eqref{est 87}. Fifth, we compute for all $t \in [0, T_{L}]$ 
\begin{equation}\label{est 359}
\lVert w(t) \rVert_{W_{x}^{1,\tilde{p}}} \overset{\eqref{est 71}}{\lesssim} \sum_{j=1}^{d}  \lVert b_{j}(t) \rVert_{C_{x}^{1}} \lVert W^{j}(t) \rVert_{W_{x}^{1,\tilde{p}}} 
\overset{\eqref{est 83} \eqref{est 63} }{\leq}C \eta^{-1} l^{-(d+2)} (\delta M_{0}(t))^{\frac{1}{p'}} \frac{ \lambda \mu + \nu}{\mu^{1+ \epsilon}}.
\end{equation} 
Sixth, we compute for all $t \in [0, T_{L}]$ 
\begin{align}
\lVert \mathcal{D}^{k} D^{h} f_{j}(t) \rVert_{L_{x}^{r}} \overset{\eqref{est 88} \eqref{est 73}}{\lesssim}& \lVert b_{j}(t) \rVert_{C_{x}^{k+h+1}} \lVert (\tilde{\varrho}_{\mu}^{j})_{\lambda} \circ \tau_{\omega t e_{j}} \rVert_{W_{x}^{k+h+1, r}} \nonumber \\
\overset{\eqref{est 67} \eqref{est 66} \eqref{est 83}}{\lesssim}& [ \eta^{-1} l^{-(d+2) (k+h+1)} (\delta M_{0}(t))^{\frac{1}{p'}} ] \lambda^{k+h+1} \lVert \tilde{\varrho}_{\mu}^{j} \rVert_{W_{x}^{k+h+1, r}} \nonumber \\
\overset{\eqref{est 68}\eqref{est 56} }{=}& C \eta^{-1} (\delta M_{0}(t))^{\frac{1}{p'}} l^{-(d+2) (k+ h + 1)} (\lambda \mu)^{k+ h + 1} \mu^{ b - \frac{d}{r}}.
\end{align}
Seventh, we compute for all $t \in [0, T_{L}]$ 
\begin{align} 
\lVert w_{c}(t) \rVert_{L_{x}^{p'}} \overset{\eqref{est 73}\eqref{est 90}}{\leq}& \sum_{j=1}^{d} C_{d, p', N} \lVert \psi \rVert_{L^{\infty}} \left( \sum_{k=0}^{N-1} \nu^{-k-1} \lVert \mathcal{D}^{k} f_{j}(t) \rVert_{L_{x}^{p'}} + \nu^{-N} \lVert \mathcal{D}^{N} f_{j}(t) \rVert_{L_{x}^{p'}}\right) \nonumber \\
\overset{\eqref{est 91}}{\lesssim}&  \sum_{k=0}^{N-1} \nu^{-k-1}\eta^{-1} ( \delta M_{0}(t))^{\frac{1}{p'}} l^{-(d+2) (k+1)} (\lambda \mu)^{k+1} \mu^{b - \frac{d}{p'}} \nonumber\\
& \hspace{30mm} + \nu^{-N} \eta^{-1} (\delta M_{0}(t))^{\frac{1}{p'}} l^{-(d+2) (N+1)} (\lambda \mu)^{N+1} \mu^{b - \frac{d}{p'}}\nonumber \\
\overset{\eqref{est 55}}{\leq}& C\eta^{-1}(\delta M_{0}(t))^{\frac{1}{p'}} [ \sum_{k=1}^{N} \left( \frac{ \lambda \mu l^{-(d+2)}}{\nu} \right)^{k} + \frac{ (\lambda \mu l^{-(d+2)})^{N+1}}{\nu^{N}}].\label{est 92}
\end{align}
Eighth, by definition, $\lVert w_{c}(t) \rVert_{W_{x}^{1, \tilde{p}}} = \lVert w_{c}(t) \rVert_{L_{x}^{\tilde{p}}} + \lVert D w_{c}(t) \rVert_{L_{x}^{\tilde{p}}}$. First, 
\begin{equation}\label{est 94}
b - \frac{d}{\tilde{p}} \overset{\eqref{est 55}}{=} \frac{d}{p'} - \frac{d}{\tilde{p}} = - ( \frac{d}{\tilde{p}} - \frac{d}{p'} - 1) - 1 \overset{\eqref{est 60}}{<} - \epsilon - 1 
\end{equation} 
and $\mu \gg \lambda$ due to \eqref{est 58}  and therefore $\mu^{b - \frac{d}{\tilde{p}}} \leq \mu^{-\epsilon - 1}$. Therefore, we can repeat the computation in \eqref{est 92} with $p'$ replaced by $\tilde{p}$ and use the fact that $\lambda \mu l^{-(d+2)} + \nu \gg 1$ de to \eqref{est 58} and \eqref{est 95} to bound $\lVert w_{c}(t) \rVert_{L_{x}^{\tilde{p}}} $ by the r.h.s. of \eqref{est 93}. On the other hand, for all $t \in [0, T_{L}]$ we can compute using \eqref{est 96} 
\begin{align}
\lVert Dw_{c}(t) \rVert_{L_{x}^{\tilde{p}}}& 
\overset{\eqref{est 90}}{\lesssim} \sum_{j=1}^{d} \lVert \psi^{j} \rVert_{L^{\infty}} \left( \sum_{k=0}^{N-1} \nu^{-k-1} \lVert \mathcal{D}^{k} D f_{j}(t) \rVert_{L_{x}^{\tilde{p}}} + \nu^{-N} \lVert \mathcal{D}^{N} D f_{j} (t) \rVert_{L_{x}^{\tilde{p}}}\right) \nonumber \\
&+ \nu \lVert D \psi^{j} \rVert_{L^{\infty}} \left( \sum_{k=0}^{N-1}\nu^{-k-1} \lVert \mathcal{D}^{k} f_{j}(t)\rVert_{L_{x}^{\tilde{p}}} + \nu^{-N} \lVert \mathcal{D}^{N} f_{j}(t) \rVert_{L_{x}^{\tilde{p}}}\right) \nonumber \\
\overset{\eqref{est 91}\eqref{est 94}}{\leq}& C \eta^{-1}  \frac{ ( \delta M_{0}(t))^{\frac{1}{p'}} [ \lambda \mu l^{-(d+2)} + \nu]}{\mu^{1+ \epsilon}} [ \sum_{k=1}^{N} \left( \frac{ l^{-(d+2)} \lambda \mu}{\nu} \right)^{k} + \frac{ ( l^{-(d+2)} \lambda \mu)^{N+1}}{\nu^{N}} ].
\end{align}
\end{proof}
 
Next, we define the new defect $R_{1}$ using \eqref{est 29} as follows: 
\begin{align}
& - \text{div} R_{1} \label{est 99}\\
\overset{\eqref{est 29} \eqref{est 76}}{=}& - \text{div}R_{0} \nonumber \\
&+ \partial_{t} [ \vartheta + \vartheta_{c} + q + q_{c} ] + \text{div} ( \theta_{0} (w + w_{c} )) + \text{div} ( ( \vartheta + q) u_{0}) + \text{div} ((\vartheta + q) (w + w_{c})) \nonumber \\ 
&+ (\vartheta_{c} + q_{c})\underbrace{\text{div} (u_{0} + w + w_{c})}_{\overset{\eqref{est 76}}{=}\text{div} u_{1} \overset{\eqref{est 98}}{=}0} + \text{div} (z(w + w_{c} )) - \Delta (\vartheta + q) \nonumber \\
=& \underbrace{\partial_{t} (q+ q_{c}) + \text{div} (\vartheta w - R_{l})}_{\text{div} R^{\text{time,1}} + \text{div} R^{\text{quadr}} + \text{div} R^{\chi}} + \underbrace{\partial_{t} (\vartheta + \vartheta_{c}) + \text{div} (\theta_{0} w + \vartheta u_{0}) - \Delta (\vartheta + q)}_{\text{div} R^{\text{time,2}} + \text{div} R^{\text{lin}}} \nonumber \\
&+ \underbrace{\text{div} (q(u_{0} + w))}_{\text{div} R^{q}} + \underbrace{\text{div} ((\theta_{0} + \vartheta + q + z) w_{c})}_{\text{div} R^{\text{corr,1}}} +   \underbrace{\text{div} (zw)}_{\text{div} R^{\text{corr,2}}}  + \underbrace{\text{div} (R_{l} - R_{0})}_{\text{div} R^{\text{moll}}}\nonumber 
\end{align}
for $R^{\text{time,1}}, R^{\text{quadr}}, R^{\chi}, R^{\text{time,2}}, R^{\text{lin}}, R^{q}, R^{\text{corr,1}}, R^{\text{corr,2}}$, and $R^{\text{moll}}$ to be defined subsequently. Thus, we have defined 
\begin{equation}\label{est 150}
-R_{1} \triangleq R^{\text{time,1}} + R^{\text{quadr}} + R^{\chi} + R^{\text{time,2}} + R^{\text{lin}} + R^{q} + R^{\text{corr,1}} + R^{\text{corr,2}} + R^{\text{moll}}. 
\end{equation} 

\subsubsection{Estimates on $\partial_{t} (q+ q_{c}) + \text{div} (\vartheta w - R_{l}) = \text{div} R^{\text{time,1}} + \text{div} R^{\text{quadr}} + \text{div} R^{\chi}$ in \eqref{est 99}}\label{Section 4.1.1} 
We first observe that 
\begin{align}
\vartheta(t,x) w(t,x) \overset{\eqref{est 71}}{=}& (\sum_{j=1}^{d} a_{j}(t,x) \Theta^{j}(t,x))( \sum_{k=1}^{d} b_{k}(t,x) W^{k}(t,x)) \nonumber \\
\overset{\eqref{est 64}}{=}& \sum_{j=1}^{d} a_{j}(t,x) b_{j}(t,x) \Theta^{j}(t,x) W^{j}(t,x). \label{est 102}
\end{align}
Relying on \eqref{est 102} gives us 
\begin{equation}\label{est 106}
\text{div} (\vartheta w) = \sum_{j=1}^{d} a_{j}b_{j} \text{div} (\Theta^{j}W^{j}) + \nabla (a_{j}b_{j}) \cdot \Theta^{j}W^{j}.
\end{equation} 
On the other hand, by setting 
\begin{equation}\label{est 105}
R^{\chi} \triangleq - \sum_{j=1}^{d} (1- \chi_{j}^{2}) R_{l}^{j} e_{j}, 
\end{equation} 
we see that 
\begin{equation}\label{est 107}
-\text{div} R_{l} \overset{\eqref{est 104}}{=} - \text{div} [ \sum_{j=1}^{d} (1- \chi_{j}^{2}) R_{l}^{j} e_{j} + \chi_{j}^{2}R_{l}^{j} e_{j} ] \overset{\eqref{est 105}\eqref{est 101}}{=} \text{div} R^{\chi} - \sum_{j=1}^{d} \nabla (a_{j}b_{j}) \cdot e_{j}.
\end{equation}
These lead us to 
\begin{align}
\text{div} (\vartheta w - R_{l}) \overset{\eqref{est 106} \eqref{est 107}}{=}&\sum_{j=1}^{d} a_{j}b_{j} \text{div} (\Theta^{j} W^{j}) + \nabla (a_{j} b_{j}) \cdot \Theta^{j} W^{j} + \text{div} R^{\chi} - \sum_{j=1}^{d} \nabla (a_{j} b_{j}) \cdot e_{j} \nonumber\\
=& \sum_{j=1}^{d} a_{j}b_{j} \text{div} (\Theta^{j} W^{j}) \nonumber \\
&  + \nabla (a_{j} b_{j}) \cdot [\Theta^{j} W^{j} - e_{j} ] - \fint_{\mathbb{T}^{d}} \nabla (a_{j} b_{j}) \cdot [\Theta^{j} W^{j} - e_{j} ] dx \nonumber \\
&+ \fint_{\mathbb{T}^{d}} \nabla (a_{j} b_{j}) \cdot [\Theta^{j}W^{j} - e_{j} ] dx + \text{div} R^{\chi}. \label{est 108}
\end{align}
On the other hand, 
\begin{align}  
 \partial_{t} ( q+ q_{c}) \overset{\eqref{est 71}}{=}& \sum_{j=1}^{d} a_{j} b_{j} \partial_{t}Q^{j}  + \partial_{t} (a_{j} b_{j}) Q^{j} -  \fint_{\mathbb{T}^{d}} \partial_{t} (a_{j}b_{j}) Q^{j} dx \nonumber\\
&+ \fint_{\mathbb{T}^{d}} \partial_{t} (a_{j}b_{j}) Q^{j} dx + q_{c}'. \label{est 109}
\end{align}
Summing \eqref{est 108}-\eqref{est 109} gives us 
\begin{subequations}\label{est 112}
\begin{align}
& \partial_{t} (q+ q_{c}) + \text{div} (\vartheta w - R_{l}) \nonumber \\
\overset{\eqref{est 108} \eqref{est 109}}{=}&  \sum_{j=1}^{d} a_{j} b_{j} [\partial_{t}Q^{j} + \text{div} (\Theta^{j}W^{j})] \label{est 110}\\
&+ [\partial_{t} (a_{j}b_{j}) Q^{j} - \fint_{\mathbb{T}^{d}} \partial_{t} (a_{j}b_{j} ) Q^{j} dx] \\
&+ \nabla (a_{j}b_{j}) \cdot [\Theta^{j}W^{j} - e_{j} ] - \fint_{\mathbb{T}^{d}} \nabla (a_{j}b_{j}) \cdot [\Theta^{j} W^{j} - e_{j} ]dx  + \text{div} R^{\chi} \\
&+ \fint_{\mathbb{T}^{d}} \partial_{t} (a_{j}b_{j})Q^{j}dx + \fint_{\mathbb{T}^{d}} \nabla (a_{j}b_{j}) \cdot [\Theta^{j} W^{j} - e_{j} ] dx + q_{c}'.\label{est 111} 
\end{align}
\end{subequations}
Here, \eqref{est 110} vanishes due to \eqref{est 65} while \eqref{est 111} also vanishes due to 
\begin{align}
& \fint_{\mathbb{T}^{d}} \partial_{t} (a_{j}b_{j})Q^{j}dx + \fint_{\mathbb{T}^{d}} \nabla (a_{j}b_{j}) \cdot [\Theta^{j} W^{j} - e_{j} ] dx + q_{c}' \nonumber\\
\overset{\eqref{est 85}\eqref{est 71}}{=}& \fint_{\mathbb{T}^{d}} \nabla (a_{j}b_{j}) \cdot \Theta^{j}W^{j} dx - \fint_{\mathbb{T}^{d}} a_{j} b_{j} \partial_{t}Q^{j} dx 
\overset{\eqref{est 65}}{=}  0. \label{est 278}
\end{align}
Thus, we conclude from \eqref{est 112} that  
\begin{align}
 \partial_{t} (q+ q_{c}) + \text{div} (\vartheta w - R_{l}) 
=&  \sum_{j=1}^{d} [\partial_{t} (a_{j}b_{j}) Q^{j} - \fint_{\mathbb{T}^{d}} \partial_{t} (a_{j}b_{j} ) Q^{j} dx] \label{est 118}\\
+& \nabla (a_{j}b_{j}) \cdot [\Theta^{j}W^{j} - e_{j} ] - \fint_{\mathbb{T}^{d}} \nabla (a_{j}b_{j}) \cdot [\Theta^{j} W^{j} - e_{j} ]dx + \text{div} R^{\chi},\nonumber 
\end{align}
where we can further compute 
\begin{align}
 \nabla (a_{j} b_{j}) \cdot [\Theta^{j} W^{j} - e_{j} ]\overset{\eqref{est 59}}{=}& \nabla (a_{j} b_{j}) \cdot [ ( \varrho_{\mu}^{j} \tilde{\varrho}_{\mu}^{j})_{\lambda} \circ \tau_{\omega t e_{j}} (\psi_{\nu}^{j})^{2} - 1] e_{j} \label{est 117}\\
=& \partial_{j} ( a_{j} b_{j}) [ ( \varrho_{\mu}^{j} \tilde{\varrho}_{\mu}^{j})_{\lambda} \circ \tau_{\omega t e_{j}} (( \psi^{j})^{2} -1)_{\nu} + ( \varrho_{\mu}^{j} \tilde{\varrho}_{\mu}^{j} -1)_{\lambda} \circ \tau_{\omega t e_{j}}]. \nonumber
\end{align}
Thus, we define 
\begin{equation}\label{est 114}
R^{\text{time,1}} \triangleq \sum_{j=1}^{d} \mathcal{D}^{-1} ( \partial_{t} (a_{j} b_{j} )Q^{j} - \fint_{\mathbb{T}^{d}} \partial_{t} (a_{j} b_{j}) Q^{j} dx)
\end{equation} 
so that because $\mathcal{D}^{-1} = \nabla \Delta^{-1}$ according to \eqref{est 88} we obtain 
\begin{equation}\label{est 119}
\text{div} R^{\text{time,1}} \overset{\eqref{est 114}\eqref{est 88}}{=} \sum_{j=1}^{d} \partial_{t} (a_{j}b_{j}) Q^{j} - \fint_{\mathbb{T}^{d}} \partial_{t} (a_{j} b_{j} )Q^{j} dx. 
\end{equation} 
Additionally, we define 
\begin{equation}\label{est 115}
R^{\text{quadr}} \triangleq R^{\text{quadr,1}} + R^{\text{quadr,2}},
\end{equation} 
where 
\begin{subequations}\label{est 116}
\begin{align}
& R^{\text{quadr,1}} \triangleq \sum_{j=1}^{d} \mathcal{R}_{1} (\partial_{j} (a_{j} b_{j}) ( \varrho_{\mu}^{j} \tilde{\varrho}_{\mu}^{j})_{\lambda} \circ \tau_{\omega t e_{j}}, ((\psi^{j})^{2} -1)_{\nu}), \\
& R^{\text{quadr,2}} \triangleq \sum_{j=1}^{d} \mathcal{R}_{1} (\partial_{j} (a_{j} b_{j}), (\varrho_{\mu}^{j} \tilde{\varrho}_{\mu}^{j} -1)_{\lambda} \circ \tau_{\omega t e_{j}}); 
\end{align}
\end{subequations} 
both $R^{\text{quadr,1}}$ and $R^{\text{quadr,2}}$ are well-defined by Definition \ref{Definition 3.2} because $\fint_{\mathbb{T}^{d}} ((\psi^{j})^{2} -1)_{\nu} dx = 0$ due to \eqref{est 57} and $\fint_{\mathbb{T}^{d}} (\varrho_{\mu}^{j} \tilde{\varrho}_{\mu}^{j} - 1)_{\lambda} \circ \tau_{\omega t e_{j}} dx = 0$ due to \eqref{est 113}. We see that $R^{\text{quadr}}$ defined in \eqref{est 115}-\eqref{est 116} satisfies 
\begin{align}
& \text{div} R^{\text{quadr}} \overset{\eqref{est 115}\eqref{est 116}}{=} \text{div} ( \sum_{j=1}^{d} \mathcal{R}_{1} (\partial_{j} (a_{j} b_{j}) ( \varrho_{\mu}^{j} \tilde{\varrho}_{\mu}^{j})_{\lambda} \circ \tau_{\omega t e_{j}}, ((\psi^{j})^{2} -1)_{\nu})  \nonumber\\
& \hspace{35mm} + \mathcal{R}_{1} (\partial_{j} (a_{j} b_{j}), (\varrho_{\mu}^{j} \tilde{\varrho}_{\mu}^{j} -1)_{\lambda} \circ \tau_{\omega t e_{j}}) ) \nonumber \\
\overset{\eqref{est 74}\eqref{est 117}}{=}& \sum_{j=1}^{d} \nabla (a_{j} b_{j}) \cdot [\Theta^{j} W^{j} - e_{j}] - \fint_{\mathbb{T}^{d}} \nabla (a_{j} b_{j}) \cdot [\Theta^{j} W^{j} - e_{j} ]dx. \label{est 120}
\end{align}
Therefore, by applying \eqref{est 119} and \eqref{est 120} to \eqref{est 118} we conclude that $\partial_{t} (q+ q_{c}) + \text{div} (\vartheta w - R_{l}) = \text{div} R^{\text{time,1}} + \text{div} R^{\text{quadr}} + \text{div} R^{\chi}$ as claimed.

\begin{lemma}\label{Lemma 4.6}
There exist constants $C = C(d) \geq 0$ with which $R^{\chi}$ in \eqref{est 105}, $R^{\text{time,1}}$ in \eqref{est 114}, and $R^{\text{quadr}}$ in \eqref{est 115} satisfy for all $t \in [0, T_{L}]$
\begin{subequations}
\begin{align}
& \lVert R^{\chi}(t) \rVert_{L_{x}^{1}} \leq \frac{\delta M_{0}(t)}{2}, \label{est 157}\\
& \lVert R^{\text{time,1}}(t) \rVert_{L_{x}^{1}} \leq C\omega^{-1} l^{-(d+2)} \max\{ (\delta M_{0}(t))^{\frac{1}{p}} \lVert R_{0} \rVert_{C_{t,x}}^{\frac{1}{p'}}, (\delta M_{0}(t))^{\frac{1}{p'}} \lVert R_{0} \rVert_{C_{t,x}}^{\frac{1}{p}} \}, \label{est 155} \\
& \lVert R^{\text{quadr}}(t) \rVert_{L_{x}^{1}} \leq C \left( \frac{\lambda \mu}{\nu} + \frac{1}{\lambda} \right) l^{-(d+2) 2} \max\{ (\delta M_{0}(t))^{\frac{1}{p}} \lVert R_{0} \rVert_{C_{t,x}}^{\frac{1}{p'}}, (\delta M_{0}(t))^{\frac{1}{p'}} \lVert R_{0} \rVert_{C_{t,x}}^{\frac{1}{p}} \}.\label{est 156} 
\end{align}
\end{subequations}
\end{lemma}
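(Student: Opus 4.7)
The plan is to establish the three bounds separately, with \eqref{est 156} being the only one that requires nontrivial bookkeeping.

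For \eqref{est 157} I would argue directly from the cutoff property \eqref{est 70}: on the support of $1 - \chi_{j}^{2}$ one has $\lvert R_{l}^{j}(t,x)\rvert \leq \tfrac{\delta}{2d}M_{0}(t)$, so
\begin{equation*}
\lVert R^{\chi}(t) \rVert_{L_{x}^{1}} \leq \sum_{j=1}^{d}\lVert (1-\chi_{j}^{2})R_{l}^{j}(t)\rVert_{L_{x}^{1}} \leq d \cdot \frac{\delta M_{0}(t)}{2d} = \frac{\delta M_{0}(t)}{2}.
\end{equation*}

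For \eqref{est 155}, starting from the definition \eqref{est 114} I would invoke the anti-divergence estimate \eqref{est 121} with $k=1$, $p=1$ on the mean-zero argument to deduce $\lVert R^{\text{time,1}}(t)\rVert_{L_{x}^{1}} \lesssim \sum_{j=1}^{d}\lVert \partial_{t}(a_{j}b_{j})\,Q^{j}(t)\rVert_{L_{x}^{1}}$. H\"older separates this as $\lVert \partial_{t}(a_{j}b_{j})\rVert_{L_{x}^{\infty}}\cdot \lVert Q^{j}(t)\rVert_{L_{x}^{1}}$, and the second factor is $\leq M\omega^{-1}$ by \eqref{est 62}. For the first factor, the product rule together with \eqref{est 82} and \eqref{est 84} yields, after cancellation of the $\eta,\eta^{-1}$ factors,
\begin{equation*}
\lVert \partial_{t}(a_{j}b_{j})\rVert_{L_{x}^{\infty}} \lesssim l^{-(d+2)}\bigl[(\delta M_{0}(t))^{\frac{1}{p}}\lVert R_{0}\rVert_{C_{t,x}}^{\frac{1}{p'}} + \lVert R_{0}\rVert_{C_{t,x}}^{\frac{1}{p}}(\delta M_{0}(t))^{\frac{1}{p'}}\bigr],
\end{equation*}
which is at most $2l^{-(d+2)}$ times the maximum on the right-hand side of \eqref{est 155}; summation in $j$ finishes the argument.

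For \eqref{est 156} I would exploit the decomposition \eqref{est 115}--\eqref{est 116} and apply the bilinear anti-divergence bounds \eqref{est 283}; denote the maximum on the right-hand side of \eqref{est 156} by $\mathcal{M}(t)$. For $R^{\text{quadr,1}}$, set $f_{*}^{j} \triangleq \partial_{j}(a_{j}b_{j})(\varrho_{\mu}^{j}\tilde{\varrho}_{\mu}^{j})_{\lambda}\circ \tau_{\omega t e_{j}}$; since $(\psi^{j})^{2}-1$ is mean-zero with $\lVert (\psi^{j})^{2}-1\rVert_{L^{\infty}}\lesssim 1$, \eqref{est 90} with $N=1$, $p=1$, wavelength $\nu$, reduces the task to estimating $\nu^{-1}\lVert f_{*}^{j}\rVert_{L_{x}^{1}}$ and $\nu^{-1}\lVert \nabla f_{*}^{j}\rVert_{L_{x}^{1}}$. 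Cauchy--Schwarz applied to \eqref{est 68} at $r=2$ (using $a+b=d$ from \eqref{est 55}) gives $\lVert (\varrho_{\mu}^{j}\tilde{\varrho}_{\mu}^{j})_{\lambda}\rVert_{L_{x}^{1}}\leq 1$ and $\lVert \nabla (\varrho_{\mu}^{j}\tilde{\varrho}_{\mu}^{j})_{\lambda}\rVert_{L_{x}^{1}}\lesssim \lambda\mu$, so that the product rule together with Lemma \ref{Lemma 4.4} produces $\lVert f_{*}^{j}\rVert_{L_{x}^{1}}\lesssim l^{-(d+2)}\mathcal{M}(t)$ and $\lVert \nabla f_{*}^{j}\rVert_{L_{x}^{1}}\lesssim [\lambda\mu\, l^{-(d+2)} + l^{-2(d+2)}]\mathcal{M}(t)$, both of which are absorbed into $\tfrac{\lambda\mu}{\nu}l^{-2(d+2)}\mathcal{M}(t)$ since $\lambda\mu \geq 1$ and $l\leq 1$. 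For $R^{\text{quadr,2}}$ I instead apply \eqref{est 89} with $N=1$, $p=1$, wavelength $\lambda$, exploiting $\lVert \varrho_{\mu}^{j}\tilde{\varrho}_{\mu}^{j}-1\rVert_{L_{x}^{1}}\lesssim 1$ via \eqref{est 307}; this leaves $\lambda^{-1}[\lVert \partial_{j}(a_{j}b_{j})\rVert_{L_{x}^{\infty}} + \lVert \nabla\partial_{j}(a_{j}b_{j})\rVert_{L_{x}^{\infty}}] \lesssim \lambda^{-1}l^{-2(d+2)}\mathcal{M}(t)$, producing the $\tfrac{1}{\lambda}l^{-2(d+2)}\mathcal{M}(t)$ contribution; summing yields \eqref{est 156}.

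The main obstacle lies in verifying the claim $\lVert \nabla\partial_{j}(a_{j}b_{j})\rVert_{L_{x}^{\infty}}\lesssim l^{-2(d+2)}\mathcal{M}(t)$, since the product rule produces the ``balanced'' cross piece $\lVert a_{j}\rVert_{C^{1}}\lVert b_{j}\rVert_{C^{1}} \lesssim l^{-2(d+2)}(\delta M_{0}(t))^{\frac{1}{p}}(\delta M_{0}(t))^{\frac{1}{p'}} = l^{-2(d+2)}\delta M_{0}(t)$, which is not manifestly inside $\mathcal{M}(t)$. This is directly absorbed when $\lVert R_{0}\rVert_{C_{t,x}} \geq \delta M_{0}(t)$, since then $\delta M_{0}(t) \leq (\delta M_{0}(t))^{\frac{1}{p}}\lVert R_{0}\rVert_{C_{t,x}}^{\frac{1}{p'}} \leq \mathcal{M}(t)$; in the opposite regime I would swap one of the two $C^{1}$-bounds for its $L^{\infty}$-counterpart from \eqref{est 82}, trading one factor of $(\delta M_{0}(t))^{\frac{1}{p}}$ (or $(\delta M_{0}(t))^{\frac{1}{p'}}$) for the corresponding power of $\lVert R_{0}\rVert_{C_{t,x}}$, after which the cross contribution again lies inside $\mathcal{M}(t)$. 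All remaining steps are routine consequences of Lemmas \ref{Lemma 4.3}--\ref{Lemma 4.4} and the bilinear anti-divergence bounds \eqref{est 283}.
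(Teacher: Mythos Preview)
Your treatment of \eqref{est 157} and \eqref{est 155} is correct and matches the paper's argument line by line. For \eqref{est 156} your overall strategy (apply \eqref{est 90} with $N=1$ at frequency $\nu$ to $R^{\text{quadr,1}}$ and \eqref{est 89} with $N=1$ at frequency $\lambda$ to $R^{\text{quadr,2}}$, then feed in the Mikado and $a_j,b_j$ bounds) is exactly what the paper does.

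The one place your argument does not close is the handling of the cross piece $\lVert Da_j\rVert_{L^\infty}\lVert Db_j\rVert_{L^\infty}$ in the regime $\lVert R_0\rVert_{C_{t,x}} < \delta M_0(t)$. Your proposed fix of ``swapping one of the two $C^1$-bounds for its $L^\infty$-counterpart from \eqref{est 82}'' cannot work as stated: the cross term contains genuine first derivatives of $a_j$ and $b_j$, and \eqref{est 82} only controls the undifferentiated functions. There is no alternative $C^1$-estimate in Lemma~\ref{Lemma 4.4} with $\lVert R_0\rVert_{C_{t,x}}$ in place of $\delta M_0(t)$, so in that regime your bound $l^{-2(d+2)}\delta M_0(t)$ genuinely exceeds $l^{-2(d+2)}\mathcal{M}(t)$ and the argument stalls.

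The paper sidesteps this issue entirely: rather than expanding $\lVert a_jb_j\rVert_{C^2}$ by the full Leibniz rule, it uses the standard H\"older product estimate
\[
\lVert a_jb_j\rVert_{C_x^2} \lesssim \lVert a_j\rVert_{C_x^2}\lVert b_j\rVert_{L_x^\infty} + \lVert a_j\rVert_{L_x^\infty}\lVert b_j\rVert_{C_x^2},
\]
which absorbs the cross term via the interpolation $\lVert f\rVert_{C^1}^2 \lesssim \lVert f\rVert_{L^\infty}\lVert f\rVert_{C^2}$ together with AM--GM. Plugging in \eqref{est 82} and \eqref{est 83} then yields directly a bound by $l^{-2(d+2)}\mathcal{M}(t)$ with no case distinction needed. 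Replacing your cross-term discussion by this single product estimate repairs the gap and makes the proof coincide with the paper's.
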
 

\begin{proof}[Proof of Lemma \ref{Lemma 4.6}]
First, we compute
\begin{equation}
\lVert R^{\chi} (t) \rVert_{L_{x}^{1}} \overset{\eqref{est 105}}{\leq} \sum_{j=1}^{d} \int_{supp (1- \chi_{j}^{2})(t)} \lvert R_{l}^{j} (t,x) \rvert dx  
\overset{\eqref{est 70}}{\leq} \frac{\delta M_{0}(t)}{2}.
\end{equation}
Second, we compute 
\begin{align*}
\lVert R^{\text{time,1}}(t) \rVert_{L_{x}^{1}} &\overset{\eqref{est 114}\eqref{est 121}}{\lesssim}\sum_{j=1}^{d} (\lVert \partial_{t} a_{j}(t) \rVert_{L_{x}^{\infty}} \lVert b_{j}(t) \rVert_{L_{x}^{\infty}} + \lVert a_{j}(t) \rVert_{L_{x}^{\infty}} \lVert \partial_{t} b_{j} (t) \rVert_{L_{x}^{\infty}}) \lVert Q^{j}(t) \rVert_{L_{x}^{1}}  \\
\overset{\eqref{est 82} \eqref{est 84} \eqref{est 62}}{\lesssim}&  [ \eta l^{-(d+2)} (\delta M_{0}(t))^{\frac{1}{p}} \eta^{-1} \lVert R_{0} \rVert_{C_{t,x}}^{\frac{1}{p'}}  + \eta \lVert R_{0} \rVert_{C_{t,x}}^{\frac{1}{p}} \eta^{-1} l^{-(d+2)} (\delta M_{0}(t))^{\frac{1}{p'}}] (\frac{M}{\omega}) \\
\leq& C \omega^{-1} l^{-(d+2)} \max\{ (\delta M_{0}(t))^{\frac{1}{p}} \lVert R_{0} \rVert_{C_{t,x}}^{\frac{1}{p'}}, (\delta M_{0}(t))^{\frac{1}{p'}} \lVert R_{0} \rVert_{C_{t,x}}^{\frac{1}{p}} \}.
\end{align*}
Third, we compute 
\begin{align}
& \lVert R^{\text{quadr,1}}(t) \rVert_{L_{x}^{1}} \nonumber \\
\overset{\eqref{est 116} \eqref{est 90}}{\lesssim}& \nu^{-1} \sum_{j=1}^{d} \lVert \partial_{j} (a_{j} b_{j}) (t)\rVert_{L_{x}^{\infty}} \lVert ( \varrho_{\mu}^{j} \tilde{\varrho}_{\mu}^{j})_{\lambda} \circ \tau_{\omega t e_{j}} \rVert_{L_{x}^{1}} + \lVert \partial_{j} (a_{j} b_{j})(t) \rVert_{C_{x}^{1}} \lVert (\varrho_{\mu}^{j} \tilde{\varrho}_{\mu}^{j})_{\lambda} \circ \tau_{\omega t e_{j}} \rVert_{W_{x}^{1,1}} \nonumber\\
\overset{\eqref{est 66}\eqref{est 67}}{\lesssim}& \nu^{-1} \sum_{j=1}^{d} (\lVert a_{j} (t) \rVert_{C_{x}^{2}} \lVert b_{j}(t) \rVert_{L_{x}^{\infty}} + \lVert a_{j}(t) \rVert_{L_{x}^{\infty}} \lVert b_{j}(t) \rVert_{C_{x}^{2}}) ( \lVert \varrho_{\mu}^{j} \tilde{\varrho}_{\mu}^{j} \rVert_{L_{x}^{1}} + \lambda \lVert \varrho_{\mu}^{j} \tilde{\varrho}_{\mu}^{j} \rVert_{W_{x}^{1,1}}) \nonumber\\
\overset{\eqref{est 82}\eqref{est 83}}{\lesssim}& \nu^{-1} ( \eta ( \delta M_{0}(t))^{\frac{1}{p}} l^{-(d+2) 2} \eta^{-1} \lVert R_{0} \rVert_{C_{t,x}}^{\frac{1}{p'}}  + \eta \lVert R_{0} \rVert_{C_{t,x}}^{\frac{1}{p}} \eta^{-1} (\delta M_{0}(t))^{\frac{1}{p'}} l^{-(d+2) 2})  \nonumber\\
& \times [ \lVert \varrho_{\mu}^{j} \rVert_{L^{2}} \lVert \tilde{\varrho}_{\mu}^{j} \rVert_{L^{2}} + \lambda( \lVert \varrho_{\mu}^{j} \rVert_{W^{1,2}} \lVert \tilde{\varrho}_{\mu}^{j} \rVert_{L^{2}} + \lVert \varrho_{\mu}^{j} \rVert_{L^{2}} \lVert \tilde{\varrho}_{\mu}^{j} \rVert_{W^{1,2}}]  \nonumber\\
\overset{\eqref{est 68} \eqref{est 55}}{\lesssim}& \left( \frac{\lambda \mu}{\nu} \right) l^{-(d+2) 2} \max\{ (\delta M_{0}(t))^{\frac{1}{p}} \lVert R_{0} \rVert_{C_{t,x}}^{\frac{1}{p'}}, (\delta M_{0}(t))^{\frac{1}{p'}} \lVert R_{0} \rVert_{C_{t,x}}^{\frac{1}{p}} \}.  \label{est 122}
\end{align}
On the other hand, 
\begin{align}
& \lVert R^{\text{quadr,2}}(t) \rVert_{L_{x}^{1}} \nonumber \\
\overset{\eqref{est 116}\eqref{est 89} }{\lesssim}& \sum_{j=1}^{d} \lVert ( \varrho_{\mu}^{j} \tilde{\varrho}_{\mu}^{j} -1) \circ \tau_{\omega t e_{j}} \rVert_{L_{x}^{1}} ( \lambda^{-1} \lVert \partial_{j} (a_{j} b_{j})(t) \rVert_{L_{x}^{\infty}} + \lambda^{-1} \lVert \mathcal{D} \partial_{j} (a_{j} b_{j})(t) \rVert_{L_{x}^{\infty}})  \nonumber\\
\overset{\eqref{est 67}}{\lesssim}& \sum_{j=1}^{d} \lVert \varrho_{\mu}^{j} \tilde{\varrho}_{\mu}^{j} - 1 \rVert_{L^{1}} \lambda^{-1} ( \lVert a_{j} (t) \rVert_{C_{x}^{2}} \lVert b_{j}(t) \rVert_{L_{x}^{\infty}} + \lVert a_{j}(t) \rVert_{L_{x}^{\infty}} \lVert b_{j}(t) \rVert_{C_{x}^{2}}) \nonumber\\
\overset{\eqref{est 82}\eqref{est 83}}{\lesssim}& \sum_{j=1}^{d} (\lVert \varrho_{\mu}^{j} \rVert_{L^{2}} \lVert \tilde{\varrho}_{\mu}^{j} \rVert_{L^{2}} + 1) \lambda^{-1}l^{-(d+2) 2} \max\{ (\delta M_{0}(t))^{\frac{1}{p}} \lVert R_{0} \rVert_{C_{t,x}}^{\frac{1}{p'}}, (\delta M_{0}(t))^{\frac{1}{p'}} \lVert R_{0} \rVert_{C_{t,x}}^{\frac{1}{p}} \}   \nonumber\\
\overset{\eqref{est 68}\eqref{est 55}}{\lesssim}& \lambda^{-1} l^{-(d+2) 2} \max\{ (\delta M_{0}(t))^{\frac{1}{p}} \lVert R_{0} \rVert_{C_{t,x}}^{\frac{1}{p'}}, (\delta M_{0}(t))^{\frac{1}{p'}} \lVert R_{0} \rVert_{C_{t,x}}^{\frac{1}{p}} \}.\label{est 123}
\end{align}
Therefore, we conclude \eqref{est 156} from \eqref{est 115}, \eqref{est 122}, and \eqref{est 123}. 
\end{proof}

\subsubsection{Estimates on $\partial_{t} (\vartheta + \vartheta_{c}) + \text{div} (\theta_{0} w + \vartheta u_{0}) - \Delta (\vartheta + q)= \text{div} R^{\text{time,2}} + \text{div} R^{\text{lin}}$ in \eqref{est 99}}\label{Section 4.1.2}
We compute 
\begin{align}
 \partial_{t} ( \vartheta + \vartheta_{c}) +& \text{div} (\theta_{0} w + \vartheta u_{0}) - \Delta (\vartheta + q) \overset{\eqref{est 71}}{=} \sum_{j=1}^{d} a_{j} \partial_{t} \Theta^{j} - \fint_{\mathbb{T}^{d}} a_{j} \partial_{t} \Theta^{j} dx \nonumber \\
&+ \partial_{t} a_{j} \Theta^{j} - \fint_{\mathbb{T}^{d}} \partial_{t} a_{j} \Theta^{j} dx + \text{div} (\theta_{0} w + \vartheta u_{0}) - \Delta (\vartheta + q) \label{est 127}
\end{align}
due to 
\begin{align*}
\sum_{j=1}^{d} \fint_{\mathbb{T}^{d}} a_{j} \partial_{t} \Theta^{j} dx + \fint_{\mathbb{T}^{d}} \partial_{t} a_{j} \Theta^{j}dx + \vartheta_{c}' 
\overset{\eqref{est 85}}{=} \partial_{t} \sum_{j=1}^{d} \fint_{\mathbb{T}^{d}} a_{j} \Theta^{j} dx - \fint_{\mathbb{T}^{d}} \vartheta' dx 
\overset{\eqref{est 71}}{=} 0.
\end{align*}
Thus, we define 
\begin{subequations}
\begin{align}
&R^{\text{lin}} \triangleq \sum_{j=1}^{d} \mathcal{D}^{-1} ( ( \partial_{t} a_{j}) \Theta^{j} - \fint_{\mathbb{T}^{d}} (\partial_{t} a_{j}) \Theta^{j} dx) + \theta_{0} w + \vartheta u_{0} - \nabla (\vartheta + q), \label{est 124}\\
& R^{\text{time,2}} \triangleq -\lambda \omega \sum_{j=1}^{d} \mathcal{R}_{N} (a_{j} (\partial_{j} \varrho_{\mu}^{j})_{\lambda} \circ \tau_{\omega t e_{j}}, \psi_{\nu}^{j}) \label{est 125}
\end{align}
\end{subequations}
where \eqref{est 125} is well-defined because $\fint_{\mathbb{T}^{d}} \psi_{\nu}^{j}(x) dx = 0$ due to \eqref{est 57}. It follows that 
\begin{subequations}
\begin{align}
\text{div} R^{\text{lin}} \overset{\eqref{est 124}\eqref{est 88}}{=}& \sum_{j=1}^{d} (\partial_{t} a_{j}) \Theta^{j} - \fint_{\mathbb{T}^{d}} (\partial_{t} a_{j}) \Theta^{j}dx + \text{div} (\theta_{0} w + \vartheta u_{0}) - \Delta (\vartheta + q) \label{est 128}, \\
\text{div} R^{\text{time,2}} \overset{\eqref{est 125}\eqref{est 74}}{=}& -\lambda \omega \sum_{j=1}^{d} [ a_{j} (\partial_{j} \varrho_{\mu}^{j})_{\lambda} \circ \tau_{\omega t e_{j}} \psi_{\nu}^{j} - \fint_{\mathbb{T}^{d}} a_{j} (\partial_{j} \varrho_{\mu}^{j})_{\lambda} \circ \tau_{\omega t e_{j}} \psi_{\nu}^{j} dx] \nonumber\\
\overset{\eqref{est 126}}{=}& \sum_{j=1}^{d} (a_{j} \partial_{t} \Theta^{j} - \fint_{\mathbb{T}^{d}} a_{j} \partial_{t} \Theta^{j} dx).\label{est 129}
\end{align}
\end{subequations}
Considering \eqref{est 128}-\eqref{est 129} in \eqref{est 127}, we conclude that $\partial_{t} (\vartheta + \vartheta_{c}) + \text{div} (\theta_{0} w + \vartheta u_{0}) - \Delta (\vartheta + q)= \text{div} R^{\text{time,2}} + \text{div} R^{\text{lin}}$. 

\begin{lemma}\label{Lemma 4.7} 
There exist constants $C \geq 0$ with which $R^{\text{lin}}$ in \eqref{est 124} and $R^{\text{time,2}}$ in \eqref{est 125} satisfy for all $t \in [0, T_{L}]$ 
\begin{subequations}
\begin{align}
&\lVert R^{\text{lin}}(t) \rVert_{L_{x}^{1}} \leq C( \mu^{-a} \lVert \theta_{0} \rVert_{C_{t,x}} \eta^{-1} \lVert R_{0} \rVert_{C_{t,x}}^{\frac{1}{p'}} + \mu^{-b}  \eta [ \lVert R_{0} \rVert_{C_{t,x}}^{\frac{1}{p}} \lVert u_{0} \rVert_{C_{t,x}} +  (\delta M_{0} (t))^{\frac{1}{p}} l^{-(d+2)}]\nonumber\\
& \hspace{20mm} + [\eta \lVert R_{0} \rVert_{C_{t,x}}^{\frac{1}{p}} \mu^{-b} + \lVert R_{0} \rVert_{C_{t,x}} \omega^{-1} ] [\lambda \mu + \nu] \nonumber \\
& \hspace{20mm} + l^{-(d+2)} \max\{ (\delta M_{0}(t))^{\frac{1}{p}} \lVert R_{0} \rVert_{C_{t,x}}^{\frac{1}{p'}}, (\delta M_{0}(t))^{\frac{1}{p'}} \lVert R_{0} \rVert_{C_{t,x}}^{\frac{1}{p}} \} \omega^{-1}), \label{est 159} \\
&\lVert R^{\text{time,2}}(t) \rVert_{L_{x}^{1}} \leq C \left(\frac{\omega}{\mu^{b}}\right) \eta (\delta M_{0}(t))^{\frac{1}{p}} \left(\sum_{k=1}^{N} \left( \frac{\lambda \mu}{\nu} \right)^{k} l^{-(d+2) (k-1)} + \frac{ (\lambda \mu)^{N+1}}{\nu^{N}} l^{-(d+2) N} \right).\label{est 158}
\end{align}
\end{subequations}
\end{lemma}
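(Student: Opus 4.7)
The plan is to decompose each of $R^{\text{lin}}$ and $R^{\text{time,2}}$ into its constituent pieces and bound each in $L^{1}_{x}$ using the $a_{j}, b_{j}$ bounds from Lemma \ref{Lemma 4.4}, the Mikado density/field estimates from Lemma \ref{Lemma 4.3}, and the anti-divergence machinery set up in Subsection \ref{Subsection 3.2}. Both estimates are structural: every factor appearing on the right-hand sides of \eqref{est 159}--\eqref{est 158} can be matched term-by-term to a Leibniz expansion of a clearly identifiable piece, and the only decision to make is whether to bound $a_j, b_j$ in pointwise norm via \eqref{est 82} or in a derivative norm via \eqref{est 83}--\eqref{est 84}.

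For $R^{\text{lin}}$ I would split the sum in \eqref{est 124} into four pieces: (i) $\sum_{j} \mathcal{D}^{-1}((\partial_{t} a_{j}) \Theta^{j} - \fint (\partial_{t} a_{j}) \Theta^{j}\,dx)$, (ii) $\theta_{0} w$, (iii) $\vartheta u_{0}$, and (iv) $\nabla(\vartheta + q)$. For (i), applying \eqref{est 121} with $k=1, p=1$ reduces the problem to $\lVert (\partial_{t} a_{j}) \Theta^{j} \rVert_{L^{1}}$, which by H\"older and \eqref{est 84}, \eqref{est 62} gives $\lesssim \mu^{-b} \eta (\delta M_{0}(t))^{\frac{1}{p}} l^{-(d+2)}$. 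For (ii) and (iii), H\"older's inequality together with $\lVert w(t) \rVert_{L^{1}} \leq \sum_{j} \lVert b_{j} \rVert_{L^\infty} \lVert W^{j} \rVert_{L^{1}} \lesssim \eta^{-1} \lVert R_{0} \rVert_{C_{t,x}}^{\frac{1}{p'}} \mu^{-a}$ and $\lVert \vartheta(t) \rVert_{L^{1}} \lesssim \eta \lVert R_{0} \rVert_{C_{t,x}}^{\frac{1}{p}} \mu^{-b}$ (from \eqref{est 71}, \eqref{est 82}, \eqref{est 62}) yield the first and third terms of \eqref{est 159}. For (iv), Leibniz gives $\nabla \vartheta = \sum_{j} (\nabla a_{j}) \Theta^{j} + a_{j} \nabla \Theta^{j}$ and $\nabla q = \sum_{j} \nabla(a_{j} b_{j}) Q^{j} + a_{j} b_{j} \nabla Q^{j}$; using the scaling $\lVert \nabla \Theta^{j} \rVert_{L^{1}} \lesssim (\lambda\mu + \nu)\mu^{-b}$ and $\lVert \nabla Q^{j} \rVert_{L^{1}} \lesssim (\lambda\mu + \nu)\omega^{-1}$ (from \eqref{est 59}, \eqref{est 66}--\eqref{est 68}, \eqref{est 305}) combined with \eqref{est 82}--\eqref{est 83} produces exactly the $[\eta \lVert R_{0} \rVert_{C_{t,x}}^{\frac{1}{p}} \mu^{-b} + \lVert R_{0} \rVert_{C_{t,x}} \omega^{-1}] [\lambda \mu + \nu]$ contribution and the final $\omega^{-1} l^{-(d+2)}$-weighted mixed term of \eqref{est 159}.

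For $R^{\text{time,2}}$ the plan is to apply \eqref{est 90} with $f_j = a_{j} (\partial_{j} \varrho_{\mu}^{j})_{\lambda} \circ \tau_{\omega t e_{j}}$ and $g = \psi_{\nu}^{j}$ at $r = p = 1$, using $\lVert \psi^{j} \rVert_{L^{\infty}} \lesssim 1$. This reduces everything to estimating $\lVert \mathcal{D}^{k}[a_{j} (\partial_{j} \varrho_{\mu}^{j})_{\lambda} \circ \tau_{\omega t e_{j}}] \rVert_{L^{1}}$ for $k = 0, \dots, N$. Expanding via Leibniz and combining \eqref{est 83} (which gives $\lVert D^m a_j \rVert_{L^\infty} \lesssim \eta l^{-(d+2)m}(\delta M_0(t))^{\frac{1}{p}}$) with \eqref{est 66}, \eqref{est 68} (which give $\lVert D^n[(\partial_j \varrho_\mu^j)_\lambda] \rVert_{L^1} \lesssim \lambda^n \mu^{n+1-b}$), the worst-case term yields
\[
\lVert \mathcal{D}^{k}[a_{j} (\partial_{j} \varrho_{\mu}^{j})_{\lambda} \circ \tau_{\omega t e_{j}}] \rVert_{L^{1}} \lesssim \eta (\delta M_{0}(t))^{\frac{1}{p}} (\lambda\mu)^{k} l^{-(d+2) k} \mu^{1-b}.
\]
Summing the $\nu^{-k-1}$-weighted contributions for $k = 0, \dots, N-1$ and the $\nu^{-N}$ tail, then multiplying by the prefactor $\lambda \omega$ and re-indexing, converts $\lambda \mu \cdot \mu^{1-b} = \omega \mu^{-b} \cdot (\lambda \mu)/\omega$... more precisely, the combinatorial identity $\lambda\mu \cdot (\lambda\mu)^{k-1}/\nu^{k} = (\lambda\mu)^{k}/\nu^{k}$ reorganizes the inner sum into exactly $\sum_{k=1}^{N}(\lambda\mu/\nu)^{k} l^{-(d+2)(k-1)}$, and the tail gives $(\lambda\mu)^{N+1}\nu^{-N}l^{-(d+2)N}$, producing \eqref{est 158}.

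The main obstacle is not analytical but bookkeeping: one must correctly decide, for each occurrence of $a_{j}$ or $b_{j}$, whether to invoke the pointwise bound \eqref{est 82} (which costs only $\lVert R_{0} \rVert_{C_{t,x}}^{1/p}$ or $\lVert R_{0}\rVert_{C_{t,x}}^{1/p'}$) or the Sobolev bound \eqref{est 83}--\eqref{est 84} (which costs the mollifier penalty $l^{-(d+2)}$ but gives the sharp $(\delta M_0)^{1/p}$). The consistent rule, which makes \eqref{est 159}--\eqref{est 158} close, is to use \eqref{est 82} whenever only pointwise control is required and to invoke \eqref{est 83}--\eqref{est 84} as soon as a derivative lands on $a_j$ or $b_j$; this reflects precisely the structural cost of having had to mollify $R_{0}$ in \eqref{est 86}, as anticipated in Remarks \ref{Remark 2.3} and \ref{Remark 4.3}. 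A secondary technical point is maintaining mean-zero hypotheses when invoking $\mathcal{D}^{-1}$ and $\mathcal{R}_{N}$, which is automatic here because $\psi_{\nu}^{j}$ is mean-zero by \eqref{est 305} and the anti-divergence term in \eqref{est 124} is explicitly built to be mean-zero.
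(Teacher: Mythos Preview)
Your proposal is correct and follows essentially the same approach as the paper: the four-piece decomposition of $R^{\text{lin}}$, the Leibniz expansions for $\nabla(\vartheta+q)$ with the scaling $\lVert\nabla\Theta^{j}\rVert_{L^{1}}\lesssim(\lambda\mu+\nu)\mu^{-b}$ and $\lVert\nabla Q^{j}\rVert_{L^{1}}\lesssim(\lambda\mu+\nu)\omega^{-1}$, and the use of \eqref{est 90} followed by the re-indexing $k\mapsto k+1$ for $R^{\text{time,2}}$ are exactly what the paper does. Your rule for when to invoke \eqref{est 82} versus \eqref{est 83}--\eqref{est 84} is the same one the paper applies implicitly.
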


\begin{proof}[Proof of Lemma \ref{Lemma 4.7}]
First, we compute 
\begin{align}
& \lVert \sum_{j=1}^{d} \mathcal{D}^{-1} (( \partial_{t} a_{j})\Theta^{j} - \fint_{\mathbb{T}^{d}} (\partial_{t} a_{j}) \Theta^{j} dx )(t) \rVert_{L_{x}^{1}}\nonumber \\
\overset{\eqref{est 121}}{\lesssim}& \sum_{j=1}^{d} \lVert ( \partial_{t} a_{j} (t)) \Theta^{j}(t) - \fint_{\mathbb{T}^{d}} (\partial_{t} a_{j} (t)) \Theta^{j}(t) dx \rVert_{L_{x}^{1}} 
\overset{\eqref{est 84} \eqref{est 62}}{\lesssim} \eta l^{-(d+2)} (\delta M_{0}(t))^{\frac{1}{p}} \mu^{-b}. \label{est 130}
\end{align}
On the other hand, 
\begin{align}
 \lVert ( \theta_{0} w + \vartheta u_{0} )(t) \rVert_{L_{x}^{1}} 
\overset{\eqref{est 71}}{\leq}& \sum_{j=1}^{d} \lVert \theta_{0}(t) \rVert_{L_{x}^{\infty}} \lVert b_{j}(t) \rVert_{L_{x}^{\infty}} \lVert W^{j}(t) \rVert_{L_{x}^{1}} + \lVert a_{j}(t) \rVert_{L_{x}^{\infty}} \lVert \Theta^{j}(t)\rVert_{L_{x}^{1}} \lVert u_{0}(t) \rVert_{L_{x}^{\infty}}  \nonumber\\
\overset{\eqref{est 82} \eqref{est 62}}{\lesssim}&  \mu^{-a}   \lVert \theta_{0} \rVert_{C_{t,x}} \eta^{-1} \lVert R_{0} \rVert_{C_{t,x}}^{\frac{1}{p'}} + \mu^{-b}  \lVert u_{0} \rVert_{C_{t,x}} \eta \lVert R_{0} \rVert_{C_{t,x}}^{\frac{1}{p}}. \label{est 131}
\end{align}
Finally, we estimate   
\begin{align}
& \lVert \nabla (\vartheta + q)(t) \rVert_{L_{x}^{1}} \label{est 142}\\
&\overset{\eqref{est 71}}{\lesssim} \sum_{j=1}^{d} \lVert \nabla a_{j}(t) \rVert_{L_{x}^{\infty}} \lVert \Theta^{j}(t) \rVert_{L_{x}^{1}} + \lVert a_{j}(t) \rVert_{L_{x}^{\infty}} \lVert \nabla \Theta^{j}(t) \rVert_{L_{x}^{1}} \nonumber \\
& \hspace{5mm} + ( \lVert \nabla a_{j}(t) \rVert_{L_{x}^{\infty}} \lVert b_{j}(t) \rVert_{L_{x}^{\infty}} + \lVert a_{j}(t) \rVert_{L_{x}^{\infty}} \lVert \nabla b_{j}(t) \rVert_{L_{x}^{\infty}}) \lVert Q^{j}(t) \rVert_{L_{x}^{1}} + \lVert a_{j}(t) \rVert_{L_{x}^{\infty}} \lVert b_{j}(t) \rVert_{L_{x}^{\infty}} \lVert \nabla Q^{j}(t) \rVert_{L_{x}^{1}} \nonumber\\
&\overset{\eqref{est 82} \eqref{est 83} \eqref{est 62} \eqref{est 59}}{\lesssim} \sum_{j=1}^{d} \eta l^{-(d+2)} (\delta M_{0}(t))^{\frac{1}{p}} \mu^{-b} \nonumber\\
& \hspace{5mm}+ \eta \lVert R_{0} \rVert_{C_{t,x}}^{\frac{1}{p}}  [ \lVert \nabla ((\varrho_{\mu}^{j})_{\lambda} \circ \tau_{\omega t e_{j}}) \rVert_{L_{x}^{1}} \lVert \psi_{\nu}^{j} \rVert_{L^{\infty}} + \lVert (\varrho_{\mu}^{j})_{\lambda} \circ \tau_{\omega t e_{j}} \rVert_{L_{x}^{1}} \lVert \nabla \psi_{\nu}^{j} \rVert_{L^{\infty}} ]\nonumber\\
& \hspace{5mm} + l^{-(d+2)} \max\{ (\delta M_{0}(t))^{\frac{1}{p}} \lVert R_{0} \rVert_{C_{t,x}}^{\frac{1}{p'}}, (\delta M_{0}(t))^{\frac{1}{p'}} \lVert R_{0} \rVert_{C_{t,x}}^{\frac{1}{p}} \} \omega^{-1} \nonumber\\
& \hspace{5mm}+ \lVert R_{0} \rVert_{C_{t,x}} \omega^{-1} [ \lVert \nabla (( \varrho_{\mu}^{j} \tilde{\varrho}_{\mu}^{j})_{\lambda} \circ \tau_{\omega t e_{j}}) \rVert_{L_{x}^{1}} \lVert \psi_{\nu}^{j} \rVert_{L^{\infty}}^{2}  + \lVert ( \varrho_{\mu}^{j} \tilde{\varrho}_{\mu}^{j})_{\lambda} \circ \tau_{\omega t e_{j}} \rVert_{L_{x}^{1}} \lVert \nabla \psi_{\nu}^{j} \rVert_{L^{\infty}} \lVert \psi_{\nu}^{j} \rVert_{L^{\infty}}] \nonumber\\
& \overset{\eqref{est 66} \eqref{est 67}\eqref{est 68}}{\lesssim} \eta l^{-(d+2)} (\delta M_{0}(t))^{\frac{1}{p}} \mu^{-b} + \eta  \lVert R_{0} \rVert_{C_{t,x}}^{\frac{1}{p}} [ \lambda \mu^{a - d + 1} \lVert \nabla \varrho \rVert_{L_{x}^{1}} + \mu^{a-d} \lVert \varrho \rVert_{L_{x}^{1}} \nu]\nonumber\\
& \hspace{5mm}+ l^{-(d+2)} \max\{ (\delta M_{0}(t))^{\frac{1}{p}} \lVert R_{0} \rVert_{C_{t,x}}^{\frac{1}{p'}}, (\delta M_{0}(t))^{\frac{1}{p'}} \lVert R_{0} \rVert_{C_{t,x}}^{\frac{1}{p}} \}  \omega^{-1} \nonumber\\
& \hspace{5mm}+  \lVert R_{0} \rVert_{C_{t,x}} \omega^{-1} [ \lambda \lVert \nabla \varrho_{\mu}^{j} \rVert_{L^{2}} \lVert \tilde{\varrho}_{\mu}^{j} \rVert_{L^{2}} + \lambda \lVert \varrho_{\mu}^{j} \rVert_{L^{2}} \lVert \nabla \tilde{\varrho}_{\mu}^{j} \rVert_{L^{2}} + \lVert \varrho_{\mu}^{j} \rVert_{L^{2}} \lVert \tilde{\varrho}_{\mu}^{j} \rVert_{L^{2}} \nu] \nonumber\\
& \overset{\eqref{est 68} \eqref{est 55}}{\lesssim} \eta l^{-(d+2)} ( \delta M_{0}(t))^{\frac{1}{p}} \mu^{-b} + [ \eta \lVert R_{0} \rVert_{C_{t,x}}^{\frac{1}{p}} \mu^{-b} + \lVert R_{0} \rVert_{C_{t,x}} \omega^{-1} ][\lambda \mu + \nu] \nonumber\\
&\hspace{5mm} + l^{-(d+2)} \max\{ (\delta M_{0}(t))^{\frac{1}{p}} \lVert R_{0} \rVert_{C_{t,x}}^{\frac{1}{p'}}, (\delta M_{0}(t))^{\frac{1}{p'}} \lVert R_{0} \rVert_{C_{t,x}}^{\frac{1}{p}} \}\omega^{-1} . \nonumber
\end{align}
Thus, we conclude  \eqref{est 159} from \eqref{est 124}, \eqref{est 130}, \eqref{est 131}, and \eqref{est 142}. Second, we compute 
\begin{align}
& \lVert R^{\text{time,2}}(t) \rVert_{L_{x}^{1}} \nonumber \\
\overset{\eqref{est 125}\eqref{est 90}}{\lesssim}&  \lambda \omega \sum_{j=1}^{d} (\sum_{k=0}^{N-1} \nu^{-k-1} \lVert a_{j}(t) (\partial_{j} \varrho_{\mu}^{j})_{\lambda} \circ \tau_{\omega t e_{j}} \rVert_{W_{x}^{k,1}}  + \nu^{-N} \lVert a_{j}(t) (\partial_{j} \varrho_{\mu}^{j})_{\lambda} \circ \tau_{\omega t e_{j}} \rVert_{W_{x}^{N,1}})\nonumber \\
\overset{\eqref{est 67}\eqref{est 66} \eqref{est 83}}{\lesssim}& \lambda \omega \sum_{j=1}^{d} ( \sum_{k=0}^{N-1} \nu^{-k-1} \eta ( \delta M_{0}(t))^{\frac{1}{p}} l^{-(d+2) k} \lambda^{k}\lVert \partial_{j} \varrho_{\mu}^{j} \rVert_{W^{k,1}} \nonumber \\
& \hspace{30mm} + \nu^{-N} \eta (\delta M_{0}(t))^{\frac{1}{p}} l^{-(d+2) N} \lambda^{N} \lVert \partial_{j} \varrho_{\mu}^{j} \rVert_{W^{N,1}}) \nonumber \\
\overset{\eqref{est 68}\eqref{est 55}}{\leq}&C \left(\frac{\omega}{\mu^{b}}\right) \eta (\delta M_{0}(t))^{\frac{1}{p}} \left(\sum_{k=1}^{N} \left( \frac{\lambda \mu}{\nu} \right)^{k} l^{-(d+2) (k-1)} + \frac{ (\lambda \mu)^{N+1}}{\nu^{N}} l^{-(d+2) N} \right).
\end{align} 
\end{proof}

\subsubsection{Estimates on $\text{div} (q(u_{0} + w)) = \text{div} R^{q}$ in \eqref{est 99}}\label{Section 4.1.3}
We define
\begin{equation}\label{est 132}
R^{q} \triangleq q (u_{0} + w).
\end{equation} 

\begin{lemma}\label{Lemma 4.8}
There exists a constant $C \geq 0$ with which $R^{q}$ defined in \eqref{est 132} satisfies for all $t \in [0, T_{L}]$
\begin{equation}\label{est 160}
\lVert R^{q}(t) \rVert_{L_{x}^{1}} \leq C\omega^{-1} \lVert R_{0} \rVert_{C_{t,x}} (\lVert u_{0} \rVert_{C_{t,x}} + \eta^{-1} \lVert R_{0} \rVert_{C_{t,x}}^{\frac{1}{p'}} \mu^{b}). 
\end{equation} 
\end{lemma}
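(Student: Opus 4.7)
The strategy is simply to split $R^{q} = q u_{0} + q w$ by the triangle inequality, estimate each term by pulling the $L^{\infty}$ factor out, and then bound $\lVert q \rVert_{L_{x}^{1}}$ using the decomposition \eqref{est 71} together with the already-established pointwise bounds of Lemma \ref{Lemma 4.4} on $a_{j}, b_{j}$ and the $L^{1}_{x}$ estimate on $Q^{j}$ in \eqref{est 62}. Concretely, for the first piece I would write
\begin{align*}
\lVert q u_{0} \rVert_{L_{x}^{1}} \leq \lVert u_{0} \rVert_{C_{t,x}} \lVert q(t) \rVert_{L_{x}^{1}} \overset{\eqref{est 71}}{\leq} \lVert u_{0} \rVert_{C_{t,x}} \sum_{j=1}^{d} \lVert a_{j}(t) \rVert_{L_{x}^{\infty}} \lVert b_{j}(t) \rVert_{L_{x}^{\infty}} \lVert Q^{j}(t) \rVert_{L_{x}^{1}},
\end{align*}
and invoking \eqref{est 82} together with the third inequality of \eqref{est 62} produces a bound of the form $C \omega^{-1} \lVert R_{0} \rVert_{C_{t,x}} \lVert u_{0} \rVert_{C_{t,x}}$, since the factors of $\eta$ and $\eta^{-1}$ from $a_{j}$ and $b_{j}$ cancel and $\lVert R_{0} \rVert_{C_{t,x}}^{1/p} \lVert R_{0} \rVert_{C_{t,x}}^{1/p'} = \lVert R_{0} \rVert_{C_{t,x}}$ by \eqref{est 235}.

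For the second piece $q w$, I would pull $w$ out in $L^{\infty}_{x}$ and bound it via \eqref{est 71} and the $C_{x}$-estimate on $W^{j}$ in \eqref{est 63}:
\begin{align*}
\lVert w(t) \rVert_{L_{x}^{\infty}} \leq \sum_{j=1}^{d} \lVert b_{j}(t) \rVert_{L_{x}^{\infty}} \lVert W^{j}(t) \rVert_{C_{x}} \overset{\eqref{est 82}\eqref{est 63}}{\lesssim} \eta^{-1} \lVert R_{0} \rVert_{C_{t,x}}^{\frac{1}{p'}} \mu^{b}.
\end{align*}
Then $\lVert q w \rVert_{L_{x}^{1}} \leq \lVert w(t) \rVert_{L_{x}^{\infty}} \lVert q(t) \rVert_{L_{x}^{1}}$ together with the $\lVert q \rVert_{L^{1}_{x}} \lesssim \omega^{-1} \lVert R_{0} \rVert_{C_{t,x}}$ bound derived above yields the contribution $C \omega^{-1} \lVert R_{0} \rVert_{C_{t,x}}^{1+ \frac{1}{p'}} \eta^{-1} \mu^{b}$, which matches the $\eta^{-1} \mu^{b}$ term on the right-hand side of \eqref{est 160}.

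Summing the two contributions and factoring out $\omega^{-1} \lVert R_{0} \rVert_{C_{t,x}}$ gives precisely the claimed inequality \eqref{est 160}. There is no real obstacle here: the proof is a direct assembly of \eqref{est 71}, \eqref{est 82}, the third inequality of \eqref{est 62} and the first inequality of \eqref{est 63}, mirroring exactly the computation already used to estimate $\lvert q_{c}(t) \rvert$ in Lemma \ref{Lemma 4.5}. The only place one must be slightly careful is to recognize that no mollification issue enters (since $q$ is built from $R_{l}$ but estimated via $\lVert R_{l}^{j} \rVert_{L^{\infty}_{x}} \lesssim \lVert R_{0} \rVert_{C_{t,x}}$ by Young's convolution inequality, exactly as in the derivation of \eqref{est 82}), and that the factor $\mu^{b}$ in the second term is genuinely unavoidable because $\lVert W^{j} \rVert_{C_{x}}$ in \eqref{est 63} scales as $\mu^{b}$.
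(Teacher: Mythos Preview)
Your proof is correct and matches the paper's argument essentially line for line: the paper also bounds $\lVert R^{q}(t)\rVert_{L_{x}^{1}} \leq \lVert q(t)\rVert_{L_{x}^{1}}(\lVert u_{0}(t)\rVert_{L_{x}^{\infty}} + \lVert w(t)\rVert_{L_{x}^{\infty}})$ and then invokes \eqref{est 71}, \eqref{est 82}, \eqref{est 62}, and \eqref{est 63} exactly as you do. The only cosmetic difference is that the paper factors before splitting, whereas you split via the triangle inequality first.
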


\begin{proof}[Proof of Lemma \ref{Lemma 4.8}]
We compute 
\begin{align}
\lVert R^{q} (t) \rVert_{L_{x}^{1}} \overset{\eqref{est 132}}{\leq}& \lVert q (t) \rVert_{L_{x}^{1}} (\lVert u_{0}(t) \rVert_{L_{x}^{\infty}} + \lVert w (t) \rVert_{L_{x}^{\infty}}) \nonumber \\
\overset{ \eqref{est 71}}{\leq}&  \sum_{j=1}^{d} \lVert a_{j}(t) \rVert_{L_{x}^{\infty}} \lVert b_{j}(t) \rVert_{L_{x}^{\infty}} \lVert Q^{j}(t) \rVert_{L_{x}^{1}} ( \lVert u_{0} \rVert_{C_{t,x}} + \sum_{k=1}^{d} \lVert b_{k}(t) \rVert_{L_{x}^{\infty}} \lVert W^{k}(t) \rVert_{L_{x}^{\infty}})\nonumber\\
\overset{\eqref{est 82} \eqref{est 62}\eqref{est 63}}{\leq}& C\omega^{-1} \lVert R_{0} \rVert_{C_{t,x}} (\lVert u_{0} \rVert_{C_{t,x}} + \eta^{-1} \lVert R_{0} \rVert_{C_{t,x}}^{\frac{1}{p'}} \mu^{b}). 
\end{align}
\end{proof}

\subsubsection{Estimates on $\text{div} ((\theta_{0} + \vartheta + q + z) w_{c}) = \text{div} R^{\text{corr,1}}$ in \eqref{est 99}}\label{Section 4.1.4}
We define 
\begin{equation}\label{est 133}
R^{\text{corr,1}} \triangleq (\theta_{0}+  \vartheta + q + z) w_{c}.
\end{equation} 

\begin{lemma}\label{Lemma 4.9}
There exists a constant $C \geq 0$ with which $R^{\text{corr,1}}$ defined in \eqref{est 133} satisfies for all $t \in [0, T_{L}]$
\begin{align}
\lVert R^{\text{corr,1}}(t) \rVert_{L_{x}^{1}} \leq& C \left(\lVert \theta_{0} \rVert_{C_{t}L_{x}^{p}} + \eta (\delta M_{0}(t))^{\frac{1}{p}}[1+  \lambda^{-\frac{1}{p}} l^{-(d+2)} ] + l^{-(d+1)} \delta M_{0}(t) \left(\frac{\mu^{b}}{\omega} \right) + L^{\frac{1}{4}}\right) \nonumber\\
& \hspace{20mm} \times  \eta^{-1}(\delta M_{0}(t))^{\frac{1}{p'}}  [ \sum_{k=1}^{N} \left( \frac{\lambda \mu l^{-(d+2)}}{\nu} \right)^{k} + \frac{ (\lambda \mu l^{-(d+2)} )^{N+1}}{\nu^{N}}]. \label{est 161} 
\end{align} 
\end{lemma}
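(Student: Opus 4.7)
The plan is to apply H\"older's inequality with conjugate exponents $p$ and $p'$ to the product
\begin{equation*}
R^{\text{corr,1}}(t,x) = (\theta_0(t,x) + \vartheta(t,x) + q(t,x) + z(t,x)) w_c(t,x),
\end{equation*}
obtaining
\begin{equation*}
\lVert R^{\text{corr,1}}(t) \rVert_{L_x^1} \leq \bigl( \lVert \theta_0(t) \rVert_{L_x^p} + \lVert \vartheta(t) \rVert_{L_x^p} + \lVert q(t) \rVert_{L_x^p} + \lVert z(t) \rVert_{L_x^p} \bigr) \lVert w_c(t) \rVert_{L_x^{p'}}.
\end{equation*}
Each of the five norms on the right-hand side has already been controlled at earlier stages of the paper, so the proof should be essentially an assembly of known bounds.

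Next I would estimate each factor in turn. The term $\lVert \theta_0(t) \rVert_{L_x^p}$ is immediately absorbed into $\lVert \theta_0 \rVert_{C_t L_x^p}$. For $\lVert \vartheta(t) \rVert_{L_x^p}$ I would invoke \eqref{est 134} from Lemma \ref{Lemma 4.5}, which, after combining the two terms, gives a bound of order $\eta (\delta M_0(t))^{1/p}\bigl[1 + \lambda^{-1/p} l^{-(d+2)}\bigr]$ (absorbing the constant $M/2$ into the outer constant $C$). For $\lVert q(t) \rVert_{L_x^p}$ I would use \eqref{est 135}, yielding the $l^{-(d+1)} \delta M_0(t)\, \mu^b/\omega$ contribution. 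For $\lVert z(t) \rVert_{L_x^p}$, since we are working on $[0,T_L]$, I would use the stopping-time bound \eqref{est 32} together with the trivial embedding $L^\infty(\mathbb{T}^d) \hookrightarrow L^p(\mathbb{T}^d)$ (the torus has finite measure, so constants are absorbed), giving $\lVert z(t) \rVert_{L_x^p} \lesssim \lVert z(t) \rVert_{L_x^\infty} \leq L^{1/4}$.

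Finally, $\lVert w_c(t) \rVert_{L_x^{p'}}$ is controlled directly by \eqref{est 136} of Lemma \ref{Lemma 4.5}, contributing precisely the factor
\begin{equation*}
\eta^{-1} (\delta M_0(t))^{1/p'} \Bigl[\, \sum_{k=1}^{N} \Bigl( \frac{\lambda \mu\, l^{-(d+2)}}{\nu} \Bigr)^{k} + \frac{(\lambda \mu\, l^{-(d+2)})^{N+1}}{\nu^{N}} \Bigr].
\end{equation*}
Multiplying this bound against the sum of the four estimates above and collecting constants yields exactly \eqref{est 161}. There is no genuine obstacle here: every ingredient is already in hand, and the estimate reduces to a single application of H\"older's inequality followed by bookkeeping of the previously-derived bounds on $\vartheta$, $q$, $z$, and $w_c$. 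The only mild care needed is to ensure that the $L^p(\mathbb{T}^d)$-bound on $z$ is deduced from the $L^\infty$-bound available on the stopping interval, rather than attempting to invoke Sobolev regularity of $z$ from Proposition \ref{Proposition 4.1}, since the target form in \eqref{est 161} requires $L^{1/4}$ on the large-parenthesis factor.
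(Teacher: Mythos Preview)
Your proposal is correct and follows essentially the same approach as the paper: apply H\"older's inequality with exponents $p,p'$ to the product $(\theta_0+\vartheta+q+z)w_c$, then insert the bounds \eqref{est 134}, \eqref{est 135}, \eqref{est 32} (via $L^\infty\hookrightarrow L^p$), and \eqref{est 136} for the four $L^p$-factors and the $L^{p'}$-factor respectively. The paper's proof is exactly this two-line computation.
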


\begin{proof}[Proof of Lemma \ref{Lemma 4.9}]
We compute for all $t \in [0, T_{L}]$  
\begin{align}
& \lVert R^{\text{corr,1}}(t) \rVert_{L_{x}^{1}} \nonumber \\
\overset{\eqref{est 133}}{\leq}& ( \lVert \theta_{0}(t) \rVert_{L_{x}^{p}} + \lVert \vartheta(t) \rVert_{L_{x}^{p}} + \lVert q(t) \rVert_{L_{x}^{p}} + \lVert z(t) \rVert_{L_{x}^{p}} ) \lVert w_{c}(t) \rVert_{L_{x}^{p'}} \nonumber \\
\overset{\eqref{est 32}\eqref{est 342} \eqref{est 136}}{\leq}& C \left(\lVert \theta_{0} \rVert_{C_{t}L_{x}^{p}} + \eta (\delta M_{0}(t))^{\frac{1}{p}}[1+  \lambda^{-\frac{1}{p}} l^{-(d+2)} ] + l^{-(d+1)} \delta M_{0}(t) \mu^{b} \omega^{-1} + L^{\frac{1}{4}}\right) \nonumber\\
& \hspace{5mm} \times \eta^{-1} (\delta M_{0}(t))^{\frac{1}{p'}}  [ \sum_{k=1}^{N} \left( \frac{\lambda \mu l^{-(d+2)}}{\nu} \right)^{k} + \frac{ (\lambda \mu l^{-(d+2)} )^{N+1}}{\nu^{N}}].
\end{align}
\end{proof}

\subsubsection{Estimates on $\text{div} (zw) = \text{div} R^{\text{corr,2}}$ in \eqref{est 99}}\label{Section 4.1.5}
We define 
\begin{equation}\label{est 138}
R^{\text{corr,2}}\triangleq zw.
\end{equation} 
\begin{lemma}\label{Lemma 4.10}
There exists a constant $C \geq 0$ with which $R^{\text{corr,2}}$ defined in \eqref{est 138} satisfies for all $t \in [0,T_{L}]$ 
\begin{equation}\label{est 163}
\lVert R^{\text{corr,2}}(t) \rVert_{L_{x}^{1}} \leq C\mu^{-1} L^{\frac{1}{4}} \eta^{-1} \lVert R_{0} \rVert_{C_{t,x}}^{\frac{1}{p'}}.
\end{equation} 
\end{lemma}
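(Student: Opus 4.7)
The plan is to reduce this to a direct application of H\"older's inequality and the already established bounds. Since $R^{\text{corr,2}}=zw$ by \eqref{est 138}, we have
\begin{equation*}
\lVert R^{\text{corr,2}}(t)\rVert_{L_{x}^{1}} \leq \lVert z(t)\rVert_{L_{x}^{\infty}}\,\lVert w(t)\rVert_{L_{x}^{1}},
\end{equation*}
so the task splits cleanly into controlling each factor. The first factor is handled for $t\in[0,T_{L}]$ by the stopping-time estimate \eqref{est 32}, giving $\lVert z(t)\rVert_{L_{x}^{\infty}}\leq L^{1/4}$. No cancellation or convex-integration structure is needed here.

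For the second factor I would use the decomposition \eqref{est 71}, $w(t)=\sum_{j=1}^{d} b_{j}(t)W^{j}(t)$, together with the trivial bound $\lVert b_{j}(t)W^{j}(t)\rVert_{L_{x}^{1}}\leq \lVert b_{j}(t)\rVert_{L_{x}^{\infty}}\lVert W^{j}(t)\rVert_{L_{x}^{1}}$. The two resulting pieces are bounded by \eqref{est 82}, yielding $\lVert b_{j}(t)\rVert_{L_{x}^{\infty}}\lesssim \eta^{-1}\lVert R_{0}\rVert_{C_{t,x}}^{1/p'}$, and by the Mikado-field bound \eqref{est 62}, giving $\lVert W^{j}(t)\rVert_{L_{x}^{1}}\leq M\mu^{-a}$ with $a=d/p$ as in \eqref{est 55}. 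Combining these and summing over the finitely many indices $j\in\{1,\ldots,d\}$ produces a bound of the form $C\mu^{-a}L^{1/4}\eta^{-1}\lVert R_{0}\rVert_{C_{t,x}}^{1/p'}$.

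The only thing one must verify to match the stated form of the estimate is that $\mu^{-a}$ may be replaced by $\mu^{-1}$. This is where the parameter hypothesis \eqref{est 8} enters: the condition $\tfrac{1}{p}+\tfrac{1}{\tilde p}>1+\tfrac{1}{d}$, together with $1/\tilde p\leq 1$, forces $1/p>1/d$, i.e.\ $p<d$, hence $a=d/p>1$. Since $\mu\geq \lambda\geq 1$ by \eqref{est 58}, this gives $\mu^{-a}\leq \mu^{-1}$, and we conclude the desired inequality
\begin{equation*}
\lVert R^{\text{corr,2}}(t)\rVert_{L_{x}^{1}}\leq C\mu^{-1} L^{1/4}\eta^{-1}\lVert R_{0}\rVert_{C_{t,x}}^{1/p'}.
\end{equation*}

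I do not anticipate a genuine obstacle; this lemma is essentially bookkeeping. The only mildly nontrivial ingredient is the observation that $a>1$ under \eqref{est 8}, which allows the slightly lossy replacement of $\mu^{-a}$ by $\mu^{-1}$ so that $R^{\text{corr,2}}$ is absorbable against the ``$\mu^{b}/\omega$'' and ``$\lambda\mu/\nu$'' error terms appearing in Lemmas \ref{Lemma 4.6}--\ref{Lemma 4.9} when parameters are eventually chosen.
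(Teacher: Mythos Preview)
Your argument is correct and follows exactly the same route as the paper: both proofs use $\lVert zw\rVert_{L^{1}_{x}}\leq \lVert z\rVert_{L^{\infty}_{x}}\lVert w\rVert_{L^{1}_{x}}$, bound $\lVert z\rVert_{L^{\infty}_{x}}\leq L^{1/4}$ via \eqref{est 32}, expand $w=\sum_{j}b_{j}W^{j}$ via \eqref{est 71}, and apply \eqref{est 82} and \eqref{est 62} to obtain the $\mu^{-a}$ bound. The paper's proof actually stops at $\mu^{-a}$ without spelling out the passage to $\mu^{-1}$; your observation that $a=d/p>1$ under \eqref{est 8} (since $1/\tilde p\leq 1$ forces $1/p>1/d$) is the missing line that reconciles the proof with the stated exponent.
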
 

\begin{proof}[Proof of Lemma \ref{Lemma 4.10}]
We compute for all $t \in [0, T_{L}]$ 
\begin{align}
\lVert R^{\text{corr,2}}(t) \rVert_{L_{x}^{1}} \overset{\eqref{est 32} \eqref{est 71}}{\lesssim}& L^{\frac{1}{4}} \sum_{j=1}^{d} \lVert b_{j}(t) \rVert_{L_{x}^{\infty}} \lVert W^{j}(t) \rVert_{L_{x}^{1}} \overset{\eqref{est 82} \eqref{est 62}}{\leq} C \mu^{-a} L^{\frac{1}{4}} \eta^{-1} \lVert R_{0} \rVert_{C_{t,x}}^{\frac{1}{p'}}.
\end{align}
\end{proof}

\subsubsection{Estimates on $\text{div} (R_{l} - R_{0}) = \text{div} R^{\text{moll}}$ in \eqref{est 99}}\label{Section 4.1.6}
We define 
\begin{equation}\label{est 100}
R^{\text{moll}} \triangleq R_{l} - R_{0}. 
\end{equation} 

\begin{lemma}\label{Lemma 4.11}
There exists a constant $C \geq 0$ with which $R^{\text{moll}}$ defined in \eqref{est 100} satisfies for all $t \in [0, T_{L}]$
\begin{equation}\label{est 139}
\lVert R^{\text{moll}}(t) \rVert_{L_{x}^{1}} \leq Cl^{\frac{1}{2} - 2 \varpi} ( \lVert R_{0} \rVert_{C_{t}C_{x}^{1}} + \lVert R_{0} \rVert_{C_{t}^{\frac{1}{2} - 2 \varpi}C_{x}}). 
\end{equation}
\end{lemma}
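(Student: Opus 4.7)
The plan is to split the mollification error into its spatial and temporal parts and estimate each separately by a first-order Taylor-type bound exploiting the regularity available in each variable. Concretely, writing $R_{l} = R_{0} \ast_{x} \phi_{l} \ast_{t} \varphi_{l}$ as in \eqref{est 86}, I would use the triangle inequality in the form
\begin{equation*}
R^{\text{moll}} = R_{l} - R_{0} = \bigl((R_{0} \ast_{t} \varphi_{l}) \ast_{x} \phi_{l} - R_{0} \ast_{t} \varphi_{l}\bigr) + \bigl(R_{0} \ast_{t} \varphi_{l} - R_{0}\bigr),
\end{equation*}
so that the two pieces involve purely spatial and purely temporal mollification errors of the appropriate intermediate functions.

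For the spatial piece I would exploit the fact that $\phi_{l}$ has mass one and support of diameter $\lesssim l$, so for each fixed $t \in [0,T_{L}]$ a pointwise Taylor expansion in $x$ gives
\begin{equation*}
\lVert (R_{0} \ast_{t} \varphi_{l})(t) \ast_{x} \phi_{l} - (R_{0} \ast_{t} \varphi_{l})(t) \rVert_{L_{x}^{1}} \lesssim l \, \lVert (R_{0} \ast_{t} \varphi_{l})(t) \rVert_{C_{x}^{1}} \lesssim l \, \lVert R_{0} \rVert_{C_{t} C_{x}^{1}},
\end{equation*}
using that $\varphi_{l}$ has mass one in time so the temporal mollification does not worsen the spatial $C^{1}$-norm. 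For the temporal piece I would use that $\varphi_{l}$ is supported in $(0,l]$ together with the Hölder regularity of $R_{0}$ in time, namely for each $x \in \mathbb{T}^{d}$ and $t \in [0,T_{L}]$,
\begin{equation*}
\lvert (R_{0} \ast_{t} \varphi_{l})(t,x) - R_{0}(t,x) \rvert \leq \int_{0}^{l} \lvert R_{0}(t-s,x) - R_{0}(t,x) \rvert \varphi_{l}(s)\, ds \lesssim l^{\frac{1}{2} - 2\varpi} \lVert R_{0} \rVert_{C_{t}^{\frac{1}{2}-2\varpi} C_{x}},
\end{equation*}
and then integrate over $\mathbb{T}^{d}$. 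The extension of $R_{0}$ to $t < 0$ by its value at $t = 0$ does not spoil this estimate since the extension is still $C^{\frac{1}{2}-2\varpi}$ on $(-\infty, T_{L}]$ with the same Hölder seminorm.

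Adding the two contributions and using $l \in (0,1)$ together with $\tfrac{1}{2} - 2\varpi \in (0, \tfrac{1}{2}) \subset (0,1)$, so that $l \leq l^{\frac{1}{2} - 2\varpi}$, yields \eqref{est 139}. I do not anticipate a genuine obstacle here: the whole point of having mollified $R_{0}$ in \eqref{est 86} rather than demanding smoothness of $R_{0}$ in time (as discussed in Remarks \ref{Remark 2.3} and \ref{Remark 4.3}) is precisely to reduce this quantitative error estimate to the two classical mollification bounds above. The only care needed is to ensure that the cost $l^{\frac{1}{2}-2\varpi}$ of the temporal mollification is eventually absorbed in the convex integration scheme; this ties back to the choice \eqref{est 95} of $l = \lambda^{-\iota}$ for $\iota > 0$ suitably small and will be used in combination with the other defect estimates in Lemmas \ref{Lemma 4.6}--\ref{Lemma 4.10} to close the iteration, but that is not part of the present lemma.
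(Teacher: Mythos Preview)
Your proof is correct and is precisely the standard mollifier argument the paper has in mind; the paper's own proof simply states that the estimate ``follows from a standard property of mollifiers as $l \ll 1$ and $\varpi \in (0,\frac{1}{4})$'' without writing out the spatial/temporal splitting you have carefully detailed.
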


\begin{proof}[Proof of Lemma \ref{Lemma 4.11}] 
This follows from a standard property of mollifiers as $l \ll 1$ and $\varpi \in (0,\frac{1}{4})$ and taking $\lambda \in\mathbb{N}$ sufficiently large. 
\end{proof}

We now choose the parameters in the order of 
\begin{subequations}\label{est 148}
\begin{align}
& \mu = \lambda^{\alpha} \text{ such that } \alpha (\epsilon)  > 2 \epsilon^{-1}, \label{est 143} \\
& \nu = \lambda^{\gamma} \text{ for } \gamma (\alpha, \epsilon) \in \mathbb{N} \text{ such that } \alpha + 1 < \gamma < \alpha (1+ \epsilon), \label{est 144}\\
& \beta = \beta(b, \alpha, \gamma) \text{ such that } b \alpha < \beta < b \alpha + \gamma - (\alpha +1), \label{est 145}\\
& \omega  =  \lambda^{\beta}, \label{est 146}\\
& N(\alpha,\gamma) \in \mathbb{N} \text{ sufficiently large such that } \frac{N}{N-1} < \frac{\gamma}{1+ \alpha}. \label{est 147} 
\end{align}
\end{subequations}
Lastly, we choose $\iota$ in \eqref{est 95}. Before we do so, we observe that \eqref{est 60} and \eqref{est 55} imply that 
\begin{equation}\label{est 149}
\epsilon + 1 < b \text{ and consequently } \gamma < b \alpha \text{ which in turn implies } \gamma < \beta. 
\end{equation} 
With \eqref{est 148}-\eqref{est 149} in mind, we choose a positive real number $\iota$ such that 
\begin{align}
\iota < &\min \{ \frac{ \min\{\frac{1}{p}, \frac{1}{p'} \}}{d+2}, \frac{\beta - \alpha b}{2d+3}, \frac{1}{d+2} \left( \frac{\gamma N}{N+1} - 1 - \alpha \right), \label{est 152}\\
&\hspace{7mm}  \frac{ \alpha (1+ \epsilon)- \gamma}{d+2}, \frac{\gamma - 1 - \alpha}{3d+5}, \frac{1}{3d+5}, \frac{b \alpha + \gamma - (\beta + 1 + \alpha)}{(d+2) N}\}. \nonumber 
\end{align}
Now by construction $(\theta_{1}, u_{1}, R_{1})$ solves \eqref{est 29}. Moreover, due to the cut-offs $\chi_{j}$, $\theta_{1}$ and $u_{1}$ defined in \eqref{est 76} are in $C_{t,x}^{\infty}$. On the other hand, $R_{1}$ defined in \eqref{est 150} is in $C_{t}C_{x}^{1} \cap C_{t}^{\frac{1}{2} - 2 \delta} C_{x}$. Next, we need to verify \eqref{est 35}-\eqref{est 38}. We estimate 
\begin{align}
\lVert (\theta_{1} - \theta_{0})(t) \rVert_{L_{x}^{p}}
\overset{\eqref{est 76}}{\leq}& \lVert \vartheta(t) \rVert_{L_{x}^{p}} + \lvert \vartheta_{c}(t) \rvert + \lVert q(t) \rVert_{L_{x}^{p}}+ \lvert q_{c}(t) \rvert \label{est 208}  \\
\overset{\eqref{est 134} \eqref{est 135} \eqref{est 151}}{\leq}& \frac{M\eta}{2} (2 \delta M_{0}(t))^{\frac{1}{p}} + \frac{C}{\lambda^{\frac{1}{p}}} \eta l^{-(d+2)} (\delta M_{0}(t))^{\frac{1}{p}} \nonumber \\
&+  C \eta  \lVert R_{0} \rVert_{C_{t,x}}^{\frac{1}{p}}  \mu^{-b} + C l^{-(d+1)} \delta M_{0}(t)  \mu^{b} \omega^{-1}  + C\lVert R_{0} \rVert_{C_{t,x}} \omega^{-1} \nonumber \\
\overset{\eqref{est 95}\eqref{est 148}}{\leq}& \frac{M \eta }{2} ( 2 \delta M_{0}(t))^{\frac{1}{p}} + C[\lambda^{- \frac{1}{p} + (d+2) \iota} (\delta M_{0}(t))^{\frac{1}{p}} \nonumber\\
& \hspace{5mm} + \eta \lVert R_{0} \rVert_{C_{t,x}}^{\frac{1}{p}} \lambda^{-\alpha b} + \lambda^{(d+1) \iota} \delta M_{0}(t) \lambda^{\alpha b - \beta} + \lVert R_{0} \rVert_{C_{t,x}} \lambda^{-\beta}].\nonumber 
\end{align}
Now we use the fact that 
\begin{equation}\label{est 324}
\iota \overset{\eqref{est 152}}{<} \frac{ \min \{ \frac{1}{p}, \frac{1}{p'} \}}{d+2} \leq\frac{1}{p (d+2)}, \hspace{3mm} \iota \overset{\eqref{est 152}}{<} \frac{\beta - \alpha b}{2d+3} < \frac{\beta - \alpha b}{d+1}
\end{equation}
so that taking $\lambda \in\mathbb{N}$ sufficiently large gives us \eqref{est 35} as desired. Next, we estimate 
\begin{align}
& \lVert (u_{1} - u_{0})(t) \rVert_{L_{x}^{p'}} \overset{\eqref{est 76}}{\leq} \lVert w(t) \rVert_{L_{x}^{p'}} + \lVert w_{c}(t) \rVert_{L_{x}^{p'}} \label{est 209}\\
\overset{\eqref{est 136} \eqref{est 153}}{\leq} &  \frac{M}{2\eta}(2 \delta M_{0}(t))^{\frac{1}{p'}} + \frac{C}{\lambda^{\frac{1}{p'}}} \eta^{-1} l^{-(d+2)} (\delta M_{0}(t))^{\frac{1}{p'}} \nonumber\\
&+ C\eta^{-1} (\delta M_{0}(t))^{\frac{1}{p'}} [\sum_{k=1}^{N} \left( \frac{ \lambda \mu l^{-(d+2)}}{\nu} \right)^{k} + \frac{ (\lambda \mu l^{-(d+2)})^{N+1}}{\nu^{N}} ] \nonumber\\
\overset{\eqref{est 95} \eqref{est 148}}{\leq}& \frac{M}{2\eta} (2 \delta M_{0}(t))^{\frac{1}{p'}} \nonumber\\
&+ C\eta^{-1} (\delta M_{0}(t))^{\frac{1}{p'}}[ \lambda^{-\frac{1}{p'} + (d+2) \iota} +  ( \sum_{k=1}^{N} \lambda^{ [1 + \alpha - \gamma + (d+2) \iota]k} + \lambda^{-\gamma N} \lambda^{[1+ \alpha + (d+2) \iota] (N+1)} )]. \nonumber 
\end{align}
We now observe that 
\begin{align}
&\iota \overset{\eqref{est 152}}{<} \frac{ \min \{\frac{1}{p}, \frac{1}{p'} \}}{d+2} \leq \frac{1}{p' (d+2)}, \nonumber\\
&\iota \overset{\eqref{est 152}}{<} \frac{\gamma - 1 - \alpha}{3d+ 5} < \frac{\gamma - 1 - \alpha}{d+2}, \hspace{3mm}  \iota \overset{\eqref{est 152}}{<} \frac{1}{d+2} \left( \frac{\gamma N}{N+1} - 1 - \alpha \right) \label{est 325}
\end{align}
and therefore taking $\lambda  \in\mathbb{N}$ sufficiently large gives us \eqref{est 36}. Next, we estimate 
\begin{align}
 \lVert  (u_{1} - u_{0})(t) \rVert_{W_{x}^{1, \tilde{p}}}& \overset{\eqref{est 76}}{\leq} \lVert w(t) \rVert_{W_{x}^{1, \tilde{p}}} + \lVert w_{c}(t) \rVert_{W_{x}^{1, \tilde{p}}}  \label{est 210}\\
\overset{\eqref{est 154} \eqref{est 93}}{\lesssim} & C \eta^{-1} l^{-(d+2)} (\delta M_{0}(t))^{\frac{1}{p'}} \frac{ \lambda \mu + \nu}{\mu^{1+ \epsilon}} \nonumber\\
&+ \eta^{-1}  \frac{ ( \delta M_{0}(t))^{\frac{1}{p'}} [ \lambda \mu l^{-(d+2)} + \nu]}{\mu^{1+ \epsilon}} [ \sum_{k=1}^{N} \left( \frac{ l^{-(d+2)} \lambda \mu}{\nu} \right)^{k} + \frac{ ( l^{-(d+2)} \lambda \mu)^{N+1}}{\nu^{N}} ] \nonumber\\ 
\overset{\eqref{est 95} \eqref{est 148}}{\lesssim}& \eta^{-1} (\delta M_{0}(t))^{\frac{1}{p'}} \lambda^{(d+2) \iota} \frac{ \lambda^{1+ \alpha} + \lambda^{\gamma}}{\lambda^{\alpha (1+ \epsilon)}} [ \sum_{k=1}^{N} \lambda^{ [(d+2) \iota + 1 + \alpha -\gamma] k} + \lambda^{-\gamma N + [(d+2) \iota + 1+ \alpha] (N+1)}].\nonumber
\end{align}
We observe that due to \eqref{est 152}
\begin{align}
\iota < \frac{\alpha (1+ \epsilon) - \gamma}{d+2},  \hspace{2mm} \iota < \frac{\gamma - 1 - \alpha}{3d+5} < \frac{\gamma - 1 - \alpha}{d+2}, \hspace{2mm}   \iota < \frac{1}{d+2} \left( \frac{\gamma N}{N+1} - 1 - \alpha \right) \label{est 326}
\end{align}
so that taking $\lambda \in\mathbb{N}$ sufficiently large gives us \eqref{est 37}. Next, to prove \eqref{est 38}, we realize that $\lVert R^{\chi}(t) \rVert_{L_{x}^{1}} \leq \frac{\delta M_{0}(t)}{2}$ due to \eqref{est 157}; thus, according to \eqref{est 150}, it suffices to bound the $L_{x}^{1}$-norm of $R^{\text{time,1}} + R^{\text{quadr}} + R^{\text{time,2}} + R^{\text{lin}} + R^{q} + R^{\text{corr,1}} + R^{\text{corr,2}} + R^{\text{moll}}$ by $\frac{\delta M_{0}(t)}{2}$. We start with 
\begin{align}
 \lVert R^{\text{time,1}}(t) \rVert_{L_{x}^{1}} \label{est 211}
\overset{\eqref{est 155}}{\lesssim}&\omega^{-1} l^{-(d+2)} \max\{ ( \delta M_{0}(t))^{\frac{1}{p}} \lVert R_{0} \rVert_{C_{t,x}}^{\frac{1}{p'}}, ( \delta M_{0}(t))^{\frac{1}{p'}} \lVert R_{0} \rVert_{C_{t,x}}^{\frac{1}{p}} \} \nonumber\\
\overset{\eqref{est 146} \eqref{est 95}}{\lesssim}& \lambda^{-\beta + (d+2) \iota} \max\{ ( \delta M_{0}(t))^{\frac{1}{p}} \lVert R_{0} \rVert_{C_{t,x}}^{\frac{1}{p'}}, ( \delta M_{0}(t))^{\frac{1}{p'}} \lVert R_{0} \rVert_{C_{t,x}}^{\frac{1}{p}} \} \ll \delta M_{0}(t) 
\end{align}
for $\lambda \in\mathbb{N}$ sufficiently large due to 
\begin{align}
\iota \overset{\eqref{est 152}}{<} \frac{\beta - \alpha b}{2d+3} < \frac{\beta}{d+2}.  \label{est 353} 
\end{align}
Next, due to \eqref{est 143}, \eqref{est 144}, and \eqref{est 95}
\begin{align}
&\lVert R^{\text{quadr}}(t) \rVert_{L_{x}^{1}} \overset{\eqref{est 156}}{\lesssim}  \left( \frac{\lambda \mu}{\nu} + \frac{1}{\lambda} \right)  l^{-(d+2) 2} \max\{ ( \delta M_{0}(t))^{\frac{1}{p}} \lVert R_{0} \rVert_{C_{t,x}}^{\frac{1}{p'}}, ( \delta M_{0}(t))^{\frac{1}{p'}} \lVert R_{0} \rVert_{C_{t,x}}^{\frac{1}{p}} \} \nonumber \\
& \hspace{7mm}\approx ( \lambda^{1+ \alpha - \gamma} + \lambda^{-1}) \lambda^{(d+2) 2 \iota} \max\{ ( \delta M_{0}(t))^{\frac{1}{p}} \lVert R_{0} \rVert_{C_{t,x}}^{\frac{1}{p'}}, ( \delta M_{0}(t))^{\frac{1}{p'}} \lVert R_{0} \rVert_{C_{t,x}}^{\frac{1}{p}} \} \ll \delta M_{0}(t)  \label{est 164}
\end{align} 
for $\lambda \in\mathbb{N}$ sufficiently large due to 
\begin{align*}
 \iota \overset{\eqref{est 152}}{<} \frac{\gamma - 1 - \alpha}{3d+5} < \frac{\gamma - 1 - \alpha}{(d+2) 2}, \hspace{3mm}  \iota \overset{\eqref{est 152}}{<} \frac{1}{3d+5} < \frac{1}{(d+2) 2}.
\end{align*}
Next, 
\begin{align}
 \lVert &R^{\text{time,2}}(t) \rVert_{L_{x}^{1}} \overset{\eqref{est 158}}{\lesssim}  \left(\frac{\omega}{\mu^{b}}\right) \eta (\delta M_{0}(t))^{\frac{1}{p}} \left(\sum_{k=1}^{N} \left( \frac{\lambda \mu}{\nu} \right)^{k} l^{-(d+2) (k-1)} + \frac{ (\lambda \mu)^{N+1}}{\nu^{N}} l^{-(d+2) N} \right)  \label{est 165}\\
&\overset{\eqref{est 148} \eqref{est 95}}{\lesssim} \lambda^{\beta + 1 + \alpha - (b \alpha + \gamma) + (d+2) N \iota} \eta (\delta M_{0}(t))^{\frac{1}{p}}  [ \sum_{k=1}^{N} \lambda^{(1+ \alpha - \gamma) (k-1)} + \lambda^{(1+ \alpha) N - \gamma (N-1)}] \ll \delta M_{0}(t)\nonumber 
\end{align}
for $\lambda \in\mathbb{N}$ sufficiently large due to 
\begin{align*}
1 + \alpha - \gamma \overset{\eqref{est 144}}{<} 0, \hspace{3mm} (1+ \alpha) N - \gamma (N-1) \overset{\eqref{est 147}}{<} 0, \hspace{3mm} \iota \overset{\eqref{est 152}}{<} \frac{b \alpha + \gamma - (\beta + 1 + \alpha)}{(d+2) N}.
\end{align*}
Next,
\begin{align}
\lVert R^{\text{lin}}(t) \rVert_{L_{x}^{1}} 
\overset{\eqref{est 159}}{\lesssim}&  \mu^{-a}  \lVert \theta_{0} \rVert_{C_{t,x}} \eta^{-1} \lVert R_{0} \rVert_{C_{t,x}}^{\frac{1}{p'}}  + \mu^{-b} \eta [ \lVert R_{0} \rVert_{C_{t,x}}^{\frac{1}{p}} \lVert u_{0} \rVert_{C_{t,x}} + (\delta M_{0}(t))^{\frac{1}{p}} l^{-(d+2)}]\nonumber\\
& \hspace{5mm} + [\eta \lVert R_{0} \rVert_{C_{t,x}}^{\frac{1}{p}} \mu^{-b} + \lVert R_{0} \rVert_{C_{t,x}} \omega^{-1} ][\lambda \mu + \nu] \nonumber \\
& \hspace{5mm} + l^{-(d+2)}  \max\{ (\delta M_{0}(t))^{\frac{1}{p}} \lVert R_{0} \rVert_{C_{t,x}}^{\frac{1}{p'}}, (\delta M_{0}(t))^{\frac{1}{p'}} \lVert R_{0} \rVert_{C_{t,x}}^{\frac{1}{p}} \} \omega^{-1}\nonumber  \\
\overset{\eqref{est 148} \eqref{est 95} }{\lesssim}& \lambda^{- \alpha a} \lVert \theta_{0} \rVert_{C_{t,x}} \eta^{-1} \lVert R_{0} \rVert_{C_{t,x}}^{\frac{1}{p'}}  + \lambda^{-\alpha b}  \eta [ \lVert R_{0} \rVert_{C_{t,x}}^{\frac{1}{p}} \lVert u_{0} \rVert_{C_{t,x}} + (\delta M_{0}(t))^{\frac{1}{p}} \lambda^{(d+2) \iota} ] \nonumber \\
&+ [\eta \lVert R_{0} \rVert_{C_{t,x}}^{\frac{1}{p}} \lambda^{-\alpha b} + \lVert R_{0} \rVert_{C_{t,x}} \lambda^{-\beta} ][\lambda^{1+ \alpha} + \lambda^{\gamma}] \nonumber\\
&+ \lambda^{(d+2) \iota} \max\{ (\delta M_{0}(t))^{\frac{1}{p}} \lVert R_{0} \rVert_{C_{t,x}}^{\frac{1}{p'}}, (\delta M_{0}(t))^{\frac{1}{p'}} \lVert R_{0} \rVert_{C_{t,x}}^{\frac{1}{p}} \}  \lambda^{-\beta} \ll \delta   M_{0}(t) \label{est 166}
\end{align}
for $\lambda \in\mathbb{N}$ sufficiently large due to 
\begin{align}
& \iota \overset{\eqref{est 152}}{<} \frac{\beta - \alpha b}{2d+3} \overset{\eqref{est 149}}{<} \frac{\beta - \gamma}{2d+3}  \overset{\eqref{est 145}}{<} \frac{b \alpha - ( \alpha +1)}{2d+3} < \frac{\alpha b}{d+2}, \hspace{5mm}  \iota \overset{\eqref{est 152}}{<} \frac{\beta - \alpha b}{2d+3} < \frac{\beta}{d+2}, \nonumber\\
& \max\{ - \alpha b, - \beta \} + \max\{1+ \alpha, \gamma \}  \overset{\eqref{est 145} \eqref{est 144}}{\leq} - \alpha b + \gamma \overset{\eqref{est 149}}{<}  0.  \label{est 354} 
\end{align}
Next, 
\begin{align}
\lVert R^{q}(t) \rVert_{L_{x}^{1}} \overset{\eqref{est 160}}{\lesssim}&\omega^{-1} \lVert R_{0} \rVert_{C_{t,x}} (\lVert u_{0} \rVert_{C_{t,x}} + \eta^{-1} \lVert R_{0} \rVert_{C_{t,x}}^{\frac{1}{p'}} \mu^{b}) \nonumber \\
\overset{ \eqref{est 148} }{\approx}& \lambda^{-\beta} \lVert R_{0} \rVert_{C_{t,x}} (\lVert u_{0} \rVert_{C_{t,x}} + \eta^{-1} \lVert R_{0} \rVert_{C_{t,x}}^{\frac{1}{p'}} \lambda^{\alpha b}) \ll \delta M_{0}(t) \label{est 167}
\end{align}
for $\lambda \in\mathbb{N}$ sufficiently large due to $\alpha b < \beta$ from \eqref{est 145}. Next,
\begin{align}
\lVert R^{\text{corr,1}}(t) \rVert_{L_{x}^{1}}  
\overset{\eqref{est 161}}{\lesssim}& \left(\lVert \theta_{0} \rVert_{C_{t}L_{x}^{p}} + \eta (\delta M_{0}(t))^{\frac{1}{p}}[1+  \lambda^{-\frac{1}{p}} l^{-(d+2)} ] + l^{-(d+1)} \delta M_{0}(t) \left(\frac{\mu^{b}}{\omega} \right) + L^{\frac{1}{4}}\right) \nonumber\\
& \hspace{5mm} \times \eta^{-1}(\delta M_{0}(t))^{\frac{1}{p'}}  [ \sum_{k=1}^{N} \left( \frac{\lambda \mu l^{-(d+2)}}{\nu} \right)^{k} + \frac{ (\lambda \mu l^{-(d+2)} )^{N+1}}{\nu^{N}}] \label{est 168} \\
\overset{\eqref{est 95} \eqref{est 148} }{\lesssim}&( \lVert \theta_{0} \rVert_{C_{t}L_{x}^{p}} + \eta (\delta M_{0}(t))^{\frac{1}{p}} [ 1+ \lambda^{-\frac{1}{p}+ (d+2) \iota}] + \lambda^{(d+1) \iota + \alpha b - \beta} \delta M_{0}(t) + L^{\frac{1}{4}}) \nonumber \\
&  \times \eta^{-1} (\delta M_{0}(t))^{\frac{1}{p'}} [\sum_{k=1}^{N} \lambda^{(1+ \alpha - \gamma + (d+2) \iota )k} + \lambda^{- \gamma N + [1+ \alpha + (d+2) \iota](N+1)}] \ll \delta M_{0}(t)\nonumber
\end{align}
for $\lambda \in\mathbb{N}$ sufficiently large due to 
\begin{align*}
& \iota \overset{\eqref{est 152}}{<} \frac{ \min \{ \frac{1}{p}, \frac{1}{p'} \}}{d+2} \leq \frac{1}{p (d+2)}, \hspace{2mm} \iota \overset{\eqref{est 152}}{<} \frac{\beta - \alpha b}{2d+3} < \frac{\beta - \alpha b}{d+1},  \\
&\iota \overset{\eqref{est 152}}{<} \frac{\gamma - 1 - \alpha}{3d+5} < \frac{\gamma - 1 - \alpha}{d+2}, \hspace{2mm}  \iota \overset{\eqref{est 152}}{<} \frac{1}{d+2} \left( \frac{\gamma N}{N+1} - 1 - \alpha \right).
\end{align*}
Next, 
\begin{align}
\lVert R^{\text{corr, 2}}(t) \rVert_{L_{x}^{1}} \overset{\eqref{est 163}}{\lesssim}&  \mu^{-1} L^{\frac{1}{4}} \eta^{-1} \lVert R_{0} \rVert_{C_{t,x}}^{\frac{1}{p'}} 
\overset{\eqref{est 143}}{ \ll} \delta M_{0}(t)\label{est 170} 
\end{align}
for $\lambda \in\mathbb{N}$ sufficiently large. Finally, 
\begin{equation}\label{est 171}
\lVert R^{\text{moll}}(t) \rVert_{L_{x}^{1}} \overset{\eqref{est 139}}{\lesssim} l^{\frac{1}{2} - 2 \varpi} ( \lVert R_{0} \rVert_{C_{t}C_{x}^{1}} + \lVert R_{0} \rVert_{C_{t}^{\frac{1}{2} - 2 \varpi}C_{x}}) \overset{\eqref{est 95}}{\ll} \delta M_{0}(t) 
\end{equation}
for $\lambda \in\mathbb{N}$ sufficiently large as $\varpi \in (0, \frac{1}{4})$. Due to \eqref{est 211}, \eqref{est 164} - \eqref{est 166}, \eqref{est 167}-\eqref{est 171}, we conclude, along with $\lVert R^{\chi}(t) \rVert_{L_{x}^{1}} \leq \frac{\delta M_{0}(t)}{2}$ due to \eqref{est 157}, that \eqref{est 38} has been proven.

Finally, the proof that $(\theta_{1}, u_{1}, R_{1})$ are $(\mathcal{F}_{t})_{t\geq 0}$-adapted if $(\theta_{0}, u_{0}, R_{0})$ are $(\mathcal{F}_{t})_{t\geq 0}$-adapted, and that $(\theta_{1}, u_{1}, R_{1})(0,x)$ are deterministic if $(\theta_{0}, u_{0}, R_{0})(0,x)$ are deterministic, is very similar to the previous works (e.g., \cite{HZZ19}); in fact, it is simpler because we mollified only $R_{0}$, not $\theta_{0}$ or $u_{0}$. First, $z(t) = \int_{0}^{t} e^{(t-r) \Delta} dB(r)$ from \eqref{stochastic heat} is $(\mathcal{F}_{t})_{t\geq 0}$-adapted. Due to the compact support of $\varphi_{l}$ in $\mathbb{R}_{+}$, $R_{l}$ is $(\mathcal{F}_{t})_{t\geq 0}$-adapted. As $\Theta^{j}, Q^{j},$ and $W^{j}$ from \eqref{est 59} are deterministic, we see that $\vartheta, q$, and $w$ in \eqref{est 79}  are $(\mathcal{F}_{t})_{t\geq 0}$-adapted; consequently, so are $\vartheta_{c}$ and $q_{c}$ in \eqref{est 85}. As $M_{0}(t)$ from \eqref{est 49} is deterministic, $\chi_{j}$ from \eqref{est 70} and hence $a_{j}$ and $b_{j}$ from \eqref{est 77} are $(\mathcal{F}_{t})_{t\geq 0}$-adapted; because $\psi_{\nu}^{j}$ and $\tilde{\varrho}_{\mu}^{j}$ are deterministic, it follows that $w_{c}$ in \eqref{est 73} is $(\mathcal{F}_{t})_{t\geq 0}$-adapted. Therefore, $\theta_{1}$ and $u_{1}$ in \eqref{est 76} are $(\mathcal{F}_{t})_{t\geq 0}$-adapted. We see that $R^{\text{time,1}}$ in \eqref{est 114} is $(\mathcal{F}_{t})_{t\geq 0}$-adapted, $R^{\text{quadr,1}}$ and $R^{\text{quadr,2}}$ in \eqref{est 116} are both $(\mathcal{F}_{t})_{t\geq 0}$-adapted so that $R^{\text{quadr}}$ in \eqref{est 115} is $(\mathcal{F}_{t})_{t\geq 0}$-adapted. $R^{\chi}$ in \eqref{est 105} is $(\mathcal{F}_{t})_{t\geq 0}$-adapted because $R_{l}$ is $(\mathcal{F}_{t})_{t\geq 0}$-adapted. Similarly, $R^{\text{lin}}$ in \eqref{est 124}, $R^{\text{time,2}}$ in \eqref{est 125}, $R^{q}$ in \eqref{est 132}, $R^{\text{corr,1}}$ in \eqref{est 133}, $R^{\text{moll}}$ in \eqref{est 100}, and $R^{\text{corr,2}}$ in \eqref{est 138} are all $(\mathcal{F}_{t})_{t\geq 0}$ -adapted. Therefore, $R_{1}$ from \eqref{est 150} is also $(\mathcal{F}_{t})_{t\geq 0}$-adapted. Due to similarity, we omit the proof that $(\theta_{1}, u_{1}, R_{1})(0,x)$ are deterministic. This completes the proof of Proposition \ref{Proposition 4.2}. 

\section{Proof of Theorem \ref{Theorem 2.3}}\label{Section 5} 
We already have a convex integration solution $(\rho, u)$ for \eqref{stochastic transport}  forced by additive noise up to a stopping time $T_{L}$ due to Theorem \ref{Theorem 2.2}. To extend this convex integration solution to the interval $[0,T]$, we follow the argument given in \cite{HZZ21a}, and glue an appropriate weak solution of \eqref{stochastic transport} to this convex integration solution. Note that here, due to technical reasons aforementioned, we are gluing a convex integration solution with a weak solution as opposed to gluing two convex integration solutions as in proof of \cite[Theorem 1.1]{HZZ21a}. However, both ideas are similar in spirit and differs slightly in details. In what follows, our aim is to solve the equation \eqref{stochastic transport} with initial data $\rho(T_{L}) \in L^{p} (\mathbb{T}^{d})$  (due to \eqref{est 363}) and $u(t) \equiv 0$ for all $t \in (T_{L}, T]$. To that context, let $\hat{\rho}$ solve the following equation on $[0,T]$
\begin{equation}\label{est 364}
d \hat{\rho} (t) = \Delta \hat{\rho} (t) dt + d \hat{B}(t) \hspace{2mm} \text{ for } t > 0, \hspace{5mm} \hat{\rho} \rvert_{t=0} = 0, 
\end{equation} 
where $\hat{B}(t) \triangleq B(t+ T_{L}) - B(T_{L})$. Next, let $\tilde{\rho}(t) \triangleq \hat{\rho} (t) + e^{t\Delta} \rho(T_{L})$. Then we observe that $\tilde{\rho}$ solves 
\begin{equation}\label{est 365}
d \tilde{\rho} (t) = \Delta \tilde{\rho} (t) dt + d \hat{B}(t) \hspace{2mm} \text{ for } t > 0, \hspace{5mm} \tilde{\rho} \rvert_{t=0} = \rho(T_{L}) 
\end{equation} 
and is adapted to the filtration $(\hat{\mathcal{F}}_{t})_{t \geq 0}$ where $\hat{\mathcal{F}}_{t} \triangleq \sigma (\hat{B} (s), s \leq t ) \vee \sigma (\rho(T_{L}))$.  Now it follows that $(\bar{\rho}, \bar{u})$ defined by 
\begin{equation}\label{est 366} 
(\bar{\rho} (t), \bar{u}(t)) \triangleq  
\begin{cases}
(\rho(t), u(t)) & \text{ if } t < T_{L}, \\
(\tilde{\rho} (t- T_{L}), 0) & \text{ if } t \geq T_{L},
\end{cases} 
\end{equation} 
satisfies \eqref{stochastic transport} forced by additive noise. Moreover, following the argument presented in \cite[Proof of Theorem 1.1]{HZZ21a}, we conclude that $\bar{\rho}$ is an $(\mathcal{F}_{t})_{t\geq 0}$-adapted process; we notice that 
\begin{equation}
\bar{\rho} \in C([0,T]; L^{p} (\mathbb{T}^{d})) \hspace{3mm} \bar{u} \in L^{\infty} ([0,T]; L^{p'} (\mathbb{T}^{d})) \cap L^{\infty} ([0,T]; W^{1,\tilde{p}} (\mathbb{T}^{d})) \hspace{3mm} \mathbb{P}\text{-a.s.} 
\end{equation} 
and observe the loss of regularity in time for the vector field $u$. 

\section{Proof of Theorem \ref{Theorem 2.5}}\label{Section 6} 
As we mentioned, the proof of Theorem \ref{Theorem 2.4} follows from similar computations in the proof of Theorem \ref{Theorem 2.2} and thus is left to Appendix A; in this section we prove Theorem \ref{Theorem 2.5}. 

\begin{remark}\label{Difficulty 1}
As we mentioned in Remark \ref{Remark in transport case}, the proof of Theorem \ref{Theorem 2.5} follows the approach of \cite[Theorem 1.1 and Corollary 1.2]{HZZ21a} which in turn followed the proof of \cite[Theorem C]{BMS21}; however, its modification to the transport equation seems new, even in the deterministic case. In order to describe difficulty, let us informally recall some details from \cite{HZZ21a}, to which we refer for specific notations. On \cite[p. 41]{HZZ21a} the authors define the new velocity field $v_{q+1} = w_{q+1} + v_{l}$ where $w_{q+1} = \tilde{w}_{q+1}^{(p)} + \tilde{w}_{q+1}^{(c)} + \tilde{w}_{q+1}^{(t)}$ represents the perturbation and $v_{l}$ is $v_{q}$ that was mollified in space-time. They estimate for $\{\gamma_{q}\}_{q=0}^{\infty} \subset \mathbb{R}$ 
\begin{align}
 \left\lvert \lVert v_{q+1} \rVert_{L^{2}}^{2} - \lVert v_{q} \rVert_{L^{2}}^{2} - 3 \gamma_{q+1} \right\rvert =& \lvert \int_{\mathbb{T}^{3}} \lvert \tilde{w}_{q+1}^{(p)} \rvert^{2} + 2 \tilde{w}_{q+1}^{(p)} (\tilde{w}_{q+1}^{(c)} + \tilde{w}_{q+1}^{(t)}) + \lvert \tilde{w}_{q+1}^{(c)} + \tilde{w}_{q+1}^{(t)} \rvert^{2} \label{est 237}\\
&+ 2 \tilde{w}_{q+1}^{(p)} v_{l} + 2 (\tilde{w}_{q+1}^{(c)} + \tilde{w}_{q+1}^{(t)}) v_{l} dx + \lVert v_{l} \rVert_{L^{2}}^{2} - \lVert v_{q} \rVert_{L^{2}}^{2} - 3 \gamma_{q+1} \rvert  \nonumber 
\end{align}
(see \cite[Equation (5.44)]{HZZ21a}, also \cite[Equation (130)]{BMS21}) where $\lvert \tilde{w}_{q+1}^{(p)} \rvert^{2}$ represents the most difficult part of the nonlinear term, called oscillation term (see \cite[Equation (3.57)]{HZZ21a}). By defining 
\begin{equation}\label{est 239}
\rho \triangleq 2 \sqrt{ l^{2} + \lvert \mathring{R}_{l} \rvert^{2}} + \frac{\gamma_{q+1}}{(2\pi)^{3}}
\end{equation} 
(see \cite[p. 39]{HZZ21a}, also \cite[Equation (43)]{BMS21}) where $\mathring{R}_{l}$ is the mollified Reynolds  stress, for certain $t$, the authors in \cite{HZZ21a} were able to deduce 
\begin{align}
\lvert \tilde{w}_{q+1}^{(p)} \rvert^{2} - \frac{3\gamma_{q+1}}{(2\pi)^{3}} =& \text{Tr} [ - \mathring{R}_{l} + \sum_{\xi \in \Lambda} a_{(\xi)}^{2} \mathbb{P}_{\neq 0} W_{(\xi)} \otimes W_{(\xi)} + \rho \text{Id}] - \frac{3\gamma_{q+1}}{(2\pi)^{3}} \nonumber\\
=& 6 \sqrt{ l^{2} + \lvert \mathring{R}_{l} \rvert^{2}} + \sum_{\xi \in \Lambda} a_{(\xi)}^{2} \mathbb{P}_{\neq 0} \lvert W_{(\xi)} \rvert^{2} \label{est 238}
\end{align}
(see \cite[Equation (3.36) and p. 43]{HZZ21a}, also \cite[Equations (50), and (95)]{BMS21}) where $W_{(\xi)}$ represents intermittent jets, and the orthogonality of $W_{(\xi)} \otimes W_{(\xi')} \equiv 0$ for $\xi \neq \xi'$ and a geometric lemma \cite[Lemma B.1]{HZZ21a} were crucially used in \eqref{est 238}. 

Let us make three observations. First, only because it was $L_{x}^{2}$-norm, expansion in \eqref{est 237} was possible. Although we prefer to repeat the same argument with $\lVert \theta_{1} \rVert_{L_{x}^{p}}^{p} - \lVert \theta_{0} \rVert_{L_{x}^{p}}^{p}$ for an arbitrary $p \in (1,\infty)$, this seems to have no chance; moreover, we cannot consider $\lVert \theta_{1} \rVert_{L_{x}^{2}}^{2} - \lVert \theta_{0} \rVert_{L_{x}^{2}}^{2}$ in case $p \in (1,2)$. Second, it was crucial to utilize the special feature of intermittent jets such as orthogonality to handle the difficult oscillation term. These two observations lead us to the direction that we need to consider $\int_{\mathbb{T}^{d}} \theta_{1}(t,x) u_{1}(t, x+ B(t)) dx$ because, as we will see in \eqref{est 172}, the new nonlinear term will be $div (\theta_{1}(t,x) u_{1}(t, x+ B(t)))$, and the orthogonality of Mikado density $\Theta^{j}$ and Mikado field $W^{j}$, specifically \eqref{est 64}, was used indeed when handling the most difficult term $R^{\text{quadr}}$ in \eqref{est 102}. The third observation from \eqref{est 238} is that the geometric lemma produces a term ``$\rho Id$'' and by strategically including $\frac{\gamma_{q+1}}{(2\pi)^{3}}$ in \eqref{est 239}, the authors of \cite{HZZ21a} (and \cite{BMS21} similarly) were able to create a cancellation. Our situation is quite different; considering $\vartheta, w,$ and $q$ in \eqref{est 79} it is not clear at all how to somehow ``embed'' an analogous term to $\frac{\gamma_{q+1}}{(2\pi)^{3}}$ to make a cancellation. We were able to come up with a suitable alternative (see \eqref{est 255}, \eqref{est 244}, and Remark \ref{Difficulty 3}). 
\end{remark}

We first describe a key proposition Proposition \ref{Proposition 6.1} which is inspired by \cite[Proposition 5.1]{HZZ21a} and \cite[Proposition 16]{BMS21} concerning the following transport-diffusion-defect equation
\begin{equation}\label{est 172} 
\partial_{t} \theta (t,x) + \text{div} (u(t,x+ B(t)) \theta(t,x)) - \Delta \theta(t,x) = - \text{div}R(t,x), \hspace{3mm} \nabla\cdot u = 0. 
\end{equation} 
We prescribe an arbitrary initial values of both $\rho$ and $u$ by $\rho^{\text{in}} \in L^{p}(\mathbb{T}^{d})$ and $u^{\text{in}} \in L^{p'} (\mathbb{T}^{d})$ $\mathbb{P}$-a.s. which are independent of the given standard Brownian motion $B$ and let $(\mathcal{F}_{t})_{t\geq 0}$ be the augmented joint canonical filtration on $(\Omega, \mathcal{F})$ generated by $B, \rho^{\text{in}}$, and $u^{\text{in}}$ so that $\rho^{\text{in}}$ is $\mathcal{F}_{0}$-measurable. For such $\rho^{\text{in}}$ and $u^{\text{in}}$, we will construct $(\theta, u)$ that satisfies \eqref{est 172} such that $(\theta, u) \rvert_{t=0} = (\theta^{\text{in}}, u^{\text{in}})$ where $\theta^{\text{in}} = \rho^{\text{in}}$. We define $l \triangleq \lambda^{-\iota}$ identically to \eqref{est 95} where $\iota$ satisfies \eqref{est 152} and it will be taken smaller as needed.  

\begin{proposition}\label{Proposition 6.1}
There exists a constant $M > 0$ such that the following holds. Let $T> 0, \varpi \in (0, \frac{1}{4})$, $p \in (1,\infty)$, $\tilde{p} \in [1,\infty)$ such that \eqref{est 8} holds, $\upsilon \in (1, p)$, and $\theta^{\text{in}} \in L^{p}(\mathbb{T}^{d})$ and $u^{\text{in}} \in L^{p'} (\mathbb{T}^{d})$ $\mathbb{P}$-a.s. independently of the given standard Brownian motion $B$. Suppose that there exists a $(\mathcal{F}_{t})_{t\geq 0}$-adapted $(\theta_{0}, u_{0}, R_{0})$ that satisfies \eqref{est 172} such that $\theta_{0}(0,x) = \theta^{\text{in}}(x), u_{0}(0,x) = u^{\text{in}}(x)$, $\fint_{\mathbb{T}^{d}} \theta_{0}(t,x) dx = 0$ for all $t \in [0, T]$, 
\begin{equation}\label{est 241}
\theta_{0} \in C^{\infty} ([0,T] \times \mathbb{T}^{d}), \hspace{1mm} u_{0} \in C^{\frac{1}{2} - 2 \varpi} ([0,T]; C^{\infty} (\mathbb{T}^{d})), \hspace{1mm} R_{0} \in  C^{\frac{1}{2} - 2 \varpi} ([0,T]; C^{\infty} (\mathbb{T}^{d})). 
\end{equation} 
Choose any $\delta, \Sigma \in (0, 1]$ and $\Gamma > 0$ such that $\frac{\Gamma}{\delta} \leq \bar{C} < \infty$. Assume that 
\begin{equation}\label{est 251}
\lVert R_{0}(t) \rVert_{L_{x}^{1}} \leq 2 \delta \hspace{5mm} \forall \hspace{1mm} t \in [2\Sigma \wedge T, T]. 
\end{equation} 
Extend $R_{0}$ to $t < 0$ with its value at $t = 0$, and mollify it with $\phi_{l}$ and $\varphi_{l}$ from \eqref{est 242} to obtain $R_{l}$ identically to \eqref{est 86}, denote its $j$-th component by $R_{l}^{j}$ for $j \in \{1,\hdots, d\}$ identically to \eqref{est 104}, and then define cut-off functions
\begin{equation}\label{est 243}
\chi_{j}: [0, T] \times \mathbb{T}^{d} \mapsto [0,1] \hspace{1mm} \text{ such that } \hspace{1mm} \chi_{j} (t,x) = 
\begin{cases}
0 & \text{ if } \lvert R_{l}^{j} (t,x) + \Gamma \rvert \leq \frac{\delta}{4d}, \\
1 & \text{ if } \lvert R_{l}^{j}(t,x)  + \Gamma \rvert \geq \frac{\delta}{2d}, 
\end{cases} 
\end{equation} 
(cf. \eqref{est 70} and \eqref{est 322}). Then there exists another $(\mathcal{F}_{t})_{t\geq 0}$-adapted $(\theta_{1}, u_{1}, R_{1})$ that satisfies \eqref{est 172} in same corresponding regularity class \eqref{est 241} such that $\theta_{1}(0,x) = \theta^{\text{in}}(x), u_{1}(0,x) = u^{\text{in}}(x),$ $\fint_{\mathbb{T}^{d}} \theta_{1}(t,x) dx = 0$ for all $t \in [0, T]$, and 
\begin{equation}\label{est 246}
\lVert (\theta_{1} - \theta_{0})(t) \rVert_{L_{x}^{p}} \leq 
\begin{cases}
M  [  2 \delta + \Gamma]^{\frac{1}{p}} &\forall \hspace{1mm} t \in (4 \Sigma \wedge T, T], \\
M  ( \sup_{\tau \in [t- l, t]} \lVert R_{0} (\tau) \rVert_{L_{x}^{1}} + \Gamma)^{\frac{1}{p}} &\forall \hspace{1mm}  t \in (\frac{\Sigma}{2} \wedge T, 4 \Sigma \wedge T], \\
0 &\forall \hspace{1mm}  t \in [0, \frac{\Sigma}{2} \wedge T], 
\end{cases}
\end{equation} 
\begin{equation}\label{est 247} 
\lVert (\theta_{1} - \theta_{0})(t) \rVert_{L_{x}^{\upsilon}} \leq 
\begin{cases}
\delta & \forall \hspace{1mm} t \in [0, T], \\
0 & \forall \hspace{1mm} t \in [0, \frac{\Sigma}{2} \wedge T], 
\end{cases}
\end{equation} 
\begin{equation}\label{est 248}
\lVert (u_{1} - u_{0})(t) \rVert_{L_{x}^{p'}} \leq 
\begin{cases}
M  [ 2 \delta  + \Gamma]^{\frac{1}{p'}} &\forall \hspace{1mm}  t \in (4 \Sigma \wedge T, T], \\
M ( \sup_{\tau \in [t- l, t]} \lVert R_{0} (\tau) \rVert_{L_{x}^{1}} + \Gamma)^{\frac{1}{p'}} &\forall \hspace{1mm}  t \in (\frac{\Sigma}{2} \wedge T, 4 \Sigma \wedge T], \\
0 &\forall \hspace{1mm}  t \in [0, \frac{\Sigma}{2} \wedge T], 
\end{cases}
\end{equation} 
\begin{equation}\label{est 249}
\lVert (u_{1} -u_{0})(t) \rVert_{W_{x}^{1, \tilde{p}}} \leq 
\begin{cases}
\delta &\forall \hspace{1mm}  t \in [0, T], \\
0 &\forall \hspace{1mm}  t \in [0, \frac{\Sigma}{2} \wedge T], 
\end{cases}
\end{equation} 
\begin{equation}\label{est 250} 
\lVert R_{1}(t) \rVert_{L_{x}^{1}} \leq 
\begin{cases}
\delta &\forall \hspace{1mm}  t \in (\Sigma \wedge T, T], \\
\sup_{\tau \in [t-l,t]} \lVert R_{0} (\tau) \rVert_{L_{x}^{1}}  + \delta &\forall \hspace{1mm}  t \in (\frac{\Sigma}{2} \wedge T, \Sigma \wedge T], \\
\lVert R_{0}(t) \rVert_{L_{x}^{1}} &\forall \hspace{1mm}  t \in [0, \frac{\Sigma}{2} \wedge T], 
\end{cases}
\end{equation} 
and 
\begin{align}
&\lvert \int_{\mathbb{T}^{d}} \theta_{1}(t,x) u_{1}(t, x+ B(t))dx - \int_{\mathbb{T}^{d}} \theta_{0}(t,x) u_{0}(t,x +B(t)) dx \nonumber\\
& \hspace{25mm} - \sum_{j=1}^{d} \int_{\mathbb{T}^{d}} \chi_{j}^{2}(t,x) dx e_{j} \Gamma \rvert \leq  \delta 3d \hspace{3mm} \forall \hspace{1mm} t \in (4 \Sigma \wedge T, T]. \label{est 293}
\end{align} 
\end{proposition}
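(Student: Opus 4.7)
The plan is to execute the same Nash-type scheme as in Proposition \ref{Proposition 4.2} with three structural modifications tailored to the present setting. First, the Brownian shift inside the nonlinearity of \eqref{est 172} is absorbed by setting $\tilde u_0(t,x) := u_0(t, x+B(t))$, so that \eqref{est 172} becomes the standard transport-diffusion-defect equation $\partial_t \theta + \text{div}(\tilde u\,\theta) - \Delta\theta = -\text{div}\,R$. Convex integration is executed on the triple $(\theta, \tilde u, R)$ and then $u_1(t,x) := \tilde u_1(t, x-B(t))$; this transformation preserves divergence-freeness, translates every $L^{p'}$ and $W^{1,\tilde p}$ norm in space isometrically, and costs only a H$\ddot{\mathrm{o}}$lder-$\frac{1}{2}-\varpi$ loss in time from the regularity of Brownian motion, all consistent with \eqref{est 241} and with adaptedness. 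The central benefit is that
\[
\int_{\mathbb T^d} \theta_1(t,x)\, u_1(t, x+B(t))\, dx = \int_{\mathbb T^d} \theta_1(t,x)\, \tilde u_1(t,x)\, dx,
\]
a standard pairing to which Mikado orthogonality applies. Second, every occurrence of $R_l^j$ in \eqref{est 77}--\eqref{est 79} is replaced by $R_l^j + \Gamma$ and the cut-off $\chi_j$ from \eqref{est 243} is used, producing $a_j b_j = \chi_j^2(R_l^j + \Gamma)$; this embeds the constant bias $\Gamma$ into the oscillation so that it reappears in \eqref{est 293}. Third, every perturbation is multiplied by a smooth deterministic temporal cut-off $\chi^{\text{time}}(t)$ which equals $0$ on $[0, \Sigma/2]$ and $1$ on $[\Sigma, T]$, so that the prescribed initial data is preserved and all bounds on $[0, \Sigma/2 \wedge T]$ are trivial.

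With these modifications in place, the analogues of Lemmas \ref{Lemma 4.4}--\ref{Lemma 4.11} go through essentially verbatim: since $\Gamma/\delta \leq \bar C$, each $\delta M_0(t)$ in those estimates is now replaced by a quantity controlled by $\delta + \Gamma \lesssim \delta$. Choosing $\mu, \nu, \omega, N, \iota$ as in \eqref{est 148}--\eqref{est 152} and sending $\lambda \in \mathbb N$ to infinity produces \eqref{est 246}, \eqref{est 248}, \eqref{est 249}, and \eqref{est 250} in the full regime $t > 4\Sigma$ from the standard convex integration bounds, in the transition window $(\Sigma/2, \Sigma]$ via $\|R_l(t)\|_{L^1} \leq \sup_{\tau\in[t-l,t]} \|R_0(\tau)\|_{L^1}$ together with the fact that $(1 - \chi^{\text{time}}(t)^2 \chi_j^2)$ multiplies the residual $R^\chi$, and trivially on $[0, \Sigma/2 \wedge T]$. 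The additional $L^\upsilon$-bound \eqref{est 247} exploits that $\|\Theta^j\|_{L^\upsilon} \lesssim \mu^{d/p - d/\upsilon}$ is a negative power of $\mu$ when $\upsilon < p$, and this decay is absorbed into $\delta$ by enlarging $\lambda$.

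The main new ingredient, and the main obstacle, is the inner product estimate \eqref{est 293}. Expanding $\theta_1 \tilde u_1 - \theta_0 \tilde u_0$ for $t \in (4\Sigma \wedge T, T]$ where $\chi^{\text{time}} \equiv 1$, the dominant contribution is $\int \vartheta\, \tilde w\, dx$, which by the orthogonality $\Theta^i W^j \equiv 0$ for $i \neq j$ from \eqref{est 64} collapses to $\sum_j e_j \int a_j b_j (\varrho_\mu^j \tilde\varrho_\mu^j)_\lambda\circ \tau_{\omega t e_j}\,(\psi_\nu^j)^2\, dx$. Decomposing $(\psi^j)^2 = 1 + [(\psi^j)^2 - 1]$ and $\varrho_\mu^j \tilde\varrho_\mu^j = 1 + [\varrho_\mu^j\tilde\varrho_\mu^j - 1]$ into their (unit) means via \eqref{est 305} and \eqref{est 113} plus zero-mean pieces, and applying the anti-divergence estimate \eqref{est 304} first at scale $\nu$ and then at scale $\lambda$, yields
\[
\Bigl| \int_{\mathbb T^d} \vartheta\,\tilde w\, dx - \sum_{j=1}^d e_j \int_{\mathbb T^d} a_j b_j\, dx \Bigr| \lesssim \bigl(\nu^{-1} + (\lambda\mu)^{-1}\bigr) \sum_{j=1}^d \|a_j b_j\|_{C^1_x} \ll \delta
\]
for $\lambda$ large. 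Since $\int a_j b_j\, dx = \Gamma \int \chi_j^2\, dx + \int \chi_j^2 R_l^j\, dx$, the first piece reproduces the prescribed gain $\sum_j (\int \chi_j^2\, dx)\, e_j \Gamma$ and the second is bounded by $\sum_j \|R_l^j\|_{L^1} \leq d \|R_l\|_{L^1} \leq 2d\delta$ via \eqref{est 251}. Every remaining mixed term ($\int \theta_0 \tilde w$, $\int \vartheta\, \tilde u_0$, $\int q\, \tilde u_0$, $\int q\, \tilde w$, $\int \vartheta\, \tilde w_c$, $\int \vartheta_c \tilde u_0$, $\int q_c \tilde u_0$, etc.) pairs a slow factor against a highly oscillatory one; the same anti-divergence trick \eqref{est 304} produces gains of $\nu^{-1}$, $(\lambda\mu)^{-1}$, or $\omega^{-1}$ which absorb each such contribution into the remaining $d\delta$ of the total budget $3d\delta$. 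Balancing this tight budget against the simultaneous requirement \eqref{est 250} that $\|R_1(t)\|_{L^1} \leq \delta$, with the extra constraint $l = \lambda^{-\iota} \ll \Sigma$ needed for the transition window, is the most delicate bookkeeping of the proof, but the parameter regime \eqref{est 148}--\eqref{est 152} turns out to be consistent with both once $\lambda$ is taken sufficiently large depending on $\delta, \Gamma, \Sigma$, and $\bar C$.
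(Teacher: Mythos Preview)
Your proposal is correct and follows essentially the same route as the paper: the paper also absorbs the Brownian shift by defining $u_1(t,x)=u_0(t,x)+\tilde w(t,x-B(t))+\tilde w_c(t,x-B(t))$ (your $\tilde u$-picture), shifts $R_l^j\mapsto R_l^j+\Gamma$ in $a_j,b_j$, multiplies by a temporal cut-off $\tilde\chi$ vanishing on $[0,\Sigma/2]$ and equal to $1$ on $[\Sigma,\infty)$, and proves \eqref{est 293} via the same mean-zero splitting of $(\varrho_\mu^j\tilde\varrho_\mu^j)_\lambda(\psi_\nu^j)^2-1$ together with \eqref{est 304}. Two small imprecisions to flag: the $q$-perturbation must carry $\tilde\chi^2$ rather than $\tilde\chi$ so that $\partial_t q+\text{div}(\vartheta w)$ retains its cancellation in the defect, and in your displayed bound the ``slow'' factor paired against $((\psi^j)^2-1)_\nu$ is $a_jb_j\cdot(\varrho_\mu^j\tilde\varrho_\mu^j)_\lambda\circ\tau$, whose gradient contributes an extra $\lambda\mu$ on top of $\lVert a_jb_j\rVert_{C^1}$; the parameter choice \eqref{est 148} still makes this small since $\lambda\mu/\nu\ll1$.
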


\begin{remark}\label{Remark 6.2}
To prove Theorem \ref{Theorem 2.5} we only need to rely on Proposition \ref{Proposition 6.1} for $\rho^{\text{in}} \equiv 0, u^{\text{in}} \equiv 0$ and the iteration argument in the proof of \cite[Theorem 1.1]{HZZ21a} is not needed because the case of transport noise does not require a stopping time $T_{L}$. Nonetheless, we proved such a slightly more general result in Proposition \ref{Proposition 6.1} allowing any $\theta^{\text{in}} \in L^{p}(\mathbb{T}^{d})$ and $u^{\text{in}} \in L^{p'}(\mathbb{T}^{d})$ in hope that it may in future lead to improvement of Theorem \ref{Theorem 2.2} preserving the continuity in time and $(\mathcal{F}_{t})_{t\geq 0}$-adaptedness of vector field $u$. We also mention that the estimate \eqref{est 247} in Proposition \ref{Proposition 6.1} is new, and we included it to guarantee that the initial data of each iteration of $\theta$ remains the same. 
\end{remark} 

\begin{proof}[Proof of Proposition \ref{Proposition 6.1}]
We adhere to some of the settings the proof of Proposition \ref{Proposition 4.2}: $a, b$ from \eqref{est 55}, $r$ from Lemma \ref{Lemma 3.8}, $\varrho$ from \eqref{est 56}, $\psi$ from \eqref{est 57}, $\lambda, \mu, \omega, \nu$ from \eqref{est 58} and more specifically \eqref{est 148}, $\Theta_{\lambda, \mu, \omega, \nu}^{j}$,$ W_{\lambda, \mu, \omega, \nu}^{j}$, $Q_{\lambda, \mu, \omega, \nu}^{j}$ for $j \in \{1,\hdots, d\}$ from \eqref{est 59}, and $\epsilon$ from \eqref{est 60} so that Lemma \ref{Lemma 4.3} remains applicable. We now define 
\begin{subequations}\label{est 245} 
\begin{align}
& \theta_{1}(t,x) \triangleq \theta_{0}(t,x) + \tilde{\vartheta} (t,x) + \tilde{\vartheta}_{c}(t) + \tilde{q} (t,x) + \tilde{q}_{c}(t), \label{est 245a}\\
& u_{1}(t,x) \triangleq u_{0} (t,x) + \tilde{w}(t,x - B(t)) + \tilde{w}_{c}(t,x- B(t)), \label{est 245b}
\end{align}
\end{subequations}
(cf. \eqref{est 76}) where
\begin{subequations}\label{est 255} 
\begin{align}
& \tilde{\vartheta}(t,x) \triangleq \tilde{\chi} (t) \vartheta(t,x), \hspace{1mm} \tilde{\vartheta}_{c}(t) \triangleq \tilde{\chi}(t) \vartheta_{c}(t), \hspace{1mm} \tilde{q}(t,x) \triangleq \tilde{\chi}^{2}(t) q(t,x), \hspace{1mm} \tilde{q}_{c}(t) \triangleq \tilde{\chi}^{2}(t) q_{c}(t), \\
& \tilde{w}(t,x) \triangleq \tilde{\chi}(t) w(t,x), \hspace{1mm} \tilde{w}_{c}(t,x) \triangleq \tilde{\chi}(t) w_{c}(t,x), \\
& \vartheta(t,x) \triangleq  \sum_{j=1}^{d} \chi_{j} (t,x) sgn (R_{l}^{j} (t,x) + \Gamma) \lvert R_{l}^{j}(t,x) + \Gamma \rvert^{\frac{1}{p}} \Theta_{\lambda, \mu, \omega, \nu}^{j} (t,x),\\
& w(t,x) \triangleq  \sum_{j=1}^{d} \chi_{j}(t,x) \lvert R_{l}^{j} (t,x) + \Gamma \rvert^{\frac{1}{p'}} W_{\lambda, \mu, w, \nu}^{j} (t,x), \\
&q(t,x) \triangleq \sum_{j=1}^{d} \chi_{j}^{2}(t,x) (R_{l}^{j} (t,x) + \Gamma) Q_{\lambda, \mu, w, \nu}^{j}(t,x), 
\end{align}
\end{subequations}  
(cf. \eqref{est 79}) with $\chi_{j}$ defined in \eqref{est 243} and  
\begin{equation}\label{est 257} 
\tilde{\chi}(t)  
\begin{cases}
=0 & t \leq \frac{\Sigma}{2}, \\
\in [0,1] & t \in (\frac{\Sigma}{2}, \Sigma), \\
=1 & t \geq \Sigma,  
\end{cases} 
\end{equation} 
is a smooth cut-off function. We note that $\tilde{\chi}^{2}$ in the definition of $\tilde{q}$ and $\tilde{q}_{c}$ are needed for a cancellation upon defining $R_{1}$, as we will see in \eqref{est 356}. Furthermore, we define $\vartheta_{c}$ and $q_{c}$ identically to \eqref{est 85} so that the mean-zero property of $\theta_{0}$ from hypothesis implies that of $\theta_{1}$ defined in \eqref{est 245}. Additionally, we define 
\begin{subequations}\label{est 244}
\begin{align}
& a_{j}(t,x) \triangleq  \chi_{j}(t,x) sgn (R_{l}^{j}(t,x) + \Gamma) \lvert R_{l}^{j}(t,x) + \Gamma \rvert^{\frac{1}{p}}, \\
& b_{j}(t,x) \triangleq  \chi_{j}(t,x) \lvert R_{l}^{j}(t,x) + \Gamma \rvert^{\frac{1}{p'}}, 
\end{align}
\end{subequations}
so that 
\begin{equation}\label{est 262}
a_{j}(t,x) b_{j}(t,x) = \chi_{j}^{2}(t,x) (R_{l}^{j}(t,x) + \Gamma)
\end{equation}
(cf. \eqref{est 77} and \eqref{est 101}) and 
\begin{equation}\label{est 258}
\vartheta(t) = \sum_{j=1}^{d} a_{j}(t) \Theta^{j}(t), \hspace{3mm} w(t) = \sum_{j=1}^{d} b_{j}(t) W^{j}(t), \hspace{3mm} q(t) = \sum_{j=1}^{d} a_{j}(t) b_{j}(t) Q^{j}(t). 
\end{equation} 
Moreover, similarly to \eqref{est 81}, $a_{j}, b_{j}$ satisfy due to \eqref{est 243} 
\begin{equation}\label{est 259}
\lVert a_{j}(t) \rVert_{L_{x}^{p}} \leq \lVert R_{l}^{j}(t) + \Gamma \rVert_{L_{x}^{1}}^{\frac{1}{p}} \hspace{3mm} \text{ and } \hspace{3mm} \lVert b_{j}(t) \rVert_{L_{x}^{p'}} \leq  \lVert R_{l}^{j}(t) + \Gamma \rVert_{L_{x}^{1}}^{\frac{1}{p'}}.
\end{equation}  
With this definition of $b_{j}$, the identity \eqref{est 75} remains valid so that if we define $f_{j}$ and $w_{c}$ identically to \eqref{est 73} with $b_{j}$ from \eqref{est 244}, then \eqref{est 197} remains valid so that $u_{1}$ defined in \eqref{est 245} satisfies \eqref{est 98}. 

\begin{remark}\label{Difficulty 2}
As we pointed out in Remark \ref{Remark 4.3}, the choice of not mollifying $\theta_{0}$ and $u_{0}$ is not only for the simplification of the proofs of Theorems \ref{Theorem 2.2}-\ref{Theorem 2.4} but necessary in the proof of Proposition \ref{Proposition 6.1}. Suppose that we mollified both $\theta_{0}$ and $u_{0}$ to obtain $\theta_{l} = \theta_{0}\ast_{x} \phi_{l}\ast_{t} \varphi_{l}$ and $u_{l} = u_{0} \ast_{x} \phi_{l} \ast_{t} \varphi_{l}$ so that instead of \eqref{est 245} we define $\theta_{1} = \theta_{l} + \tilde{\vartheta} + \tilde{\vartheta}_{c} + \tilde{q} + \tilde{q}_{c}$ and $u_{1} = u_{l} + \tilde{w} + \tilde{w}_{c}$. This will make it very difficult to prove $\lVert (\theta_{1} - \theta_{0})(t) \rVert_{L_{x}^{p}} = 0$ and $\lVert (u_{1} - u_{0})(t) \rVert_{L_{x}^{p'}} = 0$ over $[0, \frac{\Sigma}{2} \wedge T]$ in \eqref{est 246} and \eqref{est 248}. The authors in \cite{HZZ21a} are able to handle this issue as follows. They work on an additive case, assign $z(0,x)$ to be the initial data of the solution (see ``$z(0) = u_{0}$'' on \cite[p. 35]{HZZ21a}) so that the initial data of the solution within the actual convex integration have zero initial data (see ``$v_{q}(0) = 0$'' in \cite[Equation (5.1)]{HZZ21a}) which allows them to add an extra inductive hypothesis ``$\lVert v_{q}(t) \rVert_{L^{2}} = 0$ for all $t \in [0, \frac{\sigma_{q-1}}{2} \wedge T_{L}]$'' in \cite[Equation (5.5)]{HZZ21a} so that by taking advantage of $supp \varphi_{l} \subset [0,l]$, they are able to prove ``$\lVert v_{q+1} (t) - v_{q}(t) \rVert_{L^{2}} = 0$ for $t \in [0, \frac{\sigma_{q}}{2} \wedge T_{L}]$'' in \cite[Equation (5.9)]{HZZ21a}. As we are working in the case of transport noise rather than additive, we have no place to hide $\theta^{\text{in}}$ or $u^{\text{in}}$ such as ``$z(0)$'' in \cite{HZZ21a}. 
\end{remark}
\begin{remark}\label{Remark on difficulty of transport} 
At first sight, our choice of $u_{1}(t,x)$ in \eqref{est 245b} seems awkward and a more natural choice may be 
\begin{equation}\label{est 190}  
u_{1}(t,x) \triangleq u_{0} (t,x) + \tilde{w}(t,x) + \tilde{w}_{c}(t,x).
\end{equation} 
identically to \eqref{est 76}. To describe the problem with this choice simply, let us assume that $t \geq \Sigma$ so that $\tilde{\chi}(t) = 1$ by \eqref{est 257}  and hence $\tilde{\vartheta} = \vartheta$, $\tilde{\vartheta}_{c} = \vartheta_{c}$, $\tilde{q} = q$, $\tilde{q}_{c} = q$, $\tilde{w} = w,$ and $\tilde{w}_{c} = w_{c}$ due to \eqref{est 255} (see \eqref{est 357} for this case). Then, upon defining the new defect $R_{1}$, we obtain from \eqref{est 172}, \eqref{est 245a}, and \eqref{est 190}, 
\begin{align}
& - \text{div} R_{1}(t,x) \label{est 191}\\
=&\underbrace{ \partial_{t} (q(t,x) + q_{c}(t)) + \text{div} (\vartheta(t,x) w(t, x+ B(t)) - R_{l}(t,x))}_{(\text{div} R^{\text{time,1}} + \text{div} R^{\text{quadr}} + \text{div} R^{\chi} )(t,x)} \nonumber \\
&+ \underbrace{ \partial_{t} (\vartheta(t,x) + \vartheta_{c}(t)) + \text{div} ( \theta_{0}(t,x) w(t, x+ B(t) ) + \vartheta(t,x) u_{0}(t, x+ B(t))) - \Delta (\vartheta(t,x) + q(t,x))}_{( \text{div} R^{\text{time,2}} + \text{div}R^{\text{lin}})(t,x)} \nonumber \\
&+ \underbrace{\text{div} (q(t,x) (u_{0} (t, x+ B(t)) + w(t, x+ B(t)))}_{\text{div}R^{q}(t,x)} + \underbrace{ \text{div} ([\theta_{0}(t,x) + \vartheta(t,x) + q(t,x) ]w_{c}(t, x+ B(t) ))}_{\text{div} R^{\text{corr,1}}(t,x)} \nonumber\\
&+ \underbrace{\text{div} ( R_{l}(t,x) - R_{0} (t,x))}_{\text{div} R^{\text{moll}}(t,x)}.  \nonumber 
\end{align}
The main difficulty arises from the mismatch of variables in $\text{div} (\vartheta(t,x) w(t, x+ B(t)))$ in \eqref{est 191}. A glance at \eqref{est 102} shows that we need a certain cancellation due to orthogonality for this product; specifically, 
\begin{equation}
\vartheta(t,x) w(t, x +B(t)) \overset{\eqref{est 71}}{=} (\sum_{j=1}^{d} a_{j}(t,x) \Theta^{j}(t,x)) ( \sum_{k=1}^{d} b_{k}(t, x+ B(t)) W^{k}(t, x+B(t))) 
\end{equation}
where due to \eqref{est 59} 
\begin{equation}\label{est 195}
\Theta^{j}(t, x) W^{k} (t, x+B(t))= \varrho_{\mu}^{j} (\lambda (x - \omega t e_{j}) )\psi^{j}(\nu x)\tilde{\varrho}_{\mu}^{k} (\lambda ( x+ B(t) - \omega t e_{j})) \psi^{k} (\nu(x+ B(t))) e_{k} 
\end{equation} 
We used in \eqref{est 102} that fact that $\varrho_{\mu}^{j} (\lambda (x - \omega t e_{j}) ) \tilde{\varrho}_{\mu}^{k} (\lambda ( x - \omega t e_{j})) = 0$ unless $ j = k$. In detail, 
\begin{align*}
&\varrho_{\mu}^{j} (\lambda ( x - \omega t e_{j}) \overset{\eqref{est 192}}{=} ( \varrho_{\mu} \circ \tau_{\zeta_{j}}) (\lambda (x - \omega te_{j}))  \overset{\eqref{est 193}}{=} \varrho_{\mu} (\lambda x - \mathfrak{x}_{j} (\lambda \omega t)), \\
& \tilde{\varrho}_{\mu}^{k} (\lambda (x - \omega t e_{k} )) \overset{\eqref{est 192}}{=} (\tilde{\varrho}_{\mu} \circ \tau_{\zeta_{k}}) (\lambda (x - \omega t e_{k} )) \overset{\eqref{est 193}}{=} \tilde{\varrho}_{\mu} (\lambda x - \mathfrak{x}_{k} (\lambda \omega t)) 
\end{align*}
where 
\begin{align*}
d_{\mathbb{T}^{d}} (\lambda x - \mathfrak{x}_{j} (\lambda \omega t)), \lambda x - \mathfrak{x}_{k}(\lambda \omega t)) = d_{\mathbb{T}^{d}}(\mathfrak{x}_{j} (\lambda \omega t), \mathfrak{x}_{k} (\lambda \omega t)) \overset{\eqref{est 194}}{>} 2r  
\end{align*}
if $j \neq k$ and because $supp \varrho_{\mu} = supp \tilde{\varrho}_{\mu}$ and both are contained in a ball with radius at most $r$  due to Lemma \ref{Lemma 3.9}, we conclude that $\varrho_{\mu}^{j} (\lambda (x - \omega t e_{j}) ) \tilde{\varrho}_{\mu}^{k} (\lambda ( x - \omega t e_{j})) = 0$ unless $ j = k$. However, an identical computation in the case of \eqref{est 195} leads only to 
\begin{align*}
d_{\mathbb{T}^{d}} (\lambda x - \mathfrak{x}_{j} (\lambda \omega t)), \lambda x + \lambda B(t) - \mathfrak{x}_{k}(\lambda \omega t)) = d_{\mathbb{T}^{d}}(\mathfrak{x}_{j} (\lambda \omega t), - \lambda B(t) + \mathfrak{x}_{k} (\lambda \omega t)) 
\end{align*}
which is not deterministic and thus does not lead to the desired orthogonality. This lack of orthogonality, and lack of necessary cancellations, is quite significant. A naive attempt of adding $\Theta^{j}(t,x) W^{k} (t,x)$ to obtain the necessary orthogonality, and then subtracting and thereafter trying to handle an estimate of $\lVert \Theta^{j}(t,x) [W^{k} (t,x+ B(t)) - W^{k}(t,x) ] \rVert_{L_{x}^{1}}$ failed miserably. Therefore, the definition \eqref{est 190} really does not work and we need \eqref{est 245b}. Lastly, we point out that within \eqref{est 245b}, we need $\tilde{w}(t, x-B(t))$ to deduce the necessary orthogonality and $\tilde{w}_{c}(t, x-B(t))$ to secure the divergence-free property.  
\end{remark}
We first prove \eqref{est 246} and \eqref{est 248}. Let us work in case $t \in (4\Sigma \wedge T, T]$. Clearly if $T \leq 4 \Sigma$, then there is nothing to prove; thus, we assume $4 \Sigma < T$ so that $(4 \Sigma \wedge T, T] = (4\Sigma, T]$. 
\begin{lemma}\label{Lemma 6.2} 
For all $j \in \{1,\hdots, d \}$ and $t \in (4\Sigma, T]$, $a_{j}, b_{j}$ defined in \eqref{est 244} satisfy
\begin{subequations}
\begin{align}
&\lVert a_{j}(t) \rVert_{L_{x}^{\infty}} \lesssim l^{- \frac{d+1}{p}} \delta^{\frac{1}{p}}, \hspace{7mm}  \lVert b_{j}(t) \rVert_{L_{x}^{\infty}} \lesssim  l^{- \frac{d+1}{p'}} \delta^{\frac{1}{p'}},\label{est 252} \\
& \lVert a_{j}(t) \rVert_{C_{x}^{s}} \lesssim l^{-(d+2) s} \delta^{\frac{1}{p}}, \hspace{5mm} \lVert b_{j}(t) \rVert_{C_{x}^{s}} \lesssim l^{-(d+2) s} \delta^{\frac{1}{p'}} \hspace{3mm} \forall \hspace{1mm} s \in \mathbb{N}, \label{est 253} \\
& \lVert \partial_{t} a_{j}(t) \rVert_{L_{x}^{\infty}} \lesssim l^{-(d+2)} \delta^{\frac{1}{p}}, \hspace{3mm}  \lVert \partial_{t} b_{j}(t) \rVert_{L_{x}^{\infty}} \lesssim l^{-(d+2)} \delta^{\frac{1}{p'}}. \label{est 254} 
\end{align}
\end{subequations}
\end{lemma}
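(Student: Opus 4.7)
The proof parallels Lemma \ref{Lemma 4.4}, with the key modification that only $L^1_x$-bounds on $R_0$ are available via hypothesis \eqref{est 251} in place of $C_{t,x}$-regularity; the passage from $L^1_x$ to $L^\infty_x$ will be absorbed through the mollifier via the Sobolev embedding $W^{r+d+1,1}(\mathbb{T}^d) \hookrightarrow W^{r,\infty}(\mathbb{T}^d)$, at the cost of a factor $l^{-(d+1)}$ per level of regularity. The strategy is therefore the same three-step scheme used for Lemma \ref{Lemma 4.4}: (i) pass from $L^1_x$-control of $R_0$ to Sobolev-$W^{k,1}_x$-control of $R_l^j+\Gamma$ via convolution with $\phi_l,\varphi_l$; (ii) upgrade to $C^s_x$-control of $a_j,b_j$ using the chain-rule estimate for compositions in H\"older spaces; (iii) adapt the same scheme to $\partial_t a_j,\partial_t b_j$.

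First I will observe that for any $t\in(4\Sigma,T]$ and $s\in[0,l]$, provided $\lambda$ is chosen large enough that $l=\lambda^{-\iota}<2\Sigma$, we have $t-s\geq 4\Sigma-l>2\Sigma$, so that \eqref{est 251} yields $\|R_0(t-s)\|_{L^1_x}\leq 2\delta$. Combining this with Young's convolution inequality and the standard mollifier estimates $\|D^k\phi_l\|_{L^1}\lesssim l^{-k}$, $\|\varphi_l^{(n)}\|_{L^1_t}\lesssim l^{-n}$, I obtain, for every $k\in\mathbb{N}_0$,
\begin{equation*}
\|R_l^j(t)\|_{W^{k,1}_x}\lesssim \delta\,l^{-k},\qquad \|\partial_t R_l^j(t)\|_{W^{k,1}_x}\lesssim \delta\,l^{-k-1}.
\end{equation*}
Since $\Gamma\leq \bar{C}\delta$, the constant shift affects only the zeroth-order term and does not disturb these scalings for small $l$.

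Next I will apply the chain-rule estimate in H\"older spaces \cite[Equation (130)]{BDIS15} to the compositions $|R_l^j+\Gamma|^{1/p}$ and $|R_l^j+\Gamma|^{1/p'}$, together with the embedding $W^{s+d+1,1}(\mathbb{T}^d)\hookrightarrow W^{s,\infty}(\mathbb{T}^d)$, exactly mirroring the computation in the proof of Lemma \ref{Lemma 4.4} that produced \eqref{est 83}. The indispensable lower bound $|R_l^j(t,x)+\Gamma|\geq \delta/(4d)$ on $\mathrm{supp}\,\chi_j$ is built into \eqref{est 243} and guarantees that the composition estimate applies uniformly in $j$ (giving the needed control of the factor $|R_l^j+\Gamma|^{1/p-s}$ that appears through the chain rule). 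Taking $s=0$ yields \eqref{est 252}, while for $s\in\mathbb{N}$ the dominant term $\delta^{1/p}l^{-(d+2)s}$, respectively $\delta^{1/p'}l^{-(d+2)s}$, yields \eqref{est 253}. For \eqref{est 254} the identical chain-rule argument is applied to $\partial_t a_j$ and $\partial_t b_j$, using the $\partial_t R_l^j$ bound above together with the fact that $\partial_t\chi_j$ is controlled in $L^\infty_x$ by the same composition mechanism.

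The main (in fact only) technical obstacle is bookkeeping: one must keep track of the smallness constraint $l<2\Sigma$, which amounts to a lower bound $\lambda\gtrsim \Sigma^{-1/\iota}$ that will be absorbed into the eventual choice of $\lambda$ at the end of the proof of Proposition \ref{Proposition 6.1}, and one must verify at each step that the constant shift $\Gamma$ interacts correctly with the chain rule through the uniform lower bound on $\mathrm{supp}\,\chi_j$. No new analytical difficulty beyond Lemma \ref{Lemma 4.4} arises.
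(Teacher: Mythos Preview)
Your proposal is correct and follows essentially the same approach as the paper: both arguments first observe that taking $\lambda$ large enough forces $l<2\Sigma$, so that for $t\in(4\Sigma,T]$ the mollification in time only samples $R_0$ on $[2\Sigma,T]$ where \eqref{est 251} supplies the bound $\|R_0(\tau)\|_{L^1_x}\leq 2\delta$, and then both reduce to the computations of Lemma~\ref{Lemma 4.4} via the embedding $W^{d+1,1}\hookrightarrow L^\infty$, the chain-rule estimate, and the lower bound $|R_l^j+\Gamma|\geq \delta/(4d)$ on $\mathrm{supp}\,\chi_j$ together with $\Gamma\leq\bar C\delta$.
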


\begin{proof}[Proof of Lemma \ref{Lemma 6.2}]
Because $4 \Sigma < T$ by assumption, $2\Sigma < T$ so that $[2\Sigma \wedge T, T] = [2\Sigma, T]$. Hence, by taking $\lambda \in\mathbb{N}$ sufficiently large we can assure that $l \ll 2 \Sigma$ and hence for all $s \in supp \varphi_{l} \subset (0, l]$ from \eqref{est 242}, for any $t \in (4 \Sigma \wedge T, T]$, we can estimate $\sup_{s \in supp \varphi_{l}} \lVert R_{0}^{j} (t-s) \rVert_{L_{x}^{1}} \leq \sup_{\tau \in [2\Sigma \wedge T, T]} \lVert R_{0}^{j} (\tau) \rVert_{L_{x}^{1}}$ to which we can apply \eqref{est 251} to bound by $2\delta$. Thus, very similar computations to the proof of Lemma \ref{Lemma 4.4} using $W^{d+1,1}(\mathbb{T}^{d}) \hookrightarrow L^{\infty} (\mathbb{T}^{d})$ lead to \eqref{est 252}, e.g., 
\begin{align*}
\lVert a_{j}(t) \rVert_{L_{x}^{\infty}} \lesssim  ( \lVert R_{l}^{j}(t,x) \rVert_{L_{x}^{\infty}}^{\frac{1}{p}} + \Gamma^{\frac{1}{p}}) 
\overset{\eqref{est 251}}{\lesssim}  ( l^{-\frac{d+1}{p}} \delta^{\frac{1}{p}} + (\delta \bar{C})^{\frac{1}{p}})   
\lesssim  l^{-\frac{d+1}{p}} \delta^{\frac{1}{p}}  
\end{align*}
for $\lambda \in\mathbb{N}$ sufficiently large. For the estimates \eqref{est 253}-\eqref{est 254}, following the proof of Lemma \ref{Lemma 4.4} and keeping in mind the lower bound of $\lvert R_{l}^{j} (t,x) + \Gamma \rvert > \frac{\delta}{4d}$ in the support of $\chi_{j}$ due to \eqref{est 243} give us the desired results. 
\end{proof} 

Analogously to the proof of Theorem \ref{Theorem 2.2}, Lemma \ref{Lemma 6.2} leads to to the following result.
\begin{lemma}\label{Lemma 6.3}
There exist constants $C = C(p) \geq 0$ with which $\tilde{\vartheta}$, $\tilde{q}$, $\tilde{\vartheta}_{c}$, $\tilde{q}_{c}$, and $\tilde{w}$ in \eqref{est 255} satisfy for all $t \in (4\Sigma, T]$ 
\begin{subequations}\label{est 369}
\begin{align}
&\lVert \tilde{\vartheta}(t) \rVert_{L_{x}^{p}} \leq \frac{M}{2} [ 2 \delta  + \Gamma]^{\frac{1}{p}} + C \lambda^{-\frac{1}{p}}  l^{-d-2}\delta^{\frac{1}{p}},  \label{est 256}\\
& \lVert \tilde{q}(t) \rVert_{L_{x}^{p}} \leq C l^{-d-1} \delta \mu^{b} \omega^{-1}, \label{est 260} \\
&  \lvert \tilde{\vartheta}_{c} (t) \rvert \leq C  l^{-\frac{d+1}{p}} \delta^{\frac{1}{p}} \mu^{-b}, \hspace{5mm} \lvert \tilde{q}_{c}(t) \rvert \leq C l^{-d-1} \delta \omega^{-1}, \label{est 261} \\
& \lVert \tilde{w} (t) \rVert_{L_{x}^{p'}} \leq \frac{M}{2} [ 2 \delta + \Gamma]^{\frac{1}{p'}} +C\lambda^{-\frac{1}{p'}} \delta^{\frac{1}{p'}} l^{-d-2}, \label{est 263} \\
& \lVert \tilde{w} (t) \rVert_{W_{x}^{1,\tilde{p}}} \leq C \delta^{\frac{1}{p'}} l^{-d-2} \frac{\lambda \mu + \nu}{\mu^{1+ \epsilon}} \label{est 264} 
\end{align}
\end{subequations} 
(cf. \eqref{est 134}, \eqref{est 135}, \eqref{est 151}, \eqref{est 153}, and \eqref{est 154}). Furthermore, for any $k, h \in \mathbb{N}_{0}$ and $r \in [1,\infty]$, there exists a constant $C\geq 0$ with which for all $j \in 1, \hdots, d\}$, $f_{j}$ defined in \eqref{est 73} with $b_{j}$ from \eqref{est 244} satisfies for all $t \in (4\Sigma, T]$  
\begin{equation}\label{est 265}
\lVert \mathcal{D}^{k} D^{h} f_{j}(t) \rVert_{L_{x}^{r}} \leq C \delta^{\frac{1}{p'}} l^{-(d+2) (k+ h + 1)} (\lambda \mu)^{k+ h+1} \mu^{b - \frac{d}{r}}
\end{equation} 
(cf. \eqref{est 91}). Consequently, there exist constants $C \geq 0$ with which $\tilde{w}_{c}$ in \eqref{est 255} with $w_{c}$ in \eqref{est 73} and $b_{j}$ in \eqref{est 244} satisfies for all $t \in (4\Sigma, T]$
\begin{subequations}
\begin{align}
&\lVert \tilde{w}_{c}(t) \rVert_{L_{x}^{p'}} \leq C \delta^{\frac{1}{p'}} [\sum_{k=1}^{N} \left( \frac{\lambda \mu l^{-(d+2)}}{\nu} \right)^{k} + \frac{ (\lambda \mu l^{-(d+2)} )^{N+1}}{\nu^{N}} ],  \label{est 266}\\
& \lVert \tilde{w}_{c}(t) \rVert_{W_{x}^{1,\tilde{p}}} \leq C \frac{ \delta^{\frac{1}{p'}} [ \lambda \mu l^{-(d+2)} + \nu]}{\mu^{1+ \epsilon}} [ \sum_{k=1}^{N} \left( \frac{ l^{-(d+2)} \lambda \mu}{\nu} \right)^{k} + \frac{ ( l^{-(d+2)} \lambda \mu)^{N+1}}{\nu^{N}} ]\label{est 267}
\end{align}
\end{subequations}
(cf. \eqref{est 136} and \eqref{est 93}). 
\end{lemma}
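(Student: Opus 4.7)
The plan is to observe first that the assumption $t \in (4\Sigma \wedge T, T]$ combined with the cutoff property \eqref{est 257} gives $\tilde{\chi}(t) = 1$ (and $\tilde{\chi}^2(t)=1$), so that on this time regime
\begin{equation*}
\tilde{\vartheta}(t) = \vartheta(t), \quad \tilde{\vartheta}_c(t) = \vartheta_c(t), \quad \tilde{q}(t) = q(t), \quad \tilde{q}_c(t) = q_c(t), \quad \tilde{w}(t) = w(t), \quad \tilde{w}_c(t) = w_c(t).
\end{equation*}
Consequently, the task reduces to reproducing the six estimates of Lemma \ref{Lemma 4.5}, but with three bookkeeping differences: (i) there is no $\eta$, since the new definitions \eqref{est 255} omit the $\eta$ prefactor used in \eqref{est 79}; (ii) the argument of each factor is $R_l^j + \Gamma$ rather than $R_l^j$, so the $L^1$-norm controlled by hypothesis \eqref{est 251} is $\lVert R_l^j(t)+\Gamma\rVert_{L_x^1}\le 2\delta +\Gamma$; and (iii) the upper bounds are expressed directly in $\delta$ rather than in $\delta M_0(t)$, which is exactly what Lemma \ref{Lemma 6.2} delivers (after using $\Gamma\le \bar C\delta$ to absorb stray $\Gamma$'s into the $\delta$'s inside the higher-order norms).

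For \eqref{est 256} I would decompose $\vartheta(t)=\sum_j a_j(t)\Theta^j(t)$ via \eqref{est 258} and apply the dilation product estimate \eqref{est 80} (valid because $\Theta^j$ has the $\lambda$-dilated structure \eqref{est 59} with $\nu\in\lambda\mathbb N$), arriving at
\begin{equation*}
\lVert \vartheta(t)\rVert_{L^p_x}\le \sum_{j=1}^d \lVert a_j(t)\rVert_{L^p_x}\lVert \Theta^j(t)\rVert_{L^p_x} + C\lambda^{-\frac{1}{p}}\lVert a_j(t)\rVert_{C_x^1}\lVert \Theta^j(t)\rVert_{L^p_x};
\end{equation*}
the first summand is bounded by $(M/(2d))\cdot (2\delta+\Gamma)^{1/p}$ by \eqref{est 259} and \eqref{est 61}, and the second by \eqref{est 253} and \eqref{est 61}. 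The same dilation trick handles \eqref{est 263}; the $W^{1,\tilde p}_x$ estimate \eqref{est 264} follows by putting one derivative on $W^j$ and using \eqref{est 63} together with the $C^1$-bound of $b_j$ from \eqref{est 253}. For \eqref{est 260} I would simply use $\chi_j\le 1$ and $\lVert R_l^j+\Gamma\rVert_{L^\infty_x}\lesssim l^{-d-1}\delta$ (a standard mollifier estimate on \eqref{est 251} combined with $\Gamma\le\bar C\delta$) together with \eqref{est 61}. For the correctors \eqref{est 261} I would apply Hölder, bounding $\lvert \vartheta_c\rvert\le\lVert\vartheta\rVert_{L^1}$ via \eqref{est 258} and \eqref{est 252}, \eqref{est 62}, and analogously for $q_c$.

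For \eqref{est 265} the proof is identical in form to \eqref{est 91}: starting from the definition $f_j=\nabla(b_j(\tilde\varrho_\mu^j)_\lambda\circ\tau_{\omega t e_j})$ in \eqref{est 73}, Leibniz expansion yields factors of $\lVert b_j\rVert_{C_x^{k+h+1}}$ and $\lVert (\tilde\varrho_\mu^j)_\lambda\rVert_{W_x^{k+h+1,r}}$ which are estimated by \eqref{est 253} and Lemma \ref{Lemma 3.9} (together with the dilation scaling \eqref{est 66} and translation invariance \eqref{est 67}). The corrector estimates \eqref{est 266}--\eqref{est 267} are then immediate consequences of applying the bilinear anti-divergence bound \eqref{est 90} to the definition $w_c = -\sum_j \mathcal R_N(f_j,\psi_\nu^j e_j)$, invoking \eqref{est 265}, and for the Sobolev norm additionally using \eqref{est 94} and $\mu^{b-d/\tilde p}\le \mu^{-\epsilon-1}$ exactly as in the proof of \eqref{est 93}.

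No genuine new obstacle arises: the geometric structure, Mikado cancellations, and anti-divergence apparatus are unchanged. The only subtle point is consistency of scaling between the $L^\infty$-bound $l^{-(d+1)/p}\delta^{1/p}$ for $a_j$ from \eqref{est 252} (which has the $l^{-(d+1)/p}$ factor from the mollifier acting on a merely $L^1$ object) and the $L^p$-bound $(2\delta+\Gamma)^{1/p}$ used in the main term of \eqref{est 256}; the former enters only through the $\lambda^{-1/p}$-small second term and so remains harmless once $\lambda$ is taken large during the parameter choice \eqref{est 148}. Everything else is a transcription of the proof of Lemma \ref{Lemma 4.5} with the three bookkeeping substitutions noted above.
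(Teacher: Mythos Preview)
Your proposal is correct and follows essentially the same approach as the paper's proof: observe $\tilde{\chi}(t)=1$ on $(4\Sigma,T]$, then rerun the computations of Lemma~\ref{Lemma 4.5} with the three bookkeeping changes you identify, invoking Lemma~\ref{Lemma 6.2} in place of Lemma~\ref{Lemma 4.4}. The one point the paper makes explicit that you gloss over is that applying \eqref{est 251} to $R_l(t)$ for $t\in(4\Sigma,T]$ requires taking $\lambda$ large enough so that $l\ll 2\Sigma$, ensuring the temporal mollifier (supported in $[0,l]$) only samples $R_0$ on $[2\Sigma,T]$ where the hypothesis is available; this is implicit in your phrase ``a standard mollifier estimate on \eqref{est 251}'' but worth stating.
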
 

\begin{proof}[Proof of Lemma \ref{Lemma 6.3}]
Because $t \in (4 \Sigma \wedge T, T]$, we have $\tilde{\chi}(t) = 1$ by \eqref{est 257}.  Especially for \eqref{est 256} and \eqref{est 263}, by taking $\lambda \in\mathbb{N}$ sufficiently large so that $l \ll 2 \Sigma$, we can estimate for any $t \in (4\Sigma \wedge T, T]$ $\sup_{s \in supp \varphi_{l}} \lVert R_{0}^{j} (t-s) \rVert_{L_{x}^{1}} \leq \sup_{\tau \in [2\Sigma \wedge T, T]} \lVert R_{0}^{j} (\tau) \rVert_{L_{x}^{1}}$ and bound this by $2\delta$ due to \eqref{est 251}. Besides this issue, the proof is similar to that of Lemma \ref{Lemma 4.5} and thus we only sketch their computations: via \eqref{est 255}-\eqref{est 257}
\begin{align*}
\lVert \tilde{\vartheta} (t) \rVert_{L_{x}^{p}} \overset{\eqref{est 80}}{\leq} \sum_{j=1}^{d}& \lVert a_{j}(t) \rVert_{L_{x}^{p}} \lVert \Theta^{j} (t) \rVert_{L_{x}^{p}} + \frac{C_{p}}{\lambda^{\frac{1}{p}}} \lVert a_{j}(t) \rVert_{C_{x}^{1}} \lVert \Theta^{j}(t) \rVert_{L_{x}^{p}} \\
\overset{\eqref{est 259} \eqref{est 61} \eqref{est 253}\eqref{est 251}}{\leq}& \frac{M}{2} [ 2\delta + \Gamma]^{\frac{1}{p}} + C \lambda^{-\frac{1}{p}}  \delta^{\frac{1}{p}} l^{-d-2},
\end{align*} 
\begin{equation*}
\lVert \tilde{q}(t) \rVert_{L_{x}^{p}} \lesssim \sum_{j=1}^{d} ( \lVert R_{l}^{j}(t) \rVert_{L_{x}^{\infty}} + \Gamma) \lVert Q^{j} (t) \rVert_{L_{x}^{p}}  \overset{\eqref{est 61} \eqref{est 251}}{\lesssim} l^{-d-1} \delta  \mu^{b} \omega^{-1}, 
\end{equation*}
\begin{equation*}
\lvert \tilde{\vartheta}_{c}(t)\rvert \overset{\eqref{est 85} \eqref{est 258}}{\leq} \sum_{j=1}^{d} \lVert a_{j}(t) \rVert_{L_{x}^{\infty}} \lVert \Theta^{j} (t) \rVert_{L_{x}^{1}} \overset{\eqref{est 252}\eqref{est 62}}{\leq} C  l^{-\frac{d+1}{p}} \delta^{\frac{1}{p}} \mu^{-b}, 
\end{equation*}
\begin{equation*}
\lvert \tilde{q}_{c}(t) \rvert \overset{\eqref{est 85} \eqref{est 258} \eqref{est 262} \eqref{est 62}}{\lesssim} \sum_{j=1}^{d} ( \lVert R_{l}^{j}(t) \rVert_{L_{x}^{\infty}} + \Gamma) (\frac{M}{\omega}) \overset{\eqref{est 251}}{\lesssim} ( l^{-d-1} \delta + \delta \bar{C}) \omega^{-1} \leq C l^{-d-1} \delta \omega^{-1},
\end{equation*}
\begin{align*}
\lVert \tilde{w}(t) \rVert_{L_{x}^{p'}} \overset{\eqref{est 80}}{\leq}  \sum_{j=1}^{d}& \lVert b_{j}(t) \rVert_{L_{x}^{p'}} \lVert W^{j} (t) \rVert_{L_{x}^{p'}} + C_{p'} \lambda^{-\frac{1}{p'}} \lVert b_{j} (t) \rVert_{C_{x}^{1}} \lVert W^{j}(t)\rVert_{L_{x}^{p'}} \\
\overset{\eqref{est 259} \eqref{est 61} \eqref{est 253} \eqref{est 251}}{\leq}& \frac{ M}{2} [  2 \delta  + \Gamma]^{\frac{1}{p'}} + C \lambda^{-\frac{1}{p'}}  \delta^{\frac{1}{p'}} l^{-d-2}, 
\end{align*}
\begin{align*}
\lVert \tilde{w}(t) \rVert_{W_{x}^{1,\tilde{p}}} \overset{\eqref{est 258}}{\lesssim} \sum_{j=1}^{d} \lVert b_{j}(t) \rVert_{C_{x}^{1}} \lVert W^{j}(t) \rVert_{W_{x}^{1,\tilde{p}}} 
\overset{ \eqref{est 253} \eqref{est 63}}{\leq} C \delta^{\frac{1}{p'}} l^{-d-2} \frac{ \lambda \mu + \nu}{\mu^{1+ \epsilon}},
\end{align*}
\begin{align*}
\lVert \mathcal{D}^{k} D^{h} f_{j}(t) \rVert_{L_{x}^{r}} \overset{\eqref{est 88} \eqref{est 73}}{\lesssim} \lVert b_{j}(t)& \rVert_{C_{x}^{k+h+1}} \lVert ( \tilde{\varrho}_{\mu}^{j})_{\lambda} \circ \tau_{\omega t e_{j}} \rVert_{W_{x}^{k+h+1, r}} \\
\overset{\eqref{est 67} \eqref{est 66} \eqref{est 253} \eqref{est 68}}{\leq}& C \delta^{\frac{1}{p'}} l^{-(d+2) (k+h+1)} (\lambda \mu)^{k+h+1} \mu^{b - \frac{d}{r}}, 
\end{align*}
\begin{align*}
\lVert \tilde{w}_{c}(t) \rVert_{L_{x}^{p'}} \overset{\eqref{est 90}}{\lesssim}& \sum_{j=1}^{d} C_{d, p', N} \lVert \psi \rVert_{L_{x}^{\infty}} ( \sum_{k=0}^{N-1} \nu^{-k-1} \lVert \mathcal{D}^{k} f_{j}(t) \rVert_{L_{x}^{p'}} + \nu^{-N} \lVert \mathcal{D}^{N} f_{j} (t) \rVert_{L_{x}^{p'}})  \\
\overset{\eqref{est 265} \eqref{est 55}}{\leq}& C   \delta^{\frac{1}{p'}} [ \sum_{k=1}^{N} \left( \frac{\lambda \mu l^{-(d+2)}}{\nu} \right)^{k} + \frac{ (\lambda \mu l^{-(d+2)})^{N+1}}{\nu^{N}}], 
\end{align*} 
and
\begin{align*}
\lVert D\tilde{w}_{c}(t) \rVert_{L_{x}^{\tilde{p}}} \lesssim& \sum_{j=1}^{d} \lVert \mathcal{R}_{N} (Df_{j} (t), \psi_{\nu}^{j} e_{j}) \rVert_{L_{x}^{\tilde{p}}} + \lVert \mathcal{R}_{N} (f_{j}(t), D \psi_{\nu}^{j} e_{j}) \rVert_{L_{x}^{\tilde{p}}} \\
\overset{\eqref{est 66} \eqref{est 90}  \eqref{est 265}}{\leq}& C   \frac{ \delta^{\frac{1}{p'}} [ \lambda \mu l^{-(d+2)} + \nu]}{\mu^{1+ \epsilon}} [ \sum_{k=1}^{N} \left( \frac{ l^{-(d+2)} \lambda \mu}{\nu} \right)^{k} + \frac{ ( l^{-(d+2)} \lambda \mu)^{N+1}}{\nu^{N}} ].
\end{align*}
\end{proof} 

We are now ready to verify \eqref{est 246} and \eqref{est 248} on $(4 \Sigma \wedge T, T]$ by taking $\lambda \in\mathbb{N}$ sufficiently large and $\iota > 0$ sufficiently small: 
\begin{subequations}
\begin{align} 
&\lVert (\theta_{1}  - \theta_{0})(t) \rVert_{L_{x}^{p}} 
\overset{ \eqref{est 245}\eqref{est 369} \eqref{est 148}}{\leq} \frac{M}{2} [ 2\delta + \Gamma]^{\frac{1}{p}} \label{est 370} \\\
& \hspace{5mm} + C [\lambda^{-\frac{1}{p} + (d+2) \iota}  \delta^{\frac{1}{p}} +  \lambda^{(\frac{d+1}{p}) \iota - \alpha b} \delta^{\frac{1}{p}} + \lambda^{(d+1) \iota + \alpha b - \beta} \delta + \lambda^{(d+1) \iota - \beta} \delta] \overset{\eqref{est 324}}{\leq}  M  [2\delta + \Gamma]^{\frac{1}{p}}, \nonumber \\
&\lVert (u_{1} - u_{0})(t) \rVert_{L_{x}^{p'}} 
\overset{\eqref{est 245} \eqref{est 263} \eqref{est 266} \eqref{est 148}}{\leq} \frac{ M}{2} [2\delta  + \Gamma]^{\frac{1}{p'}} \\
&\hspace{5mm} + C[ \lambda^{-\frac{1}{p'} + (d+2) \iota}  \delta^{\frac{1}{p'}} +   \delta^{\frac{1}{p'}} [\sum_{k=1}^{N} \lambda^{[1+ \alpha - \gamma + (d+2) \iota] k} + \lambda^{[1+ \alpha + (d+2) \iota](N+1) - \gamma N}] \overset{\eqref{est 325}}{\leq} M  [ 2 \delta + \Gamma]^{\frac{1}{p'}}, \nonumber 
\end{align}
\end{subequations} 
where the last inequality in \eqref{est 370} also used 
\eqref{est 324} and 
\begin{align*}
\iota \overset{\eqref{est 354}}{<} \frac{\alpha b}{d+2} < \frac{p \alpha b}{d+1} \text{ and } \iota \overset{\eqref{est 354}}{<} \frac{\beta}{d+2} < \frac{\beta}{d+1}. 
\end{align*}
Next, concerning \eqref{est 246} and \eqref{est 248} in the case $t \in (\frac{\Sigma}{2} \wedge T, 4 \Sigma \wedge T]$, we cannot make use of \eqref{est 251}; nonetheless, due to $supp \varphi_{l} \subset [0,l]$, we can estimate $\lVert R_{l}^{j}(t) \rVert_{L_{x}^{1}} \leq \sup_{\tau \in [t-l, t]} \lVert R_{0}^{j}(\tau) \rVert_{L_{x}^{1}}$. Therefore, analogous computations in the case $t \in (4\Sigma \wedge T, T]$ give us \eqref{est 246} and \eqref{est 248} in case $t \in (\frac{\Sigma}{2}\wedge T, 4 \Sigma \wedge T]$. In short, we get $\frac{M}{2} (\sup_{\tau \in [t-l,t]} \lVert R_{0}(\tau) \rVert_{L_{x}^{1}} + \Gamma)^{\frac{1}{p}}$ from $\lVert \tilde{\vartheta}(t) \rVert_{L_{x}^{p}}$ and $\frac{M}{2}  (\sup_{\tau \in [t-l,t]} \lVert R_{0}(\tau) \rVert_{L_{x}^{1}} + \Gamma)^{\frac{1}{p'}}$ from $\lVert \tilde{w}(t) \rVert_{L_{x}^{p'}}$ similarly to \eqref{est 256} and \eqref{est 263} and $\lvert \tilde{\vartheta}_{c}(t) \rvert + \lVert \tilde{q} (t) \rVert_{L_{x}^{p}} + \lvert \tilde{q}_{c}(t) \rvert$ from $\lVert \theta_{1}(t) - \theta_{0}(t) \rVert_{L_{x}^{p}}$ and $\lVert \tilde{w}_{c}(t) \rVert_{L_{x}^{p'}}$ from $\lVert u_{1}(t) - u_{0}(t) \rVert_{L_{x}^{p'}}$ can be made small by taking $\lambda \in\mathbb{N}$ sufficiently large. 

Next, we consider $t \in [0, \frac{\Sigma}{2} \wedge T] \subset [0, \frac{\Sigma}{2}]$ in which $\tilde{\chi} \equiv 0$ by \eqref{est 257} so that $\theta_{1} \equiv \theta_{0}$ and $u_{1} \equiv u_{0}$ due to \eqref{est 245}. As we elaborated in Remark \ref{Difficulty 2}, the fact that we did not mollify $\theta_{0}$ or $u_{0}$ makes it easy for us here. Therefore, \eqref{est 246} and \eqref{est 248} for $t \in [0, \frac{\Sigma}{2} \wedge T]$ are proven.  

Next, we consider \eqref{est 247}. First, for $t \in [0,\frac{\Sigma}{2} \wedge T] \subset [0, \frac{\Sigma}{2}]$ in which $\tilde{\chi} \equiv 0$ due to \eqref{est 257}, we again have $\theta_{1} \equiv \theta_{0}$ so that the claim in \eqref{est 247} on this time interval is clear. Moreover, for all $t \in [0, T]$, we see from \eqref{est 245} that 
\begin{align*}
\lVert (\theta_{1} - \theta_{0})(t) \rVert_{L_{x}^{\upsilon}} = \lVert \tilde{\vartheta}(t,x) + \tilde{\vartheta}_{c}(t) + \tilde{q}(t,x) + \tilde{q}_{c}(t) \rVert_{L_{x}^{\upsilon}} \leq \lVert \tilde{\vartheta}(t) \rVert_{L_{x}^{\upsilon}} + \lvert \tilde{\vartheta}_{c}(t) \rvert + \lVert \tilde{q}(t) \rVert_{L_{x}^{p}} + \lvert \tilde{q}_{c}(t) \rvert 
\end{align*}
because $\upsilon \in (1,p)$, where we can make $\lvert \tilde{\vartheta}_{c}(t) \rvert + \lVert \tilde{q}(t) \rVert_{L_{x}^{p}} + \lvert \tilde{q}_{c}(t) \rvert \ll \delta$ for $\lambda \in\mathbb{N}$ sufficiently large. The only reason we could not estimate $\lVert \theta_{1}(t) - \theta_{0}(t) \rVert_{L_{x}^{p}}$ by a constant multiple of $\delta$ over $(\frac{\Sigma}{2} \wedge T, 4 \Sigma \wedge T]$ where \eqref{est 251} is not available was due to $\lVert \tilde{\vartheta}(t) \rVert_{L_{x}^{p}}$. In detail, we can readily deduce the following estimates for all $t \in [0, T]$: 
\begin{align*}
&\lVert \tilde{\vartheta} (t) \rVert_{L_{x}^{p}} \overset{\eqref{est 80}  \eqref{est 259} }{\leq}  \frac{M}{2} (\sup_{\tau \in [t-l,t]} \lVert R_{0}(\tau) \rVert_{L_{x}^{1}} + \Gamma)^{\frac{1}{p}} + C   \lambda^{-\frac{1}{p}} [ ( l^{-d-1} \sup_{\tau \in [t-l,t]} \lVert R_{0}(\tau) \rVert_{L_{x}^{1}} + \Gamma)^{\frac{1}{p}}  \nonumber\\
& \hspace{60mm} + \delta^{\frac{1}{p} -1} (l^{-d-2} \sup_{\tau \in [t-l,t]} \lVert R_{0}(\tau) \rVert_{L_{x}^{1}} + \Gamma) ],\\
& \lvert \tilde{\vartheta}_{c}(t) \rvert \overset{\eqref{est 258} \eqref{est 62}}{\lesssim}  (l^{-d-1} \sup_{\tau \in [t-l,t]} \lVert R_{0}(\tau) \rVert_{L_{x}^{1}} + \Gamma)^{\frac{1}{p}} \mu^{-b} \overset{\eqref{est 148} \eqref{est 354}}{\ll} \delta, \\
&\lVert \tilde{q}(t) \rVert_{L_{x}^{p}} \overset{\eqref{est 258} \eqref{est 61}}{\lesssim} (l^{-d-1} \sup_{\tau \in [t-l,t]} \lVert R_{0}(\tau) \rVert_{L_{x}^{1}} + \Gamma) \mu^{b}\omega^{-1} \overset{\eqref{est 148} \eqref{est 324}}{\ll} \delta, \\
&\lvert \tilde{q}_{c}(t) \rvert \overset{\eqref{est 262}\eqref{est 62} }{\lesssim} (l^{-d-1} \sup_{\tau \in [t-l,t]} \lVert R_{0}(\tau) \rVert_{L_{x}^{1}} + \Gamma) \omega^{-1} \overset{\eqref{est 148} \eqref{est 353}}{\ll} \delta.
\end{align*} 
However, because $\upsilon \in (1,p)$ we can interpolate and use \eqref{est 269} to deduce for all $t \in [0,T]$  
\begin{align*}
&\lVert \tilde{\vartheta}(t) \rVert_{L_{x}^{\upsilon}} \overset{\eqref{est 258} \eqref{est 80}}{\leq} \sum_{j=1}^{d} \lVert a_{j}(t) \rVert_{L_{x}^{p}} \lVert \Theta^{j} (t) \rVert_{L_{x}^{1}}^{\frac{p-\upsilon}{\upsilon(p-1)}} \lVert \Theta^{j}(t) \rVert_{L_{x}^{p}}^{\frac{\upsilon p - p}{\upsilon (p-1)}} + C_{\upsilon} \lambda^{-\frac{1}{\upsilon}} \lVert a_{j}(t) \rVert_{C_{x}^{1}} \lVert \Theta^{j}(t) \rVert_{L_{x}^{p}} \\
&\overset{\eqref{est 259} \eqref{est 269}}{\lesssim} \sum_{j=1}^{d}   \lVert R_{l}^{j}(t) + \Gamma \rVert_{L_{x}^{1}}^{\frac{1}{p}} (\frac{M}{\mu^{b}})^{\frac{p -\upsilon}{\upsilon (p-1)}} (\frac{M}{2d})^{\frac{\upsilon p-p}{\upsilon (p-1)}} + \frac{1}{\lambda^{\frac{1}{\upsilon}}} [(l^{-d-1} \sup_{\tau \in [t-l,t]} \lVert R_{0}^{j}(\tau) \rVert_{L_{x}^{1}} + \Gamma)^{\frac{1}{p}} \\
& \hspace{35mm} + \delta^{\frac{1}{p} -1} (l^{-d-2} \sup_{\tau \in [t-l,t]} \lVert R_{0}^{j}(\tau) \rVert_{L_{x}^{1}} + \Gamma)](\frac{M}{2d}) \ll \delta
\end{align*}
by taking $\lambda \in\mathbb{N}$ sufficiently large and $\iota < \frac{1}{\upsilon (d+2)} < \frac{p}{\upsilon (d+1)}$. Therefore, we have proven \eqref{est 247} in case $t \in [0,T]$ and hence \eqref{est 247} completely. 

Next, we work on \eqref{est 249}. Again, on $[0, \frac{\Sigma}{2} \wedge T] \subset [0, \frac{\Sigma}{2}]$, we have $\tilde{\chi} \equiv 0$ due to \eqref{est 257} so that $u_{1} \equiv u_{0}$ due to \eqref{est 245} and thus the claim is clear. For $t \in [0, T]$, we can just estimate $\lVert R_{l}^{j}(t) \rVert_{L_{x}^{1}} \leq \sup_{\tau \in [t-l, t]} \lVert R_{0}^{j}(\tau) \rVert_{L_{x}^{1}}$ and analogous computations to \eqref{est 210} and taking $\iota > 0$ sufficiently small while $\lambda \in\mathbb{N}$ sufficiently large gives the desired result. 

Next, we prove \eqref{est 250}. First, we consider $t \in (\Sigma \wedge T, T]$. If $T \leq \Sigma$, then there is nothing to prove. Thus, we assume that $\Sigma < T$; i.e., $t \in (\Sigma, T]$. Then by \eqref{est 257}, $\tilde{\chi}(t) = 1$. With that in mind, the new defect $R_{1}$ is defined by \eqref{est 172} and \eqref{est 245} as follows: 
\begin{subequations}\label{est 357}
\begin{align}
& - \text{div}R_{1}(t,x) \nonumber \\
=& \partial_{t} \theta_{0} (t,x) + \partial_{t} [ \vartheta(t,x) + \vartheta_{c}(t) + q(t,x) + q_{c}(t) ]  \label{est 200} \\
&+ \text{div} [\theta_{0}(t,x) u_{0}(t, x+ B(t))]\nonumber \\
&+ \text{div} ( \theta_{0} (t,x) ( w(t,x) + w_{c}(t,x)) + ( \vartheta(t,x) + q(t,x)) (u_{0} (t, x+ B(t)) + w(t,x) + w_{c}(t,x)) \nonumber \\
& \hspace{5mm} + (\vartheta_{c}(t) + q_{c}(t)) (u_{0} (t, x+ B(t))+ w(t,x) + w_{c}(t,x)))\nonumber \\
& - \Delta \theta_{0} (t,x) - \Delta (\vartheta(t,x) + q(t,x)) \nonumber \\
=& \underbrace{\partial_{t} (q(t,x) + q_{c}(t,x)) + \text{div} (\vartheta(t,x) w(t,x) - R_{l}(t,x))}_{(\text{div} R^{\text{time,1}} + \text{div}R^{\text{quadr}} + \text{div}R^{\chi})(t,x)} \label{est 202}\\
&+ \underbrace{\partial_{t} (\vartheta(t,x) + \vartheta_{c}(t)) + \text{div} (\theta_{0}(t,x) w(t,x) + \vartheta(t,x) u_{0} (t, x + B(t) )) - \Delta (\vartheta(t,x) + q(t,x))}_{(\text{div} R^{\text{time,2}} + \text{div}R^{\text{lin}})(t,x)} \nonumber \\
&+ \underbrace{ \text{div} (q(t,x) ( u_{0} (t, x+ B(t)) + w(t, x))}_{\text{div}R^{q}(t,x)} \nonumber \\
&+ \underbrace{ \text{div} ([\theta_{0}(t,x) + \vartheta(t,x) + q(t,x) ] w_{c} (t,x))}_{\text{div} R^{\text{corr}}(t,x)} + \underbrace{\text{div} (R_{l}(t,x) - R_{0}(t,x))}_{\text{div} R^{\text{moll}}(t,x)}. \nonumber 
\end{align} 
\end{subequations}
Thus, we defined 
\begin{equation}\label{est 214}
-R_{1} \triangleq R^{\text{time,1}} + R^{\text{quadr}} + R^{\chi} + R^{\text{time,2}} + R^{\text{lin}} + R^{q} + R^{\text{corr}} + R^{\text{moll}}. 
\end{equation} 
Due to our strategic definition of $u_{1}$ in \eqref{est 245} (recall Remark \ref{Remark on difficulty of transport}), the following computations have some similarity to those in the proof of Theorem \ref{Theorem 2.2}. Nonetheless, we changed the definitions of $\vartheta, w, $and $q$ from \eqref{est 79} to \eqref{est 255}  and hence we provide details.

First, similarly to Section \ref{Section 7.2.2} we work on the estimates on $\partial_{t} (q+ q_{c})(t,x) + \text{div} (\vartheta w - R_{l})(t,x) = (\text{div} R^{\text{time,1}} + \text{div} R^{\text{quadr}} + \text{div} R^{\chi})(t,x)$ of \eqref{est 357}. We observe that 
\begin{align*}
\vartheta(t,x)w(t,x) \overset{\eqref{est 258}}{=} (\sum_{j=1}^{d} a_{j}(t,x) \Theta^{j}(t,x)) (\sum_{k=1}^{d} b_{k}(t,x) W^{k}(t,x)) 
\overset{\eqref{est 64}}{=} \sum_{j=1}^{d} a_{j}(t)b_{j}(t) \Theta^{j}(t)W^{j}(t)
\end{align*}
so that 
\begin{equation}\label{est 271}
\text{div} (\vartheta w) = \sum_{j=1}^{d} a_{j}b_{j}\text{div} (\Theta^{j}W^{j}) + \nabla (a_{j}b_{j}) \cdot \Theta^{j}W^{j}. 
\end{equation}
We set, differently from \eqref{est 105}, 
\begin{equation}\label{est 270}
R^{\chi} \triangleq - \sum_{j=1}^{d} (1-\chi_{j}^{2}) (R_{l}^{j} + \Gamma) e_{j}
\end{equation} 
so that using that $\text{div} \sum_{j=1}^{d} \Gamma e_{j} = 0$, we can compute 
\begin{equation}\label{est 272}
-\text{div}R_{l} \overset{\eqref{est 104}}{=} -\text{div} \sum_{j=1}^{d}(R_{l}^{j} + \Gamma) e_{j}  \overset{\eqref{est 270}\eqref{est 262}}{=} \text{div}R^{\chi} - \sum_{j=1}^{d} \nabla (a_{j}b_{j}) \cdot e_{j}. 
\end{equation} 
This leads us to, identically to \eqref{est 108} 
\begin{align}
\text{div} (\vartheta w - R_{l}) \overset{\eqref{est 271} \eqref{est 272}}{=}& \sum_{j=1}^{d} a_{j}b_{j} \text{div} (\Theta^{j} W^{j}) \nonumber \\
&  + \nabla (a_{j} b_{j}) \cdot [\Theta^{j} W^{j} - e_{j} ] - \fint_{\mathbb{T}^{d}} \nabla (a_{j} b_{j}) \cdot [\Theta^{j} W^{j} - e_{j} ] dx \nonumber \\
&+ \fint_{\mathbb{T}^{d}} \nabla (a_{j} b_{j}) \cdot [\Theta^{j}W^{j} - e_{j} ] dx + \text{div} R^{\chi}. \label{est 274}
\end{align}
On the other hand, identical computation to \eqref{est 109} give us 
\begin{align}\label{est 273}
 \partial_{t} ( q+ q_{c}) \overset{\eqref{est 258}}{=}& \sum_{j=1}^{d} a_{j} b_{j} \partial_{t}Q^{j} + \partial_{t} (a_{j} b_{j}) Q^{j} -  \fint_{\mathbb{T}^{d}} \partial_{t} (a_{j}b_{j}) Q^{j} dx \nonumber\\
&+ \fint_{\mathbb{T}^{d}} \partial_{t} (a_{j}b_{j}) Q^{j} dx + q_{c}'.
\end{align}
Summing \eqref{est 274}-\eqref{est 273} gives us identically to \eqref{est 112} 
\begin{subequations}\label{est 275}
\begin{align}
& \partial_{t} (q+ q_{c}) + \text{div} (\vartheta w - R_{l}) \nonumber \\
\overset{\eqref{est 274} \eqref{est 273}}{=}&  \sum_{j=1}^{d} a_{j} b_{j} [\partial_{t}Q^{j} + \text{div} (\Theta^{j}W^{j})] \label{est 276}\\
&+ [\partial_{t} (a_{j}b_{j}) Q^{j} - \fint_{\mathbb{T}^{d}} \partial_{t} (a_{j}b_{j} ) Q^{j} dx] \\
&+ [\nabla (a_{j}b_{j}) \cdot [\Theta^{j}W^{j} - e_{j} ] - \fint_{\mathbb{T}^{d}} \nabla (a_{j}b_{j}) \cdot [\Theta^{j} W^{j} - e_{j} ]dx ] + \text{div} R^{\chi} \\
&+ \fint_{\mathbb{T}^{d}} \partial_{t} (a_{j}b_{j})Q^{j}dx + \fint_{\mathbb{T}^{d}} \nabla (a_{j}b_{j}) \cdot [\Theta^{j} W^{j} - e_{j} ] dx + q_{c}'\label{est 277} 
\end{align}
\end{subequations}
where \eqref{est 276} vanishes due to \eqref{est 65} and  \eqref{est 277} also vanishes identically to \eqref{est 278} using \eqref{est 85}, \eqref{est 258}, and \eqref{est 65}. Therefore, we are able to conclude the same identity as \eqref{est 118}, and \eqref{est 117} also continues to hold from \eqref{est 59}. Thus, we define $R^{\text{time,1}}$ identically to \eqref{est 114}, although with $a_{j}$ and $b_{j}$ from \eqref{est 244} rather than \eqref{est 77}, so that \eqref{est 119} continues to hold. Additionally, we define $R^{\text{quadr}}$ with $R^{\text{quadr,1}}$ and $R^{\text{quadr,2}}$ identically to \eqref{est 115}-\eqref{est 116} with $a_{j}, b_{j}$ replaced by those from \eqref{est 244} rather than \eqref{est 77} so that \eqref{est 120} continues to hold. Therefore, by applying \eqref{est 119} and \eqref{est 120} to \eqref{est 118}, all of which we just proved to remain valid, we conclude that $\partial_{t} (q+ q_{c}) + \text{div} (\vartheta w - R_{l}) = \text{div} R^{\text{time,1}} + \text{div} R^{\text{quadr}} + \text{div} R^{\chi}$ as claimed. Analogous computations to the proofs of Lemma \ref{Lemma 4.6} lead to the following results. Considering the case $t \in (\frac{\Sigma}{2} \wedge T, \Sigma \wedge T]$ we will work next, we prove this result for all $t \in [0,T]$. 
\begin{lemma}\label{Lemma 6.4}  
(Cf. Lemma \ref{Lemma 4.6}) $R^{\chi}$ defined in \eqref{est 270}, $R^{\text{time,1}}$ defined identically to \eqref{est 114} but with $a_{j}$ and $b_{j}$ from \eqref{est 244}, and $R^{\text{quadr}}$ with $R^{\text{quadr,1}}$ and $R^{\text{quadr,2}}$ defined identically to \eqref{est 115}-\eqref{est 116} with $a_{j}, b_{j}$ replaced by those from \eqref{est 244} satisfy for all $t \in [0,T]$ 
\begin{subequations}\label{est 358}
\begin{align}
&\lVert R^{\chi} (t) \rVert_{L_{x}^{1}} \overset{\eqref{est 270}}{\leq} \sum_{j=1}^{d} \int_{supp (1- \chi_{j}^{2})} \lvert R_{l}^{j}(t,x) + \Gamma \rvert dx 
\overset{\eqref{est 243}}{\leq} \frac{\delta}{2}, \label{est 280}  \\
& \lVert R^{\text{time,1}}(t) \rVert_{L_{x}^{1}} \ll \delta, \hspace{7mm} \lVert R^{\text{quadr}}(t) \rVert_{L_{x}^{1}} \ll \delta. \label{est 282} 
\end{align}
\end{subequations}
\end{lemma}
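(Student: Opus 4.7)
The plan is to handle the three estimates independently. For $R^{\chi}$ the displayed chain in the lemma statement is already the entire argument: outside $\mathrm{supp}(1-\chi_{j}^{2})$ one has $|R_{l}^{j}+\Gamma|\le\delta/(2d)$ pointwise by the definition \eqref{est 243} of $\chi_{j}$, and summing over $j$ and integrating over $\mathbb{T}^{d}$ yields the stated bound $\delta/2$.

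For $R^{\text{time,1}}$ and $R^{\text{quadr}}$ I will follow the proof of Lemma~\ref{Lemma 4.6} line by line, with only the bounds on $a_{j}, b_{j}$ needing slight adaptation to the full interval $[0,T]$. The cut-off's built-in lower bound $|R_{l}^{j}+\Gamma|>\delta/(4d)$ on $\mathrm{supp}\,\chi_{j}$ from \eqref{est 243} persists on $[0,T]$, so the schematic form of the H\"{o}lder and time-derivative estimates in Lemma~\ref{Lemma 6.2} survives. However, since the $L^{1}$-smallness hypothesis \eqref{est 251} is available only on $[2\Sigma\wedge T,T]$, the upper bounds $\|R_{l}^{j}\|_{W^{k,1}_{x}}\lesssim l^{-k}\cdot 2\delta$ implicitly used in Lemma~\ref{Lemma 6.2} must be replaced by the weaker $\|R_{l}^{j}\|_{W^{k,1}_{x}}\lesssim l^{-k}\|R_{0}\|_{C_{t,x}}$, which is finite by the regularity hypothesis \eqref{est 241}. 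The net effect is that the $C_{x}^{s}$- and $\partial_{t}$-estimates of Lemma~\ref{Lemma 6.2} hold on all of $[0,T]$ with the $\delta^{1/p},\delta^{1/p'}$ factors replaced by finite constants depending on $\|R_{0}\|_{C_{t,x}}$, $\Gamma$, and $\delta$.

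Feeding these adapted bounds into the computations of Lemma~\ref{Lemma 4.6} yields, for every $t\in[0,T]$,
\[
\|R^{\text{time,1}}(t)\|_{L^{1}_{x}}\lesssim \omega^{-1}l^{-(d+2)}\,F\bigl(\|R_{0}\|_{C_{t,x}},\Gamma,\delta\bigr),\quad \|R^{\text{quadr}}(t)\|_{L^{1}_{x}}\lesssim\Bigl(\tfrac{\lambda\mu}{\nu}+\tfrac{1}{\lambda}\Bigr)l^{-2(d+2)}\,F\bigl(\|R_{0}\|_{C_{t,x}},\Gamma,\delta\bigr),
\]
with $F$ finite by \eqref{est 241}. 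Under the parameter selection \eqref{est 148}--\eqref{est 152}, the prefactors are $\lambda^{-\beta+(d+2)\iota}$ and $(\lambda^{1+\alpha-\gamma}+\lambda^{-1})\lambda^{2(d+2)\iota}$ respectively, and both are strictly negative powers of $\lambda$ by the same inequalities used in \eqref{est 211} and \eqref{est 164}. Taking $\lambda\in\mathbb{N}$ sufficiently large forces both right-hand sides below $\delta$, uniformly in $t\in[0,T]$.

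I do not anticipate a genuine obstacle; the one substantive point is that the required smallness on $[0,T]$ is no longer inherited from an a priori $L^{1}$-smallness of $R_{0}$ but is generated entirely by the high-frequency gains of the convex integration parameters $\omega,\lambda,\mu,\nu$. This in turn is possible precisely because $\|R_{0}\|_{C_{t,x}}$ is kept finite through \eqref{est 241}, allowing the $F$-dependent constants to be dominated by the $\lambda$-decaying prefactors.
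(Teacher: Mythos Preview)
Your proposal is correct and follows essentially the same approach as the paper's own proof. The paper's argument is terser---it simply writes the $R^{\text{time,1}}$ and $R^{\text{quadr}}$ bounds in terms of $\|a_{j}\|_{C_{x}^{s}}$, $\|b_{j}\|_{C_{x}^{s}}$, $\|\partial_{t}a_{j}\|_{L^{\infty}_{x}}$, $\|\partial_{t}b_{j}\|_{L^{\infty}_{x}}$ and then invokes \eqref{est 62}, \eqref{est 148}, \eqref{est 152} to conclude---but your explicit observation that on the full interval $[0,T]$ these $a_{j},b_{j}$ bounds must come from $\|R_{0}\|_{C_{t,x}}$ (via \eqref{est 241}) rather than from \eqref{est 251} is exactly the point the paper leaves implicit. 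One trivial slip: in your $R^{\chi}$ discussion you wrote ``outside $\mathrm{supp}(1-\chi_{j}^{2})$'' where you meant ``on $\mathrm{supp}(1-\chi_{j}^{2})$''; the rest is fine.
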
  

\begin{proof}[Proof of Lemma \ref{Lemma 6.4}]
First, \eqref{est 280} is clear. Second, for the first claim in \eqref{est 282}, similarly to \eqref{est 155} we have for $\lambda \in\mathbb{N}$ sufficiently large 
\begin{align*}
\lVert R^{\text{time,1}}(t) \rVert_{L_{x}^{1}} \overset{\eqref{est 121}}{\lesssim}\sum_{j=1}^{d} ( \lVert \partial_{t} a_{j} (t) \rVert_{L_{x}^{\infty}} \lVert b_{j}(t) \rVert_{L_{x}^{\infty}} + \lVert a_{j}(t) \rVert_{L_{x}^{\infty}} \lVert \partial_{t} b_{j}(t) \rVert_{L_{x}^{\infty}}) \lVert Q^{j} (t) \rVert_{L_{x}^{1}} 
\overset{ \eqref{est 62}\eqref{est 152}}{\ll} \delta.
\end{align*}
Third, the second claim in \eqref{est 282} is a consequence of 
\begin{align*}
&  \lVert R^{\text{quadr,1}}(t) \rVert_{L_{x}^{1}} \overset{\eqref{est 283} \eqref{est 68}\eqref{est 55}}{\lesssim} \nu^{-1} \sum_{j=1}^{d} ( \lVert a_{j}(t) \rVert_{C_{x}^{2}} \lVert b_{j}(t) \rVert_{L_{x}^{\infty}} + \lVert a_{j}(t) \rVert_{L_{x}^{\infty}} \lVert b_{j}(t) \rVert_{C_{x}^{2}}) \lambda \mu 
\overset{\eqref{est 148}}{\ll} \delta, \\
&  \lVert R^{\text{quadr,2}}(t) \rVert_{L_{x}^{1}} \overset{\eqref{est 283} \eqref{est 67}  \eqref{est 68} \eqref{est 55}}{\lesssim} \lambda^{-1} \sum_{j=1}^{d} (\lVert a_{j}(t) \rVert_{C_{x}^{2}} \lVert b_{j}(t) \rVert_{L_{x}^{\infty}} + \lVert a_{j}(t) \rVert_{L_{x}^{\infty}} \lVert b_{j}(t) \rVert_{C_{x}^{2}})  \overset{\eqref{est 148}}{\ll} \delta,  
\end{align*} 
which can be verified by taking $\iota > 0$ sufficiently small and $\lambda \in\mathbb{N}$ sufficiently large. 
\end{proof}

Second, similarly to Section \ref{Section 4.1.2} we work on the estimates on $\partial_{t}(\vartheta(t,x) + \vartheta_{c}(t)) + \text{div}(\theta_{0}(t,x) w(t,x) + \vartheta(t,x) u_{0}(t, x+ B(t))) - \Delta (\vartheta(t,x) + q(t,x))= (\text{div} R^{\text{time,2}} + \text{div} R^{\text{lin}})(t,x)$ of \eqref{est 357}.   
We are able to deduce the analogous identity to \eqref{est 127} as follows: 
\begin{align}
&\partial_{t} ( \vartheta(t,x) + \vartheta_{c}(t)) + \text{div} (\theta_{0}(t,x) w(t,x) + \vartheta(t,x) u_{0}(t, x+ B(t))) - \Delta (\vartheta(t,x) + q(t,x)) \nonumber\\
=& \sum_{j=1}^{d} a_{j}(t,x) \partial_{t} \Theta^{j}(t,x) - \fint_{\mathbb{T}^{d}} a_{j}(t,x) \partial_{t} \Theta^{j}(t,x) dx + \partial_{t} a_{j}(t,x) \Theta^{j}(t,x) - \fint_{\mathbb{T}^{d}} \partial_{t} a_{j}(t,x) \Theta^{j}(t,x) dx\nonumber\\
& + \text{div} (\theta_{0}(t,x) w(t,x) + \vartheta(t,x) u_{0}(t, x+ B(t))) - \Delta (\vartheta(t,x) + q(t,x)). \label{est 284} 
\end{align}
Therefore, we only have to modify $R^{\text{lin}}$ in \eqref{est 124} slightly as 
\begin{align}
R^{\text{lin}}(t,x) \triangleq& \sum_{j=1}^{d} \mathcal{D}^{-1} ( ( \partial_{t} a_{j}(t,x)) \Theta^{j}(t,x) - \fint_{\mathbb{T}^{d}} (\partial_{t} a_{j}(t,x)) \Theta^{j}(t,x) dx) \nonumber\\
&+ \theta_{0}(t,x) w (t,x)+ \vartheta(t,x) u_{0}(t,x+B(t)) - \nabla (\vartheta(t,x) + q(t,x)) \label{est 212} 
\end{align}
and define $R^{\text{time,2}}$ identically to \eqref{est 125} although with $a_{j}$ and $b_{j}$ from \eqref{est 244} rather than \eqref{est 77}. These definitions allow \eqref{est 128} with $u_{0}(t,x)$ replaced by $u_{0}(t, x+ B(t))$ and \eqref{est 129} to continue to hold. By analogous computations to Lemma \ref{Lemma 4.7} we can prove the following estimates; considering the case $t \in ( \frac{\Sigma}{2} \wedge T, \Sigma \wedge T ]$ on which we will work next, we prove this result for all $t \in [0,T]$. 
\begin{lemma}\label{Lemma 6.5} 
(Cf. Lemma \ref{Lemma 4.7}) $R^{\text{lin}}$ defined identically to \eqref{est 212} and $R^{\text{time,2}}$ identically to \eqref{est 125} with $a_{j}$ and $b_{j}$ from \eqref{est 244} satisfy for all $t \in [0,T]$ 
\begin{equation}\label{est 285}
\lVert R^{\text{lin}}(t) \rVert_{L_{x}^{1}} \ll \delta \hspace{2mm} \text{ and } \hspace{2mm} \lVert R^{\text{time,2}}(t) \rVert_{L_{x}^{1}} \ll \delta.
\end{equation} 
\end{lemma}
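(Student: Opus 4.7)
The plan is to prove the two estimates in Lemma \ref{Lemma 6.5} by parallel computations to Lemma \ref{Lemma 4.7}, substituting the bounds on $a_{j}, b_{j}$ from Lemma \ref{Lemma 6.2} in place of those from Lemma \ref{Lemma 4.4}. Structurally, $R^{\text{lin}}$ and $R^{\text{time,2}}$ have precisely the same form as their analogues in Section \ref{Section 4.1.2}; the only novel feature is that $u_{0}(t,\cdot)$ appears composed with the Brownian shift $x\mapsto x+B(t)$ in $R^{\text{lin}}$. Since this composition is a volume-preserving translation on $\mathbb{T}^{d}$, every $L_{x}^{r}$ or $C_{x}$ norm of $u_{0}(t,\cdot+B(t))$ coincides with the corresponding norm of $u_{0}(t,\cdot)$, which is finite and bounded in $t$ by the smoothness hypothesis \eqref{est 241}.

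First I would treat $R^{\text{lin}}$ by splitting into its four constituent summands as in \eqref{est 212}. For the anti-divergence term $\sum_{j}\mathcal{D}^{-1}((\partial_{t}a_{j})\Theta^{j} - \fint_{\mathbb{T}^{d}}(\partial_{t}a_{j})\Theta^{j}\,dx)$, I apply \eqref{est 121} to pull the $L_{x}^{1}$-norm inside, then bound $\lVert\partial_{t}a_{j}\rVert_{L_{x}^{\infty}}\lVert\Theta^{j}\rVert_{L_{x}^{1}}$ via \eqref{est 254} and \eqref{est 62}. The two bilinear terms $\theta_{0}w$ and $\vartheta\,u_{0}(t,\cdot+B(t))$ are controlled by H\"older against the $L_{x}^{1}$-bounds of $\Theta^{j},W^{j}$ in \eqref{est 62}, paired with $\lVert\theta_{0}\rVert_{C_{t,x}}$ and $\lVert u_{0}\rVert_{C_{t,x}}$ from \eqref{est 241} together with \eqref{est 252}. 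The gradient term $\nabla(\vartheta+q)$ is expanded by the product rule with derivatives landing on either $a_{j},b_{j}$ or on the Mikado building blocks $\Theta^{j},W^{j},Q^{j}$, mirroring the computation \eqref{est 142} and using \eqref{est 253} in place of \eqref{est 83}.

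For $R^{\text{time,2}}$, I would apply the bilinear anti-divergence bound \eqref{est 90} exactly as in \eqref{est 165}, combined with the $C_{x}^{s}$-bounds on $a_{j}$ from \eqref{est 253} and the estimates $\lVert \partial_{j}\varrho_{\mu}^{j}\rVert_{W^{k,1}}$ from \eqref{est 68} and \eqref{est 55}. Each of the resulting bounds is a polynomial in $\lambda,\mu,\omega,\nu$ and $l^{-1}$ whose leading exponent of $\lambda$ (under the scalings \eqref{est 148} and $l=\lambda^{-\iota}$) is strictly negative once $\iota$ is taken smaller than all the thresholds collected in \eqref{est 152}; the relevant inequalities are precisely \eqref{est 353}, \eqref{est 354}, and those displayed after \eqref{est 168}. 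Taking $\lambda\in\mathbb{N}$ sufficiently large then forces each piece to be $\ll\delta$, as claimed in \eqref{est 285}.

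The main obstacle, parallel to the proof of \eqref{est 166}, is the $\nabla(\vartheta+q)$ contribution to $R^{\text{lin}}$: its worst summand pairs the factor $\lambda\mu+\nu$ against the decay $\mu^{-b}$ or $\omega^{-1}$, and for this to close one needs $-\alpha b+\max\{1+\alpha,\gamma\}<0$ and $-\beta+\gamma<0$. Both are guaranteed by \eqref{est 145} together with \eqref{est 149}, which encode the strict inequalities $b\alpha<\beta<b\alpha+\gamma-(\alpha+1)$ and $\gamma<b\alpha$ built into the parameter choices. Once these algebraic margins are verified, picking $\iota$ small and $\lambda$ large completes both estimates uniformly in $t\in[0,T]$, since neither bound relies on the regime-dependent estimate \eqref{est 251}, only on the $L^{\infty}$-type control of $a_{j},b_{j},\partial_{t}a_{j},\partial_{t}b_{j}$ that Lemma \ref{Lemma 6.2} furnishes from the smoothness of $R_{0}$.
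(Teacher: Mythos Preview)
Your proposal follows essentially the same approach as the paper's proof: split $R^{\text{lin}}$ into its four summands, bound each via \eqref{est 121}, H\"older, and the product rule exactly as in \eqref{est 130}--\eqref{est 142}, and handle $R^{\text{time,2}}$ by repeating the computation \eqref{est 158}/\eqref{est 165}. Your observation that the Brownian shift $x\mapsto x+B(t)$ preserves all $L_{x}^{r}$ and $C_{x}$ norms of $u_{0}$ is the right way to dispatch the only novel feature.

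There is one minor imprecision to flag. You invoke Lemma~\ref{Lemma 6.2} for the bounds on $a_{j},b_{j},\partial_{t}a_{j},\partial_{t}b_{j}$, but that lemma is stated and proved only for $t\in(4\Sigma,T]$, where \eqref{est 251} is available to produce the clean $\delta$-powers. The present lemma requires the estimates for all $t\in[0,T]$. On the complementary interval $[0,4\Sigma\wedge T]$ one must instead use the generic bounds of the form $\lVert a_{j}(t)\rVert_{C_{x}^{s}}\lesssim l^{-(d+2)s}(\lVert R_{0}\rVert_{C_{t,x}}+\Gamma)^{1/p}$ and similarly for $b_{j},\partial_{t}a_{j},\partial_{t}b_{j}$, which follow directly from the smoothness of $R_{0}$ in \eqref{est 241} and the lower bound $\lvert R_{l}^{j}+\Gamma\rvert\geq\delta/(4d)$ on $\operatorname{supp}\chi_{j}$ via \eqref{est 243}. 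These constants are independent of $\lambda$ (up to powers of $l^{-1}$), so after multiplication by the decaying factors $\mu^{-a},\mu^{-b},\omega^{-1}$, etc., the same choice of small $\iota$ and large $\lambda$ still yields $\ll\delta$. Your final paragraph hints at this (``from the smoothness of $R_{0}$''), but the citation to Lemma~\ref{Lemma 6.2} should be replaced by these uniform-in-$t$ estimates; this is exactly what the paper does, never invoking Lemma~\ref{Lemma 6.2} in its proof of Lemma~\ref{Lemma 6.5}.
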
 

\begin{proof}[Proof of Lemma \ref{Lemma 6.5}]
The estimate on $\lVert R^{\text{lin}}(t) \rVert_{L_{x}^{1}}$ follows from 
\begin{align*}
& \lVert \sum_{j=1}^{d} \mathcal{D}^{-1} ( \partial_{t} a_{j} (t,x) \Theta^{j} (t,x) - \fint_{\mathbb{T}^{d}} (\partial_{t} a_{j}) \Theta^{j} (t,x) dx) \rVert_{L_{x}^{1}} \\
& \hspace{15mm} \overset{\eqref{est 121}}{\lesssim} \sum_{j=1}^{d}  \lVert \partial_{t} a_{j}(t) \rVert_{L_{x}^{\infty}} \lVert \Theta^{j} (t) \rVert_{L_{x}^{1}} \overset{\eqref{est 62} \eqref{est 148}}{\ll} \delta, \\
& \lVert \theta_{0} (t,x) w(t,x) + \vartheta(t,x) u_{0}(t, x+ B(t)) \rVert_{L_{x}^{1}}  \\
& \hspace{14mm} \overset{\eqref{est 258} \eqref{est 62}}{\lesssim}  \sum_{j=1}^{d} \lVert \theta_{0} (t) \rVert_{L_{x}^{\infty}} \lVert b_{j}(t) \rVert_{L_{x}^{\infty}} (\frac{M}{\mu^{a}}) + \lVert a_{j}(t) \rVert_{L_{x}^{\infty}} (\frac{M}{\mu^{b}}) \lVert u_{0}(t) \rVert_{L_{x}^{\infty}}\overset{\eqref{est 148}}{\ll}\delta,\\
& \lVert \nabla (\vartheta + q) (t) \rVert_{L_{x}^{1}} \overset{\eqref{est 258}}{=} \lVert \nabla (\sum_{j=1}^{d} a_{j}(t) \Theta^{j} (t) + \sum_{j=1}^{d} a_{j}(t) b_{j}(t) Q^{j}(t)) \rVert_{L_{x}^{1}} \overset{\eqref{est 62}  \eqref{est 68} \eqref{est 148}}{\ll} \delta, 
\end{align*} 
for $\iota > 0$ sufficiently small and $\lambda \in\mathbb{N}$ sufficiently large. Moreover, the estimate of $\lVert R^{\text{time,2}}(t)\rVert_{L_{x}^{1}}$ follows the proof of \eqref{est 158} and \eqref{est 165}. 
\end{proof} 

Third, similarly to Section \ref{Section 4.1.3} we work on the estimates on $\text{div} (q(t,x) (u_{0}(t, x+ B(t)) + w(t,x)) = \text{div} R^{q}(t,x)$ of \eqref{est 357}. We define $R^{q}$ similarly to \eqref{est 132} with the only difference being that we replace $u_{0}(t,x)$ therein by $u_{0}(t, x+B(t))$: 
\begin{equation}\label{est 213}
R^{q}(t,x) \triangleq q(t,x) (u_{0}(t, x+B(t)) + w(t,x)).
\end{equation} 
We can estimate for $\lambda \in\mathbb{N}$ sufficiently large and $\iota > 0$ sufficiently small 
\begin{equation}\label{est 286}
\lVert R^{q}(t) \rVert_{L_{x}^{1}} \lesssim \sum_{j=1}^{d} \lVert a_{j}(t) b_{j}(t) Q^{j}(t) \rVert_{L_{x}^{1}} (\lVert u_{0}(t) \rVert_{L_{x}^{\infty}} + \sum_{k=1}^{d} \lVert b_{k}(t) W^{k}(t) \rVert_{L_{x}^{\infty}} )  \ll \delta \hspace{3mm} \forall \hspace{1mm} t \in [0,T]
\end{equation} 
via \eqref{est 258}, \eqref{est 269}, and \eqref{est 148}; considering the case $t \in (\frac{\Sigma}{2} \wedge T, \Sigma \wedge T]$ on which we will work subsequently, we stated this result for all $t \in [0,T]$.

Fourth, similarly to Section \ref{Section 4.1.4} we work on the estimates on $\text{div} ([\theta_{0}(t,x) + \vartheta(t,x) + q(t,x)] w_{c}(t,x))= \text{div} R^{\text{corr}}(t,x)$ of \eqref{est 357}. We can define $R^{\text{corr}}$ identically to \eqref{est 133} but with no $z$, and we can estimate it for $\lambda \in\mathbb{N}$ sufficiently large and $\iota > 0$ sufficiently small 
\begin{equation}\label{est 287}
\lVert R^{\text{corr}}(t) \rVert_{L_{x}^{1}} \leq ( \lVert \theta_{0}(t) \rVert_{L_{x}^{p}} +\lVert \vartheta(t) \rVert_{L_{x}^{p}} + \lVert q(t) \rVert_{L_{x}^{p}}) \lVert w_{c}(t) \rVert_{L_{x}^{p'}} 
\overset{\eqref{est 148}}{\ll} \delta \hspace{3mm} \forall \hspace{1mm} t \in [0,T];
\end{equation} 
again, we stated this result over $[0,T]$ for the convenience when we work on the case $t \in (\frac{\Sigma}{2} \wedge T, \Sigma \wedge T]$ next. 

Fifth, similarly to Section \ref{Section 4.1.6} we work on the estimates of $\text{div} (R_{l}(t,x) - R_{0}(t,x)) = \text{div} R^{\text{moll}}(t,x)$ of \eqref{est 357}. We define $R^{\text{moll}}$ identically to \eqref{est 100} for which  the estimate from \eqref{est 139} applies here directly to prove that 
\begin{equation}\label{est 288}
\lVert R^{\text{moll}}(t) \rVert_{L_{x}^{1}} \ll \delta \hspace{3mm} \forall \hspace{1mm} t \in [0,T]; 
\end{equation} 
considering the case $t \in (\frac{\Sigma}{2} \wedge T, \Sigma \wedge T]$ on which we will work subsequently we stated this estimate for all $t \in [0,T]$.  Considering \eqref{est 280}, \eqref{est 282}, \eqref{est 285}, \eqref{est 286}, \eqref{est 287}, and \eqref{est 288} in \eqref{est 214}, we conclude \eqref{est 250} on $(\Sigma \wedge T, T]$: $\lVert R_{1}(t) \rVert_{L_{x}^{1}} \leq \delta$. 

Next, we prove \eqref{est 250} for $t \in (\frac{\Sigma}{2} \wedge T, \Sigma \wedge T]$. If $T \leq \frac{\Sigma}{2}$, then there is nothing to prove. Thus, we consider $\frac{\Sigma}{2} < T$ and thus $t \in (\frac{\Sigma}{2}, \Sigma \wedge T]$. We see from \eqref{est 172}, \eqref{est 245}, and \eqref{est 255} that 
\begin{align}\label{est 356} 
 - \text{div}R_{1}(t,x) =& \underbrace{ \tilde{\chi}(t)^{2} [\partial_{t} (q(t,x) + q_{c}(t)) + \text{div} (\vartheta(t,x) w(t,x) - R_{l}(t,x))]}_{(\text{div} R^{\text{time,1}} + \text{div}R^{\text{quadr}} + \text{div} R^{\chi} )(t,x)} \nonumber  \\
&+ \underbrace{\tilde{\chi}(t) [\partial_{t} (\vartheta(t,x) + \vartheta_{c}(t)) + \text{div} (\theta_{0}(t,x) w(t,x) + \vartheta(t,x) u_{0}(t, x+B(t)))}_{(\text{div}R^{\text{time,2}} + \text{div}R^{\text{lin}})(t,x)} \nonumber\\
& \hspace{37mm} \underbrace{- \Delta (\vartheta(t,x) + \tilde{\chi}(t) q(t,x))]}_{(\text{div} R^{\text{time,2}} + \text{div}R^{\text{lin}})(t,x) \text{ continued}} \nonumber \\
&+ \underbrace{\tilde{\chi}(t)^{2} \text{div} (q(t,x) (u_{0}(t, x+B(t)) + \tilde{\chi}(t) w(t,x)))}_{\text{div} R^{q}(t,x)} \nonumber \\
&+ \underbrace{\tilde{\chi}(t) \text{div} ([\theta_{0} (t,x) + \tilde{\chi}(t) \vartheta(t,x) + \tilde{\chi}(t)^{2} q(t,x) ] w_{c}(t,x))}_{\text{div} R^{\text{corr}}(t,x)} \nonumber \\
&+ (\tilde{\chi}(t)^{2} -1) \text{div} R_{l} (t,x) + \underbrace{\text{div} (R_{l} - R_{0})(t,x)}_{\text{div} R^{\text{moll}}(t,x)} \nonumber \\
&+ \underbrace{\partial_{t} \tilde{\chi}(t) (\vartheta(t,x) + \vartheta_{c}(t)) + \partial_{t} \tilde{\chi}(t)^{2} (q(t,x) + q_{c}(t))}_{\text{div} R^{\text{cutoff}}(t,x)}  
\end{align}
and therefore 
\begin{equation}\label{est 292}
-R_{1} \triangleq R^{\text{time,1}} + R^{\text{quadr}} + R^{\chi} + R^{\text{time,2}} + R^{\text{lin}} + R^{q} + R^{\text{corr}} + (\tilde{\chi}^{2} -1) R_{l} + R^{\text{moll}} + R^{\text{cutoff}}. 
\end{equation} 
First, concerning $\tilde{\chi}(t)^{2} [\partial_{t} (q(t,x) + q_{c}(t)) + \text{div} (\vartheta(t,x) w(t,x) - R_{l}(t,x))] = (\text{div} R^{\text{time,1}} + \text{div}R^{\text{quadr}} + \text{div} R^{\chi} )(t,x)$ in \eqref{est 356}, it suffices to take the same $R^{\chi}$ in \eqref{est 270}, $R^{\text{time,1}}$ from \eqref{est 114} with $a_{j}$ and $b_{j}$ in \eqref{est 244}, and $R^{\text{quadr}}$ with $R^{\text{quadr,1}}$ and $R^{\text{quadr,2}}$ identically to \eqref{est 115}-\eqref{est 116} with $a_{j}, b_{j}$ replaced by those from \eqref{est 244}, and multiply them all by $\tilde{\chi}(t)^{2}$. Because $\tilde{\chi}(t) \in [0,1]$ for all $t \in [0,T]$ by \eqref{est 257}, the estimates from \eqref{est 280} and \eqref{est 282} from Lemma \ref{Lemma 6.4} remain valid. 

Second, concerning $\tilde{\chi}(t) [\partial_{t} (\vartheta(t,x) + \vartheta_{c}(t)) + \text{div} (\theta_{0}(t,x) w(t,x) + \vartheta(t,x) u_{0}(t, x+B(t))) - \Delta (\vartheta(t,x) + \tilde{\chi}(t) q(t,x))] = (\text{div}R^{\text{time,2}} + \text{div}R^{\text{lin}})(t,x)$ in \eqref{est 356}, we can modify \eqref{est 212} and \eqref{est 125} as follows: 
\begin{subequations}\label{est 392} 
\begin{align}
R^{\text{lin}}(t,x) \triangleq& \tilde{\chi}(t) [ \sum_{j=1}^{d} \mathcal{D}^{-1} (\partial_{t} a_{j} (t,x)) \Theta^{j} (t,x) - \fint_{\mathbb{T}^{d}} (\partial_{t} a_{j} (t,x)) \Theta^{j} (t,x) dx \nonumber \\
& + \theta_{0}(t,x) w(t,x) + \vartheta(t,x) u_{0}(t, x+ B(t)) - \nabla (\vartheta(t,x) + \tilde{\chi}(t) q(t,x))], \\
R^{\text{time,2}} (t,x) \triangleq& \tilde{\chi}(t) [-\lambda \omega \sum_{j=1}^{d} \mathcal{R}_{N} (a_{j}(t,x) (\partial_{t} \varrho_{\mu}^{j})_{\lambda} \circ \tau_{\omega t e_{j}} (x), \psi_{\nu}^{j}(x))]. 
\end{align}
\end{subequations} 
Because $\tilde{\chi}(t) \in [0,1]$ for all $t \in [0,T]$ by \eqref{est 257}, it is clear that same estimates in \eqref{est 285} from Lemma \ref{Lemma 6.5} continue to hold for $R^{\text{lin}}$ and $R^{\text{time,2}}$ in \eqref{est 392}.

Third, concerning $\tilde{\chi}(t)^{2} \text{div} (q(t,x) (u_{0}(t, x+B(t)) + \tilde{\chi}(t) w(t,x))) = \text{div}R^{q}(t,x)$ in \eqref{est 356}, it suffices to define 
\begin{align*}
R^{q}(t,x) \triangleq \tilde{\chi}(t)^{2} q(t,x) (u_{0}(t, x+B(t)) + \tilde{\chi}(t) w(t,x))
\end{align*}
for which the same estimate in \eqref{est 286} clearly goes through considering that $\tilde{\chi}(t) \in [0,1]$ for all $t \in [0,T]$ by \eqref{est 257}. 

Fourth, concerning $\tilde{\chi}(t) \text{div} ([\theta_{0} (t,x) + \tilde{\chi}(t) \vartheta(t,x) + \tilde{\chi}(t)^{2} q(t,x) ] w_{c}(t,x)) = \text{div} R^{\text{corr}}(t,x)$ in \eqref{est 356}, we define 
\begin{equation}
R^{\text{corr}} (t,x) \triangleq \tilde{\chi}(t) [\theta_{0} (t,x) + \tilde{\chi}(t) \vartheta(t,x) + \tilde{\chi}(t)^{2} q(t,x) ] w_{c}(t,x)
\end{equation} 
for which the same estimate in \eqref{est 287} applies because $\tilde{\chi}(t) \in [0,1]$ for all $t \in [0,T]$ by \eqref{est 257}. 

Fifth, concerning $(\tilde{\chi}(t)^{2} -1) \text{div} R_{l}(t,x)$ in \eqref{est 356}, via \eqref{est 257} and Young's inequality for convolution we bound for all $t \in (\frac{\Sigma}{2}, \Sigma \wedge T]$ 
\begin{equation}\label{est 362}
\lVert (\tilde{\chi}(t)^{2} -1) R_{l}(t) \rVert_{L_{x}^{1}}  \leq  \sup_{\tau \in [t-l,t]} \lVert R_{0}(\tau) \rVert_{L_{x}^{1}}. 
\end{equation} 

Sixth, concerning $\text{div} (R_{l} - R_{0})(t,x) = \text{div} R^{\text{moll}}(t,x)$ in \eqref{est 356}, we define $R^{\text{moll}}$ identically to \eqref{est 100},  the same estimate in \eqref{est 288} applies. 

Finally, concerning $\partial_{t} \tilde{\chi}(t) (\vartheta(t,x) + \vartheta_{c}(t)) + \partial_{t} \tilde{\chi}(t)^{2} (q(t,x) + q_{c}(t)) = \text{div} R^{\text{cutoff}}(t,x)$ in \eqref{est 356}, we define 
\begin{equation}\label{est 291}
R^{\text{cutoff}}(t,x) \triangleq \mathcal{D}^{-1} (\partial_{t} \tilde{\chi}(t) (\vartheta(t,x) + \vartheta_{c}(t)) + \partial_{t} \tilde{\chi}(t)^{2} [q(t,x) + q_{c}(t)])
\end{equation} 
and separately estimate $L^{1}(\mathbb{T}^{d})$-norms of $\mathcal{D}^{-1} (\partial_{t} \tilde{\chi}(t) (\vartheta(t,x) + \vartheta_{c}(t))$ and $\mathcal{D}^{-1} (\partial_{t} \tilde{\chi}(t)^{2} [q(t,x) + q_{c}(t) ])$, both of which are well-defined because $\vartheta(t,x) + \vartheta_{c}(t)$ and $q(t,x) + q_{c}(t)$ are mean-zero due to \eqref{est 85}. First, for $\lambda \in\mathbb{N}$ sufficiently large  
\begin{align}
& \lVert \mathcal{D}^{-1} (\partial_{t} \tilde{\chi}(t) ( \vartheta(t,x) + \vartheta_{c}(t)) ) \rVert_{L_{x}^{1}}  \label{est 289}\\
\overset{\eqref{est 121}}{\lesssim}& \Sigma^{-1} \lVert \vartheta(t) \rVert_{L_{x}^{1}}  
\overset{\eqref{est 258} \eqref{est 269}\eqref{est 148}}{\lesssim}    \Sigma^{-1} \sum_{j=1}^{d}  ( \lambda^{(d+1) \iota} \sup_{\tau \in [t-l,t]} \lVert R_{0}^{j}(\tau) \rVert_{L_{x}^{1}} + \Gamma)^{\frac{1}{p}} \lambda^{-\alpha b}  \ll \delta \nonumber 
\end{align}
because $\iota < \frac{p \alpha b}{d+1}$ due to \eqref{est 354}. Second, for $\lambda \in\mathbb{N}$ sufficiently large 
\begin{align}
& \lVert \mathcal{D}^{-1} (\partial_{t} \tilde{\chi}(t)^{2} [q(t,x) + q_{c}(t) ]) \rVert_{L_{x}^{1}} \label{est 290}\\
\overset{\eqref{est 121}}{\lesssim}&  \Sigma^{-1} \lVert q(t) \rVert_{L_{x}^{1}} \overset{\eqref{est 258} \eqref{est 269} \eqref{est 148}}{\lesssim} \Sigma^{-1} \sum_{j=1}^{d} ( \lambda^{(d+1) \iota} \sup_{\tau \in [t- l, t]} \lVert R_{0}^{j}(\tau) \rVert_{L_{x}^{1}} + \Gamma) \lambda^{-\beta} \ll \delta \nonumber
\end{align}
because $\iota < \frac{\beta}{d+1}$ due to \eqref{est 354}. Considering \eqref{est 289} and \eqref{est 290} in \eqref{est 291} gives us 
\begin{equation}\label{est 360}
\lVert R^{\text{cutoff}}(t) \rVert_{L_{x}^{1}} \ll \delta. 
\end{equation} 
Applying these estimates \eqref{est 358}, \eqref{est 285}, \eqref{est 286}, \eqref{est 287}, \eqref{est 362}, \eqref{est 288}, and \eqref{est 360} to \eqref{est 292} allows us to conclude \eqref{est 250} for $t \in (\frac{\Sigma}{2} \wedge T, \Sigma \wedge T]$. 

Finally, on $[0, \frac{\Sigma}{2} \wedge T], t \leq \frac{\Sigma}{2}$ and thus $\tilde{\chi}(t) = 0$ by \eqref{est 257}. Therefore, $\theta_{1}\equiv \theta_{0}$ and $u_{1} \equiv u_{0}$ due to \eqref{est 245} so that $R_{1}\equiv R_{0}$; hence, \eqref{est 250} for this range of $t$ is trivially satisfied. 

Lastly, we prove \eqref{est 293}. We have $t \in (4\Sigma \wedge T, T]$. If $T \leq 4 \Sigma$, then there is nothing to prove; thus, we assume that $4\Sigma < T$ so that $t \in (4\Sigma, T]$. Therefore, $t > 4\Sigma$ so that $\tilde{\chi}(t) = 1$ by \eqref{est 257}. Moreover, as $4\Sigma < T$, we have $[2\Sigma \wedge T, T] = [2\Sigma, T]$. We also note that \eqref{est 251} is applicable here. Let us compute using \eqref{est 245}-\eqref{est 255} 
\begin{align} 
&\lvert \int_{\mathbb{T}^{d}} \theta_{1}(t,x) u_{1}(t, x+ B(t))dx - \int_{\mathbb{T}^{d}} \theta_{0}(t,x) u_{0}(t,x +B(t)) dx - \sum_{j=1}^{d} \int_{\mathbb{T}^{d}} \chi_{j}^{2}(t,x) dx \Gamma  e_{j}  \rvert  \nonumber \\
\leq& \mathfrak{A}(t) + \mathfrak{B}(t)+ \mathfrak{C}(t) + \mathfrak{D}(t)+ \mathfrak{E}(t) \label{est 294} 
\end{align}
where  
\begin{subequations}\label{est 388} 
\begin{align}
&\mathfrak{A}(t) \triangleq \vert \int_{\mathbb{T}^{d}} \vartheta(t,x) w(t,x) - \sum_{j=1}^{d} \chi_{j}^{2}(t,x) e_{j} \Gamma dx \rvert, \label{est 295}\\
&\mathfrak{B}(t) \triangleq \lvert \int_{\mathbb{T}^{d}} (\theta_{0} w)(t,x) + \vartheta(t,x) u_{0}(t, x+ B(t)) dx \rvert, \label{est 296}\\
&\mathfrak{C}(t) \triangleq \lvert \int_{\mathbb{T}^{d}} q(t,x) (u_{0}(t, x+B(t)) + w(t,x)) dx \rvert, \label{est 297} \\
&\mathfrak{D}(t) \triangleq \lvert \int_{\mathbb{T}^{d}} (\theta_{0} + \vartheta + q) (t,x) w_{c}(t,x) dx \rvert,  \label{est 298}\\
&\mathfrak{E}(t) \triangleq \lvert (\vartheta_{c} + q_{c})(t) \int_{\mathbb{T}^{d}} u_{0}(t, x+B(t)) + w(t,x) + w_{c}(t,x)) dx \rvert. \label{est 299} 
\end{align}
\end{subequations}
When defining a new defect $R_{1}$ (e.g., \eqref{est 356}), the last term $\mathfrak{E}(t)$ typically vanishes because $\text{div} ( (\vartheta_{c}(t) + q_{c}(t))(u_{0}(t, x+ B(t)) + w(t,x) + w_{c}(t,x)))=0$ but not here; hence, we need to estimate this term as well. Now the most difficult term in \eqref{est 388} is $\mathfrak{A}$. Importantly, we furthermore split it from \eqref{est 295} by relying on an orthogonality relation \eqref{est 64} as follows: 
\begin{align}
\mathfrak{A}(t) \overset{\eqref{est 258}\eqref{est 64}}{=}&  \lvert \int_{\mathbb{T}^{d}} \sum_{j=1}^{d} a_{j}(t,x) b_{j}(t,x) \Theta^{j}(t,x) W^{j}(t,x) - \chi_{j}^{2}(t,x) e_{j} \Gamma dx \rvert \nonumber \\
\overset{\eqref{est 262}}{=}& \lvert \int_{\mathbb{T}^{d}} \sum_{j=1}^{d} \chi_{j}^{2}(t,x) (R_{l}^{j}(t,x) + \Gamma) \Theta^{j}(t,x) W^{j}(t,x) - \sum_{j=1}^{d} \chi_{j}^{2}(t,x)  \Gamma e_{j} dx \rvert  \nonumber\\
=& \lvert \int_{\mathbb{T}^{d}} \sum_{j=1}^{d} \chi_{j}^{2}(t,x) R_{l}^{j}(t,x) [\Theta^{j}(t,x) W^{j}(t,x) - e_{j} ] + \chi_{j}^{2}(t,x) R_{l}^{j}(t,x) e_{j} \nonumber\\
&+ \chi_{j}^{2}(t,x)\Gamma [\Theta^{j}(t,x) W^{j}(t,x) - e_{j} ]dx \rvert 
\overset{\eqref{est 59}}{\leq}  \mathfrak{A}_{1}(t) + \mathfrak{A}_{2}(t) + \mathfrak{A}_{3}(t)   \label{est 300}
\end{align}
where 
\begin{subequations}\label{est 389} 
\begin{align}
& \mathfrak{A}_{1}(t) \triangleq \lvert \int_{\mathbb{T}^{d}} \sum_{j=1}^{d} \chi_{j}^{2}(t,x) R_{l}^{j}(t,x) [(\varrho_{\mu}^{j} \tilde{\varrho}_{\mu}^{j})_{\lambda} \circ \tau_{\omega t e_{j}} (\psi_{\nu}^{j})^{2} e_{j} - e_{j} ] dx \rvert, \label{est 301}\\
& \mathfrak{A}_{2}(t) \triangleq \lvert \int_{\mathbb{T}^{d}} \sum_{j=1}^{d} \chi_{j}^{2}(t,x) R_{l}^{j}(t,x) e_{j} dx \rvert, \label{est 302}\\
& \mathfrak{A}_{3}(t) \triangleq \lvert \int_{\mathbb{T}^{d}} \sum_{j=1}^{d} \chi_{j}^{2}(t,x) [(\varrho_{\mu}^{j} \tilde{\varrho}_{\mu}^{j})_{\lambda} \circ \tau_{\omega t e_{j}} (\psi_{\nu}^{j})^{2} e_{j} - e_{j} ] dx \Gamma \rvert.\label{est 303}
\end{align}
\end{subequations}

\begin{remark}\label{Difficulty 3}
Subtracting and adding terms in \eqref{est 389} is crucial. Indeed, estimating e.g. $\int_{\mathbb{T}^{d}} R_{l}^{j}(t,x) \Theta^{j}(t,x) W^{j}(t,x)dx$ by simply $\lVert R_{l}^{j}(t)\rVert_{L_{x}^{1}} \lVert \Theta^{j}(t)W^{j}(t) \rVert_{L_{x}^{\infty}}$ in hope to make use of \eqref{est 251} on $\lVert R_{l}^{j}(t) \rVert_{L_{x}^{1}}$ will not work because $\lVert \Theta^{j} W^{j} \rVert_{L_{x}^{\infty}} \overset{\eqref{est 59}}{=} \omega \lVert Q^{j} \rVert_{L_{x}^{\infty}}$ and this is too large (see e.g., $\lVert Q^{j}(t)\rVert_{L_{x}^{p}} \leq \frac{M \mu^{b}}{\omega}$ in \eqref{est 61}). Within \eqref{est 301} and \eqref{est 303} we will further split 
\begin{equation}\label{est 308}
(\varrho_{\mu}^{j} \tilde{\varrho}_{\mu}^{j})_{\lambda} \circ \tau_{\omega t e_{j}} (\psi_{\nu}^{j})^{2} - 1 =  ( \varrho_{\mu}^{j} \tilde{\varrho}_{\mu}^{j})_{\lambda} \circ \tau_{\omega t e_{j}} (( \psi^{j})^{2} -1)_{\nu} + ( \varrho_{\mu}^{j} \tilde{\varrho}_{\mu}^{j} -1)_{\lambda} \circ \tau_{\omega t e_{j}}
\end{equation} 
identically to the derivation of \eqref{est 117} and take advantage of the mean-zero property of $(\psi^{j})^{2} - 1$ and $(\varrho_{\mu}^{j} \tilde{\varrho}_{\mu}^{j} - 1) \circ \tau_{\lambda \omega t e_{j}}$ due to \eqref{est 305} and \eqref{est 113} so that we can employ \eqref{est 304}, which was never used in \cite{MS20}. On the other hand, we can handle $\mathfrak{A}_{2}$ in \eqref{est 302} by \eqref{est 251}. 
\end{remark}

Using \eqref{est 308} we further split 
\begin{equation}\label{est 311} 
\mathfrak{A}_{1}(t) \leq \mathfrak{A}_{1,1}(t) + \mathfrak{A}_{1,2}(t) 
\end{equation}
where 
\begin{subequations}
\begin{align}
& \mathfrak{A}_{1,1}(t) \triangleq  \lvert \int_{\mathbb{T}^{d}} \sum_{j=1}^{d} \chi_{j}^{2}(t,x) R_{l}^{j}(t,x) (\varrho_{\mu}^{j} \tilde{\varrho}_{\mu}^{j})_{\lambda} \circ \tau_{\omega t e_{j}} ((\psi^{j})^{2} -1)_{\nu} e_{j}dx \rvert,\\ 
& \mathfrak{A}_{1,2}(t) \triangleq \lvert \int_{\mathbb{T}^{d}} \sum_{j=1}^{d} \chi_{j}^{2}(t,x) R_{l}^{j}(t,x) (\varrho_{\mu}^{j} \tilde{\varrho}_{\mu}^{j} -1)_{\lambda} \circ \tau_{\omega t e_{j}} e_{j} dx\rvert \label{est 306}
\end{align}
\end{subequations}
similarly to \eqref{est 116}. Now we estimate using that $(\psi^{j})^{2} -1$ is mean-zero due to \eqref{est 305} and $W^{d+1, 1}(\mathbb{T}^{d}) \hookrightarrow L^{\infty} (\mathbb{T}^{d})$, for $\lambda \in\mathbb{N}$ sufficiently large 
\begin{align}
\mathfrak{A}_{1,1}(t) \overset{\eqref{est 304}}{\lesssim}& \sum_{j=1}^{d} \nu^{-1} \lVert \nabla (\chi_{j}^{2}(t,x) R_{l}^{j}(t,x) (\varrho_{\mu}^{j} \tilde{\varrho}_{\mu}^{j})_{\lambda} \circ \tau_{\omega t e_{j}}) \rVert_{L_{x}^{1}} \lVert (\psi^{j})^{2} -1 \rVert_{L_{x}^{\infty}} \nonumber\\
\overset{\eqref{est 66}\eqref{est 67}}{\lesssim}& \nu^{-1} \sum_{j=1}^{d} [\lVert R_{l}^{j}(t) \rVert_{W_{x}^{d+1,1}} \lVert \varrho_{\mu}^{j} \tilde{\varrho}_{\mu}^{j} \rVert_{L^{1}} + \lVert R_{l}^{j} (t) \rVert_{W_{x}^{d+2,1}} \lVert \varrho_{\mu}^{j}\tilde{\varrho}_{\mu}^{j} \rVert_{L^{1}} + \lVert R_{l}^{j}(t) \rVert_{W_{x}^{d+1,1}} \lambda \lVert \varrho_{\mu}^{j}\tilde{\varrho}_{\mu}^{j} \rVert_{W^{1,1}} ] \nonumber \\
\overset{\eqref{est 68}\eqref{est 55}\eqref{est 251}}{\lesssim}& \nu^{-1} \delta l^{-d-1} (l^{-1} + \lambda \mu) \overset{\eqref{est 148}}{\approx} \lambda^{-\gamma} \delta \lambda^{(d+1) \iota} (\lambda^{\iota} + \lambda^{1+ \alpha}) \ll \delta \label{est 312}
\end{align}
because $\iota \overset{\eqref{est 152}}{<} \frac{\gamma - 1 - \alpha}{3d+5} < \frac{\gamma-1-\alpha}{d+1}$. Next, we estimate $\mathfrak{A}_{1,2}$ from \eqref{est 306} using $\int_{\mathbb{T}^{d}} \varrho_{\mu}^{j} \tilde{\varrho}_{\mu}^{j} -1 dx = 0$ from \eqref{est 307} as follows:
\begin{align}
&\mathfrak{A}_{1,2}(t) \overset{\eqref{est 304}}{\lesssim} \lambda^{-1} \sum_{j=1}^{d} \lVert \nabla (\chi_{j}^{2}(t,x) R_{l}^{j}(t,x)) \rVert_{L_{x}^{\infty}} \lVert (\varrho_{\mu}^{j} \tilde{\varrho}_{\mu}^{j} -1) \circ \tau_{\lambda \omega t e_{j}} \rVert_{L_{x}^{1}}  \label{est 313}\\
\overset{\eqref{est 67}}{\lesssim}& \lambda^{-1} \sum_{j=1}^{d} \lVert R_{l}^{j}(t) \rVert_{W_{x}^{d+2,1}} (\lVert \varrho_{\mu}^{j} \rVert_{L^{2}} \lVert \tilde{\varrho}_{\mu}^{j} \rVert_{L^{2}} + 1) 
\overset{\eqref{est 68} \eqref{est 251}}{\lesssim}  \lambda^{-1} \lambda^{(d+2) \iota} \delta \ll \delta \nonumber
\end{align}
as $\iota \overset{\eqref{est 152}}{<} \frac{1}{3d+5} < \frac{1}{d+2}$. Second, we estimate 
\begin{equation}\label{est 315}
\mathfrak{A}_{2}(t) \overset{\eqref{est 302} \eqref{est 243}}{\leq} \sum_{j=1}^{d} \int_{\mathbb{T}^{d}} \lvert R_{l}^{j}(t,x) \rvert dx  \leq d \sup_{\tau \in [t-l,t]} \lVert R_{0}(\tau) \rVert_{L_{x}^{1}} \overset{\eqref{est 251}}{\leq} 2 d \delta.
\end{equation} 
Similarly to $\mathfrak{A}_{1}$, we make use of \eqref{est 308} and split $\mathfrak{A}_{3}$ from \eqref{est 303} by  
\begin{equation}\label{est 314}
\mathfrak{A}_{3}(t) \leq \mathfrak{A}_{3,1}(t) + \mathfrak{A}_{3,2}(t) 
\end{equation} 
where 
\begin{subequations}
\begin{align}
& \mathfrak{A}_{3,1}(t) \triangleq \lvert \int_{\mathbb{T}^{d}}\sum_{j=1}^{d}   \chi_{j}^{2} (t,x) \Gamma (\varrho_{\mu}^{j} \tilde{\varrho}_{\mu}^{j})_{\lambda} \circ \tau_{\omega t e_{j}} ( (\psi^{j})^{2} -1)_{\nu} e_{j} dx \rvert,\\
& \mathfrak{A}_{3,2}(t) \triangleq \lvert  \int_{\mathbb{T}^{d}}  \sum_{j=1}^{d} \chi_{j}^{2}(t,x) \Gamma (\varrho_{\mu}^{j} \tilde{\varrho}_{\mu}^{j} -1)_{\lambda} \circ \tau_{\omega t e_{j}} e_{j}dx \rvert 
\end{align}
\end{subequations} 
and estimate for $\lambda \in \mathbb{N}$ sufficiently large 
\begin{align}
&\mathfrak{A}_{3,1}(t) \overset{\eqref{est 304}}{\lesssim} \Gamma \sum_{j=1}^{d} \nu^{-1}\lVert \nabla ( \chi_{j}^{2}(t,x) (\varrho_{\mu}^{j}\tilde{\varrho}_{\mu}^{j})_{\lambda} \circ \tau_{\omega t e_{j}} ) \rVert_{L_{x}^{1}} \lVert (\psi^{j})^{2} -1 \rVert_{L_{x}^{\infty}} \label{est 309}\\
\lesssim& \Gamma \nu^{-1}\sum_{j=1}^{d} \lVert \varrho_{\mu}^{j} \rVert_{L^{2}} \lVert \tilde{\varrho}_{\mu}^{j} \rVert_{L^{2}} + \lambda( \lVert \varrho_{\mu}^{j} \rVert_{W^{1,2}} \lVert \tilde{\varrho}_{\mu}^{j} \rVert_{L^{2}} + \lVert \varrho_{\mu}^{j} \rVert_{L^{2}} \lVert \tilde{\varrho}_{\mu}^{j} \rVert_{W^{1,2}}) \overset{\eqref{est 68}}{\lesssim} \Gamma \lambda^{-\gamma}  [1+ \lambda^{1+ \alpha}]  \overset{\eqref{est 144}}{\ll} \delta  \nonumber 
\end{align}  
and 
\begin{align}
\mathfrak{A}_{3,2}(t) \overset{\eqref{est 304}}{\lesssim}& \Gamma \lambda^{-1} \sum_{j=1}^{d} \lVert \nabla \chi_{j}^{2}(t) \rVert_{L_{x}^{\infty}} \lVert (\varrho_{\mu}^{j} \tilde{\varrho}_{\mu}^{j} -1) \circ \tau_{\lambda \omega t e_{j}} \rVert_{L_{x}^{1}} \nonumber \\
\overset{\eqref{est 68}}{\lesssim}& \Gamma \lambda^{-1} \sum_{j=1}^{d} (\mu^{a- \frac{d}{2}} \lVert \varrho \rVert_{L^{2}} \mu^{b- \frac{d}{2}} \lVert \varrho \rVert_{L^{2}} + 1) \overset{\eqref{est 55}}{\approx} \Gamma \lambda^{-1} \ll \delta.\label{est 310}
\end{align}
Applying \eqref{est 312} and \eqref{est 313} to \eqref{est 311} for $\mathfrak{A}_{1}$, as well as \eqref{est 309} and \eqref{est 310} to \eqref{est 314} for $\mathfrak{A}_{3}$, and considering \eqref{est 315} imply that for all $t \in (4\Sigma, T]$ and $\lambda \in \mathbb{N}$ sufficiently large 
\begin{equation}\label{est 317}
\mathfrak{A}(t) \overset{\eqref{est 300}}{\leq} \mathfrak{A}_{1}(t) + \mathfrak{A}_{2}(t) + \mathfrak{A}_{3}(t) \leq \delta d (\frac{5}{2}).
\end{equation} 
Next, utilizing \eqref{est 252} we can estimate $\mathfrak{B}$ from \eqref{est 296} similarly to \eqref{est 131} by taking $\iota > 0$ sufficiently small so that 
\begin{equation}\label{est 390}
\iota < \min\{ \frac{\alpha a p'}{d+1}, \frac{\alpha b p}{d+1} \}
\end{equation} 
and $\lambda \in \mathbb{N}$ sufficiently large, for all $t \in (4\Sigma, T]$ 
\begin{align}
\mathfrak{B} (t) \overset{\eqref{est 258}}{\lesssim} \sum_{j=1}^{d}& \lVert \theta_{0}(t) \rVert_{L_{x}^{\infty}} \lVert b_{j}(t) \rVert_{L_{x}^{\infty}} \lVert W^{j}(t) \rVert_{L_{x}^{1}} + \lVert a_{j}(t) \rVert_{L_{x}^{\infty}} \lVert \Theta^{j}(t) \rVert_{L_{x}^{1}} \lVert u_{0}(t) \rVert_{L_{x}^{\infty}}  \nonumber \\
\overset{\eqref{est 252}\eqref{est 62}}{\lesssim}&  \lVert \theta_{0}(t) \rVert_{L_{x}^{\infty}}  \lambda^{ (\frac{d+1}{p'}) \iota} \delta^{\frac{1}{p'}} \lambda^{-\alpha a} +  \lambda^{ (\frac{d+1}{p}) \iota} \delta^{\frac{1}{p}} \lambda^{-\alpha b} \lVert u_{0} (t) \rVert_{L_{x}^{\infty}}  \overset{\eqref{est 145}}{\ll} \delta. \label{est 318}
\end{align}
Next, similarly to \eqref{est 160} we can estimate $\mathfrak{C}$ from \eqref{est 297} for all $t \in (4\Sigma, T]$ by taking $\lambda \in \mathbb{N}$ sufficiently large and $\iota < \frac{\beta - \alpha b}{(d+1)(1+ \frac{1}{p'})}$ (which is possible thanks to \eqref{est 145}) 
\begin{align}
\mathfrak{C}(t) \overset{\eqref{est 262} \eqref{est 258}}{\lesssim}& \sum_{j=1}^{d} (\lVert R_{l}^{j} (t) \rVert_{L_{x}^{\infty}} + \Gamma) \lVert Q^{j}(t) \rVert_{L_{x}^{1}}( \lVert u_{0}(t) \rVert_{L_{x}^{\infty}} + \sum_{k=1}^{d} \lVert b_{k}(t) \rVert_{L_{x}^{\infty}} \lVert W^{k}(t) \rVert_{L_{x}^{\infty}}) \label{est 319}\\
\overset{\eqref{est 251}}{\lesssim}& ( \lambda^{(d+1)\iota} \delta + \Gamma) \lambda^{-\beta} (\lVert u_{0}(t) \rVert_{L_{x}^{\infty}} +  \lambda^{(d+1) \iota \frac{1}{p'}} \delta^{\frac{1}{p'}} \lambda^{\alpha b}) \overset{\eqref{est 145}}{\ll} \delta.\nonumber
\end{align}
Next, similarly to \eqref{est 161} we can rely on the proofs of \eqref{est 256}-\eqref{est 260} and estimate $\mathfrak{D}$ from \eqref{est 298} for all $t \in (4\Sigma, T]$ 
\begin{align}
\mathfrak{D}(t) \overset{\eqref{est 256} \eqref{est 260} \eqref{est 266}}{\lesssim}&  ( \lVert \theta_{0}(t) \rVert_{L_{x}^{p}}+  M [ (2 \delta)^{\frac{1}{p}} + \Gamma^{\frac{1}{p}} ] +  \lambda^{-\frac{1}{p}} \delta^{\frac{1}{p}} l^{-d-2}+ l^{-d-1} \delta \mu^{b} \omega^{-1}) \nonumber   \\
&\times  \delta^{\frac{1}{p'}} [\sum_{k=1}^{N} \left( \frac{\lambda \mu l^{-d-2}}{\nu} \right)^{k} + \frac{ (\lambda \mu l^{-d-2} )^{N+1}}{\nu^{N}} ] \nonumber\\
\overset{\eqref{est 148} }{\approx}&  ( \lVert \theta_{0}(t) \rVert_{L_{x}^{p}}+   [ 2 \delta  + \Gamma]^{\frac{1}{p}}  + \lambda^{-\frac{1}{p} + (d+2) \iota}  \delta^{\frac{1}{p}}  + \lambda^{(d+1) \iota}\delta \lambda^{\alpha b - \beta}) \nonumber \\
& \times    \delta^{\frac{1}{p'}} [\sum_{k=1}^{N} \lambda^{( 1 + \alpha + (d+2) \iota - \gamma)k}+ \lambda^{(1+ \alpha + (d+2) \iota)(N+1) - \gamma N} ] \ll \delta\label{est 320}
\end{align}
due to \eqref{est 152} which guarantees 
\begin{align*}
\iota < \min \{ \frac{1}{p(d+2)}, \frac{\beta - \alpha b}{d+1}, \frac{\gamma - 1 - \alpha}{d+2}, \frac{1}{d+2} (\frac{\gamma N}{N+1} - 1 - \alpha ) \}.
\end{align*}
At last, we estimate $\mathfrak{E}$ from \eqref{est 299} as follows. Using \eqref{est 262}, \eqref{est 266}, \eqref{est 62}, \eqref{est 390}, and \eqref{est 149}-\eqref{est 152} that implies 
\begin{align*}
\iota < \min\{ \frac{p \alpha b}{d+1}, \frac{\gamma - 1 - \alpha}{d+2}, \frac{1}{d+2}[ \frac{\gamma N}{N+1} - 1 - \alpha] \}, 
\end{align*}
as well as $\lambda \in \mathbb{N}$ sufficiently large 
\begin{align}
\mathfrak{E}(t) \leq& ( \lVert \vartheta(t) \rVert_{L_{x}^{1}} + \lVert q(t) \rVert_{L_{x}^{1}}) ( \lVert u_{0}(t, x+B(t)) \rVert_{L_{x}^{1}} + \lVert w(t) \rVert_{L_{x}^{1}} + \lVert w_{c}(t) \rVert_{L_{x}^{1}})  \label{est 321}\\
\overset{\eqref{est 258}}{\lesssim}&( \lVert \sum_{j=1}^{d} (a_{j} \Theta^{j})(t) \rVert_{L_{x}^{1}} + \lVert \sum_{j=1}^{d} (a_{j}b_{j}Q^{j})(t) \rVert_{L_{x}^{1}}) ( \lVert u_{0}(t) \rVert_{L_{x}^{p'}} + \lVert \sum_{j=1}^{d} (b_{j} W^{j})(t) \rVert_{L_{x}^{1}} + \lVert w_{c}(t) \rVert_{L_{x}^{p'}})\nonumber\\
\overset{\eqref{est 252}}{\lesssim}& ( l^{- \frac{d+1}{p}} \delta^{\frac{1}{p}} (\frac{M}{\mu^{b}})  + (\sup_{\tau \in [t-l,t]} \lVert R_{0}^{j}(\tau) \rVert_{L_{x}^{\infty}} + \Gamma) (\frac{M}{\omega}))  \nonumber\\
&\hspace{5mm}  \times ( \lVert u_{0}(t) \rVert_{L_{x}^{p'}} +  l^{- \frac{d+1}{p'}} \delta^{\frac{1}{p'}}\frac{M}{\mu^{a}} +  \delta^{\frac{1}{p'}} [\sum_{k=1}^{N} \left( \frac{\lambda \mu l^{-d-2}}{\nu} \right)^{k} + \frac{ (\lambda \mu l^{-d-2} )^{N+1}}{\nu^{N}} ]) \nonumber\\
\overset{\eqref{est 148}}{\approx}&( \lambda^{(\frac{d+1}{p}) \iota - \alpha b} \delta^{\frac{1}{p}} + (\sup_{\tau \in [t-l,t]} \lVert R_{0}^{j}(\tau) \rVert_{L_{x}^{\infty}} + \Gamma) \lambda^{-\beta}) \nonumber\\
& \times ( \lVert u_{0}(t) \rVert_{L_{x}^{p'}} +  \lambda^{(\frac{d+1}{p'}) \iota - \alpha a} +  \delta^{\frac{1}{p'}} [ \sum_{k=1}^{N} (\lambda^{[1+\alpha+(d+2)\iota - \gamma] k} + \lambda^{[1+\alpha + (d+2)\iota ](N+1) - \gamma N}])\ll \delta. \nonumber
\end{align}
Considering \eqref{est 317}, \eqref{est 318}, \eqref{est 319}, \eqref{est 320}, and \eqref{est 321} into \eqref{est 294}, we finally conclude \eqref{est 293}:
\begin{align*}
&\lvert \int_{\mathbb{T}^{d}} \theta_{1}(t,x) u_{1}(t, x+ B(t))dx - \int_{\mathbb{T}^{d}} \theta_{0}(t,x) u_{0}(t,x +B(t)) dx - \sum_{j=1}^{d} \int_{\mathbb{T}^{d}} \chi_{j}^{2}(t,x) dx e_{j} \Gamma  \rvert  \\
\leq& (\mathfrak{A} + \mathfrak{B}+ \mathfrak{C} + \mathfrak{D}+ \mathfrak{E})(t) \leq \delta 3d.
\end{align*}
The facts that $(\theta_{1}, u_{1}, R_{1})$ are $(\mathcal{F}_{t})_{t\geq 0}$-adapted and has regularity listed in \eqref{est 241} can be shown identically to the proof of Proposition \ref{Proposition 4.2}. Finally, it is clear that $\tilde{\chi}(0) = 0$ from \eqref{est 257} implies 
\begin{equation}
\theta_{1}(0,x) \overset{\eqref{est 245}}{=} \theta_{0}(0,x) = \theta^{\text{in}}(x), \hspace{5mm} u_{1}(0,x) \overset{\eqref{est 245}}{=} u_{0}(0,x) = u^{\text{in}}(x).
\end{equation} 
\end{proof}

With Proposition \ref{Proposition 6.1} in hand, we are ready to prove Theorem \ref{Theorem 2.5}. 
\begin{proof}[Proof of Theorem \ref{Theorem 2.5}]
We take $\theta_{0} \equiv 0, u_{0} \equiv 0$ and $R_{0} \equiv 0$ so that the hypothesis of Proposition \ref{Proposition 6.1} with $\theta^{\text{in}} \equiv 0, u^{\text{in}} \equiv 0$ are all trivially satisfied. For the fixed $\xi \in (0,T)$ from the hypothesis of Theorem \ref{Theorem 2.5}, we can take e.g., 
\begin{equation}\label{est 343} 
\delta_{n} \triangleq 2^{-n}, \hspace{2mm} \Sigma_{n} \triangleq \frac{\xi}{4}2^{-n}, \hspace{2mm} { and } \hspace{2mm}  \Gamma_{n} \triangleq 
\begin{cases}
2 \delta_{n} & \text{ if } n \in \mathbb{N}\setminus \{3\},\\
K \gg 1 & \text{ if } n = 3. 
\end{cases}
\end{equation}  
Note that this choice satisfies ``$\frac{\Gamma}{\delta} \leq \bar{C} < \infty$'' from the hypothesis of Proposition \ref{Proposition 6.1}. Then clearly $\Sigma_{n} \searrow 0$ as $n\nearrow + \infty$ so that for any $t \in (0,T]$, there exists $N \in \mathbb{N}$ sufficiently large such that $N \geq 4$ and $4 \Sigma_{n} \leq t$ for all $n \geq N$ so that $t \in (4 \Sigma_{n} \wedge T, T]$. Due to \eqref{est 246} and \eqref{est 248} this implies 
\begin{align}
\sum_{n=0}^{\infty} \lVert (\theta_{n+1} - \theta_{n})(t) \rVert_{L_{x}^{p}} 
\overset{\eqref{est 343}}{\leq}&   \sum_{n=0}^{N} \lVert (\theta_{n+1} - \theta_{n})(t)  \rVert_{L_{x}^{p}} + 4^{\frac{1}{p}}M   \sum_{n=N+1}^{\infty}  2^{-\frac{n}{p}} < \infty\label{est 344}
\end{align}
and 
\begin{align}
\sum_{n=0}^{\infty} \lVert (u_{n+1} - u_{n})(t) \rVert_{L_{x}^{p'}} \overset{\eqref{est 343}}{\leq} 
\sum_{n=0}^{N} \lVert (u_{n+1} - u_{n})(t) \rVert_{L_{x}^{p'}} + 4^{\frac{1}{p'}}M \sum_{n=N+1}^{\infty} 2^{-\frac{n}{p'}} < \infty. \label{est 345} 
\end{align}
On the other hand, for any $t \in [0,T]$, we deduce immediately from \eqref{est 247} and \eqref{est 249}  that 
\begin{align}\label{est 346}
\sum_{n=0}^{\infty} \lVert (\theta_{n+1} - \theta_{n})(t) \rVert_{L_{x}^{\upsilon}} \overset{\eqref{est 343}}{\leq} \sum_{n=0}^{\infty}2^{-n} < \infty, \sum_{n=0}^{\infty} \lVert (u_{n+1} - u_{n})(t) \rVert_{W_{x}^{1,\tilde{p}}} \overset{\eqref{est 343}}{\leq} \sum_{n=0}^{\infty} 2^{-n} < \infty. 
\end{align} 
Therefore, $\{\theta_{n}(t)\}_{n\in\mathbb{N}_{0}}$ and $\{u_{n}(t)\}_{n\in\mathbb{N}_{0}}$ are Cauchy respectively in $C((0,T]; L^{p}(\mathbb{T}^{d})) \cap C([0,T]; L^{\upsilon}(\mathbb{T}^{d}))$ and $C((0,T]; L^{p'}(\mathbb{T}^{d})) \cap C([0,T]; W^{1,\tilde{p}}(\mathbb{T}^{d}))$ so that there exists unique $(\theta, u)$ in $[C((0,T]; L^{p}(\mathbb{T}^{d})) \cap C([0,T]; L^{\upsilon}(\mathbb{T}^{d}))] \times [C((0,T]; L^{p'}(\mathbb{T}^{d})) \cap C([0,T]; W^{1,\tilde{p}}(\mathbb{T}^{d}))]$ and a deterministic constant $C$ such that 
\begin{align}
\lVert \theta \rVert_{C((0,T]; L^{p}(\mathbb{T}^{d}))} + \lVert \theta \rVert_{C([0,T]; L^{\upsilon}(\mathbb{T}^{d}))} + \lVert u \rVert_{C((0,T]; L^{p'}(\mathbb{T}^{d}))} + \lVert u \rVert_{C([0,T]; W^{1,\tilde{p}}(\mathbb{T}^{d}))} \leq C < \infty. 
\end{align} 
Moreover, because $(\theta_{0}, u_{0},R_{0})$ are $(\mathcal{F}_{t})_{t\geq 0}$-adapted, so are $(\theta_{n}, u_{n}, R_{n})$ for all $n \in \mathbb{N}$ by Proposition \ref{Proposition 6.1}; consequently, so are $(\theta, u)$. Next, for all $t \in (0, T]$, because $\Sigma_{n} = \frac{\xi}{4} 2^{-n} \searrow 0$ as $n\nearrow \infty$, we see that there exists $N \in \mathbb{N}$ sufficiently large that $2\Sigma_{n} \wedge T \leq t$ for all $n \geq N$ and therefore $\lVert R_{n}(t) \rVert_{L_{x}^{1}} \leq 2 \delta_{n} = 2^{1-n} \searrow 0$ as $n\nearrow + \infty$ due to \eqref{est 251}. Moreover, from \eqref{est 247} and \eqref{est 249} we deduce that $(\theta, u)\rvert_{t=0} \equiv 0$ because $(\theta_{0}, u_{0}) \rvert_{t=0} \equiv 0$. We can now deduce that $(\rho, u)$ where $\rho(t,x) = \theta(t, x-B(t))$ from Section \ref{Section 2} satisfies \eqref{stochastic transport} forced by transport noise in Stratonovich's interpretation analytically weakly as follows; i.e., \eqref{solution transport}. Because $(\theta_{n}, u_{n}, R_{n})$ for all $n \in \mathbb{N}$ satisfy \eqref{est 172} strongly (recall \eqref{est 241}), we may take an arbitrary $\psi \in C_{c}^{\infty} (\mathbb{T}^{d})$ and deduce from \eqref{est 172}  
\begin{align*}
&\partial_{s} \theta_{n}(s,x) \psi(x+B(s)) + \text{div} (u_{n}(s, x+B(s)) \theta_{n}(s,x)) \psi(x+ B(s)) \\
& - \Delta \theta_{n}(s,x) \psi(x+B(s)) = -\text{div} R_{n}(s,x) \psi(x+B(s)). 
\end{align*}
We may integrate over $[0,t]\times \mathbb{T}^{d}$ for any $t \in [0,T]$, integrate by parts considering the smoothness of both $(\theta_{n}, u_{n}, R_{n})$ and $\psi$, pass the limit $n\to\infty$ considering the convergence results we have obtained already, apply It$\hat{\mathrm{o}}$-Wentzell-Kunita formula (e.g., \cite[Theorem 3.3.2 on p. 93]{K90}, also \cite{K82, K84}) on $\int \theta(t,x)\, \psi(x+y)\,dx$ which is smooth, and translate $x + B(t) = y$ to deduce the claim. At last, to prove non-uniqueness in law of \eqref{stochastic transport} forced by transport noise in Stratonovich's interpretation, we rely on \eqref{est 293}. First, we denote the $n$-th repetition of the cut-off function $\chi_{j}$ in \eqref{est 243} by 
\begin{equation}\label{est 349}
\chi_{n,j}: [0,T] \times \mathbb{T}^{d} \mapsto [0,1] \text{ such that } \chi_{n,j} (t,x) = 
\begin{cases}
0 & \text{ if } \lvert R_{n,l}^{j} (t,x) + \Gamma_{n} \rvert \leq \frac{\delta_{n}}{4d}, \\
1 & \text{ if } \lvert R_{n,l}^{j}(t,x)  + \Gamma_{n} \rvert \geq \frac{\delta_{n}}{2d}, 
\end{cases} 
\end{equation} 
where $R_{n,l}$ is $R_{n}$ that is mollified in space-time. It is easy to see that due to our choice of parameters from \eqref{est 343}, for any $t \in (4\Sigma_{n} \wedge T, T]$, $\sum_{j=1}^{d} \int_{\mathbb{T}^{d}} \chi_{n,j}^{2}(t,x) dx e_{j} \equiv 0$ is impossible. Now $\xi < T$ by hypothesis of Theorem \ref{Theorem 2.5} while $4 \Sigma_{0} \wedge T = \xi \wedge T$ by \eqref{est 343}; thus, $(4 \Sigma_{0} \wedge T, T] = (\xi, T]$. For such $t \in (4\Sigma_{0} \wedge T, T] = (\xi, T]$, using our choice of $(\theta_{0}, u_{0}) \equiv 0$, we can find a finite constant $\tilde{C}$ such that 
\begin{align}
& \lvert \int_{\mathbb{T}^{d}} u(t, x+B(t)) \theta(t,x) dx - \sum_{j=1}^{d} \int_{\mathbb{T}^{d}} \chi_{3,j}^{2}(t,x) dx e_{j} K \rvert \nonumber \\
\overset{\eqref{est 343} \eqref{est 349} }{\leq}& \lvert \sum_{n=0}^{\infty} \int_{\mathbb{T}^{d}} u_{n+1}(t, x+B(t)) \theta_{n+1}(t,x) - u_{n}(t,x+B(t)) \theta_{n}(t,x) - \sum_{j=1}^{d} \chi_{n,j}^{2}(t,x)  e_{j} \Gamma_{n} dx \rvert \nonumber \\
& \hspace{15mm} + \sum_{n\in\mathbb{N}_{0}: n \neq 3}d \Gamma_{n}  \overset{\eqref{est 293}\eqref{est 343} }{\leq} \sum_{n=0}^{\infty} \delta_{n} 3d +d \sum_{n\in\mathbb{N}_{0}: n\neq 3} 2(2^{-n}) = \tilde{C}. \label{est 351}
\end{align} 
We take two distinct $K, K' \gg 1$ such that 
\begin{equation}\label{est 350}
\sum_{j=1}^{d} \int_{\mathbb{T}^{d}} \chi_{3,j}(t,x)^{2} dx e_{j} \lvert K - K' \rvert > 2\tilde{C}. 
\end{equation}
For such $K, K'$, we can get a corresponding $(\theta^{K}, u^{K})$ and $(\theta^{K'}, u^{K'})$ so that for $t \in (4\Sigma_{0} \wedge T, T] = (\xi, T]$ 
\begin{align}
& \lvert \int_{\mathbb{T}^{d}} u^{K} (t, x+B(t)) \theta^{K} (t,x) dx - \int_{\mathbb{T}^{d}} u^{K'} (t, x+B(t)) \theta^{K'} (t,x) dx \rvert \nonumber\\
\geq&\sum_{j=1}^{d} \int_{\mathbb{T}^{d}} \chi_{3,j}^{2}(t,x) dx e_{j} \lvert K - K' \rvert \nonumber \\
&- \lvert \int_{\mathbb{T}^{d}} u^{K}(t, x+B(t)) \theta^{K} (t,x) dx - \left(\sum_{j=1}^{d} \int_{\mathbb{T}^{d}} \chi_{3,j}^{2}(t,x) dx e_{j} \right) K \rvert \nonumber\\
& - \lvert \int_{\mathbb{T}^{d}} u^{K'}(t, x+B(t)) \theta^{K'} (t,x) dx - \left(\sum_{j=1}^{d} \int_{\mathbb{T}^{d}} \chi_{3,j}^{2}(t,x) dx e_{j} \right) K' \rvert  \overset{\eqref{est 350} \eqref{est 351}}{>} 0. 
\end{align}
By a change of variable $y = x+B(t)$ and $\theta (t,x) = \rho(t, x+ B(t))$, we deduce that we have constructed at least two pairs $(\rho^{K},u^{K})$ and $(\rho^{K'}, u^{K'})$ that satisfy \eqref{stochastic transport} forced by transport noise in Stratonovich's interpretation analytically weakly, $\rho^{K}, u^{K}, \rho^{K'}, u^{K'}$ all vanish at $t =0$ and for all $t \in (4\Sigma_{0} \wedge T, T] = (\xi, T]$, 
\begin{align}
\int_{\mathbb{T}^{d}} u^{K} (t, y) \rho^{K}(t,y) dy \neq \int_{\mathbb{T}^{d}} u^{K'} (t, y) \rho^{K'} (t,y) dy \hspace{3mm} \mathbb{P}\text{-a.s.}\label{est 352}
\end{align}
I.e., \eqref{est 236} has been proven. At last, non-uniqueness in law follows immediately. As \eqref{est 352} implies that one of the integrals is non-zero for some $t_{0} \in (\xi, T]$, without loss of generality we assume $\int_{\mathbb{T}^{d}} u^{K} (t_{0}, y) \rho^{K}(t_{0},y) dy \neq 0$ $\mathbb{P}$-a.s. and hence $\rho^{K}(t_{0}) \not\equiv 0$ $\mathbb{P}$-a.s.; we let $P$ denote the law of $\rho^{K}$. Next, we take this $u^{K}$, construct an analytically weak solution $\tilde{\rho}$ to \eqref{stochastic transport} forced by transport noise in Stratonovich's interpretation via Galerkin approximation with same initial data $\tilde{\rho} \rvert_{t=0} \equiv 0$, namely $\tilde{\rho}\equiv 0$. Therefore, if we denote $\tilde{P}$ the law of $\tilde{\rho}$, then we see that $P$ and $\tilde{P}$ are distinct, allowing us to conclude the non-uniqueness in law.
\end{proof}

\section{Appendix A}\label{Appendix A} 
\subsection{Proof of Theorem \ref{Theorem 2.1}}
The purpose of this section is to prove Theorem \ref{Theorem 2.1}. In fact, it follows immediately from the following more general result.
\begin{theorem}\label{Theorem 7.1}  
Let $\epsilon > 0, T > 0$, and $\bar{\theta} \in C^{\infty} ([0,T] \times \mathbb{T}^{d})$ such that $\fint_{\mathbb{T}^{d}} \bar{\theta} (t,x) dx = 0$ for all $t \in [0,T]$ and $\bar{u} \in C^{\infty} ([0,T] \times \mathbb{T}^{d})$ such that $\nabla\cdot \bar{u} = 0$ on $[0,T] \times \mathbb{T}^{d}$. Set 
\begin{equation}\label{est 391}
E \triangleq \{ t \in [0,T]: \partial_{t} \bar{\theta} + \text{div} (\bar{u} \bar{\theta}) + \frac{1}{2} \bar{\theta} - \Delta \bar{\theta} = 0 \}. 
\end{equation} 
Let $p \in (1,\infty)$ and $\tilde{p} \in (1,\infty)$ such that \eqref{est 8} holds. Then there exist deterministic $\theta: [0,T] \times \mathbb{T}^{d} \mapsto \mathbb{R}$ and $u: [0,T] \times \mathbb{T}^{d} \mapsto \mathbb{R}^{d}$ such that 
\begin{enumerate}
\item $\theta \in C([0,T]; L^{p}(\mathbb{T}^{d}))$ and $u \in C([0,T]; L^{p'} (\mathbb{T}^{d})) \cap C([0,T];  W^{1,\tilde{p}}(\mathbb{T}^{d}))$, 
\item $(\theta, u)$ satisfies \eqref{est 4} analytically weakly,
\item $(\theta, u) (t,x) = (\bar{\theta}, \bar{u})(t,x)$ for all $(t,x) \in E \times \mathbb{T}^{d}$,  
\item $\lVert (\theta - \bar{\theta})(t) \rVert_{L_{x}^{p}} < \epsilon$ or $\lVert (u - \bar{u})(t) \rVert_{L_{x}^{p'}} < \epsilon$ for all $t \in [0,T]$. 
\end{enumerate}   
\end{theorem}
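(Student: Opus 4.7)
The plan is to adapt the deterministic Modena--Sattig construction \cite[Theorem 1.2(iii)]{MS20} to the random PDE \eqref{est 4}, which is essentially the deterministic transport--diffusion equation with the extra lower-order damping $\tfrac{1}{2}\theta$. Accordingly, the relevant transport--diffusion--defect equation becomes
\begin{equation*}
\partial_{t}\theta + \mathrm{div}(u\theta) + \tfrac{1}{2}\theta - \Delta \theta = -\mathrm{div}\, R, \qquad \nabla\cdot u = 0,
\end{equation*}
and I will initialize the iteration with $(\theta_{0},u_{0}) = (\bar\theta,\bar u)$ and $R_{0} \triangleq -\mathcal{D}^{-1}\!\left(\partial_{t}\bar\theta + \mathrm{div}(\bar u \bar\theta) + \tfrac{1}{2}\bar\theta - \Delta\bar\theta\right)$, which is well-defined and smooth because the bracket is mean-zero in $x$ (the divergence term and $\Delta\bar\theta$ are, and $\bar\theta$ and $\partial_{t}\bar\theta$ are by hypothesis). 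By construction $R_{0}(t,\cdot)\equiv 0$ for every $t\in E$.

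The core step is a key iteration proposition in the spirit of \cite[Proposition 2.1]{MS20}: given a smooth $(\theta_{q},u_{q},R_{q})$ with $\theta_{q}$ mean-zero and $\|R_{q}(t)\|_{L^{1}_{x}}\leq 2\delta_{q}$, produce $(\theta_{q+1},u_{q+1},R_{q+1})$ via the Mikado density/field perturbations $\vartheta,w,q,\vartheta_{c},q_{c},w_{c}$ defined exactly as in \eqref{est 79}--\eqref{est 85} (with the cut-offs \eqref{est 70} applied to $R_{q}$ itself, \emph{not} to a mollification, which is permissible because $R_{q}$ is already smooth in $t,x$), satisfying
\begin{equation*}
\|\theta_{q+1}-\theta_{q}\|_{C_{t}L^{p}_{x}}\leq M\eta\,\|R_{q}\|_{C_{t}L^{1}_{x}}^{1/p},\ \ \|u_{q+1}-u_{q}\|_{C_{t}L^{p'}_{x}}\leq M\eta^{-1}\|R_{q}\|_{C_{t}L^{1}_{x}}^{1/p'},
\end{equation*}
\begin{equation*}
\|u_{q+1}-u_{q}\|_{C_{t}W^{1,\tilde p}_{x}}\leq \delta_{q+1},\qquad \|R_{q+1}\|_{C_{t}L^{1}_{x}}\leq \delta_{q+1},
\end{equation*}
together with the vanishing property: whenever $R_{q}(t,\cdot)\equiv 0$, all perturbations vanish at time $t$, so $(\theta_{q+1},u_{q+1},R_{q+1})(t)=(\theta_{q},u_{q},0)(t)$. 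The decomposition of $-\mathrm{div}\,R_{q+1}$ mirrors \eqref{est 99}, with every block $R^{\text{time,1}},R^{\text{quadr}},R^{\chi},R^{\text{time,2}},R^{\text{lin}},R^{q},R^{\text{corr,1}}$ constructed as in Sections~\ref{Section 4.1.1}--\ref{Section 4.1.4} of the proof of Proposition~\ref{Proposition 4.2}; the new damping contributes an extra term $\tfrac{1}{2}(\vartheta+\vartheta_{c}+q+q_{c})$ which I absorb into $R^{\text{lin}}$ via $\tfrac{1}{2}\mathcal{D}^{-1}(\vartheta+q-\!\!\fint\vartheta+q\,dx)$ (well-defined since $\vartheta+q$ minus its mean is mean-zero). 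Crucially, the blocks $R^{\text{corr,2}}$ and $R^{\text{moll}}$ of Section~\ref{Section 4} disappear here since there is neither a stochastic forcing $z$ nor a need to mollify. The parameters $\lambda,\mu,\omega,\nu,N,\iota$ are chosen exactly as in \eqref{est 148}--\eqref{est 152}, and the estimates of Lemmas~\ref{Lemma 4.5}--\ref{Lemma 4.11} apply verbatim (with the $z$- and mollification-dependent terms removed), yielding the claimed bounds for $\lambda$ sufficiently large.

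Having the proposition, I choose $\delta_{q}\searrow 0$ geometrically and $\eta_{q}$ so that $\delta_{q}^{1/p}\eta_{q}=\sigma\delta_{q-1}^{1/2}$ for a free parameter $\sigma>0$, exactly as in \eqref{est 41}--\eqref{est 47}. Telescoping produces $(\theta,u)\in C_{t}L^{p}_{x}\times(C_{t}L^{p'}_{x}\cap C_{t}W^{1,\tilde p}_{x})$ solving \eqref{est 4} analytically weakly with
\begin{equation*}
\sup_{t}\|\theta(t)-\bar\theta(t)\|_{L^{p}_{x}}\leq C\sigma\sqrt{\|R_{0}\|_{C_{t}L^{1}_{x}}},\qquad \sup_{t}\|u(t)-\bar u(t)\|_{L^{p'}_{x}}\leq C\sigma^{-1}\sqrt{\|R_{0}\|_{C_{t}L^{1}_{x}}},
\end{equation*}
so taking $\sigma$ small (resp.\ large) produces conclusion~(4). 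The vanishing property propagates through the iteration: at any $t\in E$ we have $R_{0}(t)\equiv 0$, hence inductively $R_{q}(t)\equiv 0$ and all perturbations vanish at such $t$, so $(\theta,u)(t,\cdot)=(\bar\theta,\bar u)(t,\cdot)$, proving~(3). Theorem~\ref{Theorem 2.1} then follows by taking $\bar\theta(t,x)=\chi(t)\sin(2\pi x_{d})$ with a smooth cut-off $\chi$ equal to $0$ near $t=0$ and non-zero at $t=T$, and $\bar u\equiv 0$ (so that $E\supseteq[0,T_{*}]$ for some $T_{*}>0$, forcing $\theta|_{t=0}\equiv 0$ while $\theta|_{t=T}\not\equiv 0$ for $\sigma$ small, hence $\rho=\theta e^{B}$ yields the desired non-unique probabilistically strong solutions).

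The main obstacle, in my view, is bookkeeping rather than conceptual: one must verify that the addition of the $\tfrac{1}{2}\theta$ term in \eqref{est 4} does not spoil the crucial vanishing-on-$E$ structure, and that every estimate in Lemmas~\ref{Lemma 4.5}--\ref{Lemma 4.7} can be reproduced with cut-offs applied directly to $R_{q}$ (which removes the $l^{-(d+2)}$ losses present in Lemma~\ref{Lemma 4.4} and replaces them with purely polynomial-in-$\lambda$ estimates, making the parameter choice \eqref{est 148} work even more comfortably). Since $R^{\text{lin}}$ absorbs the new damping via an extra $\mathcal{D}^{-1}$ of an $L^{p}$-small, mean-zero quantity, its $L^{1}_{x}$-norm is controlled by $\|\vartheta\|_{L^{1}_{x}}+\|q\|_{L^{1}_{x}}$, which is already $\ll \delta_{q+1}$ by \eqref{est 62}, so this presents no additional difficulty.
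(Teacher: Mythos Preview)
Your proposal is correct and follows essentially the same route as the paper: initialize with $(\theta_{0},u_{0})=(\bar\theta,\bar u)$ and $R_{0}$ an anti-divergence of the defect, invoke the Modena--Sattig iteration \cite[Proposition~2.1]{MS20} adapted to include the damping $\tfrac{1}{2}\theta$ and diffusion (the paper packages these into a separate error term $R^{\mathrm{diff}}\triangleq \tfrac{1}{2}\mathcal{D}^{-1}[\vartheta+\vartheta_{c}+q+q_{c}]-\nabla[\vartheta+q]$ rather than absorbing them into $R^{\mathrm{lin}}$, but the estimate is identical), and conclude via the vanishing-on-$E$ property together with the free choice of $\sigma$. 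The only cosmetic difference is that the paper writes $R_{0}=-\mathcal{D}^{-1}[\partial_{t}\bar\theta+\mathrm{div}(\bar u\bar\theta)+\tfrac{1}{2}\bar\theta]+\nabla\bar\theta$ (keeping the Laplacian piece as an explicit gradient), which coincides with your expression since $\mathcal{D}^{-1}\Delta\bar\theta=\nabla\bar\theta$.
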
 

\begin{proof}[Proof of Theorem \ref{Theorem 2.1} assuming Theorem \ref{Theorem 7.1}]
We take $\Upsilon \in C^{\infty} ([0,T])$ such that  
\begin{equation}\label{est 215}
\Upsilon(t) \in [0,1], \hspace{3mm} \Upsilon(t) = 
\begin{cases}
0 & \text{ on } [0, \frac{T}{3}], \\
e^{-\frac{t}{2}} & \text{ on } [\frac{2T}{3}, T].
\end{cases}
\end{equation} 
Define 
\begin{equation}\label{est 216} 
\bar{\theta}(x) \triangleq c (\frac{1}{2} - x_{d}) \hspace{2mm} \text{ for } \hspace{2mm} c \neq 0 
\end{equation} 
so that $\fint_{\mathbb{T}^{d}} \bar{\theta}(x) dx = 0$. Then $\Upsilon(t) \bar{\theta}(x) \in C^{\infty} ([0,T] \times \mathbb{T}^{d})$ and $\fint_{\mathbb{T}^{d}} \Upsilon(t) \bar{\theta} (x) dx = 0$ for all $t \in [0,T]$. We take $\bar{u} \equiv 0$ so that it is smooth and divergence-free. Now $(\Upsilon \bar{\theta}, \bar{u})$ satisfies 
\begin{equation}
\partial_{t} ( \Upsilon \bar{\theta}) + \text{div} ( \bar{u} \Upsilon \bar{\theta}) + \frac{1}{2} \Upsilon \bar{\theta} - \Delta ( \Upsilon \bar{\theta}) = 0
\end{equation}
on $[0, \frac{T}{3}] \cup [\frac{2T}{3}, T]$. Consequently, $[0, \frac{T}{3}] \cup [\frac{2T}{3}, T] \subset E$. By Theorem \ref{Theorem 7.1} we find functions $\theta \in C([0,T]; L^{p}(\mathbb{T}^{d}))$ and $u \in C([0,T]; L^{p'} (\mathbb{T}^{d})) \cap C([0,T]; W^{1,\tilde{p}}(\mathbb{T}^{d}))$ such that $(\theta, u)$  satisfies \eqref{est 4} analytically weakly and $(\theta, u) (t) = (\Upsilon \bar{\theta}, \bar{u})(t)$ for all $t \in E$. As a result, $\rho(t,x) = \theta(t,x) e^{B(t)}$ solves \eqref{solution multiplicative}, $\rho \in C_{t}L_{x}^{p}$ $\mathbb{P}$-a.s., $\rho(t) \rvert_{t=0}  = \Upsilon(0) \bar{\theta} e^{0} = 0$ due to \eqref{est 215}, and $\rho(t) = \theta(t) e^{B(t)}$ is $(\mathcal{F}_{t})_{t\geq 0}$-adapted. Therefore, given $B$ on $(\Omega, \mathcal{F}, \mathbb{P})$, for all $T > 0$ and $c \neq 0$, we constructed $\rho$ such that $\rho \rvert_{t=0} \equiv 0$ and $\rho \rvert_{t=T} = e^{B(T)} e^{-\frac{T}{2}} c(\frac{1}{2} - x_{d})$ so that $\mathbb{P} ( \{ \rho \rvert_{t=T} \equiv 0 \}) = 0$. Due to an arbitrariness of $c \neq 0$ in \eqref{est 216} we obtain infinitely many $(\theta, u)$ and therefore $(\rho, u)$ that satisfy these properties.  Because $(\tilde{\rho}, B)$ on $(\Omega, \mathcal{F}, \mathbb{P})$ where $\tilde{\rho} \equiv 0$ is another analytically weak solution with same initial distribution, we conclude that non-uniqueness in law holds on $[0,T]$.\end{proof}

The proof of Theorem \ref{Theorem 7.1} follows from the next proposition concerning the following damped transport-diffusion defect equation
\begin{equation}\label{est 217}
\partial_{t} \theta + \text{div} (u\theta) + \frac{1}{2} \theta - \Delta \theta = - \text{div} R, \hspace{3mm} \nabla \cdot u = 0. 
\end{equation} 
\begin{proposition}\label{Proposition 7.2} 
There exists a constant $M > 0$ such that the following holds. Let $p \in (1,\infty)$ and $\tilde{p} \in [1,\infty)$ such that \eqref{est 8} holds. Then, for any $\delta, \eta > 0$ and any smooth $(\theta_{0}, u_{0}, R_{0})$ that satisfies \eqref{est 217}, there is another smooth $(\theta_{1}, u_{1}, R_{1})$ that satisfies for al $t \in [0,T]$ \eqref{est 217} and  
\begin{subequations}\label{est 338}
\begin{align}
& \lVert (\theta_{1} - \theta_{0})(t) \rVert_{L_{x}^{p}} \leq M \eta \lVert R_{0}(t) \rVert_{L_{x}^{1}}^{\frac{1}{p}}, \\
&\lVert (u_{1} - u_{0})(t) \rVert_{L_{x}^{p'}} \leq M \eta^{-1} \lVert R_{0}(t) \rVert_{L_{x}^{1}}^{\frac{1}{p'}}, \\
& \lVert (u_{1} - u_{0})(t) \rVert_{W_{x}^{1,\tilde{p}}} \leq \delta, \\
& \lVert R_{1}(t) \rVert_{L_{x}^{1}} \leq \delta.  \label{est 223}
\end{align}
\end{subequations}
Furthermore, $R_{1}(t) \equiv 0$ and $(\theta_{1}, u_{1})(t) \equiv (\theta_{0}, u_{0})(t)$ for all $t \in [0,T]$ such that $R_{0}(t) \equiv 0$. 
\end{proposition}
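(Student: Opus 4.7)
Proposition \ref{Proposition 7.2} is the deterministic counterpart of Proposition \ref{Proposition 4.2}, and my plan is to adapt the Nash-type scheme used in its proof (and in \cite{MS20}) with two simplifications and one new ingredient. Simplification (i): since $(\theta_0, u_0, R_0)$ is smooth by hypothesis, the temporal mollification $R_l$ defined in \eqref{est 86} is unnecessary and I would work directly with $R_0$, which removes the loss factor $l^{-(d+2)k}$ from the estimates of Lemma \ref{Lemma 4.4}. Simplification (ii): the absence of a stochastic heat solution $z$ eliminates the terms $R^{\text{corr,2}}$ and $R^{\text{moll}}$ of Sections \ref{Section 4.1.5}--\ref{Section 4.1.6}, together with the source $-\operatorname{div}(uz)$ in \eqref{est 3}. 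New ingredient: the damping $\tfrac{1}{2}\theta$ contributes an extra piece
\[
R^{\text{damp}} := \tfrac{1}{2}\mathcal{D}^{-1}(\vartheta + \vartheta_c + q + q_c)
\]
to the new defect, which is well-defined via Definition \ref{Definition 3.1} because $\vartheta+\vartheta_c$ and $q+q_c$ are mean-zero by the definitions \eqref{est 85}.

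Concretely, I would fix parameters $\lambda, \mu, \omega, \nu, N$ as in \eqref{est 148} and $\iota$ subject to \eqref{est 152}, define cut-off functions $\chi_j$ satisfying $\chi_j = 0$ on $\{|R_0^j| \leq \delta_0\}$ and $\chi_j = 1$ on $\{|R_0^j| \geq 2\delta_0\}$ for a threshold $\delta_0 = \delta_0(\delta) > 0$ to be chosen small, and set
\[
a_j := \eta \chi_j \operatorname{sgn}(R_0^j)|R_0^j|^{1/p}, \qquad b_j := \eta^{-1}\chi_j |R_0^j|^{1/p'}.
\]
Next I would define $\vartheta, w, q$ via \eqref{est 71} with Mikado building blocks $\Theta^j, W^j, Q^j$ from \eqref{est 59}, the correctors $\vartheta_c, q_c$ via \eqref{est 85}, and $w_c$ via \eqref{est 73}, and set $\theta_1 := \theta_0 + \vartheta + \vartheta_c + q + q_c$, $u_1 := u_0 + w + w_c$ (which is divergence-free by \eqref{est 98}). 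Computing $-\operatorname{div}R_1$ from \eqref{est 217} exactly as in \eqref{est 99} reproduces the decomposition $R^{\text{time,1}}+R^{\text{quadr}}+R^{\chi}+R^{\text{time,2}}+R^{\text{lin}}+R^{q}+R^{\text{corr,1}}+R^{\text{damp}}$, where the first seven pieces are defined as in Sections \ref{Section 4.1.1}--\ref{Section 4.1.4} with $R_l$ replaced by $R_0$ and with $R^{\text{corr,1}}$ now involving only $\theta_0 + \vartheta + q$ (no $z$). Their $L^1_x$-bounds are those of Lemmas \ref{Lemma 4.6}--\ref{Lemma 4.9}, simplified by the absence of the $l^{-(d+2)k}$ factors, while the new piece satisfies
\[
\lVert R^{\text{damp}}(t)\rVert_{L^1_x} \lesssim \eta \lVert R_0\rVert_{C^0_{t,x}}^{1/p}\mu^{-b} + \lVert R_0\rVert_{C^0_{t,x}}\omega^{-1} \ll \delta
\]
by \eqref{est 148}. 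The principal bounds in \eqref{est 338} would then follow from \eqref{est 80} applied to $\vartheta = \sum_j a_j\Theta^j$ and $w = \sum_j b_j W^j$, together with the elementary estimates $\lVert a_j\rVert_{L^p_x}\leq \eta\lVert R_0^j(t)\rVert_{L^1_x}^{1/p}$ and $\lVert b_j\rVert_{L^{p'}_x} \leq \eta^{-1}\lVert R_0^j(t)\rVert_{L^1_x}^{1/p'}$.

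The concluding assertion, that $R_0(t) \equiv 0$ forces $R_1(t) \equiv 0$ and $(\theta_1,u_1)(t) \equiv (\theta_0,u_0)(t)$, is the precise payoff of omitting the mollification and is immediate from the construction: when $R_0(t)\equiv 0$ we have $\chi_j(t,\cdot) \equiv 0$, hence $a_j(t) \equiv b_j(t) \equiv 0$, so each of $\vartheta, w, q, \vartheta_c, q_c, w_c, R^{\text{damp}}$ and every other defect piece (each of which is built from these vanishing coefficients or from $R_0$ itself) vanishes identically at time $t$. The main technical point I anticipate is verifying that the extra term $R^{\text{damp}}$ does not conflict with the tight constraints \eqref{est 152} on $\iota$, but this turns out to be harmless because $R^{\text{damp}}$ is of strictly smaller order than the already-controlled $R^{\text{lin}}$, so no reshuffling of parameters is needed. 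Summing the $L^1_x$-bounds of all defect pieces and taking $\lambda$ sufficiently large then yields \eqref{est 223}.
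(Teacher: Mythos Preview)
Your proposal is correct and follows essentially the same route as the paper. The paper likewise reduces to \cite[Proposition 2.1]{MS20} with a single extra defect term $R^{\text{diff}} \triangleq \tfrac{1}{2}\mathcal{D}^{-1}[\vartheta+\vartheta_c+q+q_c] - \nabla[\vartheta+q]$ collecting both the damping and the diffusion, whereas you keep the diffusion inside $R^{\text{lin}}$ (as in the proof of Proposition \ref{Proposition 4.2}) and isolate only the damping as $R^{\text{damp}}$; this is a purely organizational difference, and your estimate for $R^{\text{damp}}$ coincides with the paper's estimate for the damping half of $R^{\text{diff}}$.
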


\begin{proof}[Proof of Theorem \ref{Theorem 7.1} assuming Proposition \ref{Proposition 7.2}]
We take $(\bar{\theta}, \bar{u})$ in the hypothesis of Theorem \ref{Theorem 7.1} and define 
\begin{equation}\label{est 339}
(\theta_{0}, u_{0}) = (\bar{\theta}, \bar{u}), \hspace{3mm} R_{0}(t) \triangleq - \mathcal{D}^{-1}[\partial_{t} \bar{\theta}+ \text{div} (\bar{u} \bar{\theta}) + \frac{1}{2} \bar{\theta}](t) + \nabla \bar{\theta}(t)
\end{equation} 
where $R_{0}$ is well-defined by Definition \ref{Definition 3.1} because $\int_{\mathbb{T}^{d}} \frac{1}{2} \bar{\theta}(t) dx = 0$. The rest of this proof follows very closely the ``Proof of Theorem 1.2, assuming Proposition 2.1'' in \cite[Section 2]{MS20} and thus is omitted.
\end{proof} 

Therefore, the proof of Theorem \ref{Theorem 2.1} is complete once Proposition \ref{Proposition 7.2} is proven. In fact, we can adhere to the convex integration setting from Section \ref{Subsection 3.2} and Proposition \ref{Proposition 7.2} can be proven similarly to the proof of \cite[Proposition 2.1]{MS20} with only a few necessary modifications. We actually already addressed all such modifications. First, we need to choose $\epsilon$ in \eqref{est 60} rather than $\epsilon$ in \cite[Equation (4.11)]{MS20}. We prove in Appendix B that with such a choice of $\epsilon$, we can still retain Lemma \ref{Lemma 4.3}, which is \cite[Proposition 4.4]{MS20}.  Using the same observation in the proof of \eqref{est 93}, specifically \eqref{est 94}, we can also retain Lemma \cite[Lemma 4.12]{MS20}. Finally, concerning the definition of the new defect $R_{1}$, in the proofs of Theorems \ref{Theorem 2.2}, \ref{Theorem 2.4}, and \ref{Theorem 2.5}, we added the diffusion $-\Delta (\vartheta + q)$ within $R^{\text{lin}}$ (see e.g., \eqref{est 99} and \eqref{est 124}). Because the only difference from our current proof and that of \cite[Proposition 2.1]{MS20} is the diffusive and damping terms, let us define 
\begin{equation}\label{est 224} 
R^{\text{diff}} \triangleq \frac{1}{2} \mathcal{D}^{-1} [\vartheta + \vartheta_{c} + q + q_{c}] - \nabla [\vartheta + q]
\end{equation} 
which is well-defined as $\fint_{\mathbb{T}^{d}} \vartheta + \vartheta_{c} + q + q_{c} =0$ by definitions of $\vartheta_{c}$ and $q_{c}$ on \cite[p. 1093]{MS20} (see \eqref{est 85}) so that we have 
\begin{equation}
-R_{1} \triangleq R^{\text{time,1}} + R^{\text{quadr}} + R^{\chi} + R^{\text{time,2}} + R^{\text{lin}} + R^{q} + R^{\text{corr}} + R^{\text{diff}}
\end{equation}  
where $R^{\text{time,1}}, R^{\text{quadr}}, R^{\chi}, R^{\text{time,2}}, R^{\text{lin}}, R^{q}$, and $R^{\text{corr}}$ are defined identically to \cite{MS20}. Then the rest of the proof of \cite[Proposition 2.1]{MS20} completely goes through with same choice of parameters in \eqref{est 148} (\cite[Section 6.1]{MS20}) leaving us an only task to show that $\lVert R^{\text{diff}}(t) \rVert_{L_{x}^{1}} \ll \delta$ to conclude \eqref{est 223}. The estimate of the diffusive term $\nabla [\vartheta + q]$ is described on \cite[p. 1106]{MS20}; in fact, simplification of how we handled this term in \eqref{est 142} already shows that $\lVert \nabla (\vartheta + q) \rVert_{L_{x}^{1}} \ll \delta$ in our current case. The estimate of the damping term $\frac{1}{2} \mathcal{D}^{-1} [\vartheta + \vartheta_{c} + q + q_{c}]$ is even easier as follows: as $\vartheta + \vartheta_{c} + q + q_{c}$ is mean-zero, 
\begin{align}
&\lVert \mathcal{D}^{-1} [ (\vartheta + \vartheta_{c} + q + q_{c})(t) ] \rVert_{L_{x}^{1}} 
\overset{\eqref{est 121}}{\lesssim} \lVert (\vartheta + \vartheta_{c} + q + q_{c})(t) \rVert_{L_{x}^{1}} \lesssim  [ \lVert \vartheta(t) \rVert_{L_{x}^{1}} + \lVert q(t) \rVert_{L_{x}^{1}} ] \\
&\hspace{12mm} \lesssim \sum_{j=1}^{d} [ \lVert a_{j}(t) \rVert_{L_{x}^{\infty}} \lVert \Theta^{j} (t) \rVert_{L_{x}^{1}} + \lVert a_{j}(t) \rVert_{L^{\infty}} \lVert b^{j}(t) \rVert_{L_{x}^{\infty}} \lVert Q^{j} (t) \rVert_{L_{x}^{1}} ] \overset{\eqref{est 62}}{\lesssim} \frac{M}{\mu^{b}} + \frac{M}{\omega} \ll 1  \nonumber 
\end{align}
for $\lambda \in\mathbb{N}$ sufficiently large. This completes the proof of Theorem \ref{Theorem 2.1}.  

\subsection{Proof of Theorem \ref{Theorem 2.4}}
The following result is the key proposition, analogous to Proposition \ref{Proposition 4.2}.
\begin{proposition}\label{Proposition 7.3} 
There exists a constant $M > 0$ such the following holds. Let $p \in (1,\infty)$, $\tilde{p} \in [1,\infty)$ such that \eqref{est 8} holds, and $\varpi \in (0, \frac{1}{4})$. Then for any $\delta, \eta > 0$ and $(\mathcal{F}_{t})_{t\geq 0}$-adapted $(\theta_{0}, u_{0}, R_{0})$ in the regularity class \eqref{est 241} that satisfies \eqref{est 172} such that for all $t \in [0, T]$ $\fint_{\mathbb{T}^{d}} \theta_{0}(t,x) dx = 0$ and 
\begin{equation}\label{est 178}
\lVert R_{0}(t) \rVert_{L_{x}^{1}} \leq 2 \delta, 
\end{equation}
there exists another $(\mathcal{F}_{t})_{t\geq 0}$-adapted $(\theta_{1}, u_{1}, R_{1})$ that satisfies \eqref{est 172} in same corresponding regularity class \eqref{est 241} such that for all $t \in [0,T]$ $\fint_{\mathbb{T}^{d}} \theta_{1}(t,x) dx = 0$ and satisfies 
\begin{subequations}\label{est 173}
\begin{align}
& \lVert (\theta_{1} -\theta_{0})(t) \rVert_{L_{x}^{p}} \leq M \eta ( 2 \delta)^{\frac{1}{p}}, \label{est 174} \\ 
& \lVert (u_{1} - u_{0})(t) \rVert_{L_{x}^{p'}} \leq M \eta^{-1} (2\delta)^{\frac{1}{p'}}, \label{est 175}\\
& \lVert (u_{1} - u_{0})(t) \rVert_{W_{x}^{1,\tilde{p}}} \leq \delta, \label{est 176}\\
& \lVert R_{1}(t) \rVert_{L_{x}^{1}} \leq \delta. \label{est 177}
\end{align}
\end{subequations}
Finally, if $(\theta_{0}, u_{0}, R_{0})(0,x)$ are deterministic, then so are $(\theta_{1}, u_{1}, R_{1})(0,x)$. 
\end{proposition}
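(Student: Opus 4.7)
The plan is to adapt the proof of Proposition \ref{Proposition 4.2} to the transport-noise setting, borrowing the translation trick already used for Proposition \ref{Proposition 6.1}. Since the transformed random PDE \eqref{est 172} contains no external forcing by $z$, and since there is no stopping time, the bookkeeping simplifies: we effectively replace $M_0(t)$ by $1$ throughout and drop the correction $R^{\text{corr},2}$ that handled $zw$ in \eqref{est 138}. Concretely, I first mollify $R_0$ in space-time as in \eqref{est 86} to obtain $R_l$ with enough temporal differentiability, define cut-offs $\chi_j$ as in \eqref{est 70} with $M_0 \equiv 1$, and amplitudes $a_j, b_j$ as in \eqref{est 77}. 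I then set
\begin{align*}
\theta_1(t,x) &\triangleq \theta_0(t,x) + \vartheta(t,x) + \vartheta_c(t) + q(t,x) + q_c(t), \\
u_1(t,x) &\triangleq u_0(t,x) + w(t, x - B(t)) + w_c(t, x - B(t)),
\end{align*}
with $\vartheta, w, q$ exactly as in \eqref{est 79} and $w_c$ as in \eqref{est 73}. The $B(t)$-translation in $u_1$ is the heart of the construction: it guarantees that the evaluation $u_1(t, x + B(t))$ strips off the translation and recovers the product $w(t,x)\theta_1(t,x)$ whose structure is amenable to the orthogonality relation $\Theta^i W^j = 0$ for $i \neq j$ from \eqref{est 64}.

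Next, I would verify \eqref{est 174}--\eqref{est 176} by a verbatim repetition of Lemmas \ref{Lemma 4.4} and \ref{Lemma 4.5} with $M_0(t) \equiv 1$, combined with the parameter choices \eqref{est 148} and the constraint \eqref{est 152} on $\iota$. For \eqref{est 177}, I would decompose $-\operatorname{div} R_1$ following \eqref{est 357} of the proof of Proposition \ref{Proposition 6.1}, but with $\tilde{\chi} \equiv 1$ (no cut-off in time is needed here because we do not insist on preserving initial data) and with the $z$-dependent piece deleted:
\begin{equation*}
-R_1 \triangleq R^{\text{time,1}} + R^{\text{quadr}} + R^{\chi} + R^{\text{time,2}} + R^{\text{lin}} + R^{q} + R^{\text{corr}} + R^{\text{moll}}.
\end{equation*}
Here $R^{\text{time,1}}, R^{\text{quadr}}$ arise from $\partial_t(q+q_c) + \operatorname{div}(\vartheta w - R_l)$ exactly as in \eqref{est 118}--\eqref{est 120}; $R^{\text{time,2}}, R^{\text{lin}}$ arise from $\partial_t(\vartheta + \vartheta_c) + \operatorname{div}(\theta_0 w + \vartheta u_0(t,x + B(t))) - \Delta(\vartheta + q)$ just as in \eqref{est 284}--\eqref{est 212}; $R^{q}$ and $R^{\text{corr}}$ are the direct analogues of \eqref{est 213} and \eqref{est 133} with no $z$; and $R^{\text{moll}}$ is \eqref{est 100}. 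Each piece is then estimated in $L^1_x$ by the same computations appearing in Lemmas \ref{Lemma 6.4}--\ref{Lemma 6.5} and the bounds \eqref{est 286}--\eqref{est 288}, using the Mikado estimates \eqref{est 269} and the cancellation \eqref{est 113}; every one of them is shown to be $\ll \delta$ for $\lambda \in \mathbb{N}$ sufficiently large and $\iota > 0$ sufficiently small.

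The principal obstacle is exactly the variable mismatch in $\operatorname{div}(u_1(t, x + B(t))\theta_1(t,x))$, highlighted in Remark \ref{Remark on difficulty of transport}. Without the $B(t)$-translation in $u_1$, the relevant product of $\Theta^j(t,x) W^k(t, x + B(t))$ given by \eqref{est 195} fails the orthogonality on which the whole definition of $R^{\text{quadr}}$ and $R^{\chi}$ rests, because the random shift $-\lambda B(t)$ destroys the separation of supports underlying Lemma \ref{Lemma 3.8}. With the translation in place, the product collapses back to $\vartheta(t,x) w(t,x)$ and \eqref{est 102} goes through unchanged; the divergence-free condition on $u_1$ is preserved because $w_c(t, \cdot - B(t))$ inherits it from \eqref{est 197}, and $\mathcal{F}_t$-adaptedness survives because $B(t)$ is itself $\mathcal{F}_t$-measurable. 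The claim that $(\theta_1, u_1, R_1)(0, x)$ remain deterministic whenever $(\theta_0, u_0, R_0)(0, x)$ are, follows since $B(0) = 0$ eliminates all stochastic translations at $t = 0$, exactly as in the closing paragraph of the proof of Proposition \ref{Proposition 4.2}.
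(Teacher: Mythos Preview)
Your proposal is correct and follows essentially the same approach as the paper's own proof: the paper likewise mollifies $R_0$, defines $\theta_1$ as in \eqref{est 76} and $u_1$ with the $B(t)$-translation exactly as you write (see \eqref{est 186}--\eqref{est 188}), uses the cut-offs \eqref{est 322} (which is \eqref{est 70} with $M_0\equiv 1$), and decomposes $-R_1$ into the same eight pieces \eqref{est 214} without a $z$-corrector. The paper then invokes Lemmas \ref{Lemma 4.4}--\ref{Lemma 4.11} with $M_0(t)$ replaced by $1$ and $L^{1/4}$ removed, arriving at the same estimates and the same parameter choices \eqref{est 148}, \eqref{est 152}; your routing of the $R_1$ estimates through the Proposition \ref{Proposition 6.1} machinery (Lemmas \ref{Lemma 6.4}--\ref{Lemma 6.5}, \eqref{est 286}--\eqref{est 288}) is an equivalent path to the same bounds.
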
 

\begin{proof}[Proof of Theorem \ref{Theorem 2.4} assuming Proposition \ref{Proposition 7.3}]
The following proof has much similarity with the proof of Theorem \ref{Theorem 2.2} assuming Proposition \ref{Proposition 4.2}; in fact, it's much simpler. For the  $K> 1$ fixed from hypothesis, we can define $\theta_{0}(t,x) \triangleq Kt (x_{d} - \frac{1}{2}), u_{0} \equiv 0$ and $R_{0} \triangleq -\mathcal{D}^{-1} \partial_{t} \theta_{0}$ similarly to \eqref{est 179}. By construction, $(\theta_{0}, u_{0}, R_{0})$ satisfies \eqref{est 172}, and $\theta_{0} \in C_{t,x}^{\infty}, u_{0} \in C_{t}^{\frac{1}{2} - 2 \varpi}C_{x}^{\infty}$, $R_{0} \in C_{t}^{\frac{1}{2} - 2 \varpi} C_{x}^{\infty}$. From \eqref{est 40} we see that this new definition of $\theta_{0}$ gives 
\begin{equation}\label{est 323}
\lVert \theta_{0}(t) \rVert_{L^{p}} = \frac{Kt}{2(p+1)^{\frac{1}{p}}}. 
\end{equation}
Similarly to the proof of Theorem \ref{Theorem 2.2} we set $\delta = \sup_{t \in [0,T]}  \lVert R_{0}(t) \rVert_{L_{x}^{1}}$ so that Proposition \ref{Proposition 7.3} is applicable. For the fixed $T > 0$ from hypothesis of Theorem \ref{Theorem 2.4}, we choose similarly to \eqref{est 41}-\eqref{est 42} $\delta_{n} = \delta 2^{-(n-1)}$ for $n \in \mathbb{N}_{0}$, $\eta_{n}$ so that $\delta_{n}^{\frac{1}{p}} \eta_{n} = \sigma \delta_{n}^{\frac{1}{2}}$ for $\sigma > 0$ such that 
\begin{equation}\label{est 169}
\sigma 4 (p+1)^{\frac{1}{p}} M \sum_{n=0}^{\infty} \sqrt{\delta} 2^{-\frac{n-1}{2}} < T
\end{equation}
similarly to \eqref{est 47}. We note that at first sight, the proof of Theorem \ref{Theorem 2.2} assuming Proposition \ref{Proposition 4.2} seems to rely heavily on $L$ (e.g., \eqref{est 50}, \eqref{est 51}, \eqref{est 48}, \eqref{est 52}) and hence seems to suggest difficulty in our current proof; this is somewhat compensated by the fact that we can choose this $\sigma$ freely and we included $T$ in its range. Now we apply Proposition \ref{Proposition 7.3} repeatedly, similarly to \eqref{est 180}, and deduce that there exist unique $\theta \in C([0,T]; L^{p}(\mathbb{T}^{d}))$, $u \in C([0,T]; L^{p'} (\mathbb{T}^{d})) \cap C([0,T]; W^{1, \tilde{p}}(\mathbb{T}^{d}))$ which are both $(\mathcal{F}_{t})_{t\geq 0}$-adapted, satisfies the same deterministic bound in \eqref{est 347} over $[0,T]$ such that 
\begin{subequations}\label{idea 2}
\begin{align}
&  \theta_{n} \to \theta \hspace{1mm} \text{ in } C([0,T]; L^{p} (\mathbb{T}^{d})), \hspace{1mm} u_{n} \to u \hspace{1mm} \text{ in } C([0,T]; L^{p'} (\mathbb{T}^{d})) \cap C([0,T]; W^{1,\tilde{p}}(\mathbb{T}^{d})), \\
& R_{n} \to 0 \hspace{1mm} \text{ in } C([0,T]; L^{1}(\mathbb{T}^{d})).
\end{align}
\end{subequations} 
Consequently, by similar computations to \eqref{est 181} and the proof of Theorem \ref{Theorem 2.5}, $(\theta, u)$ satisfies  \eqref{est 6} analytically weakly and hence $(\rho, u)$ satisfies \eqref{stochastic transport} forced by transport noise analytically weakly, i.e., \eqref{solution transport}. Now we fix such $\rho$ and $\rho^{\text{in}} = \rho \rvert_{t=0}$. Due to our choice of $\sigma$ in \eqref{est 169}, identically to \eqref{est 48} we can deduce  
\begin{equation}\label{est 182}
\lVert (\theta - \theta_{0})(t) \rVert_{L^{p}} \overset{\eqref{est 174}}{\leq} \sum_{n=0}^{\infty} M \eta_{n} ( 2 \delta_{n+1})^{\frac{1}{p}}  
\overset{\eqref{est 169}}{<} \frac{T}{4 (p+1)^{\frac{1}{p}}}.
\end{equation}
Then \eqref{est 355} follows immediately by using the fact that $\lVert \theta_{0}(0) \rVert_{L_{x}^{p}} = 0$ from \eqref{est 323}:
\begin{align*}
\lVert \rho(T) \rVert_{L_{x}^{p}} = \lVert \theta(T) \rVert_{L_{x}^{p}} \geq& \lVert \theta_{0}(T)\rVert_{L_{x}^{p}} - \lVert (\theta - \theta_{0})(T) \rVert_{L_{x}^{p}} \overset{\eqref{est 323} \eqref{est 182}}{>} \frac{K T}{2(p+1)^{\frac{1}{p}}} - \frac{T}{4(p+1)^{\frac{1}{p}}} \\
\overset{\eqref{est 182}}{>}& K\lVert \theta(0) - \theta_{0}(0) \rVert_{L_{x}^{p}} \overset{\eqref{est 323}}{=} K[\lVert \theta(0) - \theta_{0}(0) \rVert_{L_{x}^{p}} + \lVert \theta_{0}(0) \rVert_{L_{x}^{p}}] \geq K \lVert \rho^{\text{in}} \rVert_{L_{x}^{p}}.
\end{align*}
Identically to the proof of Theorem \ref{Theorem 2.2}, because $(\theta_{0}, u_{0}, R_{0})(0,x)$ were deterministic, Proposition \ref{Proposition 7.3} implies that $(\theta_{n}, u_{n}, R_{n})(0,x)$ for all $n \in \mathbb{N}$ are deterministic and consequently so is $\theta(0,x)  = \rho^{\text{in}}(x)$. Lastly, we deduce the non-uniqueness in law from \eqref{est 355}. Let $P$ denote the law of $\rho$ that we constructed. On the other hand, by Galerkin approximation we can construct a martingale solution $\tilde{P}$ such that $\mathbb{E}^{\tilde{P}} [ \lVert \xi(T) \rVert_{L^{p}}] \leq \lVert \xi^{\text{in}} \rVert_{L^{p}}$. In observation of \eqref{est 355}, we conclude that $P$ and $\tilde{P}$ are distinct. This completes the proof of Theorem \ref{Theorem 2.4}.  
\end{proof} 

The rest of the proof of Theorem \ref{Theorem 2.4} is devoted to the proof of Proposition \ref{Proposition 7.3}. Except several crucial modifications, we can follow the steps in the proof of Proposition \ref{Proposition 4.2}.  

\subsubsection{Proof of Proposition \ref{Proposition 7.3}}
We define $l \triangleq \lambda^{-\iota}$ identically to \eqref{est 95}. Given $(\mathcal{F}_{t})_{t\geq 0}$-adapted $(\theta_{0}, u_{0}, R_{0})$ in the regularity class \eqref{est 241} that satisfies \eqref{est 172}, we extend $R_{0}$ to $t < 0$ with its value at $t = 0$ and mollify it identically to \eqref{est 86}. For the fixed $p \in (1,\infty)$ from the hypothesis of Proposition \ref{Proposition 7.3}, we again define $a \triangleq \frac{d}{p}$ and $b \triangleq \frac{d}{p'}$ so that $a+ b = d$ as in \eqref{est 55}.  We continue with same convex integration settings identically to the proof of Theorem \ref{Theorem 2.2}: $r$ from Lemma \ref{Lemma 3.8}, $\varrho$ from \eqref{est 56}, $\psi$ from \eqref{est 57}, $\lambda, \mu, \omega, \nu$ from \eqref{est 58}, $\Theta_{\lambda, \mu, \omega, \nu}^{j}$, $W_{\lambda, \mu, \omega, \nu}^{j}$, and $Q_{\lambda, \mu, \omega, \nu}^{j}$ from \eqref{est 59}. We define $\epsilon$ to satisfy \eqref{est 60} again so that Lemma \ref{Lemma 4.3} remains valid for us. We write $R_{l}(t,x)$ in components again as in \eqref{est 104}. Next, we define $\theta_{1}$ identically to but $u_{1}$ differently from \eqref{est 76} as follows:
\begin{subequations}\label{est 186} 
\begin{align}
&\theta_{1}(t,x) \triangleq \theta_{0} (t,x) + \vartheta(t,x) + \vartheta_{c} (t) + q(t,x) + q_{c}(t), \label{est 187}\\
&u_{1}(t,x) \triangleq  u_{0} (t,x) + w(t,x-B(t)) + w_{c}(t,x-B(t)) \label{est 188}
\end{align}
\end{subequations}  
(recall Remark \ref{Remark on difficulty of transport}) where we identically define $\vartheta(t,x), w(t,x), q(t,x)$ in \eqref{est 79}, and $a_{j}, b_{j}$ in \eqref{est 77}, but with slightly modified $\chi_{j}$ from \eqref{est 70}: 
\begin{equation}\label{est 322}
\chi_{j}: \mathbb{R}_{\geq 0}\times \mathbb{T}^{d} \mapsto [0,1] \text{ and } \chi_{j} (t,x) = 
\begin{cases}
0 & \text{ if } \lvert R_{l}^{j} (t,x) \rvert \leq \frac{\delta}{4d}, \\
1 & \text{ if } \lvert R_{l}^{j}(t,x) \rvert \geq \frac{\delta}{2d}. 
\end{cases} 
\end{equation}
(recall \eqref{est 245}). We observe that the identities \eqref{est 101}-\eqref{est 71} and the estimate \eqref{est 81} all remain valid. We define $\vartheta_{c}$ and $q_{c}$ identically to \eqref{est 85} so that $\theta_{1}$ is mean-zero. Moreover, similarly to \eqref{est 75} we can compute 
\begin{equation}\label{est 199}
\nabla \cdot w(t, x- B(t)) \overset{\eqref{est 71}\eqref{est 59}\eqref{est 72}}{=} \sum_{j=1}^{d} \nabla (b_{j} (t, x - B(t) ) ( \tilde{\varrho}_{\mu}^{j})_{\lambda} \circ \tau_{B(t) + \omega t e_{j}} ) \cdot \psi_{\nu}^{j} (x - B(t)) e_{j} 
\end{equation} 
due to $\nabla \cdot \psi^{j} (\nu (x- B(t)) e_{j})  = 0$. We define $w_{c}(t,x)$ and $f_{j}(t,x)$ identically to \eqref{est 73} so that 
\begin{subequations}\label{est 198}
\begin{align}
&w_{c}(t,x- B(t)) = - \sum_{j=1}^{d} \mathcal{R}_{N} (f_{j}(t,x-B(t)), \psi_{\nu}^{j}(x-B(t))e_{j}), \\
& f_{j}(t,x-B(t)) = \nabla (b_{j}(t, x- B(t)) \tilde{\varrho}_{\mu}^{j} (\lambda (x- B(t) - \omega t e_{j} ))).
\end{align}
\end{subequations} 
Similarly to \eqref{est 197} we see from \eqref{est 198} that $\nabla \cdot w_{c}(x -B(t)) = - \nabla \cdot w(t, x-B(t))$. Therefore, we conclude from \eqref{est 188} that $\nabla\cdot u_{1} = 0$ as desired. Next, due to our choice of definitions thus far, it is clear from \eqref{est 178} and \eqref{est 322} that Lemmas \ref{Lemma 4.4}-\ref{Lemma 4.5} remain valid with ``$M_{0}(t)$'' therein replaced by ``1.'' For completeness, we list these estimates here: analogously to \eqref{est 82}, \eqref{est 83}, \eqref{est 84}, we have for all $j \in \{1,\hdots, d\}$ and $t \in [0,T]$ 
\begin{subequations}\label{est 361} 
\begin{align}
&\lVert a_{j}(t) \rVert_{L_{x}^{\infty}} \lesssim \eta \lVert R_{0} \rVert_{C_{t,x}}^{\frac{1}{p}}, \hspace{6mm} \lVert b_{j}(t) \rVert_{L_{x}^{\infty}} \lesssim \eta^{-1} \lVert R_{0} \rVert_{C_{t,x}}^{\frac{1}{p'}},  \label{new 1}\\
&\lVert a_{j}(t) \rVert_{C_{x}^{s}} \lesssim \eta l^{-(d+2) s} \delta^{\frac{1}{p}}, \hspace{4mm} \lVert b_{j}(t) \rVert_{C_{x}^{s}} \lesssim \eta^{-1} l^{-(d+2) s} \delta^{\frac{1}{p'}} \hspace{3mm} \forall \hspace{1mm} s \in \mathbb{N},  \label{new 3}\\
& \lVert \partial_{t} a_{j}(t) \rVert_{L_{x}^{\infty}} \lesssim \eta l^{-(d+2)} \delta^{\frac{1}{p}}, \hspace{2mm}  \lVert \partial_{t} b_{j}(t) \rVert_{L_{x}^{\infty}} \lesssim \eta^{-1} l^{-(d+2)} \delta^{\frac{1}{p'}}, \label{new 5}
\end{align}
\end{subequations}
which lead to, analogously to \eqref{est 134}, \eqref{est 135}, \eqref{est 151}, \eqref{est 153}, \eqref{est 154}, \eqref{est 91}, \eqref{est 136}, \eqref{est 93}:
\begin{subequations}
\begin{align}
& \lVert \vartheta(t) \rVert_{L_{x}^{p}} \leq \frac{M\eta}{2} (2 \delta)^{\frac{1}{p}} + \frac{C}{\lambda^{\frac{1}{p}}} \eta l^{-(d+2)} \delta^{\frac{1}{p}}, \label{new 7}\\
& \lVert q(t) \rVert_{L_{x}^{p}} \leq C l^{-(d+1)} \delta  \mu^{b} \omega^{-1}, \label{new 8} \\
& \lvert \vartheta_{c}(t) \rvert \leq C \eta \lVert R_{0} \rVert_{C_{t,x}}^{\frac{1}{p}} \mu^{-b} \hspace{2mm} \text{ and } \hspace{2mm}\lvert q_{c}(t) \rvert \leq C\lVert R_{0} \rVert_{C_{t,x}} \omega^{-1}, \label{new 9}\\
& \lVert w(t) \rVert_{L_{x}^{p'}} \leq \frac{M}{2\eta}(2 \delta)^{\frac{1}{p'}} + \frac{C}{\lambda^{\frac{1}{p'}}} \eta^{-1} l^{-(d+2)}\delta^{\frac{1}{p'}}, \label{new 10}\\
&\lVert w(t) \rVert_{W_{x}^{1, \tilde{p}}} \leq  C \eta^{-1} l^{-(d+2)} \delta^{\frac{1}{p'}} \frac{ \lambda \mu + \nu}{\mu^{1+ \epsilon}}, \label{new 11}\\
& \lVert \mathcal{D}^{k} D^{h} f_{j}(t) \rVert_{L_{x}^{r}} \leq C \eta^{-1} \delta^{\frac{1}{p'}} l^{-(d+2) (k+ h + 1)} (\lambda \mu)^{k+ h + 1} \mu^{ b - \frac{d}{r}} \hspace{3mm} \forall \hspace{1mm} k, h \in \mathbb{N}_{0}, r \in [1,\infty],\label{new 12} \\
& \lVert w_{c}(t) \rVert_{L_{x}^{p'}} \leq C\eta^{-1} \delta^{\frac{1}{p'}} [\sum_{k=1}^{N} \left( \frac{ \lambda \mu l^{-(d+2)}}{\nu} \right)^{k} + \frac{ (\lambda \mu l^{-(d+2)})^{N+1}}{\nu^{N}} ], \label{new 13}\\
&\lVert w_{c}(t) \rVert_{W_{x}^{1, \tilde{p}}} \leq C\eta^{-1} \frac{ \delta^{\frac{1}{p'}} [ \lambda \mu l^{-(d+2)} + \nu]}{\mu^{1+ \epsilon}} [ \sum_{k=1}^{N} \left( \frac{ l^{-(d+2)} \lambda \mu}{\nu} \right)^{k} + \frac{ ( l^{-(d+2)} \lambda \mu)^{N+1}}{\nu^{N}} ]. \label{new 14}
\end{align}
\end{subequations}
The next step is to define the new defect $R_{1}$; in fact, we observe that it is same as \eqref{est 200}-\eqref{est 202} and \eqref{est 214} due to \eqref{est 172} and \eqref{est 186}.

\subsubsection{Estimates on $\partial_{t} (q(t,x)+ q_{c}(t,x)) + \text{div} (\vartheta(t,x) w(t,x) - R_{l}(t,x)) = (\text{div} R^{\text{time,1}} + \text{div} R^{\text{quadr}} + \text{div} R^{\chi})(t,x)$ in \eqref{est 202}}\label{Section 7.2.2} 

The estimates here follow Section \ref{Section 4.1.1} completely. Specifically, the identities \eqref{est 102} and \eqref{est 106} go through identically. We define $R^{\chi}$ identically to \eqref{est 105}, although with $\chi_{j}$  in \eqref{est 322}. The identities \eqref{est 107}-\eqref{est 117} follow. We define $R^{\text{time,1}}$, $R^{\text{quadr,1}}, R^{\text{quadr,2}}$, and $R^{\text{quadr}}$ identically to \eqref{est 114}, \eqref{est 115}, \eqref{est 116} so that identities \eqref{est 119} and \eqref{est 120} hold. It follows that Lemmas \ref{Lemma 4.6} holds with ``$M_{0}(t)$'' therein replaced by ``1,'' specifically
\begin{subequations}
\begin{align}
& \lVert R^{\chi}(t) \rVert_{L_{x}^{1}} \leq \frac{\delta}{2}, \label{new 15}\\
&\lVert R^{\text{time,1}}(t) \rVert_{L_{x}^{1}} \leq C\omega^{-1} l^{-(d+2)} \max\{ \delta^{\frac{1}{p}} \lVert R_{0} \rVert_{C_{t,x}}^{\frac{1}{p'}}, \delta^{\frac{1}{p'}} \lVert R_{0} \rVert_{C_{t,x}}^{\frac{1}{p}} \}, \label{new 16}\\
& \lVert R^{\text{quadr}}(t) \rVert_{L_{x}^{1}} \leq C \left( \frac{\lambda \mu}{\nu} + \frac{1}{\lambda} \right) l^{-(d+2) 2} \max\{ \delta^{\frac{1}{p}} \lVert R_{0} \rVert_{C_{t,x}}^{\frac{1}{p'}}, \delta^{\frac{1}{p'}} \lVert R_{0} \rVert_{C_{t,x}}^{\frac{1}{p}} \}.\label{new 17}
\end{align}
\end{subequations}

\subsubsection{Estimates on $\partial_{t}(\vartheta(t,x) + \vartheta_{c}(t)) + \text{div}(\theta_{0}(t,x) w(t,x) + \vartheta(t,x) u_{0}(t, x+ B(t))) - \Delta (\vartheta(t,x) + q(t,x))= (\text{div} R^{\text{time,2}} + \text{div} R^{\text{lin}})(t,x)$ in \eqref{est 202}}\label{Section 7.2.3} 

The only difference here from Section \ref{Section 4.1.2} is $u_{0}(t, x+ B(t))$ rather than $u_{0}(t,x)$. The identity \eqref{est 127} continues to hold with $u_{0}(t,x)$ therein replaced by $u_{0}(t,x+B(t))$. We define $R^{\text{time,2}}$ identically to \eqref{est 125} and $R^{\text{lin}}$ identically to \eqref{est 212} except with $a_{j}$ from \eqref{est 77} instead of \eqref{est 244} and $w, \vartheta,$ and $q$ from \eqref{est 79} instead of \eqref{est 255}. This new definition allows \eqref{est 128} to continue to hold, only with $u_{0}(t,x)$ replaced by $u_{0}(t, x+B(t))$ while \eqref{est 129} also holds. With ``$M_{0}(t)$'' therein replaced by ``1,'' \eqref{est 159} continues to hold while \eqref{est 158} also remains valid; i.e., 
\begin{subequations}
\begin{align}
\lVert R^{\text{lin}}(t) \rVert_{L_{x}^{1}} &\leq C( \mu^{-a} \lVert \theta_{0} \rVert_{C_{t,x}} \eta^{-1} \lVert R_{0} \rVert_{C_{t,x}}^{\frac{1}{p'}} + \mu^{-b}  \eta [ \lVert R_{0} \rVert_{C_{t,x}}^{\frac{1}{p}} \lVert u_{0} \rVert_{C_{t,x}} +  \delta^{\frac{1}{p}} l^{-(d+2)}] \label{new 18}\\
& + [\eta \lVert R_{0} \rVert_{C_{t,x}}^{\frac{1}{p}} \mu^{-b} + \lVert R_{0} \rVert_{C_{t,x}} \omega^{-1} ] [\lambda \mu + \nu] + l^{-(d+2)} \max\{ \delta^{\frac{1}{p}} \lVert R_{0} \rVert_{C_{t,x}}^{\frac{1}{p'}}, \delta^{\frac{1}{p'}} \lVert R_{0} \rVert_{C_{t,x}}^{\frac{1}{p}} \} \omega^{-1}, \nonumber\\
\lVert R^{\text{time,2}}(t) &\rVert_{L_{x}^{1}} \leq C \left(\frac{\omega}{\mu^{b}}\right) \eta \delta^{\frac{1}{p}} \left(\sum_{k=1}^{N} \left( \frac{\lambda \mu}{\nu} \right)^{k} l^{-(d+2) (k-1)} + \frac{ (\lambda \mu)^{N+1}}{\nu^{N}} l^{-(d+2) N} \right). \label{new 19}
\end{align}
\end{subequations}

\subsubsection{Estimates on $\text{div} (q(t,x) (u_{0}(t, x+ B(t)) + w(t,x)) = \text{div} R^{q}(t,x)$ in \eqref{est 202}}\label{Section 7.2.4} 

Again, the only difference here from Section \ref{Section 4.1.3} is $u_{0}(t, x+B(t))$. We define $R^{q}$ identically to \eqref{est 213} except with $q(t,x)$ from \eqref{est 79} rather than \eqref{est 255}. Then Lemma \ref{Lemma 4.8} remains valid.

\subsubsection{Estimates on $\text{div} ([\theta_{0}(t,x) + \vartheta(t,x) + q(t,x)] w_{c}(t,x))= \text{div} R^{\text{corr}}(t,x)$ in \eqref{est 202}}\label{Section 7.2.5}
Our current situation is even simpler than Section \ref{Section 4.1.4} because we can define $R^{\text{corr}}$ identically to \eqref{est 133} but with no $z$. This implies that following the proof of Lemma \ref{Lemma 4.9} identically, we can bound $\lVert R^{\text{corr}}(t) \rVert_{L_{x}^{1}}$ by the r.h.s. of \eqref{est 161} without ``$L^{\frac{1}{4}}$'' and ``$M_{0}(t)$'' replaced by ``1,'' i.e., 
\begin{align}
\lVert R^{\text{corr}}(t) \rVert_{L_{x}^{1}} \leq& C \left(\lVert \theta_{0} \rVert_{C_{t}L_{x}^{p}} + \eta \delta^{\frac{1}{p}}[1+  \lambda^{-\frac{1}{p}} l^{-(d+2)} ] + l^{-(d+1)} \delta \mu^{b} \omega^{-1} \right) \nonumber\\
& \hspace{15mm} \times \eta^{-1} \delta^{\frac{1}{p'}}  [ \sum_{k=1}^{N} \left( \frac{\lambda \mu l^{-(d+2)}}{\nu} \right)^{k} + \frac{ (\lambda \mu l^{-(d+2)} )^{N+1}}{\nu^{N}}]. \label{new 20} 
\end{align} 

\subsubsection{Estimates on $\text{div}R^{\text{moll}}$ in \eqref{est 202}}\label{Section 7.2.6}

We define $R^{\text{moll}}(t,x)$ identically to \eqref{est 100} and the estimate from Lemma \ref{Lemma 4.11} directly applies for us here as well. 

Next, we choose parameters identically to \eqref{est 148} and $\iota$ sufficiently small to satisfy \eqref{est 152}. By construction, $(\theta_{1}, u_{1}, R_{1})$ solves \eqref{est 172}. Due to cut-offs $\chi_{j}$, $\theta_{1}$ and $u_{1}$ defined in \eqref{est 186} satisfy $\theta_{1} \in C_{t,x}^{\infty}, u_{1} \in C_{t}^{\frac{1}{2} - 2 \varpi}C_{x}^{\infty}$ while $R_{1} \in C_{t}^{\frac{1}{2} - 2 \varpi} C_{x}^{\infty}$ due to $w(t, x-B(t)) + w_{c}(t, x- B(t))$ within $u_{1}(t,x)$ and $u_{0}(t,x+B(t))$ within $R_{1}(t,x)$. We can now verify \eqref{est 174}-\eqref{est 177} very similarly to our proof of \eqref{est 39} with ``$M_{0}(t)$'' therein replaced by ``1.'' Lastly, the proof that $(\theta_{1}, u_{1}, R_{1})$ are $(\mathcal{F}_{t})_{t\geq 0}$-adapted and that $(\theta_{1}, u_{1}, R_{1})(0,x)$ are deterministic if $(\theta_{0}, u_{0}, R_{0})(0,x)$ are deterministic is similar to the proof of Theorem \ref{Theorem 2.2} and thus omitted. 

\subsection{Proof of Corollary \ref{Corollary 2.6}}
Following the appropriate modification of \cite[Proposition 2.1]{MS20} in the case with diffusion $-\Delta \rho$, we are able to deduce this key iteration scheme:
\begin{proposition}\label{Proposition 7.4} 
There exists a constant $M > 0$ such that the following holds. Let $p \in (1,\infty)$ and $\tilde{p} \in [1,\infty)$ such that \eqref{est 8} holds, $f\in L^{1}(0, T; L_{0}^{p}(\mathbb{T}^{d}))$. Then for any $\delta, \eta > 0$ and any smooth $(\rho_{0}, u_{0}, R_{0})$ that satisfies 
\begin{equation}
\partial_{t} \rho + \text{div} (u\rho) - \Delta \rho = f -\text{div} R, \hspace{3mm}  \nabla\cdot u = 0, \label{est 328}
\end{equation} 
there exists another smooth $(\rho_{1}, u_{1}, R_{1})$ that satisfies \eqref{est 328} and for all $t \in [0,T]$ 
\begin{subequations}\label{est 340}
\begin{align}
& \lVert (\rho_{1}- \rho_{0})(t) \rVert_{L_{x}^{p}} \leq M \eta \lVert R_{0}(t) \rVert_{L_{x}^{1}}^{\frac{1}{p}}, \hspace{3mm}  \lVert (u_{1} - u_{0})(t) \rVert_{L_{x}^{p'}} \leq M \eta^{-1} \lVert R_{0}(t) \rVert_{L_{x}^{1}}^{\frac{1}{p'}}, \\
& \lVert (u_{1} - u_{0})(t) \rVert_{W_{x}^{1,\tilde{p}}} \leq \delta, \hspace{17mm}  \lVert R_{1}(t) \rVert_{L_{x}^{1}} \leq \delta. 
\end{align}
\end{subequations} 
\end{proposition}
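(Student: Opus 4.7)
The plan is to carry out a Nash-type convex integration scheme that mirrors that of Proposition \ref{Proposition 4.2} very closely, but with substantial simplifications. The key preliminary observation is that since the external force $f$ appears identically on the right-hand side of the equations satisfied by $\rho_0$ and $\rho_1$, it cancels upon subtracting the two equations. Hence $f$ enters neither the construction of the perturbation nor the decomposition of the new defect $R_1$, and the role of $f$ in the statement is purely to set the admissible class of triples $(\rho_0,u_0,R_0)$. A second simplification: since $R_0$ is smooth in both space and time by hypothesis, the temporal mollification (\ref{est 86}) used in the stochastic setting becomes unnecessary, and we may work directly with $R_0$ in place of $R_l$, which in particular eliminates the $R^{\text{moll}}$ contribution.

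For the construction, I would define the perturbation building blocks exactly as in Section \ref{Preliminaries}: pick the same Mikado densities, fields, and correctors $\Theta^j_{\lambda,\mu,\omega,\nu}$, $W^j_{\lambda,\mu,\omega,\nu}$, $Q^j_{\lambda,\mu,\omega,\nu}$, the same $a_j$, $b_j$, and cutoffs $\chi_j$ as in (\ref{est 79})–(\ref{est 77}), only with $R_0^j$ in place of $R_l^j$ and with the cutoff threshold $\delta/(4d)$ and $\delta/(2d)$ rather than the $\delta M_0(t)$-weighted version of (\ref{est 70}). Set $\rho_1\triangleq \rho_0+\vartheta+\vartheta_c+q+q_c$ and $u_1\triangleq u_0+w+w_c$. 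Expanding the identity
\[
-\mathrm{div}\,R_1=\partial_t(\rho_1-\rho_0)+\mathrm{div}(u_1\rho_1-u_0\rho_0)-\Delta(\rho_1-\rho_0)-\mathrm{div}\,R_0
\]
and reorganizing exactly as in (\ref{est 99}) yields the decomposition
\[
-R_1=R^{\mathrm{time},1}+R^{\mathrm{quadr}}+R^{\chi}+R^{\mathrm{time},2}+R^{\mathrm{lin}}+R^{q}+R^{\mathrm{corr}},
\]
with $R^{\mathrm{corr}}\triangleq(\rho_0+\vartheta+q)w_c$ (compare (\ref{est 133})), i.e., no $R^{\mathrm{corr},2}$ and no $R^{\mathrm{moll}}$. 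The diffusive term $-\Delta(\vartheta+q)$ is handled inside $R^{\mathrm{lin}}$ exactly as in Section \ref{Section 4.1.2}.

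For the estimates, I would run the analogues of Lemmas \ref{Lemma 4.4}–\ref{Lemma 4.11} with $M_0(t)$ replaced by $1$ everywhere; every term involving $\|R_0\|_{C_{t,x}}$ or $\|R_0\|_{C_tC_x^1}$ is a bounded deterministic quantity because $R_0$ is smooth on the compact interval $[0,T]$. The bounds on $\vartheta$ and $w$ deliver (\ref{est 340}a) via (\ref{est 80}) and the estimates (\ref{est 61})–(\ref{est 63}) exactly as in the derivation of (\ref{est 134}) and (\ref{est 153}), while $\vartheta_c$, $q_c$, $q$, and $w_c$ absorb into the $\delta$-error after choosing parameters. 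I would then select $\mu=\lambda^{\alpha},\nu=\lambda^{\gamma},\omega=\lambda^{\beta},N$, and $\iota$ as in (\ref{est 148})–(\ref{est 152}) and take $\lambda\in\mathbb{N}$ sufficiently large; every piece of $R_1$ other than $R^{\chi}$ then gets bounded by a negative power of $\lambda$ times constants depending only on $\delta,\eta,p,\tilde p,\|\rho_0\|_{C_{t,x}},\|u_0\|_{C_{t,x}},\|R_0\|_{C_tC_x^1}$, while $\|R^{\chi}(t)\|_{L^1}\le\delta/2$ by the cutoff construction, giving (\ref{est 340}d). The $W^{1,\tilde p}_x$-bound on $u_1-u_0$ is proved as in the derivation of (\ref{est 37}).

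The main obstacle is essentially bookkeeping rather than conceptual: one must verify that the parameter inequalities (\ref{est 152}) still force every term in $R_1$ to be small once the $M_0(t)$ weights are removed, and one must check that the absence of the stochastic force $z$ does not disturb the balance between $R^{\mathrm{corr}}$ and $R^{\mathrm{lin}}$. Both checks are straightforward since removing $z$ only removes terms (the contributions $R^{\mathrm{corr},2}$ and the $z$-piece of $R^{\mathrm{corr},1}$), and the parameter inequalities in (\ref{est 152}) are unchanged, being independent of $L$ and $M_0$. In particular, there is no analogue of the stopping time subtlety that forced the hypothesis (\ref{est 78}) in Proposition \ref{Proposition 4.2}, and consequently no need for an auxiliary parameter $\eta$ beyond what is already present in the statement of Proposition \ref{Proposition 7.4}.
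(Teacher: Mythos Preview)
Your proposal is correct and takes essentially the same approach as the paper: the key observation that $f$ cancels when subtracting the equations for $(\rho_0,u_0,R_0)$ and $(\rho_1,u_1,R_1)$ is exactly what the paper uses, and the rest is a direct reduction to the Modena--Sattig scheme (equivalently, Proposition~\ref{Proposition 4.2} with $M_0(t)\equiv 1$ and no mollification or $z$-terms), with the diffusive contribution $\nabla(\vartheta+q)$ absorbed into the linear defect. The paper's own proof sketch is even terser than yours and simply cites \cite[Proposition 2.1]{MS20} with the diffusive modification of \cite[Section 7.2]{MS20}.
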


\begin{remark}
We may furthermore prove that if $R_{0}(t) \equiv 0$ for some $t \in [0,T]$, then $R_{1}(t) \equiv 0$ and $(\rho_{1}, u_{1})(t) \equiv (\rho_{0}, u_{0})(t)$ following \cite[Proposition 2.1]{MS20}; however, that result will not help in proving non-uniqueness for our case with non-zero force anyway. 
\end{remark} 

\begin{proof}[Proof of Proposition \ref{Proposition 7.4}]
The proof of Proposition \ref{Proposition 7.4} follows that of \cite[Proposition 2.1]{MS20} very similarly except that one needs to take the diffusion $-\Delta \rho$ into account as in \cite[Section 7.2]{MS20}. Indeed, although the external force $f$ is not smooth, it cancels out in the definition of $R_{1}$ because it appears in \eqref{est 328} for $(\rho_{0}, u_{0}, R_{0})$ and $(\rho_{1}, u_{1}, R_{1})$ (i.e., at the step of \eqref{est 99}). Thus, the same definition of $R_{1}$ in \cite[Equation p. 1096]{MS20} added by $\nabla(\vartheta + q)$ suffices for us. We omit details due to similarity to the proofs throughout this manuscript. 
\end{proof}

We now conclude the proof of Corollary \ref{Corollary 2.6} following those of Theorems \ref{Theorem 2.2} and \ref{Theorem 2.4}. 
\begin{proof}[Proof of Corollary \ref{Corollary 2.6}]
By hypothesis $K > 1$,  $T > 0$, and $f \in L^{1}(0, T; L_{0}^{p}(\mathbb{T}^{d}))$ are fixed. Thus, we can take $N \in \mathbb{N}$ sufficiently large so that 
\begin{equation}\label{est 333}
\frac{K^{N} T}{2(p+1)^{\frac{1}{p}}} > K [\frac{T}{2(p+1)^{\frac{1}{p}}} + \int_{0}^{T} \lVert f(s) \rVert_{L_{x}^{p}} ds]. 
\end{equation} 
Then we choose $\rho_{0}(t,x) \triangleq K^{N} t(x_{d} - \frac{1}{2})$, $u_{0} \equiv 0$, $R_{0} \triangleq \mathcal{D}^{-1} (f - \partial_{t} \rho_{0})$ which is well-defined because $f$ is mean-zero by hypothesis. Then by definition, $(\rho_{0}, u_{0}, R_{0})$ satisfy \eqref{est 328}, $\rho_{0}$ has mean-zero for all $t \geq 0$, $u_{0}$ is trivially divergence-free, and $(\rho_{0}, u_{0}, R_{0})$ is smooth. Moreover, 
\begin{equation}\label{est 332}
\lVert \rho_{0}(t) \rVert_{L_{x}^{p}} = \frac{K^{N} t}{2(p+1)^{\frac{1}{p}}}.
\end{equation} 
We set $\delta \triangleq \sup_{t\in [0,T]} \lVert R_{0}(t) \rVert_{L_{x}^{1}}$, $\delta_{n} = \delta 2^{-(n-1)}$ so that $\delta_{n+1} = \delta_{n} 2^{-1}$ identically to \eqref{est 41} and $\eta_{n} \subset (1,\infty)$ for $n \in \mathbb{N}$ such that $\delta_{n}^{\frac{1}{p}} \eta_{n} = \sigma \delta_{n}^{\frac{1}{2}}$ identically to \eqref{est 42} where $\sigma > 0$ and  
\begin{equation}\label{est 330}
\sigma 4(p+1)^{\frac{1}{p}} M \sum_{n=0}^{\infty} \sqrt{\delta} 2^{-\frac{n-1}{2}}< T.
\end{equation} 
Then using Proposition \ref{Proposition 7.4} iteratively, analogously to the proof of Theorem \ref{Theorem 2.2} we can find $\rho \in C([0,T]; L^{p}(\mathbb{T}^{d}))$ and $u \in C([0,T]; L^{p'} (\mathbb{T}^{d})) \cap C([0,T];  W^{1,\tilde{p}}(\mathbb{T}^{d}))$ that satisfies \eqref{est 329} analytically weakly. We fix this $\rho$ and define $\rho^{\text{in}} = \rho \rvert_{t=0}$. We can compute for all $t \in [0,T]$, 
\begin{equation}\label{est 334}
\lVert (\rho - \rho_{0})(t) \rVert_{L_{x}^{p}} \leq \sum_{n=0}^{\infty} \lVert (\rho_{n+1} - \rho_{n})(t) \rVert_{L_{x}^{p}}
\leq \sum_{n=0}^{\infty} M \sigma \delta_{n}^{\frac{1}{2}}  \overset{\eqref{est 330}}{<} \frac{T}{4(p+1)^{\frac{1}{p}}}. 
\end{equation} 
Finally, this leads to \eqref{est 331} as follows:
\begin{align*}
\lVert \rho(T) \rVert_{L_{x}^{p}} &\overset{\eqref{est 332}}{\geq} \frac{K^{N} T}{2(p+1)^{\frac{1}{p}}} - \lVert (\rho - \rho_{0})(T) \rVert_{L_{x}^{p}} \overset{\eqref{est 333}\eqref{est 334}}{\geq} K [ \frac{T}{4(p+1)^{\frac{1}{p}}} + \int_{0}^{T} \lVert f (s)\rVert_{L_{x}^{p}} ds] \\
\overset{\eqref{est 334}\eqref{est 332}}{>}& K[ \lVert (\rho - \rho_{0})(0) \rVert_{L_{x}^{p}} + \lVert \rho_{0}(0) \rVert_{L_{x}^{p}} + \int_{0}^{T} \lVert f(s) \rVert_{L_{x}^{p}} ds] \geq K [\lVert \rho^{\text{in}} \rVert_{L^{p}} + \int_{0}^{T} \lVert f(s) \rVert_{L_{x}^{p}} ds].
\end{align*}
\end{proof}

\section{Appendix B}\label{Appendix B}
\subsection{Further preliminaries}\label{Section 8.1} 
The following Lemma \ref{Lemma 8.1} is a slight generalization of \cite[Proposition II.1]{DL89} concerning existence of solution to the non-diffusive transport equation with an external force to the diffusive case. We note that the precise statement of \cite[Proposition II.1]{DL89} does not include an external force;  however, \cite[Remark on p. 514]{DL89} states that such an external force $f$ can be added if $f \in L_{t}^{1}L_{x}^{p}$. 
\begin{lemma}\label{Lemma 8.1}
Let $T > 0$, $p \in (1, \infty]$, and $\theta^{\text{in}} \in L^{p} (\mathbb{T}^{d})$. Assume that 
\begin{equation}\label{est 13}
u \in L^{1} (0, T; L^{p'} (\mathbb{T}^{d})), \hspace{2mm} \nabla\cdot u = 0, \hspace{2mm} c \in L^{1} (0, T; L^{\infty} (\mathbb{T}^{d})), \hspace{2mm} f \in L^{1} (0, T; L^{p} (\mathbb{T}^{d})).
\end{equation}  
Then there exists an analytically weak solution $\theta \in L^{\infty} (0, T; L^{p} (\mathbb{T}^{d}))$ to 
\begin{equation}\label{est 227}
\partial_{t} \theta + (u\cdot\nabla) \theta + c\theta  = \Delta \theta + f;
\end{equation} 
i.e., for all $\phi \in C([0,T] \times \mathbb{T}^{d})$ with compact support in $[0, T) \times \mathbb{T}^{d}$, 
\begin{align}
&- \int_{0}^{T} \int_{\mathbb{T}^{d}} \theta(t,x) \partial_{t} \phi(t,x) dx dt - \int_{\mathbb{T}^{d}} \theta^{\text{in}} (x) \phi(0,x) dx - \int_{0}^{T} \int_{\mathbb{T}^{d}}  (u(t,x)\cdot\nabla) \phi(t,x) \theta(t,x) dx dt \nonumber\\
& \hspace{10mm} + \int_{0}^{T} \int_{\mathbb{T}^{d}} c \phi(t,x) \theta (t,x) dx dt = \int_{0}^{T} \int_{\mathbb{T}^{d}} \theta(t,x) \Delta \phi(t,x)  + f(t,x) \phi(t,x) dx dt. \label{est 12}
\end{align} 
\end{lemma}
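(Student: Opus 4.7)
My plan is to obtain the solution by regularization, a priori $L^p$ estimates, and weak compactness, closely mirroring the approach in \cite{DL89} but with the additional advantage that the Laplacian provides a favorable (dissipative) sign. First I would mollify the data in space to obtain a sequence $(u_n, c_n, f_n, \theta^{\text{in}}_n)$ with $u_n \in L^1(0,T; C^\infty(\mathbb{T}^d))$ satisfying $\nabla \cdot u_n = 0$ (which is preserved by mollification with a radial kernel), $c_n \in L^1(0,T; C^\infty(\mathbb{T}^d))$ bounded in $L^1_t L^\infty_x$ uniformly in $n$, $f_n \in L^1(0,T; C^\infty(\mathbb{T}^d))$ bounded in $L^1_t L^p_x$ uniformly in $n$, and $\theta^{\text{in}}_n \in C^\infty(\mathbb{T}^d)$ with $\theta^{\text{in}}_n \to \theta^{\text{in}}$ in $L^p(\mathbb{T}^d)$ (weak-$\ast$ when $p = \infty$). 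Standard linear parabolic theory (e.g., via Galerkin approximation, or the variation-of-constants formula for the heat semigroup together with Duhamel iteration for the lower order terms) yields a unique smooth solution $\theta_n$ to the regularized problem on $[0,T]$.

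Next I would establish the uniform a priori bound. For $p \in (1,\infty)$, multiplying the smooth equation by $\lvert \theta_n \rvert^{p-2}\theta_n$ and integrating over $\mathbb{T}^d$, the transport term vanishes because $\nabla \cdot u_n = 0$, while $-\int_{\mathbb{T}^d} \Delta \theta_n \cdot \lvert \theta_n \rvert^{p-2} \theta_n \, dx = (p-1) \int_{\mathbb{T}^d} \lvert \theta_n \rvert^{p-2} \lvert \nabla \theta_n \rvert^2 \, dx \geq 0$. Hence
\begin{equation*}
\frac{d}{dt} \lVert \theta_n(t) \rVert_{L^p} \leq \lVert c_n(t) \rVert_{L^\infty} \lVert \theta_n(t) \rVert_{L^p} + \lVert f_n(t) \rVert_{L^p},
\end{equation*}
and Gr\"onwall's inequality gives
\begin{equation*}
\sup_{t \in [0,T]} \lVert \theta_n(t) \rVert_{L^p} \leq e^{\lVert c_n \rVert_{L^1_t L^\infty_x}} \left( \lVert \theta^{\text{in}}_n \rVert_{L^p} + \lVert f_n \rVert_{L^1_t L^p_x} \right),
\end{equation*}
which is uniform in $n$. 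Banach-Alaoglu then extracts a subsequence $\theta_n \rightharpoonup^\ast \theta$ in $L^\infty(0,T; L^p(\mathbb{T}^d))$.

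Finally I would pass to the limit in the weak formulation \eqref{est 12} written for $\theta_n$. Since $u_n \to u$ strongly in $L^1(0,T; L^{p'}(\mathbb{T}^d))$, $c_n \to c$ strongly in $L^1(0,T; L^\infty(\mathbb{T}^d))$, $f_n \to f$ strongly in $L^1(0,T; L^p(\mathbb{T}^d))$, and $\theta^{\text{in}}_n \to \theta^{\text{in}}$ in $L^p(\mathbb{T}^d)$, while $\theta_n \rightharpoonup^\ast \theta$ in $L^\infty_t L^p_x$, each linear-in-$\theta_n$ term pairs strong convergence of its coefficient with weak-$\ast$ convergence of $\theta_n$, and the test function $\phi \in C^1_c([0,T) \times \mathbb{T}^d)$ contributes bounded factors. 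Thus every term in \eqref{est 12} converges, yielding the existence of the desired $\theta \in L^\infty(0,T; L^p(\mathbb{T}^d))$.

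The main subtlety is the endpoint $p = \infty$, where the test function $\lvert \theta_n \rvert^{p-2} \theta_n$ degenerates and the $L^p$-method is unavailable directly. I would handle this by carrying out the above argument for each finite $q \in (1, \infty)$ with the regularized initial datum $\theta^{\text{in}}_n \in C^\infty \subset L^q$, invoking uniqueness of smooth solutions so that the same $\theta_n$ obeys the estimate with norms $L^q$ on both sides, and then letting $q \to \infty$ using $\lVert g \rVert_{L^\infty(\mathbb{T}^d)} = \lim_{q \to \infty} \lVert g \rVert_{L^q(\mathbb{T}^d)}$ on the bounded torus together with $\lVert \theta^{\text{in}}_n \rVert_{L^q} \to \lVert \theta^{\text{in}}_n \rVert_{L^\infty}$ and similarly for $f_n$. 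This yields the uniform $L^\infty_{t,x}$ bound needed to extract a weak-$\ast$ limit in $L^\infty(0,T; L^\infty(\mathbb{T}^d))$, after which the passage to the limit in \eqref{est 12} is identical.
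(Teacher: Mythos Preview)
Your approach is essentially identical to the paper's: mollify the data, solve the smooth parabolic problem, derive the $L^p$ a priori bound by testing against $\lvert \theta_n\rvert^{p-2}\theta_n$, extract a weak-$\ast$ limit, and treat $p=\infty$ by letting $q\nearrow\infty$. The one place the paper is more careful than you is the diffusion term when $p\in(1,2)$: since $r\mapsto \lvert r\rvert^{p-2}r$ is not $C^1$ in that range, the identity $-\int \Delta\theta_n\,\lvert\theta_n\rvert^{p-2}\theta_n\,dx=(p-1)\int\lvert\theta_n\rvert^{p-2}\lvert\nabla\theta_n\rvert^2\,dx$ needs justification, which the paper supplies by replacing $\lvert\theta_n\rvert^{p-2}$ with $(\lvert\theta_n\rvert+\delta)^{p-2}$ and passing $\delta\to 0$ via monotone and dominated convergence.
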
 

\begin{proof}[Proof of Lemma \ref{Lemma 8.1}]
The proof follows that of \cite[Proposition II.1]{DL89} completely with only a care about the additional diffusive term in case $p \in (1,2]$, partially following the argument of \cite[Lemma 3.1]{CZ16}. We start with $\psi$ that is smooth with compact support, non-negative, and has mass one, define $\psi_{\epsilon} (x) \triangleq \epsilon^{-d} \psi(\frac{\cdot}{\epsilon})$ and mollify to obtain $u_{\epsilon} \triangleq u \ast_{x} \psi_{\epsilon}$, $c_{\epsilon} \triangleq c \ast_{x} \psi_{\epsilon}$, $\theta_{\epsilon}^{\text{in}} \triangleq \theta^{\text{in}} \ast_{x} \psi_{\epsilon}$, and $f_{\epsilon} \triangleq f \ast_{x} \psi_{\epsilon}$. Thus, we obtain a corresponding solution $\theta_{\epsilon} \in C_{t}C_{x}^{1}$ to 
\begin{equation}\label{est 9}
\partial_{t} \theta_{\epsilon} + \text{div} (u_{\epsilon} \theta_{\epsilon}) + c_{\epsilon} \theta_{\epsilon} - \Delta \theta_{\epsilon} = f_{\epsilon} \hspace{3mm} \forall \hspace{1mm} t > 0, \hspace{3mm} \theta_{\epsilon} \rvert_{t=0} = \theta_{\epsilon}^{\text{in}}. 
\end{equation} 
Now for all $p \in (1,\infty)$, as the function $r \mapsto r^{p}$ is $C^{1}$, we multiply \eqref{est 9} by $\lvert \theta_{\epsilon} \rvert^{p-2} \theta_{\epsilon}$, integrate over $\mathbb{T}^{d}$, use the fact that $\nabla \cdot u_{\epsilon} = 0$ to deduce 
\begin{equation}
\frac{1}{p} \partial_{t} \lVert \theta_{\epsilon} \rVert_{L_{x}^{p}}^{p} - \int_{\mathbb{T}^{d}} \Delta \theta_{\epsilon} \lvert \theta_{\epsilon} \rvert^{p-2} \theta_{\epsilon} dx = -  \int_{\mathbb{T}^{d}} c_{\epsilon} \lvert \theta_{\epsilon} \rvert^{p} dx + \int_{\mathbb{T}^{d}} f_{\epsilon} \lvert \theta_{\epsilon} \rvert^{p-2} \theta_{\epsilon} dx. 
\end{equation} 
If $p \in [2, \infty)$, then the function $r \mapsto r^{p-1}$ is $C^{1}$ and thus we can integrate by parts to deduce 
\begin{equation}\label{est 10}
\int_{\mathbb{T}^{d}} \Delta \theta_{\epsilon} \lvert \theta_{\epsilon} \rvert^{p-2} \theta_{\epsilon} dx = - (p-1) \int_{\mathbb{T}^{d}} \lvert \nabla \theta_{\epsilon} \rvert^{2} \lvert \theta_{\epsilon} \rvert^{p-2} dx. 
\end{equation}
On the other hand, if $p \in (1,2)$, then we can use  
\begin{align*}
\int_{\mathbb{T}^{d}} \Delta \theta_{\epsilon} ( \lvert \theta_{\epsilon} \rvert + \delta)^{p-2} \theta_{\epsilon} dx = - \int_{\mathbb{T}^{d}} (p-2) \lvert \nabla \theta_{\epsilon} \rvert^{2} \lvert \theta_{\epsilon} \rvert  ( \lvert \theta_{\epsilon} \rvert + \delta)^{p-3} + \lvert \nabla \theta_{\epsilon} \rvert^{2} ( \lvert \theta_{\epsilon} \rvert + \delta)^{p-2} dx, 
\end{align*}
to deduce \eqref{est 10} by monotone and dominated convergence theorems. Having obtained \eqref{est 10} for all $p \in (1,\infty)$, we can deduce in a standard manner 
\begin{equation}\label{est 11}
\lVert \theta_{\epsilon} (t) \rVert_{L_{x}^{p}} \leq \lVert \theta^{\text{in}} \rVert_{L^{p}} e^{\int_{0}^{t} \lVert c \rVert_{L_{x}^{\infty}} ds} + \int_{0}^{t} e^{\int_{s}^{t} \lVert c \rVert_{L_{x}^{\infty}} d\tau} \lVert f (s) \rVert_{L_{x}^{p}} ds; 
\end{equation}
the case $p = \infty$ follows by taking limit $p\nearrow \infty$ in \eqref{est 11}. Extracting a subsequence if necessary that converges to some $\theta$ and showing that it satisfies \eqref{est 12} is also standard; we refer to \cite[Proposition II.1]{DL89} for further details.  
\end{proof}

\subsection{Proof of second inequality of \eqref{est 63} in Lemma \ref{Lemma 4.3}}
We estimate 
\begin{align*}
\lVert W_{\lambda, \mu, \omega, \nu}^{j}(t) \rVert_{W_{x}^{1,\tilde{p}}}
\overset{\eqref{est 59}}{\leq}& \lVert ( \tilde{\varrho}_{\mu}^{j})_{\lambda} \circ \tau_{\omega t e_{j}} \rVert_{L_{x}^{\tilde{p}}} \lVert \psi_{\nu}^{j} \rVert_{L^{\infty}} \\
&+ \lVert D( ( \tilde{\varrho}_{\mu}^{j})_{\lambda} \circ \tau_{\omega t e_{j}}) \rVert_{L_{x}^{\tilde{p}}} \lVert \psi_{\nu}^{j} \rVert_{L^{\infty}} + \lVert (\tilde{\varrho}_{\mu}^{j})_{\lambda} \circ \tau_{\omega t e_{j}} \rVert_{L_{x}^{\tilde{p}}} \lVert D \psi_{\nu}^{j} \rVert_{L^{\infty}} \\
\overset{\eqref{est 66}\eqref{est 67} \eqref{est 68}}{=}& \mu^{ b - \frac{d}{\tilde{p}}} \lVert \varrho \rVert_{L^{\tilde{p}}} \lVert \psi \rVert_{L^{\infty}}+ \lambda \mu^{b - \frac{d}{\tilde{p}} + 1} \lVert D \varrho \rVert_{L^{\tilde{p}}} \lVert \psi \rVert_{L^{\infty}} + \nu \mu^{ b - \frac{d}{\tilde{p}}} \lVert \varrho \rVert_{L^{\tilde{p}}} \lVert D \psi \rVert_{L^{\infty}} \\
\overset{\eqref{est 55} \eqref{est 69}}{\leq}&   \mu^{\frac{d}{p'} - \frac{d}{\tilde{p}}}  \frac{M}{2d} + \lambda \mu^{\frac{d}{p'} - \frac{d}{\tilde{p}} + 1} \frac{M}{2d} + \nu\mu^{\frac{d}{p'} - \frac{d}{\tilde{p}}} \frac{M}{2d}.
\end{align*}
Now $\frac{d}{p'} - \frac{d}{\tilde{p}} < - 1 - \epsilon$ due to \eqref{est 94} and $\mu \gg \lambda$ due to \eqref{est 58} imply 
\begin{align*}
\lVert W_{\lambda, \mu, \omega, \nu}^{j}(t) \rVert_{W_{x}^{1,\tilde{p}}} \leq \mu^{-1 - \epsilon} (\frac{M}{2d}) + \lambda \mu^{-\epsilon} (\frac{M}{2d})  + \nu \mu^{-1 - \epsilon} (\frac{M}{2d}) \leq M \left( \frac{\lambda}{\mu^{\epsilon}} + \frac{\nu}{\mu^{1+ \epsilon}} \right) 
\end{align*}
for $\lambda \in \mathbb{N}$ sufficiently large which is the desired result.
 
\section*{Acknowledgements}
The second author would like to thank Prof. Carl Mueller for valuable discussions. The first author acknowledges the support of the Department of Atomic Energy,  Government of India, under project no.$12$-R$\&$D-TFR-$5.01$-$0520$, and DST-SERB SJF grant DST/SJF/MS/$2021$/$44$.

\end{document}